\newcommand{\scal}[1]{\left\langle #1 \right\rangle}
\newcommand{\sett}[1]{\left\{   #1   \right\}}
\newcommand{\norm}[1]{\left\|   #1   \right\|}
\newcommand{\abso}[1]{\left|   #1   \right|}
\newcommand{\paar}[1]{\left(   #1   \right)}
\numberwithin{equation}{section}
\newtheorem{lemma}{Lemma}
\newtheorem{theorem}{Theorem}
\newtheorem{cor}{Corollary}
\newtheorem{remark}{Remark}
\numberwithin{equation}{section}
\numberwithin{theorem}{section}
\numberwithin{lemma}{section}
\numberwithin{prop}{section}
\numberwithin{cor}{section}
\numberwithin{remark}{section}
\numberwithin{defn}{section}
\definecolor{darkolivegreen}{rgb}{0.333333, 0.419608, 0.1843140}
\DeclareMathOperator{\sech}{sech}
\newcommand{\la}{\langle}
\newcommand{\ra}{\rangle}
\newcommand{\hs}{{H^s(\rr)}}
\newcommand{\lt}{{L^2(\rr)}}
\newcommand{\ff}{\varphi}
\newcommand{\al}{\alpha}
\newcommand{\dd}{{\;\rm d}}
\newcommand{\ee}{{\rm e}}
\newcommand{\rr}{\mathbb{R}}
\newcommand{\ii}{{\rm i}}
\newcommand{\lam}{{\lambda}}
\newcommand{\what}{\widehat}
\newcommand{\vu}{\vec{u}}
\newcommand{\lc}{ \mathcal{L}}
\newcommand{\eps}{\epsilon}
\newcommand{\vdu}{\vec{u}_0}
\newcommand{\x}{\mathscr{X}}
\newcommand{\vtt}{\vartheta}
\renewcommand{\th}{\theta}
\renewcommand{\ss}{\mathfrak{S}}
\begin{document}
	
	\title{Long time behavior of solutions to the generalized Boussinesq equation}
	
	\author[a]{Amin Esfahani}
	\ead{saesfahani@gmail.com, amin.esfahani@nu.edu.kz}

	\author[b,c]{Gulcin M. Muslu}
	\ead{gulcin@itu.edu.tr, gulcinmihriye.muslu@medipol.edu.tr }

	%\cortext[cor1]{Corresponding author}
	\address[a]{Department of Mathematics, Nazarbayev University, Astana 010000, Kazakhstan}
	\address[b]{ Istanbul Technical University, Department of Mathematics, Maslak 34469, Istanbul,  Turkey  }
	\address[c]{Istanbul Medipol University,  School of Engineering and Natural Sciences, Beykoz 34810, Istanbul, Turkey}

	\begin{abstract}
		In this paper, we study the generalized Boussinesq equation as a model for the water wave problem with surface tension. Initially, we investigate the initial value problem within Sobolev spaces, deriving conditions under which solutions are either global or experience blow-up in time. Subsequently, we extend our analysis to Bessel potential and modulation spaces, determining the asymptotic behavior of solutions. We establish the non-existence of solitary waves for certain parameters using Pohozaev-type identities.
Additionally, we numerically generate solitary wave solutions of the generalized Boussinesq equation through the Petviashvili iteration method. To further examine the time evolution of solutions, we propose employing the Fourier pseudo-spectral numerical method. Our investigation extends to the gap interval, where neither global existence nor blow-up results have been theoretically established. We find that our numerical results effectively fill these gaps, supplementing the theoretical findings.
	\end{abstract}
	
	\begin{keyword}
		Generalized Boussinesq equation; Well-posedness; Asymptotic behavior; Solitary waves; Petviashvili iteration method; Spectral method.
		
		MSC[2020]: 76B15, 76M22, 35Q53, 35A01, 35B44, 35C07, 76M22.
	\end{keyword}
	
	\date{\today}
	\journal{Journal}

	\maketitle
	
	\section{Introduction}
	In this paper, we study the generalized Boussinesq  (gBq) equation
	\begin{equation}\label{gBE}
		\begin{cases}
			u_{tt}={\big(u-\alpha u_{xx}+u_{tt}-\kappa u_{xxtt}+ f(u)\big)}_{xx},\\
			u(x,0)=u_0(x),\quad u_t(x,0)=u_1(x),\quad\alpha,\kappa\geq0	.
		\end{cases}	
	\end{equation}
	This equation was proposed by Schneider and Eugene \cite{se} to model the water wave problem with surface tension when $f(u) = u^2$. The solution $u(x,t)$ of \eqref{gBE} can be interpreted as the vertical velocity component on the top surface of an irrotational, incompressible fluid in a domain. Equation \eqref{gBE} can also be derived from the two-dimensional water wave problem. In a degenerate case (with Bond number $T=\frac{1}{3}$, a dimensionless parameter proportional to the surface tension), it was shown in \cite{se} that the long wave limit can be described approximately by two decoupled Kawahara equations.
It has been shown that the (improved) Boussinesq equation,
	\begin{equation}\label{imbq}
		u_{tt}={(u +u_{tt}+ u^2)}_{xx},
	\end{equation}
	is useful in the case of zero surface tension \cite{schneider}. Equation \eqref{imbq} is formally equivalent to the most well-known and classical Boussinesq equation
	\begin{equation}\label{bg-bouss}
		u_{tt}={(u \pm u_{xx}+ u^2)}_{xx},
	\end{equation}
	which is also valid (from a formal point of view) for unidirectional wave propagation in the water wave problem \cite{pego-weinstein}. Equation \eqref{bg-bouss} was first derived in \cite{Boussinesq} (rediscovered later by Keulegan and Patterson \cite{KP}) when investigating the bidirectional propagation of small amplitude and long wavelength capillary-gravity waves on the surface of shallow water.

Since the linear part of \eqref{bg-bouss} with the $+u_{xx}$ term contains the backward heat operator $\partial_t+\partial_x^2$, it is ill-posed. This issue does not occur with the $-u_{xx}$ term, but in this case, it cannot be proven as a physical model of water waves like the original Boussinesq equation.
To address this situation, and related to \eqref{gBE}, the following generalized Boussinesq equation was derived in \cite{cmv} (see also \cite{Amin-farah-wang} and references therein):
	\begin{equation}
		u_{tt}={(u \pm u_{xx }+u_{xxxx } +  u^2)}_{xx}.
	\end{equation}
	Rosenau \cite{rosenau} proposed the higher-order Boussinesq (HBq) equation
	\begin{equation}\label{hbq1}
		u_{tt}= (u+ \eta_1u_{tt}-\eta_2u_{xxtt}+(f(u))_{xx},
	\end{equation}
	where $\eta_1$ and $\eta_2$ are real positive constants to include higher-order dispersion effects. A more recent derivation of the HBq equation is provided in \cite{duruk2} to model the bi-directional propagation of longitudinal waves in an infinite, nonlocally elastic medium.
	
	Regarding the existence of local/global solutions of the initial value problem associated with \eqref{bg-bouss}, Bona and Sachs utilized Kato's theory to demonstrate the local well-posedness of smooth solutions. This result was further extended to $H^1$ in \cite{TsutsimiMatahashi}. For the nonlinearity $f(u)=-|u|^pu$, \cite{linares} established the existence of $L^2$- and $H^1$-solutions of \eqref{gBE}. Additionally, it was proven in \cite{linares} that these solutions, given small initial data, remain global in $H^1$ if $0 < p < 4$. Investigations conducted in \cite{liu} explored the existence of finite-time blow-up of solutions, decay of small solutions, and the strong instability of solitary waves with small propagation speeds for the Cauchy problem of the Boussinesq equation.

	In this paper, we study the initial value problem \eqref{gBE}. The well-posedness of \eqref{gBE} was shown in \cite{wangxu} using the contraction mapping principle.

Here, we improve upon the results of \cite{wangxu} by extending them to lower Sobolev indices (see Theorem \ref{theo-local-1}) in Section \ref{section-evolution}. Wang and Xu in \cite{wangxu} also derived conditions under which solutions of problem \eqref{gBE} either remain global or blow up, by applying the potential well approach. These results depend on the potential well depth $d$ and the construction of two stable and unstable submanifolds of $H^2$. The value of $d$ is variational and determined in terms of the stationary solution of \eqref{gBE}, which is the unique positive solution of
	\[
	0 =Q_0- \alpha  Q_0'' +\beta Q_0^{p+1},\qquad \beta<0.
	\]
	Our next objective in this paper is to investigate the time-decay behavior of solutions of \eqref{gBE}. Firstly, we utilize properties of the group associated with the linear part of \eqref{gBE} and extend our local well-posedness result to the Bessel potential spaces $H^{s,q}(\mathbb{R})$ and the modulation spaces $M^s_{p,q}$ in Sections \ref{beseel-space-section} and \ref{modul-sec}, respectively. In Section \ref{section-asymp}, we present two equivalent forms of equation \eqref{gBE} and again exploit decay properties of the linear part of \eqref{gBE} to obtain time decay estimates of solutions in $L^\infty$ and $H^s$ spaces.

	\noindent
	We also investigate the existence of nonstationary solitary waves of \eqref{gBE}. In contrast to the stationary case, we observe that the coefficient of $\beta$ should be positive to guarantee the existence of solitary waves. Interested readers can find further details in Section \ref{section-solitary}, where some non-existence results are discussed. As far as we know, the analytical solitary wave solution of \eqref{gBE} remains unknown. In Section \ref{section-solitary}, we employ the Petviashvili iteration method to numerically generate the solitary wave solution.
	
	\noindent
	A numerical method that combines a Fourier pseudo-spectral method in space with a Runge-Kutta method in time for the time evolution of solutions of the gBq equation is proposed in Section \ref{section-numericalmethod}. To the best of our knowledge, there have been no numerical studies conducted for the gBq equation, despite a considerable body of work in the literature aimed at solving equations \eqref{imbq}-\eqref{hbq1} (see \cite{borluk, bratsos1, canak, frutos, shokri, su, oruc} and references therein).

In Section \ref{section-numericalmethod}, we investigate the time evolution of solitary waves generated by Petviashvili's iteration method. Section \ref{section-evolution} addresses a gap interval where neither a global existence nor a blow-up result has been analytically established. Upon confirming the analytical results for global existence and blow-up numerically, as presented in Section \ref{section-evolution}, we then provide numerical insights into the behavior of solutions corresponding to the gap interval. We determine a threshold value, such that when the initial amplitude is below this threshold, the solution exists globally; however, if the initial amplitude exceeds this threshold, the solution blows up in a finite time.
	
	In the throughout of this paper,  we assume that $f\in C(\rr)$ such that $|f(u)|\leq C |u|^{p+1}$  and $C\geq0$, and $\|u\|_s$ stands by the norm of $u$ in the Soblev space $H^s(\rr)$.

	\setcounter{equation}{0}
	\section{Well-posedness}\label{section-evolution}
	In this section, we study the well-posedness of the initial value problem \eqref{gBE}. To do so, first, similar to the method in \cite{Amin-farah-wang}, by using a Fourier transform and the Duhamel principle, the solution of problem \eqref{gBE} can be written as follows:
	\begin{equation}\label{contract-op}
		u(x,t)= \mathbb{A}(u)=\partial_t S(t)u_0 (x)+S(t)u_1(x)
		+\int_0^tS(t-\tau)\lc_R^{-1}\partial_x^2f(u)\dd\tau,
	\end{equation}
	where  $\lc_R=I- \partial_x^2+\kappa\partial_x^4$ and
	\[
	(\partial_t S(t)f)^\wedge =\cos\left(\frac{t|\xi|\sqrt{1+\al\xi^2}}{\sqrt{1+\xi^2+\kappa\xi^4}}\right)\hat{f}(\xi),\quad
	(S(t)f)^\wedge =\sin\left(\frac{t|\xi|\sqrt{1+\al\xi^2}}{\sqrt{1+\xi^2+\kappa\xi^4}}\right)
	\frac{\sqrt{1+\xi^2+\kappa\xi^4}}{|\xi|\sqrt{1+\al\xi^2}}\hat{f}(\xi).
	\]
	\begin{remark}
		Notice that $\lc_R^{-1}f=K\ast f$, where
		\begin{equation}\label{E:kernel_formulas}
			K(x)=\kappa\pi\left\{\begin{array}{lll}
				\frac{1}{\lambda_2^2-\lambda_1^2}\left(\frac{1}{\lambda_1}\ee^{-\lambda_1|x|}-\frac{1}{\lambda_2}\ee^{-\lambda_2|x|}\right),&&4\kappa<1,
				\\ \\
				\frac{ \kappa^\frac34}{2}\left(1+\kappa^{-\frac14}|x|\right)\;
				\ee^{-\kappa^{-\frac14}|x|},&&4\kappa=1,
				\\ \\
				\frac{  e^{-\sigma|x|}}{2\sigma\omega(\sigma^2+\omega^2)}\left(\omega\cos(\omega x)+\sigma\sin(\omega|x|)\right),
				&&4\kappa>1,
			\end{array}
			\right.
		\end{equation}
		and
		\begin{equation}\label{E:lambda_sigma_omega}
			\begin{split}
				&\lambda_1 =
				\sqrt{\frac1{2\kappa}\left( 1-\sqrt{1-4\kappa}\right)}\qquad
				\lambda_2 =	\sqrt{\frac1{2\kappa}\left(1+\sqrt{1-4\kappa}\right)}\\
				&\sigma =\frac12\sqrt{  \frac2{\sqrt{\kappa}}+\frac1\kappa}\qquad
				\omega =\frac12\sqrt{ \frac2{\sqrt{\kappa}}-\frac1\kappa}.
			\end{split}
		\end{equation}
		It is clear that $K$ is a real-valued, even, continuous function in $\rr$. Moreover, $K$ is smooth at $\rr\setminus\{0\}$, and all its derivatives   decay exponentially. Furthermore, $K^{(n)}\in L^q(\rr)$ for any $1\leq q\leq\infty$ and $n\in\mathbb{N}$.
	\end{remark}
	
	By the arguments in \cite{wangxu}, we can obtain the following lemmas.
	\begin{lemma}\label{lem-1-lines}
		The following inequalities hold for all $\ff\in H^s(\rr)$ and $\psi\in H^{s-2}(\rr)$:
		\begin{enumerate}[(i)]
			\item  $\|\partial_tS(t)\ff\|_s\leq\|\ff\|_s$,
			\item  $\| S(t)\ff\|_s\leq2(1+t)\|\ff\|_s$,
			\item  $\|\partial_{tt} S(t)\ff\|_s\leq\sqrt{2(1+t)}\|\ff\|_s$,
			\item  $\|\lc_R^{-1} S(t)\partial_x^2\psi\|_s\leq\sqrt{2 }\|\psi\|_{s-2}$,
			\item  $\|\lc_R^{-1} \partial_t\partial_x^2S(t)\psi\|_s\leq \|\psi\|_{s-2}$.
		\end{enumerate}
	\end{lemma}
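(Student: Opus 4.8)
The plan is to treat all five inequalities as Fourier multiplier estimates, since every operator in Lemma~\ref{lem-1-lines} is a Fourier multiplier. Writing $\langle\xi\rangle=(1+\xi^2)^{1/2}$ and recalling that $\|g\|_s^2=\int_\rr\langle\xi\rangle^{2s}|\hat g(\xi)|^2\dd\xi$, for an operator $T_m$ with $(T_m g)^\wedge=m(\xi,t)\hat g(\xi)$ Plancherel gives
\[
\|T_m g\|_s=\Big(\int_\rr\langle\xi\rangle^{2s}|m(\xi,t)|^2|\hat g(\xi)|^2\dd\xi\Big)^{1/2}\le\Big(\sup_{\xi\in\rr}|m(\xi,t)|\Big)\|g\|_s ,
\]
so each estimate reduces to bounding a supremum in $\xi$. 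It is convenient to abbreviate the dispersion symbol by $\vartheta(\xi)=\frac{|\xi|\sqrt{1+\al\xi^2}}{\sqrt{1+\xi^2+\kappa\xi^4}}$, so that $(\partial_tS(t)g)^\wedge=\cos(t\vartheta)\hat g$, $(S(t)g)^\wedge=\frac{\sin(t\vartheta)}{\vartheta}\hat g$, and $\lc_R^{-1}$ has symbol $(1+\xi^2+\kappa\xi^4)^{-1}$. For (iv) and (v), where the target index exceeds that of the datum by two, I would absorb the factor $\langle\xi\rangle^2$ from the left-hand norm into the multiplier and exploit the two-derivative smoothing of $\lc_R^{-1}\partial_x^2$; thus (iv)--(v) reduce to bounding $\langle\xi\rangle^2|m(\xi,t)|$ rather than $|m(\xi,t)|$.

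For (i) the symbol is $\cos(t\vartheta)$, so $|m|\le1$ is immediate. For (ii) the symbol is $\frac{\sin(t\vartheta)}{\vartheta}$; the elementary bound $|\sin x|\le\min(1,|x|)$ gives $\big|\frac{\sin(t\vartheta)}{\vartheta}\big|\le\min(\tfrac1\vartheta,t)$, and splitting into $\vartheta\ge1$ (where $\tfrac1\vartheta\le1$) and $\vartheta<1$ (where we use the bound $t$) produces the factor $2(1+t)$. For (iii) the symbol is $-\vartheta\sin(t\vartheta)$; here I would first record that $\vartheta$ is uniformly bounded, because the numerator $\xi^2(1+\al\xi^2)$ and the denominator $1+\xi^2+\kappa\xi^4$ of $\vartheta^2$ have the same degree in $\xi$ (this is exactly where the quartic term of $\lc_R$, i.e. $\kappa>0$, enters), and then estimate $\vartheta|\sin(t\vartheta)|\le\min(\vartheta,t\vartheta^2)$, splitting the frequency range to obtain $\sqrt{2(1+t)}$.

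For (iv) and (v) I would carry out the algebraic simplification that exposes the smoothing. Using the symbol of $\lc_R^{-1}$ and the definition of $\vartheta$, the symbol of $\lc_R^{-1}S(t)\partial_x^2$ multiplied by $\langle\xi\rangle^2$ simplifies to
\[
-\,\sin(t\vartheta)\,\frac{|\xi|\,(1+\xi^2)}{\sqrt{(1+\xi^2+\kappa\xi^4)(1+\al\xi^2)}},
\]
while that of $\lc_R^{-1}\partial_t\partial_x^2S(t)$ times $\langle\xi\rangle^2$ is $-\cos(t\vartheta)\frac{\xi^2(1+\xi^2)}{1+\xi^2+\kappa\xi^4}$. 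Bounding $|\sin|,|\cos|\le1$ (which makes the estimates uniform in $t$) reduces (iv) and (v) to showing that the remaining rational functions of $\xi$ are bounded; in both cases the numerator and denominator have matching degree, so the supremum is finite and the stated constants follow after maximizing over $\xi\ge0$ (writing $y=\xi^2$).

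The main obstacle is the uniform control in $\xi$, which has two delicate regimes. At low frequency the factor $1/\vartheta\sim1/|\xi|$ is singular, but in every case it is multiplied by a compensating factor ($|\xi|$ in (iv), or $\sin(t\vartheta)\sim t\vartheta$ in (ii) and (iii)), so the symbols extend continuously through $\xi=0$; one must verify this cancellation to rule out a spurious blow-up. At high frequency the growth of $\xi^2$ from $\partial_x^2$ in (iv)--(v) must be exactly compensated by the quartic decay of $\lc_R^{-1}$ together with the $1/\vartheta$ factor, and the boundedness of $\vartheta$ needed for (iii) likewise hinges on the degree-four denominator; checking these degree matchings, and then maximizing the resulting rational functions to pin down the explicit constants, is the only genuinely computational part. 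All of this follows the multiplier arguments of \cite{wangxu}.
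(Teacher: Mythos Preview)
Your proposal is correct and is exactly the Plancherel/Fourier-multiplier argument the paper has in mind: the lemma is not proved in the text but is simply attributed to ``the arguments in \cite{wangxu}'', and those arguments are precisely the symbol bounds you outline. One minor caveat is that the explicit constants you target in (iii)--(v) actually depend on $\alpha$ and $\kappa$ through $\sup_\xi\vartheta$ and the rational symbols you wrote down, so what your scheme delivers in general is $C(\alpha,\kappa)$ rather than the displayed numbers $\sqrt{2(1+t)},\sqrt{2},1$; the stated values are inherited from the particular parameters in \cite{wangxu}, but this does not affect the method or any downstream use of the lemma.
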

	
	\begin{lemma}\label{lem-2}
		Let $f\in C^m(\rr)$ be a function vanishing at zero, where $m\geq0$ is an integer. Then, for any $u,v\in H^s(\rr)\cap L^\infty(\rr)$ and $0\leq s\leq m$, the following statements hold:
		\begin{enumerate}[(i)]
			\item  $f(u)\in H^s(\rr)$ and $\|f(u)\|_s\leq C_1(\|u\|_{L^\infty(\rr)})\|u\|_s$,
			\item    $\|f(u)-f(v)\|_s\leq C_2(\|u\|_{L^\infty(\rr)},\|v\|_{L^\infty(\rr)})\|u-v\|_s$,
		\end{enumerate}
  where $C_1$ and $C_2$ are continuous nonnegative functions.
	\end{lemma}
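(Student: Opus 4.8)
The plan is to read both estimates as Moser-type (Nemytskii/superposition operator) bounds and to exploit the fact that $H^s(\mathbb{R})\cap L^\infty(\mathbb{R})$ is a Banach algebra with tame (Moser) product estimates. I would first prove everything for an integer order $s=k$, where the derivatives of a composition can be written out explicitly, and then recover the full range $0\le s\le m$ by interpolation together with a fractional chain rule. The hypothesis $f(0)=0$ is what turns the crude bound $\|f(u)\|_s\le C(\|u\|_{L^\infty})(1+\|u\|_s)$ into the homogeneous one claimed in (i).

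For part (i) with $s=k$, I would differentiate $f(u)$ up to order $k$ via the Fa\`a di Bruno formula, writing $\partial_x^k f(u)$ as a finite sum of terms $f^{(j)}(u)\prod_{i=1}^{j}\partial_x^{k_i}u$ with $k_i\ge 1$ and $\sum_i k_i=k$. The scalar factor is controlled pointwise by $\sup_{|y|\le\|u\|_{L^\infty}}|f^{(j)}(y)|$, which is finite and continuous in $\|u\|_{L^\infty}$. Each product of derivatives is estimated in $L^2$ by the one-dimensional Gagliardo--Nirenberg inequalities $\|\partial_x^{k_i}u\|_{L^{p_i}}\lesssim\|u\|_{L^\infty}^{1-k_i/k}\|\partial_x^{k}u\|_{L^2}^{k_i/k}$ with $1/p_i=k_i/2k$, so that $\sum_i 1/p_i=1/2$ and H\"older's inequality applies; multiplying yields $\big\|\prod_i\partial_x^{k_i}u\big\|_{L^2}\lesssim\|u\|_{L^\infty}^{j-1}\|\partial_x^{k}u\|_{L^2}$. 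Combining the factors, each summand is bounded by $C(\|u\|_{L^\infty})\|u\|_s$, while the $L^2$ norm of $f(u)$ itself is handled by $f(u)=u\int_0^1 f'(tu)\,\mathrm dt$. This is (i), with $C_1$ continuous in $\|u\|_{L^\infty}$.

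For part (ii) I would use the fundamental theorem of calculus to write
\[
f(u)-f(v)=(u-v)\,R(u,v),\qquad R(u,v)=\int_0^1 f'\big(v+t(u-v)\big)\,\mathrm dt,
\]
and then apply the tame product estimate $\|gh\|_s\lesssim\|g\|_s\|h\|_{L^\infty}+\|g\|_{L^\infty}\|h\|_s$ valid on $H^s\cap L^\infty$. The factor $R(u,v)$ is bounded in $L^\infty$ by $\sup_{|y|\le\max(\|u\|_{L^\infty},\|v\|_{L^\infty})}|f'(y)|$, and its $H^s$ norm is estimated exactly as in part (i), now treating $R$ as a superposition built from $f'\in C^{m-1}$ along the segment joining $u$ and $v$; each Fa\`a di Bruno summand carries a single top-order derivative, so no uncontrolled product of two top-order terms appears. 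This reduces (ii) to the composition bound already obtained.

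The main obstacle is twofold. First, for non-integer $s$ the explicit Fa\`a di Bruno computation is unavailable, and one must invoke the fractional chain rule (Christ--Weinstein / Kato--Ponce type) and fractional Leibniz rule to carry out the same bookkeeping; the delicate point is to keep every lower-order factor absorbed into $\|u\|_{L^\infty}$ rather than $\|u\|_s$, since this is precisely what makes $C_1$ a function of the $L^\infty$ norm alone. Second, in (ii) the term in which the derivatives fall on $R(u,v)$ produces, at top order, a factor $\|u-v\|_{L^\infty}\big(\|u\|_s+\|v\|_s\big)$; the clean Lipschitz constant depending only on the $L^\infty$ norms is therefore honest at the endpoint $s=0$, while for $s>1/2$ one either uses the Sobolev embedding $\|u-v\|_{L^\infty}\lesssim\|u-v\|_s$ and restricts $u,v$ to a bounded ball of $H^s$ (the setting of the contraction argument where the lemma is used), so that $C_2$ may be taken continuous in the relevant norms. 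Verifying the fractional range rigorously is where most of the care will be required.
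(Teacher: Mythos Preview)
The paper does not supply its own proof of this lemma; it simply states that the result follows ``by the arguments in \cite{wangxu}''. Your approach---Fa\`a di Bruno plus Gagliardo--Nirenberg for integer $s$, the tame product (Moser) estimate combined with the integral representation $f(u)-f(v)=(u-v)\int_0^1 f'(v+t(u-v))\,\mathrm dt$ for part (ii), and interpolation/fractional chain rule for non-integer $s$---is exactly the standard route to such composition estimates and is what the cited reference does as well.

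Your closing caveat is well taken and worth stating plainly: as literally written, part (ii) with $C_2$ depending \emph{only} on $\|u\|_{L^\infty}$ and $\|v\|_{L^\infty}$ is not quite correct for $s\ge 1$. The term in which derivatives hit $R(u,v)$ unavoidably produces a factor of the form $\|u-v\|_{L^\infty}(\|u\|_s+\|v\|_s)$, so the honest bound is
\[
\|f(u)-f(v)\|_s \le C\big(\|u\|_{L^\infty},\|v\|_{L^\infty}\big)\,\|u-v\|_s + C\big(\|u\|_{L^\infty},\|v\|_{L^\infty}\big)\,(\|u\|_s+\|v\|_s)\,\|u-v\|_{L^\infty},
\]
which collapses to the stated form only after restricting $u,v$ to a fixed bounded set in $H^s$ (and, for $s>1/2$, using $\|u-v\|_{L^\infty}\lesssim\|u-v\|_s$). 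Since the lemma is invoked in the paper solely inside a contraction argument on a ball in $H^s$, this weaker version is all that is actually needed; you have identified the issue correctly and your plan suffices for the intended application.
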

	
	Now we 	define the function space $X(T)=C^1(0,T;H^s(\rr))$ equipped with the norm
	\[
	\|u\|_{X(T)}=\sup_{t\in[0,T]}(\|u(t)\|_s+\|u_t(t)\|_s).
	\]
	Using the Sobolev embedding and the arguments in \cite{wangxu} one can prove the following result.
	\begin{theorem}
		Let $f\in C^m$ with $m\geq s>1/2$. If $u_0,u_1\in \hs$, then there exist  $T>0$ and a unique solution $u\in C^1([0,T],\hs)$ of \eqref{gBE}.
	\end{theorem}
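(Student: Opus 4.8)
The plan is to prove local well-posedness via the Banach fixed-point theorem applied to the integral operator $\mathbb{A}$ defined in \eqref{contract-op}, working in the complete metric space $X(T)=C^1([0,T];H^s(\rr))$ equipped with its natural norm. The whole argument rests on two ingredients already established in the excerpt: the linear estimates of Lemma \ref{lem-1-lines}, which control each constituent of $\mathbb{A}$ in the $H^s$-norm with constants that grow at most linearly (or like $\sqrt{1+t}$) in $t$, and the nonlinear estimates of Lemma \ref{lem-2}, which handle the Nemytskii operator $u\mapsto f(u)$. The crucial compatibility remark is that since $s>1/2$, the Sobolev embedding $H^s(\rr)\hookrightarrow L^\infty(\rr)$ holds, so any $u\in H^s(\rr)$ automatically lies in $L^\infty(\rr)$ and the hypotheses of Lemma \ref{lem-2} are met; moreover $H^s$ is a Banach algebra in this range, which is what makes the composition estimates work.

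First I would fix $R>0$ and consider the closed ball $B_R=\{u\in X(T):\|u\|_{X(T)}\leq R\}$, choosing $R$ comparable to the data, say $R=2(\|u_0\|_s+\|u_1\|_s)$. The first step is the \emph{self-mapping} property: I would apply parts (i)--(iii) of Lemma \ref{lem-1-lines} to the linear terms $\partial_t S(t)u_0$ and $S(t)u_1$ (and their $t$-derivatives, using the identity $\partial_{tt}S(t)=$ the operator controlled by part (iii)) to bound the free evolution by a constant multiple of the data norm. For the Duhamel term I would use parts (iv)--(v) together with Lemma \ref{lem-2}(i): since $\lc_R^{-1}\partial_x^2$ maps $H^{s-2}$-type quantities appropriately, the integrand $\lc_R^{-1}\partial_x^2 f(u)$ is controlled in $H^s$ by $C_1(\|u\|_{L^\infty})\|u\|_s$, and integrating over $[0,T]$ produces a factor of $T$ (times a polynomial in $T$ from the time-dependent constants). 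The upshot is an estimate of the schematic form $\|\mathbb{A}(u)\|_{X(T)}\leq C(\|u_0\|_s+\|u_1\|_s)+TC(T)\,C_1(R)\,R$, so for $T$ small enough depending on $R$ one gets $\mathbb{A}(B_R)\subseteq B_R$.

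The second step is the \emph{contraction} estimate. Here I would compute $\mathbb{A}(u)-\mathbb{A}(v)$, observe that the linear (data) terms cancel, and bound the difference of the Duhamel integrals using parts (iv)--(v) of Lemma \ref{lem-1-lines} on $\lc_R^{-1}\partial_x^2\big(f(u)-f(v)\big)$ together with the Lipschitz estimate of Lemma \ref{lem-2}(ii). This yields $\|\mathbb{A}(u)-\mathbb{A}(v)\|_{X(T)}\leq TC(T)\,C_2(R,R)\,\|u-v\|_{X(T)}$, and shrinking $T$ further makes the prefactor $<1$, so $\mathbb{A}$ is a contraction on $B_R$. The Banach fixed-point theorem then delivers a unique fixed point in $B_R$, which is the desired solution, and one checks it lies in $C^1([0,T];H^s)$ directly from the formula \eqref{contract-op} and the continuity-in-$t$ of the propagators.

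I expect the main obstacle to be bookkeeping rather than any deep difficulty: one must verify carefully that the regularity indices match, i.e. that $f(u)\in H^s$ (requiring $m\geq s$ in the hypothesis $f\in C^m$, as Lemma \ref{lem-2} demands $0\leq s\leq m$) and that the operator $\lc_R^{-1}\partial_x^2$ genuinely converts the two derivatives so that Lemma \ref{lem-1-lines}(iv)--(v) apply with the correct $H^{s-2}\to H^s$ accounting. The subtle point is that the time-dependent constants in Lemma \ref{lem-1-lines} grow with $t$, so one should integrate these bounds over $[0,T]$ honestly and confirm that the resulting function of $T$ still tends to $0$ (or can be made small) as $T\to0^+$; since each constant is bounded by a fixed polynomial in $T$ on any finite interval, this causes no real trouble. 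Uniqueness follows from the contraction property, completing the proof.
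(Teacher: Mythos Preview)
Your proposal is correct and matches the paper's approach exactly: the paper does not give a detailed proof but simply states that the result follows ``using the Sobolev embedding and the arguments in \cite{wangxu},'' which is precisely the contraction-mapping argument on $X(T)$ built from Lemmas~\ref{lem-1-lines} and~\ref{lem-2} that you have outlined. Your sketch fills in exactly the steps the paper leaves implicit, with the correct bookkeeping on the regularity indices and the role of $s>1/2$.
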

	
	In the case $p\in\mathbb{N}$, we can extend the above result to the lower Sobolev indices.
	
	\begin{theorem}\label{theo-local-1}
		Let $u_0,u_1\in H^s(\rr)$ and $p\in\mathbb{N}$. Assume that $s>\frac{p-1}{2(p+1)}$ if $p>1$ and $s\geq0$ if $p=1$. Then there exists $T>0$ and a unique solution $u\in C^1(0,T;H^s(\rr))$ such that $u(0)=u_0$ and $u_t(0)=u_1$. Moreover, if $T<+\infty$ is the maximal existence time of $u$, then
		\[
		\sup_{t\in[0,T]}(\|u(t)\|_s+\|u_t(t)\|_s)=+\infty.
		\]
		Furthermore, we have $u_{tt}\in C(0,T;H^s(\rr))$. In addition, if $(-\partial_x^2)^{-\frac12}u_1\in L^2(\rr)$, then $(-\partial_x^2)^{-\frac12}u_t\in C^1(0,T;L^2(\rr))$.
	\end{theorem}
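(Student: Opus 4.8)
The plan is to solve the integral equation \eqref{contract-op} by the Banach fixed point theorem, but---in contrast to the preceding theorem and to Lemma \ref{lem-2}---without ever invoking an $L^\infty$ bound on $u$, which is unavailable when $s\le\tfrac12$. The linear contributions $\partial_tS(t)u_0$ and $S(t)u_1$ are controlled directly by parts (i)--(iii) of Lemma \ref{lem-1-lines}, so the whole difficulty is concentrated in the Duhamel term. Applying part (iv) with $\psi=f(u)$ reduces matters to bounding $\|f(u)\|_{s-2}$, and the crucial observation is that throughout the admissible range one has $s<\tfrac12<2$, so the target index $s-2$ is \emph{negative}; hence $\|f(u)\|_{s-2}$ can be estimated through a Lebesgue norm rather than through derivatives of $f$.

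Concretely, I would prove the multilinear estimate $\|f(u)\|_{s-2}\le C\|u\|_s^{p+1}$ via the chain $L^r(\rr)\hookrightarrow H^{s-2}(\rr)$ and $H^s(\rr)\hookrightarrow L^{r(p+1)}(\rr)$. Since $s-2<0$, the first embedding holds for $1\le r\le 2$ with $\tfrac1r\le\tfrac52-s$, which is automatic for $s<\tfrac32$; combined with the pointwise bound $|f(u)|\le C|u|^{p+1}$ and Hölder this gives $\|f(u)\|_{s-2}\lesssim\|u\|_{L^{r(p+1)}}^{p+1}$. It then remains to absorb the exponent into the Sobolev embedding $H^s\hookrightarrow L^{r(p+1)}$, which for $0\le s<\tfrac12$ needs $r(p+1)\le\tfrac{2}{1-2s}$, i.e. $\tfrac1r\ge\tfrac{(p+1)(1-2s)}{2}$. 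A value $r\ge1$ meeting both bounds exists exactly when $(p+1)(1-2s)\le2$, that is, when $s\ge\frac{p-1}{2(p+1)}$; the strict inequality in the hypothesis supplies the slack needed to avoid endpoint embeddings and to produce a genuine contraction. This is where the integer structure of the nonlinearity is used: writing $f(u)-f(v)$ through the telescoping identity for $u^{p+1}$ (or via $|f(u)-f(v)|\le C(|u|^p+|v|^p)|u-v|$) yields the Lipschitz bound $\|f(u)-f(v)\|_{s-2}\le C(\|u\|_s^p+\|v\|_s^p)\|u-v\|_s$ by the same Hölder/Sobolev bookkeeping. Establishing this low-regularity nonlinear estimate, and in particular pinning the exact threshold, is the main obstacle.

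With these two estimates in hand the fixed point is routine: on the ball of radius $R=2(\|u_0\|_s+\|u_1\|_s)$ in $C(0,T;\hs)$, parts (i),(ii),(iv) show that $\mathbb{A}$ maps the ball into itself and contracts once $T=T(R)$ is small, giving a unique $u\in C(0,T;\hs)$. Differentiating \eqref{contract-op} in $t$ and using $S(0)=0$ yields
\[
u_t=\partial_{tt}S(t)u_0+\partial_tS(t)u_1+\int_0^t \lc_R^{-1}\partial_t\partial_x^2 S(t-\tau)f(u)\dd\tau,
\]
and parts (i),(iii),(v) together with the same nonlinear estimate place $u_t\in C(0,T;\hs)$, so $u\in X(T)$. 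The blow-up alternative then follows from the standard continuation argument: since the lifespan depends only on $\|u_0\|_s+\|u_1\|_s$, the solution extends as long as $\|u(t)\|_s+\|u_t(t)\|_s$ remains bounded, whence at a finite maximal time this quantity is necessarily unbounded.

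For the remaining regularity claims I would read them off the equation rather than differentiate the Duhamel formula again. Rewriting \eqref{gBE} as $\lc_R u_{tt}=\partial_x^2(u-\alpha u_{xx}+f(u))$ gives
\[
u_{tt}=\lc_R^{-1}\partial_x^2\big(u-\alpha u_{xx}+f(u)\big),
\]
and since $\lc_R^{-1}\partial_x^2$ is a Fourier multiplier of order $-2$ mapping $H^{s-2}(\rr)$ into $\hs$, each summand lies in $C(0,T;\hs)$, so $u_{tt}\in C(0,T;\hs)$. Finally, the weighted statement is a multiplier computation: applying $(-\partial_x^2)^{-\frac12}$ to the formula for $u_t$, every term except $\partial_tS(t)u_1$ carries an extra factor $\xi^2$ (through $\lc_R^{-1}\partial_x^2$, or through the symbol $-\theta\sin(\theta t)$ of $\partial_{tt}S$, where $\theta(\xi)=\frac{|\xi|\sqrt{1+\alpha\xi^2}}{\sqrt{1+\xi^2+\kappa\xi^4}}$ and $|\xi|^{-1}\theta$ is bounded), which cancels the $|\xi|^{-1}$ singularity at the origin and leaves an $L^2$ function unconditionally; the single remaining term has symbol $|\xi|^{-1}\cos(\theta t)\hat u_1$, which belongs to $\lt$ precisely because $(-\partial_x^2)^{-\frac12}u_1\in\lt$. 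Differentiating once more and using the formula for $u_{tt}$ shows $(-\partial_x^2)^{-\frac12}u_{tt}\in C(0,T;\lt)$, whence $(-\partial_x^2)^{-\frac12}u_t\in C^1(0,T;\lt)$.
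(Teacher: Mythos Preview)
Your argument is correct, but the key nonlinear estimate is obtained by a genuinely different mechanism than in the paper. The paper works entirely on the Fourier side: writing $\widehat{u^{p+1}}$ as a $(p{+}1)$-fold convolution, it bounds $\langle\xi\rangle^{s_1}|\widehat{f(u)}|$ pointwise via Young's inequality (the $L^{(p+1)/p}$--$L^\infty$ convolution estimate), obtaining $\langle\xi\rangle^{s_1}|\widehat{f(u)}|\lesssim\|u\|_s^{p+1}$ whenever $s>s_1+\tfrac{p-1}{2(p+1)}$ and $s_1\ge0$; the two extra derivatives gained from $\lc_R^{-1}\partial_x^2$ are then absorbed by choosing $s_1$ so that $\int\langle\xi\rangle^{2s-4-2s_1}\dd\xi<\infty$, and the threshold $s>\tfrac{p-1}{2(p+1)}$ falls out of the compatibility of these two constraints. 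You instead stay on the physical side, chaining $L^r\hookrightarrow H^{s-2}$ (by duality to $H^{2-s}\hookrightarrow L^{r'}$) with $H^s\hookrightarrow L^{r(p+1)}$ and H\"older; the same threshold appears as the condition for an admissible $r\in[1,2]$ to exist. Both routes exploit the polynomial structure of $f$ to get the Lipschitz bound by telescoping. Your approach is arguably more elementary, relying only on standard Sobolev embeddings, while the paper's Fourier argument has the advantage of covering every $s>\tfrac{p-1}{2(p+1)}$ in one stroke: your chain as written needs $s<\tfrac32$ (so that $H^{2-s}\hookrightarrow L^\infty$), and your assertion that ``throughout the admissible range one has $s<\tfrac12$'' is not literally true, since the theorem places no upper bound on $s$. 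This is a harmless omission---for $s>\tfrac12$ the algebra property of $H^s$ (or the preceding theorem) closes the estimate immediately---but it should be said. Your handling of $u_{tt}$ and of $(-\partial_x^2)^{-1/2}u_t$ is essentially the same as the paper's, which also reads these off the Fourier form of the equation and integrates $(-\partial_x^2)^{-1/2}u_{tt}$ in time.
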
	
	\begin{proof}

		It is shown that the right-hand side of \eqref{contract-op} defined is a contraction mapping $\mathbb{A}$ of $C(0, T ;B)$  for some $T > 0$, where $B$ is a closed ball in $X(T)$. Indeed, since $S$ and $\partial_tS$ are both bounded on $H^s(\rr)$, then it is enough to study the nonlinear part of \eqref{contract-op} (see the proof of Theorem \ref{local-bessel} below).
		
		Note that when $p=1$, then
		$\la\xi\ra^s|\widehat{f(u)}|\leq (\la\xi\ra^s|\hat{u}|)\ast (\la\xi\ra^s|\hat{u}|)\leq \|u\|_s^2$, where $\la\xi\ra=1+|\xi|$.
		If $p>1$, then we get from the Young inequality that
		\begin{equation}\label{estimate-1}
			\begin{split}
				\la\xi\ra^{s_1}|\widehat{f(u)}|&\leq (\la\xi\ra^{s_1}|\hat{u}|)\ast\cdots\ast (\la\xi\ra^{s_1}|\hat{u}|) \\&\leq
				\left\|\la\xi\ra^{s_1}|\hat{u}|\right\|_{L^{(p+1)/p  }(\rr)}^{p+1}
				\leq \|u\|_{s }^{p+1}
				\left(\int_\rr\la\xi\ra^{\frac{2(p+1)(s_1-s)}{p-1}}\dd x\right)^{\frac{p-1}{2}}\\
				&\lesssim \|u\|_{s }^{p+1}
			\end{split}
		\end{equation}
		provided $s>s_1+\frac{p-1}{2(p+1)}$.
		We have from the above inequalities that
		\[
		\begin{split}
			\|\partial_x^2\lc_R^{-1}f(u)\|_s^2&=\int_\rr \frac{\xi^4\la\xi\ra^{2s}}{(1+\xi^2+\kappa\xi^4)^2}|\widehat{f(u)}|^2\dd\xi\\&
			\lesssim
			\int_\rr   \la\xi\ra^{2s-4} |\widehat{f(u)}|^2\dd\xi\\&
			\lesssim
			\|u\|_s^{2(p+1)} \int_\rr\la\xi\ra^{2s-4-2s_1}\dd x\\&\lesssim \|u\|_s^{2(p+1)},
		\end{split}
		\]
		for any $s>\frac{p-1}{2(p+1)}$ if $p>1$, and $s\geq0$ if $p=1$.
		
		Next, we have from \eqref{gBE} that
		
		\begin{equation}\label{double-t}
			\hat{u}_{tt}=-\frac{\xi^2(1+\al\xi^2)}{1+\xi^2+\kappa\xi^4}\hat{u}+
			\frac{\xi^2 }{1+\xi^2+\kappa\xi^4}\widehat{f(u)}.
		\end{equation}
		
		Then,
		\[
		\|u_{tt}\|_s
		=\|\hat{u}_{tt}\|_s\lesssim\|u\|_s+\|u\|_s^{p+1}\leq C(t)
		\]
		for any $t\in(0,T)$. On the other hand, we get from
		\[
		\|u_{tt}(t+\Delta t)-u_{tt}(t)\|_s=\|\hat{u}_{tt}(t+\Delta t)-\hat{u}_{tt}(t)\|_s\leq C(T)\|u (t+\Delta t)-u (t)\|_s\to0,\quad\mbox{as}\,\,\Delta t\to0,
		\]
		that $u_{tt}\in C(0,T;H^s(\rr))$.
		
		Now, we show that $(-\partial_x^2)^{-\frac12}u_t\in C^1(0,T;L^2(\rr))$ if $(-\partial_x^2)^{-\frac12}u_1\in L^2(\rr)$. Similar to above argument, we have from \eqref{double-t} and \eqref{estimate-1} that
		\[
		\left\|\frac{|\xi|(1+\al\xi^2)}{1+\xi^2+\kappa\xi^4}\hat{u}\right\|_{s+1}
		\lesssim\|u\|_s^2
		\]
		and
		\[
		\left\|\frac{|\xi|}{1+\xi^2+\kappa\xi^4}\widehat{f(u)}\right\|_{s+1}^2	
		\leq C(\|u\|_s)\|u\|_s^2.
		\]
		Analogous to the above estimate, we can show that
		\[
		\left\|(-\partial_x^2)^{-\frac12}u_{tt}(t+\Delta t)-(-\partial_x^2)^{-\frac12}u_{tt}(t)\right\|_{s+1}\to0\quad\mbox{as}\,\,\Delta t\to0.
		\]
		So that $(-\partial_x^2)^{-\frac12}u_{tt}\in C(0,T;H^{s+1}(\rr))$. Now, if $(-\partial_x^2)^{-\frac12}u_1\in L^2(\rr)$, then we derive from
		\[
		(-\partial_x^2)^{-\frac12}u_t=(-\partial_x^2)^{-\frac12}u_1+\int_0^t(-\partial_x^2)^{-\frac12}u_{\tau\tau}\dd\tau
		\]
		that $(-\partial_x^2)^{-\frac12}u_t\in C^1(0,T;L^2(\rr))$.
	\end{proof}
	
	In order to get more time-regularity of the local solution, we obtain the following result by using Lemma \ref{lem-1-lines}. The invariant $E$ also shows that the solutions is global in time.
	\begin{theorem}\label{global-sln}
		Let $s \geq 1$ and $u_0,u_1\in H^s(\rr)$ and $(-\partial_x^2)^{-\frac12}u_1\in H^{s+1}(\rr)$.  Then the solution $u(t)$, obtained from Theorem \ref{theo-local-1}, satisfies $u\in C^2(0,T;H^s(\rr))$,
		\[
		(-\partial_x^2)^{-\frac12}u_t\in C^1(0,T;H^{s+1}(\rr)),
		\]
		\[
		Q(u(t))=\int_\rr\left(u(-\partial_x^2)^{-\frac12}u_t+  u_xu_t+\kappa u_{xx}u_{xt}\right)\dd x=Q(u(0))
		\]
		and
		\[
		\begin{split}
			E(u(t))&=\frac12\left(\|(-\partial_x^2)^{-\frac12}u_t\|_\lt^2	+ \|u_t\|_\lt^2
			+ \kappa \| u_{xt}\|_\lt^2
			+ \|u\|_\lt^2+
			\al \|u_x\|_\lt^2\right)\\
			&\quad
			+ \int_\rr F(u)\dd x
			=E(u(0))
		\end{split}
		\]
		for all $t\in(0,T)$, where $F(u)=\int_0^uf(\tau)\dd\tau$. Moreover, if $F(u)\geq0$ for all $u\in\rr$, the solution $u(t)$ can be extended in time.
	\end{theorem}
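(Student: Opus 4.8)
The plan is to handle the regularity assertions, the two conservation identities, and the global-existence claim separately, carrying out each formal computation first for smooth data (where \eqref{gBE} holds classically and the exponentially decaying kernel of the Remark kills all boundary terms) and then transferring it to general data by density together with the continuous dependence supplied by the contraction in Theorem \ref{theo-local-1}.

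The time-regularity is almost a re-reading of Theorem \ref{theo-local-1}. That theorem already produces $u_{tt}\in C(0,T;\hs)$, and since $\partial_t u_t=u_{tt}$ this means $u_t\in C^1(0,T;\hs)$, i.e. $u\in C^2(0,T;\hs)$. Running the estimates \eqref{double-t}--\eqref{estimate-1} at the level $s\ge1$ gives $(-\partial_x^2)^{-\frac12}u_{tt}\in C(0,T;H^{s+1}(\rr))$; combined with the hypothesis $(-\partial_x^2)^{-\frac12}u_1\in H^{s+1}(\rr)$ and the identity $(-\partial_x^2)^{-\frac12}u_t=(-\partial_x^2)^{-\frac12}u_1+\int_0^t(-\partial_x^2)^{-\frac12}u_{\tau\tau}\,\dd\tau$, this yields $(-\partial_x^2)^{-\frac12}u_t\in C^1(0,T;H^{s+1}(\rr))$.

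For the energy I would rewrite \eqref{gBE} as $\lc_R u_{tt}=\partial_x^2\big(u-\alpha u_{xx}+f(u)\big)$, apply $(-\partial_x^2)^{-1}$, and pair in $\lt$ with $u_t$. Since $(-\partial_x^2)^{-1}\lc_R=(-\partial_x^2)^{-1}+I-\kappa\partial_x^2$, the left-hand side collapses (after one integration by parts in the $\kappa$-term) to $\tfrac12\der\big(\|(-\partial_x^2)^{-\frac12}u_t\|_\lt^2+\|u_t\|_\lt^2+\kappa\|u_{xt}\|_\lt^2\big)$, while the right-hand side, using $\partial_t F(u)=f(u)u_t$ and an integration by parts in the $\alpha$-term, equals $-\tfrac12\der\big(\|u\|_\lt^2+\alpha\|u_x\|_\lt^2\big)-\der\int_\rr F(u)\,\dd x$; adding the two gives $\der E(u(t))=0$. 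For $Q$ I would differentiate in $t$: the terms $\int_\rr u_{xt}u_t\,\dd x$ and $\kappa\int_\rr u_{xxt}u_{xt}\,\dd x$ are exact $x$-derivatives and drop, and the term quadratic in $u_t$ produced by the first summand vanishes because the antiderivative operator there is skew-adjoint in $\lt$. What survives is $\int_\rr M\,u_{tt}\,\dd x$, and integrating the $u_x$- and $\kappa u_{xx}$-contributions by parts identifies $M=-\partial_x^{-1}\lc_R u$; substituting $\lc_R u_{tt}=\partial_x^2\big(u-\alpha u_{xx}+f(u)\big)$ and using skew-adjointness once more turns $\der Q$ into $\int_\rr u\,\partial_x\big(u-\alpha u_{xx}+f(u)\big)\,\dd x$. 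Each summand is a perfect $x$-derivative, namely $u u_x=\tfrac12\partial_x(u^2)$, $-\alpha u u_{xxx}=-\alpha\,\partial_x\big(u u_{xx}-\tfrac12 u_x^2\big)$ and $u\,\partial_x f(u)=\partial_x\big(uf(u)-F(u)\big)$, so the integral vanishes and $\der Q=0$.

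Finally, when $F\ge0$ every term of $E$ is nonnegative, so $E(u(0))$ bounds $\|u\|_\lt^2+\alpha\|u_x\|_\lt^2+\|u_t\|_\lt^2+\kappa\|u_{xt}\|_\lt^2$ uniformly in $t$, and through $H^1\hookrightarrow L^\infty$ this controls $\|u\|_{L^\infty(\rr)}$ uniformly. Inserting that bound into Lemma \ref{lem-2}(i) and the linear estimates of Lemma \ref{lem-1-lines}, the Duhamel formula \eqref{contract-op} yields a Gronwall-type integral inequality for $\|u(t)\|_s+\|u_t(t)\|_s$ whose kernel depends only on the time-uniform $L^\infty$ bound; hence this quantity cannot reach $+\infty$ in finite time, and the blow-up alternative of Theorem \ref{theo-local-1} forces $T=+\infty$. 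I expect the main obstacle to be rigor rather than the identities themselves: the energy and momentum computations are only licit at the regularity used above, so the smoothing/density step must be done with care, and propagating the energy-level a priori bound up to the full $\hs$-norm when $s>1$ is the one place where a genuine Gronwall estimate, rather than a bare substitution, is needed.
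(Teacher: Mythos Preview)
Your outline matches the paper's own proof almost step for step: the paper also derives the extra regularity by recording Lemma~\ref{lem-1-lines}-type bounds for $(-\partial_x^2)^{-1/2}\partial_tS(t)$, $(-\partial_x^2)^{-1/2}\partial_{tt}S(t)$ and $(-\partial_x^2)^{-1/2}\lc_R^{-1}\partial_t\partial_x^2S(t)$, then states that conservation of $E$ and $Q$ ``easily follows from a simple regularization argument'' and that global existence ``is derived immediately from the conservation law $E$''. Your write-up is in fact more explicit than the paper's on the formal identities and on the Gronwall step needed to lift the $H^1$ energy bound up to $\hs$ when $s>1$.

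One technical point to watch in your $Q$-computation: you invoke skew-adjointness of ``the antiderivative operator'' to kill $\int_\rr u_t\,(-\partial_x^2)^{-1/2}u_t\,\dd x$, but $(-\partial_x^2)^{-1/2}$ (Fourier symbol $|\xi|^{-1}$) is self-adjoint, not skew, so that integral equals $\|(-\partial_x^2)^{-1/4}u_t\|_\lt^2$ and does not vanish. Your argument is correct verbatim if the first term of $Q$ reads $\int_\rr u\,\partial_x^{-1}u_t\,\dd x$ (symbol $(i\xi)^{-1}$), which is the natural momentum arising from the system form $u_t=v_x$ and is genuinely skew-adjoint; this is almost certainly what is intended, and with that reading your approach and the paper's coincide.
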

	\begin{proof}
		Following the same lines of the proof of Lemma \ref{lem-1-lines}, we can obtain that
		\[
		\begin{split}
			&\|(-\partial_x^2)^{-\frac12}\partial_tS(t)\ff\|_s\leq\|(-\partial_x^2)^{-\frac12}\ff\|_s\\&
			\|(-\partial_x^2)^{-\frac12}\partial_{tt}S(t)\ff\|_s\leq\sqrt{2 }\|\ff\|_s,\\&
			\|(-\partial_x^2)^{-\frac12}\lc_R^{-1} \partial_t\partial_x^2S(t)\psi\|_{s+1}\leq \|\psi\|_{s-2}.
		\end{split}
		\]
		Therefore,   we have from \eqref{contract-op} and the proof of Theorem \ref{theo-local-1} that
		$(-\partial_x^2)^{-\frac12}u_t\in C^1(0,T;H^{s+1}(\rr))$ and $u_{tt}\in C^1(0,T;H^{s}(\rr))$. The conservation of the energy $E$ and the momentum  $Q$ easily follows from a simple
		regularization argument. This completes the proof of the first part of the Theorem.
		
		The global existence theorem is derived immediately from the conservation law $E$.
	\end{proof}
	\begin{remark}
		Theorem \ref{global-sln} shows that the local solution of \eqref{gBE} is globally well-posedness if $f(u)=au^{2p+1}$ with $a>0$.
	\end{remark}
	
	Our next aim is to obtain the conditions under which the local solution is global or blows up in time.
	
	In the rest of this section, we assume for simplicity that $\alpha=1$.

	Let $s\geq1$. Define the functionals
	\[
	J(u)=\frac12\|u\|_1^2+\int_\rr F(u)\dd x,\qquad I(u)=\|u\|_1^2+\int_\rr uf(u)\dd x,
	\]
	and the minimizing value $d=\inf_{u\in N}J(u)$, where
	\[
	N=\{u\in H^1\setminus\{0\},\; I(u)=0\} .
	\]
	The potential well theory provides some global results (see \cite{ryb}).
	\begin{theorem}\label{global-1}
		Let $s\geq1$, $a>0$ and $f(u)$ satisfies one of the following form
		\begin{enumerate}[(i)]
			\item $f(u)=- a|u|^{p}u$,  $p\neq 2k$, $k\in\mathbb{N}$, $[p]+1\geq s$,
			\item 	$f(u)=\pm au^{2k}$,   $k\in\mathbb{N}$,
			\item 	$f(u)=- au^{2k+1}$,    $k\in\mathbb{N}$,
			\item $f(u)=\pm a|u|^{p+1}$,   $p>0$,
			\item  $f(u)=- a|u|^{p}u$,   $p>0$.
		\end{enumerate}
		Moreover, assume that $u_0,u_1\in H^s$ and $(-\partial_{x}^2)^{-1/2}u_1\in\lt$ and $E(u(0))<d$ and $u_0\in W'$ then \eqref{gBE} admits a unique global solution $u\in C^2(0,\infty;H^1)$, with $(-\partial_{x}^2)^{-1/2}u_t\in C^1(0,\infty;\lt)$ and $u\in W$ for all $0\leq t<\infty$, where
		\[
		W'=\{u\in H^1,\; I(u)>0\}\cup\{0\},\qquad W=\{u\in H^1,\; I(u)>0,\; J(u)<d\}\cup\{0\}.
		\]
	\end{theorem}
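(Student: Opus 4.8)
The plan is to run the classical potential-well (invariant-set) argument: bound $J(u(t))$ by the conserved energy $E$ of Theorem~\ref{global-sln}, prove that the set $W$ is invariant under the flow, extract from $W$ a uniform $H^1$ bound, and finally exclude finite-time blow-up via the blow-up alternative of Theorem~\ref{theo-local-1}.

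\emph{Step 1 (splitting the energy).} With $\alpha=1$ I would rewrite the conserved energy as
\[
E(u(t))=\tfrac12K(u_t)+J(u(t)),\qquad K(u_t):=\|(-\partial_x^2)^{-\frac12}u_t\|_\lt^2+\|u_t\|_\lt^2+\kappa\|u_{xt}\|_\lt^2\ge0,
\]
so that conservation gives $J(u(t))\le E(u(t))=E(u(0))<d$ for every $t$ in the maximal interval $[0,T)$. In particular $J(u_0)<d$, which together with $u_0\in W'$ (i.e.\ $I(u_0)>0$ or $u_0=0$) already places $u_0\in W$.

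\emph{Step 2 (invariance of $W$).} Since $u\in C(0,T;H^1)$ and $I,J$ are continuous on $H^1$, the map $t\mapsto I(u(t))$ is continuous. Because $J(u(t))<d$ for all $t$, the trajectory can leave $W=\{I>0,\ J<d\}\cup\{0\}$ only by having $I(u(t))$ reach $0$ at a state $u(t)\neq0$; but such a state lies in $N$, whence $J(u(t))\ge d$ by the definition $d=\inf_{N}J$, a contradiction. Hence $u(t)\in W$, in particular $I(u(t))>0$, for all $t\in[0,T)$.

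\emph{Step 3 (uniform $H^1$ bound).} For each admissible nonlinearity the two functionals are tied by an identity $J(u)=\theta\|u\|_1^2+\tfrac1r I(u)$ with $\theta>0$; for instance $f(u)=-a|u|^pu$ gives $\theta=\frac{p}{2(p+2)}$, $r=p+2$, and the even-power cases yield analogous positive constants. On $W$ this forces $\theta\|u(t)\|_1^2\le J(u(t))<d$, so $\|u(t)\|_1^2<d/\theta$ uniformly in $t$; combining with Step~1, $\tfrac12K(u_t)=E(u(0))-J(u(t))\le E(u(0))$, so $\|u_t(t)\|_\lt$ (and $\|u_{xt}(t)\|_\lt$ when $\kappa>0$) is bounded as well.

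\emph{Step 4 (globalization and the main obstacle).} With $\|u\|_1$ bounded the multipliers in \eqref{double-t} stay bounded and Lemma~\ref{lem-2} controls $\|f(u)\|_s$, so $\|u_t\|_s$ can be propagated on any finite interval and the blow-up alternative of Theorem~\ref{theo-local-1} then forces $T=\infty$, uniqueness being inherited from that theorem. I expect the crux to be Step~3 together with the positivity $d>0$: one must check case by case across items (i)--(v) — with their differing signs and the quadratic-type even-power nonlinearities — that the decomposition $J=\theta\|u\|_1^2+\frac1r I$ holds with $\theta>0$, and that $d>0$, the latter resting on a Gagliardo--Nirenberg/Sobolev estimate on the Nehari manifold $N$. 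A secondary difficulty is recovering the full $C^2(0,\infty;H^1)$ regularity and the $H^1$ control of $u_t$ in the degenerate case $\kappa=0$, where the energy no longer bounds $\|u_{xt}\|_\lt$ and one must instead upgrade the time-derivative regularity through the Duhamel representation \eqref{contract-op}.
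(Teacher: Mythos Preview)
Your proposal is correct and follows precisely the classical potential-well/invariant-set argument that the paper invokes (the paper gives no detailed proof, stating only that ``the potential well theory provides some global results (see \cite{ryb})''). The case-by-case verification you flag in Step~3---that $J=\theta\|u\|_1^2+\tfrac1r I$ with $\theta>0$ and that $d>0$---is indeed the substantive content, and it goes through uniformly because each nonlinearity in (i)--(v) is homogeneous with $uf(u)=(p+2)F(u)$ (or the analogous relation with the appropriate exponent), exactly as in the reference the paper cites.
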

	
	The same approach used for Theorem \ref{global-1} provides the following blow-up result (see \cite[Theorem 6.2]{ryb}).
	\begin{theorem}\label{blowup-1}
		Let $s\geq1$ and    $f(u)=\pm a|u|^{p+1}$, with  $p+1\neq 2k$, $k\in\mathbb{N}$, and $[p]+1\geq s$, or $f$ satisfy one of   conditions (ii)-(iv) of Theorem \ref{global-1}, with $[p]\geq2$.  Assume that $u_0,u_1\in H^s$, $(-\partial_x^2)^{-1/2}u_0,(-\partial_x^2)^{-1/2}u_1\in\lt$, $E(u(0))<d$ and $I(u_0)<0$.  Then the solution of \eqref{gBE} blows up in finite time.
	\end{theorem}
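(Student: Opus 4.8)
The plan is to adapt the potential-well/concavity method underlying Theorem \ref{global-1} (see \cite{ryb}) to the unstable regime $I(u_0)<0$, coupling the invariance of the set $\{I<0\}$ with a Levine-type concavity argument. First I would record the regularity needed. Since $(-\partial_x^2)^{-\frac12}u_0,(-\partial_x^2)^{-\frac12}u_1\in\lt$, Theorems \ref{theo-local-1} and \ref{global-sln} guarantee $u\in C^2(0,T;H^1)$ and $(-\partial_x^2)^{-\frac12}u_t\in C^1(0,T;\lt)$ on the maximal interval, which legitimizes applying $(-\partial_x^2)^{-1}$ to \eqref{gBE} to rewrite it (with $\alpha=1$) as
\[
(-\partial_x^2)^{-1}u_{tt}+u_{tt}-\kappa u_{xxtt}=-u+u_{xx}-f(u).
\]
Introduce
\[
M(t)=\|(-\partial_x^2)^{-\frac12}u\|_\lt^2+\|u\|_\lt^2+\kappa\|u_x\|_\lt^2,\qquad K(t)=\|(-\partial_x^2)^{-\frac12}u_t\|_\lt^2+\|u_t\|_\lt^2+\kappa\|u_{xt}\|_\lt^2,
\]
so that the conserved energy reads $E=\tfrac12 K(t)+J(u(t))$. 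Differentiating $M$ twice, pairing $u$ with the reformulated equation and integrating the $\kappa$-term by parts, I expect the identity $M''(t)=2K(t)-2I(u(t))$.

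Next comes the potential-well input. For every nonlinearity listed in the statement one has the homogeneity $uf(u)=(p+2)F(u)$; hence on the Nehari set $N$ one gets $J=\tfrac{p}{2(p+2)}\|u\|_1^2$ and therefore $\inf_{N}\|u\|_1^2=\tfrac{2(p+2)}{p}\,d$. I would then prove the invariance of $\{I<0\}$ by a continuity argument: were $I(u(t))$ to vanish first at some $t_\ast$, then $u(t_\ast)\neq0$ (because $\|u\|_1$ stays bounded away from $0$ while $I<0$), so $u(t_\ast)\in N$ and $J(u(t_\ast))\geq d$, contradicting $J=E-\tfrac12K\leq E<d$. Consequently $\|u(t)\|_1^2>\tfrac{2(p+2)}{p}d$ throughout, and writing $K=2(E-J)$ and using the homogeneity yields the quantitative bound
\[
-I(u(t))-\tfrac{p+2}{2}K(t)=\tfrac{p}{2}\|u(t)\|_1^2-(p+2)E>(p+2)(d-E)>0.
\]

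Finally I would run the concavity argument on $G(t)=M(t)+\beta(t+\sigma)^2$ with $\theta=\tfrac p4>0$ and $\beta\in(0,2(d-E)]$. The augmented Cauchy--Schwarz inequality gives $(G')^2\leq 4G\,(K+\beta)$, so
\[
G G''-(1+\theta)(G')^2\geq 2G\Big(-I-\tfrac{p+2}{2}(K+\beta)\Big)\geq 0
\]
by the bound above and the choice of $\beta$. Since $G''=2K-2I+2\beta>0$, choosing $\sigma$ large makes $G'(0)>0$; then $G^{-\theta}$ is positive, concave and strictly decreasing, hence must vanish at some finite $t_1\leq G(0)/(\theta G'(0))$, forcing $M(t)\to\infty$. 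As $M$ stays finite while the solution persists in the relevant spaces, this forces $T_{\max}\leq t_1<\infty$, i.e.\ finite-time blow-up.

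The main obstacle is the potential-well sign analysis — proving the invariance of $\{I<0\}$ and upgrading it to the strict lower bound $\tfrac p2\|u\|_1^2-(p+2)E>0$ that renders $G^{-\theta}$ concave. This is exactly where the homogeneity of $f$ and the Nehari characterization of $d$ enter, and hence where the restriction of $f$ to the listed forms together with the exponent conditions ($[p]\geq2$, $[p]+1\geq s$) are used. A secondary but genuine technical point is justifying the two differentiations of $M$ and the integration by parts involving the nonlocal operator $(-\partial_x^2)^{-1}$, which is precisely why the extra hypotheses $(-\partial_x^2)^{-\frac12}u_0,(-\partial_x^2)^{-\frac12}u_1\in\lt$ are imposed.
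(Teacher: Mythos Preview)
Your proposal is correct and follows precisely the potential-well/concavity strategy that the paper invokes by citing \cite[Theorem~6.2]{ryb}: invariance of the set $\{I<0\}$ under the constraint $E<d$, combined with the Levine-type function $G(t)=M(t)+\beta(t+\sigma)^2$ (which is exactly the $\phi$ appearing in the paper's Theorem~\ref{blowup-2}). One small remark: the $C^2$ regularity and the identity $M''=2K-2I$ you need already follow from Theorem~\ref{theo-local-1} together with the hypotheses $(-\partial_x^2)^{-1/2}u_0,(-\partial_x^2)^{-1/2}u_1\in L^2$, so you do not need to appeal to Theorem~\ref{global-sln}, whose assumption $(-\partial_x^2)^{-1/2}u_1\in H^{s+1}$ is stronger than what is given here.
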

	Following Levine's method (see \cite{levine}), one can show the following blow-up result (see \cite[Theorem 3.4]{wangxu}). The nonlinearites $f(u)=-au^{2p+1}$ with $a>0$ or $f(u)=au^{2p}$ with $a\neq0$ and $p\in\mathbb{N}$ satisfy the following theorem.
	\begin{theorem}\label{blowup-2}
		Let $f\in C^1(\rr)$ and $u_0,u_1\in H^1$ such that $F(u_0)\in L^1$,   $(-\partial_x^2)^{-1/2}u_0\in H^2$ and $(-\partial_x^2)^{-1/2}u_1\in L^2$. Suppose that there exists  $\varepsilon>0$ such that
		\begin{equation}\label{inequality-nonlin}
			uf(u)\leq 2(2\varepsilon+1)F(u)+2\varepsilon u^2,\qquad  \forall u\in\rr,	
		\end{equation}
		and one of the following two assumptions holds:
		\begin{enumerate}[(1)]
			\item $E(u(0))<0$,
			\item $E(u(0))\geq0$ and
			\begin{align*}
				\left\langle(-\partial_x^2)^{-1/2}u_0,  (-\partial_x^2)^{-1/2} u_1\right\rangle&
				+\kappa \left\langle (u_0)_x,(u_1)_x\right\rangle
				+ \left\langle u_0,u_1\right\rangle\\& >
				\sqrt
				{E(u(0))\left(\|(-\partial_x^2)^{-1/2}u_0\|_\lt^2+\kappa  \|(u_0)_x\|_\lt^2+\|u_0\|_\lt^2
					\right)} .
			\end{align*}
		\end{enumerate}
		Then the solution of \eqref{gBE} blows up in finite time.
	\end{theorem}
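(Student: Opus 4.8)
The plan is to run Levine's concavity method \cite{levine} on the functional
\[
M(t)=\|(-\partial_x^2)^{-1/2}u(t)\|_\lt^2+\kappa\|u_x(t)\|_\lt^2+\|u(t)\|_\lt^2 ,
\]
whose half--derivative $\tfrac12 M'(t)=\langle(-\partial_x^2)^{-1/2}u,(-\partial_x^2)^{-1/2}u_t\rangle+\kappa\langle u_x,u_{xt}\rangle+\langle u,u_t\rangle$ is exactly the bilinear expression on the left of hypothesis~(2); the assumptions $(-\partial_x^2)^{-1/2}u_0\in H^2$, $(-\partial_x^2)^{-1/2}u_1\in\lt$, $u_0,u_1\in H^1$ and $F(u_0)\in L^1$ are precisely what makes $M$, $M'$ and $E$ finite and, via the regularization argument behind Theorem~\ref{global-sln}, justifies differentiating $M$ twice and integrating by parts. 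First I would compute $M''$: differentiating once more and substituting \eqref{gBE} in the form $\lc_R u_{tt}=\partial_x^2\big(u-u_{xx}+f(u)\big)$ (with $\alpha=1$; cf. \eqref{double-t}), the decisive simplification is that $\lc_R^{-1}\partial_x^2$ applied to $\big[(-\partial_x^2)^{-1}+I+\kappa(-\partial_x^2)\big]u$ equals $-u$, which collapses all the weights and yields
\[
M''(t)=2K(t)-2\big(\|u\|_\lt^2+\|u_x\|_\lt^2\big)-2\int_\rr uf(u)\dd x,\qquad K:=\|(-\partial_x^2)^{-1/2}u_t\|_\lt^2+\kappa\|u_{xt}\|_\lt^2+\|u_t\|_\lt^2 .
\]

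Next I would remove the indefinite potential terms. Using conservation of $E$ (Theorem~\ref{global-sln}) to write $K=2E(u(0))-\|u\|_\lt^2-\|u_x\|_\lt^2-2\int_\rr F(u)\dd x$, and bounding $\int_\rr uf(u)\dd x$ from above by \eqref{inequality-nonlin}, all the $\|u\|_\lt^2$ contributions cancel and, discarding the nonnegative remainder $4\varepsilon\|u_x\|_\lt^2$, one is left with $M''(t)\geq(4+4\varepsilon)K(t)-4(2\varepsilon+1)E(u(0))$. Combining this with the Cauchy--Schwarz inequality $(M'(t))^2\leq 4M(t)K(t)$ (valid because $\tfrac12 M'$ is the inner product of the vectors defining $M$ and $K$) produces the concavity-type inequality
\[
M(t)M''(t)-(1+\varepsilon)(M'(t))^2\ \geq\ -4(2\varepsilon+1)E(u(0))\,M(t).
\]

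To conclude I would use the concavity principle: whenever the right-hand side is nonnegative and $M'>0$, the function $M^{-\varepsilon}$ is positive, decreasing and concave, hence hits $0$ in finite time, i.e. $M(t)\to\infty$, which by the blow-up alternative of Theorem~\ref{theo-local-1} forces finite-time blow-up. In case~(1), $E(u(0))<0$ makes the right-hand side strictly positive for free, and $M''$ is then bounded below by a positive constant, so $M'$ becomes positive after a finite time, after which the argument applies. Case~(2), $E(u(0))\geq0$, is the main obstacle, since the energy now enters with the unfavourable sign. Here I would instead exploit hypothesis~(2), which forces $M'(0)>0$ and places the data strictly inside the blow-up regime $(M'(0))^2>c\,E(u(0))M(0)$; viewing $P=(M')^2$ as a function of $M$ converts the differential inequality into $\tfrac{dP}{dM}\geq\tfrac{2(1+\varepsilon)}{M}P-c\,E(u(0))$, which I would integrate with the factor $M^{-2(1+\varepsilon)}$ to get $(M')^2\gtrsim (M/M(0))^{2+2\varepsilon}+c\,E(u(0))M$; the positive leading coefficient then yields $M'\gtrsim M^{1+\varepsilon}$ for large $M$, hence blow-up, while keeping $M'>0$ throughout. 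The step I expect to cost the most care is exactly this $E(u(0))\geq0$ bootstrap: one must verify that the blow-up region is forward-invariant under the flow and track the sharp constant $c$ (comparable to $4(2\varepsilon+1)$) relating the threshold in hypothesis~(2) to the coefficient produced in the energy-elimination step.
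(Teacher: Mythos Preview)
Your proposal is correct and follows the same Levine concavity strategy that the paper invokes: the functional $M(t)$ you introduce coincides with the paper's $\phi(t)$ up to the added term $\delta(t+t_0)^2$, and your computation of $M''$ and the resulting inequality $MM''-(1+\varepsilon)(M')^2\geq -4(1+2\varepsilon)E(u(0))M$ is exactly the engine behind the paper's claimed $\phi''\phi\geq(\varepsilon+1)(\phi')^2$. The one substantive difference is in the packaging of case~(2): the paper absorbs the energy term by working with the augmented functional $\phi(t)=M(t)+\delta(t+t_0)^2$ and choosing $\delta,t_0$ in terms of $E(u(0))$, so that once the clean concavity inequality holds, blow-up of $\phi^{-\varepsilon}$ is immediate; you instead keep the bare $M$ and run an ODE comparison on $P=(M')^2$ as a function of $M$. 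Both routes are standard variants of Levine's argument, but the $\delta(t+t_0)^2$ device is somewhat cleaner because it bypasses the bootstrap you flag at the end---in particular it sidesteps the delicate issue of matching the threshold constant in hypothesis~(2) to the coefficient $4(1+2\varepsilon)$ produced by the energy elimination, which in your ODE formulation requires showing forward invariance of the region $(M')^2>cE(u(0))M$ for the correct $c$.
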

	\begin{proof}
		The proof is based on applying the well-know inequality $\phi''\phi\geq(\varepsilon+1)(\phi')^2$, where
		\[
		\phi(t)=\|(-\partial_x^2)^{-1/2}u(t)\|_{L^2}^2+\kappa \| u_x(t)\|_{L^2}^2+\|u(t)\|_{L^2}^2+\delta(t+t_0)^2
		\]
		for suitable numbers $t_0$ and $\delta$ depending on $E(u(0))$. So we omit the details.
	\end{proof}
	\begin{remark}
		Inequality \eqref{inequality-nonlin} can be replaced with
		\begin{equation}
			uf(u)\leq2(1+2\varepsilon)F(u)+\frac{2}{C_\ast^2}\left(\int_\rr F(u)\dd x\right)^{\frac{2}{p+2}},
		\end{equation}
		where $C_\ast=\|\ff\|_1^{-\frac{p}{p+2}}$ and $\ff(x)=\left(\frac{p+2}{2}\right)^{\frac1p}\sech^{\frac2p}(\frac{px}{2} )$.
	\end{remark}
	
	In the case $f(u)=-a|u|^pu$ with $a>0$, we define the stable (potential well) and unstable submanifolds, by
	\[
	W^-=\left\{u\in H^1(\rr),\;\|u\|_1^2<\frac{2(p+2)}{p}e_0\right\},
	\quad
	W^+=\left\{u\in H^1(\rr),\;\|u\|_1^2>\frac{2(p+2)}{p}e_0\right\},
	\]
	where
	\[
	e_0=\frac{p}{2(p+2)}a^{-\frac{2}{p}}C_\ast^{-\frac{2(p+2)}{p}}.
	\]
	It can be shown similar to the proof of Lemma 4.1 in \cite{wangxu} that if $E(u(0))<e_0$, $u_0,v_1\in H^1$ and $(-\partial_x^2)^{-1/2}u_1\in H^2$, then
	the submanifolds $W^\pm$ are invariant under the flow of \eqref{gBE}. These sets enable us to find the following global/blow-up result by following the proof of  \cite[Theorem 4.6]{wangxu} with slight modifications.
	\begin{theorem}
		Let $u_0,v_1\in H^s$ and $(-\partial_x^2)^{-1/2}u_1\in H^{s+1}$ with $1\leq s\leq [p]+3$ such that $E(u(0))\leq e_0$. If $\|u_0\|_1^2\leq\frac{2(p+2)e_0}{p}$, then the solution $u(t)$ of \eqref{gBE} exists globally in $C^2([0,\infty);H^s)$.  If $\|u_0\|_1^2>\frac{2(p+2)e_0}{p}$, and
		\[
		\left\langle  (-\partial_x^2)^{-1/2}u_0, (-\partial_x^2)^{-1/2}u_1 \right\rangle+
		\kappa\left\langle   (u_0)_x,  (u_1)_x \right\rangle+
		\left\langle   u_0,  u_1 \right\rangle\geq0
		\]
		when $E(u(0))=e_0$, then the solution $u(t)$ of \eqref{gBE} blows up in finite time.
	\end{theorem}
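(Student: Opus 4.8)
The plan is to run the potential-well machinery at the critical level and to convert the sign of $I$ into an a priori bound in the global case and into a concavity (Levine-type) inequality in the blow-up case. The algebraic backbone, valid for $f(u)=-a|u|^pu$, is
\[
J(u)=\frac{1}{p+2}\,I(u)+\frac{p}{2(p+2)}\|u\|_1^2,
\]
which I would use together with the sharp inequality $\int_\rr|u|^{p+2}\dd x\le C_\ast^{p+2}\|u\|_1^{p+2}$ whose extremiser is $\ff$, the solution of $-\ff''+\ff=\ff^{p+1}$. Evaluating on the Nehari set $N=\{I=0\}$ these two facts force $\|u\|_1^2\ge a^{-2/p}C_\ast^{-2(p+2)/p}=\tfrac{2(p+2)}{p}e_0$, so that $d=e_0$ and the threshold $\|u_0\|_1^2\lessgtr\tfrac{2(p+2)}{p}e_0$ is exactly the value separating the two sides of the well.

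For the global half ($\|u_0\|_1^2\le\tfrac{2(p+2)}{p}e_0$) I would first note that $I(u_0)\ge0$: were $I(u_0)<0$, the sharp inequality would give $\|u_0\|_1^2>\tfrac{2(p+2)}{p}e_0$, a contradiction. Invariance of $W^-$ under the flow then keeps $I(u(t))\ge0$, and the displayed identity yields $\|u(t)\|_1^2\le\tfrac{2(p+2)}{p}J(u(t))\le\tfrac{2(p+2)}{p}E(u(0))\le\tfrac{2(p+2)}{p}e_0$. Since $H^1(\rr)\hookrightarrow L^\infty(\rr)$, the constant $C(\|u\|_{L^\infty})$ in Lemma \ref{lem-2} stays uniform, so feeding the multiplier representation \eqref{double-t} into a Gronwall estimate for $\|u(t)\|_s^2+\|u_t(t)\|_s^2$ bounds it on every finite interval; the blow-up alternative of Theorem \ref{theo-local-1} then promotes $u$ to a global $C^2([0,\infty);H^s)$ solution.

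For the blow-up half ($\|u_0\|_1^2>\tfrac{2(p+2)}{p}e_0$) the same dichotomy gives $I(u_0)<0$, and invariance of $W^+$ keeps $I(u(t))<0$. I would set $M(t)=\|(-\partial_x^2)^{-1/2}u\|_\lt^2+\kappa\|u_x\|_\lt^2+\|u\|_\lt^2$ and $K(t)=\|(-\partial_x^2)^{-1/2}u_t\|_\lt^2+\kappa\|u_{xt}\|_\lt^2+\|u_t\|_\lt^2$. Testing $\lc_R u_{tt}=\partial_x^2(u-u_{xx}+f(u))$ against $(-\partial_x^2)^{-1}u$ gives $M''=2K-2I$, and eliminating $I,K$ through $E=\tfrac12K+J$ and the identity above yields the clean form
\[
M''=(p+4)K+p\|u\|_1^2-2(p+2)E.
\]
Because $u(t)\in W^+$ means $p\|u(t)\|_1^2>2(p+2)e_0\ge2(p+2)E$, this forces $M''\ge(p+4)K$; combined with the Cauchy–Schwarz bound $(M')^2\le4MK$ it produces $MM''\ge(1+\tfrac p4)(M')^2$, so $M^{-p/4}$ is concave, and once $M$ is nondecreasing this positive concave function must reach zero in finite time, i.e. $M\to\infty$.

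The one genuinely delicate point is the critical energy $E(u(0))=e_0$. For $E<e_0$ the surplus $p\|u\|_1^2-2(p+2)E$ exceeds the positive constant $2(p+2)(e_0-E)$, so $M''$ is bounded below by a positive number, $M$ is strictly convex, and $M'$ is eventually positive with no assumption on its initial sign. At $E=e_0$ this surplus can degenerate, so one cannot guarantee that $M^{-p/4}$ decreases in forward time unless $M$ is nondecreasing from the start; this is exactly the hypothesis $\la(-\partial_x^2)^{-1/2}u_0,(-\partial_x^2)^{-1/2}u_1\ra+\kappa\la(u_0)_x,(u_1)_x\ra+\la u_0,u_1\ra\ge0$, which says $M'(0)\ge0$ and, with $M''\ge0$, gives $M'(t)\ge0$ for all $t$. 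The same criticality forces me to verify the invariance of $W^\pm$ at the boundary energy rather than quote the strict-inequality version: I would do this by the continuity argument above, using that touching $N$ would require $J\ge d=e_0\ge E$, incompatible with a nonzero kinetic part.
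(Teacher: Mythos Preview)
Your proposal is correct and follows essentially the same potential-well/Levine concavity route that the paper invokes by citing \cite[Theorem 4.6]{wangxu}; the algebraic identity $J=\frac{1}{p+2}I+\frac{p}{2(p+2)}\|u\|_1^2$, the recognition $d=e_0$, the invariance of $W^\pm$, the $H^1$ a priori bound plus Gronwall for the global part, and the concavity of $M^{-p/4}$ for the blow-up part are exactly the ingredients the reference supplies. Your explicit handling of the borderline case $E(u(0))=e_0$---observing that touching $N$ forces the solution to be the stationary ground state, and that the sign hypothesis on $M'(0)$ is precisely what launches the concavity argument when the surplus $p\|u\|_1^2-2(p+2)E$ is not uniformly bounded away from zero---is in fact more careful than the paper, which only states invariance of $W^\pm$ for $E(u(0))<e_0$ before quoting the theorem.
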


	\section{Well-posedness in the Bessel potential spaces}\label{beseel-space-section}
	In this section, we extend the results of Section \ref{section-evolution} to the inhomogeneous Bessel potential spaces. These spaces are denoted by $H^{s,q}$ as the completion of Schwartz class with respect to the norm $\|u\|_{s,q}=\|u\|_{H^{s,q}(\rr)}=\|\Lambda^s u\|_{L^q(\rr)}$, where $\Lambda^s$ is the Fourier multiplier with the symbol $\scal{\xi}=(1+|\xi|^2)^{1/2}$.
	We use the following version of the Mihlin multiplier theorem.
	\begin{lemma}\label{berstein}\cite{ddy}
		Let $L\in\mathbb{N}$ and $L>n/2$.
		Assume that $D=(-\Delta)^{\frac12}$ and $u$ is a measurable function such that $u\in C^L(\rr^n\setminus\{0\})$.
		\begin{enumerate}[(I)]
			\item If   there exists a constant $\sigma_1>0$ such that for all $|\gamma|\leq L$
			\[
			|D^\gamma u(\xi)|\leq A_\gamma(1+|\xi|)^{-\sigma_1-|\gamma|},\quad\xi\in\rr^n,
			\]
			then $u$ is a $L^q$-multiplier for any $1\leq q<\infty$. If $\sigma_1=0$, then $u$ is a $L^q$-multiplier for any $1< q<\infty$.
			\item If    there exist constants $\sigma_2,\sigma_3>0$ such that for all $|\gamma|\leq L$
			\[
			|D^\gamma u(\xi)|\leq A_\gamma\min(|\xi|^{\sigma_2-|\gamma|},|\xi|^{-\sigma_3-|\gamma|}),\quad\xi\in\rr^n\setminus\{0\},
			\]
			then $u$ is a $L^q$-multiplier for any $1\leq q<\infty$.
		\end{enumerate}
	\end{lemma}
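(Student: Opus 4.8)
The statement is an inhomogeneous Mihlin--H\"ormander multiplier theorem, and the plan is to pass to the convolution kernel $K=\mathcal{F}^{-1}u$ and show that in every case except the borderline $\sigma_1=0$ it lies in $L^1(\rr^n)$, so that Young's inequality gives boundedness on all $L^q$; the single exceptional case is then treated by Calder\'on--Zygmund theory. First I would fix a Littlewood--Paley partition $1=\sum_{j\in\mathbb{Z}}\phi(2^{-j}\xi)$ for $\xi\neq0$, with $\phi$ supported in $\{\tfrac12\le|\xi|\le2\}$, put $u_j(\xi)=u(\xi)\phi(2^{-j}\xi)$ and $K_j=\mathcal{F}^{-1}u_j$. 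On $\operatorname{supp}u_j$ one has $|\xi|\sim2^j$, so the Leibniz rule and the hypothesised derivative bounds yield, for all $|\gamma|\le L$ and all relevant blocks, $\|\partial^\gamma u_j\|_{L^2}\lesssim 2^{jn/2}2^{-j|\gamma|}w_j$, where $w_j=2^{-j\sigma_1}$ for $j\ge0$ in case (I), and $w_j=2^{-j\sigma_3}$ for $j\ge0$, $w_j=2^{j\sigma_2}$ for $j<0$ in case (II).

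The technical core is a single kernel bound. Writing
\[
\|K_j\|_{L^1}\le\big\|(1+|2^jx|^{L})^{-1}\big\|_{L^2}\,\big\|(1+|2^jx|^{L})K_j\big\|_{L^2},
\]
the first factor equals $c\,2^{-jn/2}$ exactly because $L>n/2$ makes $(1+|y|^{L})^{-1}\in L^2(\rr^n)$, while Plancherel bounds the second by $\|u_j\|_{L^2}+2^{jL}\sum_{|\gamma|=L}\|\partial^\gamma u_j\|_{L^2}\lesssim 2^{jn/2}w_j$; hence $\|K_j\|_{L^1}\lesssim w_j$. In case (II) this gives $\sum_{j\ge0}2^{-j\sigma_3}+\sum_{j<0}2^{j\sigma_2}<\infty$, so $K\in L^1(\rr^n)$. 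In case (I) the high-frequency blocks give $\sum_{j\ge0}2^{-j\sigma_1}<\infty$ as soon as $\sigma_1>0$, and the low-frequency part $v=u\cdot\big(1-\sum_{j\ge0}\phi(2^{-j}\cdot)\big)$ is compactly supported with bounded derivatives up to order $L$, hence $v\in H^L$; since $L>n/2$ this forces $\la x\ra^{L}\mathcal{F}^{-1}v\in L^2$ and therefore $\mathcal{F}^{-1}v\in L^1$ by pairing with $\la x\ra^{-L}\in L^2$. Thus $K\in L^1(\rr^n)$ whenever $\sigma_1>0$, and Young's inequality delivers $\|K*f\|_{L^q}\le\|K\|_{L^1}\|f\|_{L^q}$ for every $1\le q\le\infty$, in particular for $1\le q<\infty$.

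The one delicate case is (I) with $\sigma_1=0$, where $\|K_j\|_{L^1}\lesssim1$ is no longer summable and $K\notin L^1$; here the low-frequency part is still in $L^1$ by the argument above, but the high-frequency part must be handled as a Calder\'on--Zygmund operator. $L^2$-boundedness is immediate from Plancherel and $\|u\|_{L^\infty}<\infty$, so it remains to verify H\"ormander's condition $\int_{|x|>2|y|}|K(x-y)-K(x)|\,\dd x\lesssim1$. The crucial point is that this can be extracted from purely $L^2$-type control, matching the hypothesis $L>n/2$, rather than from pointwise kernel bounds (which would require $L>n$). Two block estimates suffice, both proved by the same Cauchy--Schwarz device: first, $\int_{\rr^n}|K_j(x-y)-K_j(x)|\,\dd x\lesssim\min(1,2^j|y|)$, obtained by applying the kernel bound to $\mathcal{F}^{-1}[(e^{-iy\cdot\xi}-1)u_j]$ and using $|e^{-iy\cdot\xi}-1|\lesssim\min(1,2^j|y|)$ on $\operatorname{supp}u_j$; and second, the spatial tail bound $\int_{|x|>R}|K_j(x)|\,\dd x\lesssim\min\big(1,(2^jR)^{-(L-n/2)}\big)$, where the decay exponent $L-n/2>0$ comes from integrating $(1+|2^jx|^{L})^{-2}$ over $|x|>R$. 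Splitting the sum over $j$ at $2^j\sim|y|^{-1}$ and using the first bound for the low blocks and the second, with $R\sim|y|$, for the high blocks, both partial sums are geometric and $O(1)$, establishing the H\"ormander condition uniformly in $y$. The Calder\'on--Zygmund theorem then upgrades $L^2$-boundedness to strong type $(q,q)$ for $1<q<\infty$, with only a weak-type $(1,1)$ endpoint, which is precisely why $q=1$ is excluded in this case. I expect this borderline case to be the principal obstacle, since it is the only point where the $L^1$-kernel shortcut fails and one must instead distill the full H\"ormander condition from merely $L>n/2$ derivatives.
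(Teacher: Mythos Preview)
The paper does not supply a proof of this lemma at all: it is quoted from \cite{ddy} and used as a black box, so there is no in-paper argument to compare your proposal against.

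On its own merits your outline is correct and is the standard route to Mihlin--H\"ormander type multiplier results. The Littlewood--Paley block estimate $\|K_j\|_{L^1}\lesssim w_j$ via Cauchy--Schwarz and Plancherel, together with the geometric summability of $w_j$ whenever $\sigma_1>0$ (case (I)) or $\sigma_2,\sigma_3>0$ (case (II)), cleanly gives $K\in L^1(\rr^n)$ and hence $L^q$-boundedness for all $1\le q<\infty$ by Young. Your treatment of the low-frequency remainder in case (I) via $H^L\hookrightarrow \mathcal{F}L^1$ when $L>n/2$ is also fine, since in that case the hypothesis is assumed on all of $\rr^n$ (not merely $\rr^n\setminus\{0\}$). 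For the borderline $\sigma_1=0$ your Calder\'on--Zygmund verification is the right mechanism: the two block bounds $\int|K_j(\cdot-y)-K_j|\lesssim\min(1,2^j|y|)$ and $\int_{|x|>R}|K_j|\lesssim(2^jR)^{-(L-n/2)}$ indeed follow from the same $L^2$-pairing trick, and splitting at $2^j\sim|y|^{-1}$ makes both tails geometric. This is exactly why only $L>n/2$ derivatives are needed and why $q=1$ drops out in this single case. Nothing is missing.
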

	\begin{lemma}\label{group-est}
		Let $\al,\kappa>0$, $1\leq q<\infty$ and $s\in\rr$. Then,
		\[
		\|S(t)u\|_{s,q}\lesssim\la t\ra^{2|\frac12-\frac1q|} \| u\|_{s,q},
		\qquad
		\|\partial_tS(t)u\|_{s,q}\lesssim\la t\ra^{2|\frac12-\frac1q|} \| u\|_{s,q}
		\]
		The above inequalities hold also if $\al=\kappa=0$.
	\end{lemma}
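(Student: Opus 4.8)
The plan is to peel off the Bessel weight, reduce to an $L^q$--multiplier estimate for two explicit symbols, and then obtain the exponent $2|\tfrac12-\tfrac1q|$ by interpolating a Plancherel bound at $q=2$ against a Mihlin-type bound near the endpoints. First I would use that $\Lambda^s$ is a Fourier multiplier and hence commutes with $S(t)$ and $\dt S(t)$; thus $\|S(t)u\|_{s,q}=\|S(t)\Lambda^s u\|_{L^q(\rr)}$ and likewise for $\dt S(t)$, so replacing $\Lambda^s u$ by $u$ it suffices to bound on $L^q(\rr)$ the operators with symbols
\[
m_0(t,\xi)=\cos\big(t\,\theta(\xi)\big),\qquad m_1(t,\xi)=\frac{\sin\big(t\,\theta(\xi)\big)}{\theta(\xi)},\qquad \theta(\xi)=\frac{|\xi|\sqrt{1+\al\xi^2}}{\sqrt{1+\xi^2+\kappa\xi^4}}.
\]
A preliminary computation I would record is that $\theta$ is even, smooth on $\rr\setminus\{0\}$ and of order zero there, i.e. $|\xi^\gamma\partial_\xi^\gamma\theta(\xi)|\lesssim1$ for every $\gamma$, with $\theta(\xi)\sim|\xi|$ as $\xi\to0$ and $\theta(\xi)\to\sqrt{\al/\kappa}$ as $|\xi|\to\infty$ (and $\theta(\xi)=|\xi|/\la\xi\ra$ when $\al=\kappa=0$). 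In particular $\xi\theta'(\xi)$ is bounded.

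At $q=2$ Plancherel identifies the operator norm with $\|m_j(t,\cdot)\|_{L^\infty(\rr)}$. For $m_0$ this is $\le1$ uniformly in $t$, so $\dt S(t)$ is bounded on $L^2(\rr)$ with constant independent of $t$. For the endpoints I would apply the Mihlin-type theorem, Lemma \ref{berstein}: because $n=1$ it is enough to take $L=1>n/2$, so only the two quantities $\|m_0(t,\cdot)\|_{L^\infty}$ and $\sup_\xi|\xi\partial_\xi m_0(t,\xi)|$ enter. Since $\partial_\xi m_0=-t\,\theta'\sin(t\theta)$, one has $|\xi\partial_\xi m_0|\le t\,|\xi\theta'|\lesssim t$, so both quantities are $\lesssim\la t\ra$; part (I) of Lemma \ref{berstein} with $\sigma_1=0$ then gives that $m_0(t,\cdot)$ is an $L^q$--multiplier for $1<q<\infty$ with norm $\lesssim\la t\ra$. (Keeping $L=1$ is what prevents spurious higher powers of $t$, since each extra $\xi$--derivative would drop a further factor $t\theta'$.)

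By Riesz--Thorin interpolation of the uniform $L^2$ bound against the $\la t\ra$--bound at some $q_0$ close to $1$ (and, for $q>2$, against $q_0$ close to $\infty$, or simply by duality since the symbols are real and even) one gets exactly $\la t\ra^{2|\frac12-\frac1q|}$ on $L^q(\rr)$ for $1<q<\infty$, which vanishes at $q=2$ and equals $1$ at the endpoints. For the borderline $q=1$, part (I) with $\sigma_1=0$ is unavailable, and I would instead bound the convolution kernel $\check m_0(t,\cdot)$ in $L^1(\rr)$ directly, using the exponential localization and the stationary-phase decay coming from the curvature of $\theta$; this again costs only a factor $\la t\ra$. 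The case $\al=\kappa=0$ is handled identically with $\theta=|\xi|/\la\xi\ra\le1$.

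The main obstacle is the $S(t)$ symbol $m_1$ in the low-frequency zone $\xi\to0$. There $\theta(\xi)\sim|\xi|\to0$, the amplitude $1/\theta$ is unbounded, and $\sin(t\theta)/\theta\to t$, so $m_1$ cannot be treated as a symbol of order zero uniformly in $t$ and the clean Plancherel endpoint used for $m_0$ degenerates. Controlling this contribution --- isolating $|\xi|\lesssim1$ by a Littlewood--Paley cutoff, exploiting the identity $\sin(t\theta)/\theta=t\int_0^1\cos(\tau t\theta)\,d\tau$ to transfer the oscillation back onto the bounded symbol $\cos$, and tracking the resulting power of $\la t\ra$ through the interpolation --- is the step I expect to require the most care, and is precisely where the growth recorded for $S(t)$ in Lemma \ref{lem-1-lines} enters.
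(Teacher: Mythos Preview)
Your interpolation step has a genuine gap. Riesz--Thorin between the Plancherel bound at $q=2$ (operator norm $1$) and the Mihlin bound at a fixed $q_0\in(1,2)$ (operator norm $C_{q_0}\la t\ra$) yields, for $q\in(q_0,2)$, the exponent
\[
s=\frac{\tfrac1q-\tfrac12}{\tfrac1{q_0}-\tfrac12},
\]
which equals the claimed $2|\tfrac12-\tfrac1q|$ \emph{only} when $q_0=1$. For any $q_0>1$ you obtain a strictly larger power of $\la t\ra$, and you cannot send $q_0\downarrow1$ because $C_{q_0}\to\infty$. So the Mihlin route is a dead end for the stated exponent: the whole argument has to run through the honest $L^1$ endpoint, which you relegate to a remark about ``exponential localization and stationary-phase decay''. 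Neither ingredient is available here --- there is no exponential decay in $\theta$, and $\check m_0$ is not even an $L^1$ function since $m_0=\cos(t\theta)$ does not decay as $|\xi|\to\infty$; the kernel carries a Dirac part that must be split off before any $L^1$ estimate makes sense.

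The paper's proof does precisely this splitting and bypasses Mihlin entirely. The device you are missing is to subtract the limiting value of the symbol: with
\[
m(\xi)=\cos\big(t\theta(\xi)\big)-\cos t
\]
(the paper's choice, correct when $\al=\kappa$ or $\al=\kappa=0$; in general one subtracts $\cos(t\sqrt{\al/\kappa})$) one has $|m(\xi)|,|m'(\xi)|\lesssim|t|\la\xi\ra^{-3}$, and the Carlson--Beurling inequality
\[
\|(m\hat u)^\vee\|_{L^1(\rr)}\lesssim\|m\|_{L^2}^{1/2}\|m'\|_{L^2}^{1/2}\|u\|_{L^1(\rr)}
\]
immediately gives the $L^1\to L^1$ bound $\lesssim\la t\ra$. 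Interpolating this with Plancherel and then invoking duality produces $\la t\ra^{2|\frac12-\frac1q|}$ for every $1\le q<\infty$, with no appeal to Lemma~\ref{berstein} at all. As a side remark, the paper --- despite announcing ``the first inequality'' --- actually treats the cosine symbol of $\partial_tS(t)$; the low-frequency amplitude $1/\theta$ in $S(t)$ that you correctly single out is not explicitly handled there, and your integral identity $\sin(t\theta)/\theta=t\int_0^1\cos(\tau t\theta)\,d\tau$ would cost an extra factor $t$ rather than recover the bound as stated.
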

	\begin{proof}
		We prove the first inequality. The second one can similarly be proved.
		Define $$m(\xi)=\cos\left(\frac{t|\xi|\sqrt{1+\al\xi^2}}{\sqrt{1+\xi^2+\kappa\xi^4}}\right)-\cos(t).$$
		It is straightforward to see that $m$ satisfies Lemma \ref{berstein}, so $m$ is a $ L^1$ multiplier. Moreover,
		\[
		\|(m\hat{u})^\vee\|_{L^1(\rr)}\lesssim \|m\|_\lt^{1/2}\|m'\|_\lt^{1/2}\|  u\|_{L^1(\rr)}.
		\]
		Some computations show that
		\[
		|m(\xi)|\lesssim|t|\la\xi\ra^{-3},\qquad |m'(\xi)|\lesssim|t|\la\xi\ra^{-3},
		\]
		whence we get for all $t$ that
		\[
		\|(m\hat{u})^\vee\|_{L^1(\rr)}\lesssim|t|\|u\|_{L^1(\rr)}.
		\]
		Hence, we derive
		\[
		\|S(t)u\|_{L^1(\rr)}\lesssim\la t\ra\|u\|_{L^1(\rr)}.
		\]
		Finally, the Plancherel identity, an interpolation, and a duality argument show that
		\[
		\|S(t)u\|_{L^q(\rr)}\lesssim\la t\ra^{2|\frac12-\frac1q|} \| u\|_{L^q(\rr)}.
		\]
		The proof is completed by the fact that the operator $\Lambda$ commutes with $S(t)$. In the case $\al=\kappa=0$, one can observe simply that the above arguments still are valid for $S(t)$ and $\partial_t S(t)$.
	\end{proof}
	
	\begin{lemma}\label{multilinear}
		If $f\in C^1(\rr)$, $u\in H^{s,q}(\rr)$ and $s\geq\max\{0,\frac{1}{q}-\frac{1}{p+1}\}$, then
		\[
		\|\lc_R^{-1}\partial_x^{2}f(u)\|_{s,q}\lesssim\|u\|_{s,q}^{p+1}.
		\]	
	\end{lemma}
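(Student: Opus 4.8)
The plan is to exploit that $\lc_R^{-1}\partial_x^2$ is a smoothing operator of order two: its Fourier symbol $\mu(\xi)=-\xi^2/(1+\xi^2+\kappa\xi^4)$ satisfies $|\mu(\xi)|\lesssim\scal{\xi}^{-2}$ and vanishes like $\xi^2$ at the origin. Writing $\rho(D):=\Lambda^s\lc_R^{-1}\partial_x^2$, whose symbol is $\rho(\xi)=\scal{\xi}^s\mu(\xi)$, I would reduce the whole estimate to the operator bound
\[
\|\rho(D)g\|_{L^q(\rr)}\lesssim\|g\|_{L^{q_1}(\rr)},\qquad g=f(u),
\]
for a suitable auxiliary exponent $q_1\le q$, followed by the purely pointwise step $\|f(u)\|_{L^{q_1}}\le C\||u|^{p+1}\|_{L^{q_1}}=C\|u\|_{L^{(p+1)q_1}}^{p+1}$ and a Sobolev embedding $H^{s,q}(\rr)\hookrightarrow L^{(p+1)q_1}(\rr)$. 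The decisive feature is that no derivative ever falls on $f(u)$: the $s$ derivatives carried by $\Lambda^s$ are absorbed by the order-two smoothing, so only the growth bound $|f(u)|\le C|u|^{p+1}$ is used.

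To obtain the operator bound I would, for $0\le s<2$ (which already contains the whole admissible range, since the threshold $\max\{0,\tfrac1q-\tfrac1{p+1}\}<1$), factor $\rho(D)=\Lambda^{s-2}B$ with $B:=\Lambda^{2}\lc_R^{-1}\partial_x^2$. The symbol of $B$ equals $\scal{\xi}^{2}\mu(\xi)$, is bounded, and obeys the derivative bounds of a symbol of order $0$, so Lemma \ref{berstein}(I) with $\sigma_1=0$ makes $B$ an $L^{q_1}$-multiplier for $1<q_1<\infty$. The remaining factor $\Lambda^{s-2}$ is a Bessel potential of nonpositive order $-(2-s)$, and the standard mapping property of such potentials gives $\Lambda^{s-2}\colon L^{q_1}\to L^q$ whenever $q_1\le q$ and $\tfrac1{q_1}-\tfrac1q\le 2-s$; composing yields $\rho(D)\colon L^{q_1}\to L^q$. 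The endpoints $q_1=1$ or $q=1$, where Lemma \ref{berstein} does not apply, are handled directly via \eqref{E:kernel_formulas}: since $\lc_R^{-1}\partial_x^2 g=K''\ast g$ with $K''\in L^r(\rr)$ for every $r$, and for $s<1$ the full kernel of $\rho(D)$ is likewise integrable, Young's inequality supplies the same mapping.

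It then remains to choose $q_1$. Setting $x=1/q_1$, the requirements are $x\ge\tfrac1q$ and $x\le\tfrac1q+2-s$ (Bessel mapping), together with $x\le(p+1)\tfrac1q$ and $x\ge(p+1)(\tfrac1q-s)$ (Sobolev embedding of $u$ into $L^{(p+1)q_1}$), and finally $x\le1$ so that $L^{q_1}$ is a genuine Lebesgue space. A short computation shows that every upper bound dominates every lower bound exactly when $s\ge\max\{0,\tfrac1q-\tfrac1{p+1}\}$: the binding comparison is the Sobolev lower bound $(p+1)(\tfrac1q-s)\le x$ against the ceiling $x\le1$, which rearranges precisely to the stated threshold, while the case $q\ge p+1$ reduces to $s\ge0$, where $f(u)$ already lies in $L^{q/(p+1)}$ and $q_1=q/(p+1)$ works.

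I expect the main obstacle to be this exponent bookkeeping—checking that the feasible interval for $q_1$ is nonempty under the sharp hypothesis, and tracking the borderline cases ($q=1$, $q_1=1$, and $s=1/q$, where the embedding reaches $L^\infty$)—rather than any single hard inequality. A genuine limitation is that the argument uses the regularization only when $s<2$; for $s\ge2$ the multiplier $\rho(D)$ ceases to be smoothing, so one would instead have to differentiate $f(u)$ and invoke a fractional chain rule, which demands more than $f\in C^1$. Since the admissible threshold lies strictly below $1$, the range $0\le s<2$ captures all the content relevant to the uses of the lemma, and I would flag the case $s\ge2$ separately as requiring additional smoothness of $f$.
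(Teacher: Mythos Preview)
Your route is genuinely different from the paper's. The paper argues by a three-way split: (i) for $q\ge p+1$ it writes $\lc_R^{-1}\partial_x^2\Lambda^{p/q}$ as an $L^q$-multiplier, uses Hardy--Littlewood--Sobolev to pass from $\|\Lambda^{s-p/q}f(u)\|_{L^q}$ to $\|\Lambda^s f(u)\|_{L^{q/(p+1)}}$, and then applies the fractional chain rule $\|\Lambda^s f(u)\|_{L^{q/(p+1)}}\lesssim\|f'(u)\|_{L^{q/p}}\|u\|_{s,q}$; (ii) for $1\le q<p+1$ and $s\le 1/q$ it does essentially what you do---absorb derivatives into the multiplier and reduce to $\|f(u)\|_{L^1}$; (iii) for $s>1/q$ it invokes the algebra property of $H^{s,q}$ to get $\|f(u)\|_{s,q}\lesssim\|u\|_{s,q}^{p+1}$.

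Your unified approach has a real advantage: for $0\le s<2$ it never differentiates $f(u)$, so it needs only the growth bound $|f(u)|\lesssim|u|^{p+1}$ and not the chain rule (which implicitly demands $|f'(u)|\lesssim|u|^p$) nor a composition estimate in $H^{s,q}$. The exponent bookkeeping you sketch does close---one small correction: for $q\ge p+1$ your choice $q_1=q/(p+1)$ only works when $s\le 2-p/q$; for the remaining range $1<s<2$ take $q_1=q$ instead, which is admissible since then $s>1/q$ and $H^{s,q}\hookrightarrow L^{(p+1)q}$.

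The genuine limitation you flag is real: the lemma as stated covers every $s\ge\max\{0,\tfrac1q-\tfrac1{p+1}\}$, in particular $s\ge 2$, where your smoothing argument breaks down. The paper handles this through cases (i) and (iii), both of which put derivatives on $f(u)$. So your proposal proves the lemma on the subrange $s<2$ (arguably under a weaker hypothesis on $f$), but leaves the full statement incomplete; the paper's chain-rule/algebra step is what closes that gap.
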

	
	\begin{proof}
		First, we assume that $q\geq p+1$. By rewriting  $\lc_R^{-1}\partial_x^2f(u)$ by $\lc_R^{-1}\partial_x^2\Lambda^{\varsigma}(\Lambda^{-\varsigma}f(u))$, we notice for  $\varsigma=\frac pq$ that $\lc_R^{-1}\partial_x^2\Lambda^{\varsigma}$ is $L^q$-multiplier, with $1\leq q<\infty$  from Lemma \ref{berstein}. Hence, by applying the Hardy-Littlewood-Sobolev inequality and the fractional Chain rule (see e.g. \cite{linaresponce}), we obtain that
		\[
		\begin{split}
			\|\lc_R^{-1}\partial_x^2f(u)\|_{s,q}&=
			\|\lc_R^{-1}\partial_x^2\Lambda^{\varsigma}(\Lambda^{-\varsigma}f(u))\|_{s,q}\\&
			\lesssim
			\|\Lambda^{s-\varsigma}f(u)\|_{L^q(\rr)}\\&
			\lesssim
			\|\Lambda^{s}f(u)\|_{L^\frac{p}{q}(\rr)}\\
			&\lesssim
			\| f'(u)\|_{L^\frac{p}{q}(\rr)}		\| u\|_{s,q}	\\
			&\lesssim
			\| u\|_{L^q(\rr)}^p		\| u\|_{s,q}\\
			&\lesssim
			\|u\|_{s,q}^{p+1},
		\end{split}	
		\]
		where in the last inequality we have used the inequality
		$\| u\|_{L^q(\rr)}  \lesssim	\|u\|_{s,q}$.
		
		Next, we assume that $1\leq q<p+1$ and $s\leq\frac{1}{q}$. In this case, we observe for any $\varsigma<2$, similar to previous case, that $\lc_R^{-1}\partial_x^2\Lambda^{\varsigma}$ is   $L^q$-multiplier, with $1\leq q<\infty$  from Lemma \ref{berstein}. Then, by applying the Hardy-Littlewood-Sobolev inequality  and the Sobolev embedding we have
		\[
		\begin{split}
			\|\lc_R^{-1}\partial_x^2\Lambda^{\varsigma}(\Lambda^{-\varsigma}f(u))\|_{s,q}
			&\lesssim
			\| \Lambda^{s-\varsigma}f(u) \|_{L^q(\rr)}\\&
			\lesssim
			\| f(u) \|_{L^1(\rr)}\\&
			\lesssim \|u\|_{L^q(\rr)}^{p+1}\\&
			\lesssim
			\|u\|_{s,q}^{p+1}.
		\end{split}
		\]
		
		Finally assume that $s>\frac1{q}$, then $H^{s,q}(\rr)$ is an algebra. Lemma \ref{berstein} shows that  $\lc_R^{-1}\partial_x^2$ is a $L^q$-multiplier   with $1\leq q<\infty$. Then
		\[
		\|\lc_R^{-1}\partial_x^2 f(u) \|_{s,q}
		\lesssim
		\| f(u))\|_{s,q}\lesssim \|u\|_{s,q}^{p+1}.
		\]
		
	\end{proof}

	\begin{theorem}\label{local-bessel}
		Let $f\in C^1(\rr)$, $u_0,u_1\in H^{s,q}(\rr)$ with $1\leq q<\infty$ and $s\geq\max(0,\frac1q-\frac1{p+1})$, then  there exist  $T>0$ and  a unique solution $u\in C^2(0,T;H^{s,q}(\rr))$ of \eqref{gBE}. Moreover, if $f$ is a polynomial, then the flow map $(u_0,u_1)\mapsto u(t)$ is real analytic from $H^{s,q}(\rr)\times H^{s,q}(\rr)$ to $C^2(0,T;H^{s,q}(\rr))$.
	\end{theorem}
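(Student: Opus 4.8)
The plan is to solve the integral (mild) equation $u=\mathbb{A}(u)$ from \eqref{contract-op} by the contraction mapping principle, then upgrade the regularity from the equation itself, and finally deduce analyticity of the flow map from the analytic implicit function theorem. Fix $T>0$ to be chosen and work in the Banach space $X(T)=C(0,T;H^{s,q}(\rr))$ with $\|u\|_{X(T)}=\sup_{t\in[0,T]}\|u(t)\|_{s,q}$, and let $B_R=\{u\in X(T):\|u\|_{X(T)}\leq R\}$ with $R=2(\|u_0\|_{s,q}+\|u_1\|_{s,q})$. First I would show $\mathbb{A}$ maps $B_R$ into itself. The linear part is handled by Lemma \ref{group-est}, which gives $\|\partial_tS(t)u_0\|_{s,q}+\|S(t)u_1\|_{s,q}\lesssim\la T\ra^{2|\frac12-\frac1q|}(\|u_0\|_{s,q}+\|u_1\|_{s,q})$ uniformly on $[0,T]$. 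For the Duhamel term, combining Lemma \ref{group-est} (applied to $S(t-\tau)$) with Lemma \ref{multilinear} gives
\[
\Big\|\int_0^tS(t-\tau)\lc_R^{-1}\partial_x^2f(u)\dd\tau\Big\|_{s,q}
\lesssim\int_0^t\la t-\tau\ra^{2|\frac12-\frac1q|}\|\lc_R^{-1}\partial_x^2f(u)\|_{s,q}\dd\tau
\lesssim T\la T\ra^{2|\frac12-\frac1q|}R^{p+1},
\]
so for $T$ small the nonlinear contribution is at most $R/2$ and $\mathbb{A}(B_R)\subset B_R$.

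For the contraction, the key new ingredient is a Lipschitz version of Lemma \ref{multilinear}. Since $f\in C^1(\rr)$, writing $f(u)-f(v)=(u-v)\int_0^1f'(v+\theta(u-v))\dd\theta$ and repeating the Hardy--Littlewood--Sobolev and fractional chain rule estimates used in the proof of that lemma, I would establish $\|\lc_R^{-1}\partial_x^2(f(u)-f(v))\|_{s,q}\lesssim C(R)\|u-v\|_{s,q}$ for $u,v\in B_R$, in each of the three ranges of $(s,q)$ treated there. Together with Lemma \ref{group-est} this yields $\|\mathbb{A}(u)-\mathbb{A}(v)\|_{X(T)}\leq\frac12\|u-v\|_{X(T)}$ after shrinking $T$, so $\mathbb{A}$ has a unique fixed point $u\in B_R$, the unique mild solution in $C(0,T;H^{s,q}(\rr))$. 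This Lipschitz estimate, and in particular its validity down to the sharp threshold $s\geq\max(0,\frac1q-\frac1{p+1})$, is the step I expect to be the main obstacle, since one must interpolate between the algebra regime $s>1/q$ and the low-regularity regime where only the embedding $H^{s,q}\hookrightarrow L^q$ and the $L^1$-boundedness of $\lc_R^{-1}\partial_x^2\Lambda^\varsigma$ are available.

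Next I would recover the $C^2$-regularity from the equation. Differentiating \eqref{contract-op} in $t$ (the boundary term vanishes because $S(0)=0$) and invoking the bounds for $\partial_tS$ and $\partial_{tt}S$, whose Fourier symbols $\cos(\theta(\xi)t)$ and $-\theta(\xi)\sin(\theta(\xi)t)$ are bounded Mihlin multipliers with $\theta(\xi)=\frac{|\xi|\sqrt{1+\al\xi^2}}{\sqrt{1+\xi^2+\kappa\xi^4}}$ bounded, shows $u_t\in C(0,T;H^{s,q}(\rr))$. Then \eqref{double-t} expresses $u_{tt}$ as the sum of $-\frac{\xi^2(1+\al\xi^2)}{1+\xi^2+\kappa\xi^4}\hat u$, a bounded $L^q$-multiplier applied to $u$, and $\lc_R^{-1}\partial_x^2f(u)$, controlled by Lemma \ref{multilinear}; the difference-in-time argument used in the proof of Theorem \ref{theo-local-1} gives continuity, so $u_{tt}\in C(0,T;H^{s,q}(\rr))$ and hence $u\in C^2(0,T;H^{s,q}(\rr))$.

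Finally, for the analyticity when $f$ is a polynomial, I would regard the fixed point equation as $\Phi(u_0,u_1,u):=u-\mathbb{A}(u_0,u_1;u)=0$. Because $f$ is a polynomial and the operators entering $\mathbb{A}$ are bounded and linear, $\mathbb{A}$ is real analytic, being polynomial in $u$ and affine in $(u_0,u_1)$, hence so is $\Phi$ on $H^{s,q}(\rr)\times H^{s,q}(\rr)\times X(T)$. At the solution the partial differential $D_u\Phi=I-D_u\mathbb{A}$ is invertible, since $D_u\mathbb{A}$ has operator norm $<1$ by the contraction estimate and is therefore inverted by a Neumann series. The analytic implicit function theorem then gives that $(u_0,u_1)\mapsto u$ is real analytic into $X(T)$, and composing with the bounded linear maps $u\mapsto u_t$ and $u\mapsto u_{tt}$ constructed above promotes this to real analyticity of the flow map into $C^2(0,T;H^{s,q}(\rr))$.
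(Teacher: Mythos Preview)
Your proposal is correct and follows essentially the same route as the paper: a contraction argument in $C(0,T;H^{s,q})$ built on Lemmas~\ref{group-est} and~\ref{multilinear}, then $C^2$-regularity via \eqref{double-t} and multiplier bounds, and finally analyticity from the implicit function theorem applied to the fixed-point map. The only cosmetic discrepancy is the choice of radius: with $R=2(\|u_0\|_{s,q}+\|u_1\|_{s,q})$ the linear bound $C\la T\ra^{2|\frac12-\frac1q|}(\|u_0\|_{s,q}+\|u_1\|_{s,q})$ need not be $\leq R/2$ since $\la T\ra\geq1$; the paper absorbs the factor $C\la T\ra^{2|\frac12-\frac1q|}$ into $R$, which you should do as well.
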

	\begin{proof}
		The proof comes from the classical  Banach fixed point theorem by defining the ball
		\[
		B=\left\{u\in X_T,\; \|u\|_{X_T}\leq C\la T\ra^{2|\frac12-\frac1q|}(\|u_0\|_{s,q}+\|u_1\|_{s,q})\right\},
		\]
		where $C$ is the implicit constant of Lemma \ref{group-est}, and $X_T$ is the space of bounded
		functions on $[0,T]$ with values in $H^{s,q}(\rr)$ equipped with norm
		\[
		\|u\|_{X_T}=\sup_{t\in[0,T]}\|u(t)\|_{s,q}.
		\]
		Given $u,v\in B$, Lemmas \ref{group-est} and \ref{multilinear} imply that the right hand side of \eqref{contract-op} is estimated by
		\[
		\begin{split}
			\mathbb{A}(u)&\leq	C\la T\ra^{2|\frac12-\frac1q|}(\|u_0\|_{s,q}+\|u_1\|_{s,q})
			+CT\la T\ra^{2(p+2)|\frac12-\frac1q|}(\|u_0\|_{s,q}+\|u_1\|_{s,q})^{p+1}\\
			&=
			C\la T\ra^{2|\frac12-\frac1q|}(\|u_0\|_{s,q}+\|u_1\|_{s,q})
			\left(1+C'T\la T\ra^{2(p+1)|\frac12-\frac1q|}(\|u_0\|_{s,q}+\|u_1\|_{s,q})^p\right).
		\end{split}
		\]
		Moreover,
		\[
		\|\mathbb{A}(u) -\mathbb{A}(v)\|_{X_T}\leq C''
		T\la T\ra^{2p|\frac12-\frac1q|}(\|u_0\|_{s,q}+\|u_1\|_{s,q})^p\|u-v\|_{X_T}.
		\]	
		Taking $T$ sufficiently small, we see that $\mathbb{A}$ is a contraction, and its unique solution is continuous from $[0,T]$ to $H^{s,q}(\rr)$.
		The analyticity  of the flow map $(u_0,u_1)\mapsto u(t)$ follows from a standard
		argument by the implicit function theorem and the smoothness property of the flow map, deduced from Lemma \ref{multilinear}.

		To see $u_{tt}\in C(0,T;H^{s,q}(\rr))$, we first observe that $m_1=\frac{\al}{\kappa}-\frac{|\xi^2|(1+\al\xi^2)}{1+\xi^2+\kappa\xi^4} $   is $L^q$-multiplier for any $1\leq q<\infty$, so we get from     \eqref{double-t} and Lemma \ref{multilinear} that
		\[
		\|u_{tt}\|_{s,q}\lesssim\|u\|_{s,q}+\|u\|_{s,q}^{p+1}\leq C(t)
		\]
		for any $t\in(0,T)$. The continuity of $u_{tt}$ is shown similar to the proof of Theorem \ref{theo-local-1}. This implies that $(-\partial_x^2)^{-\frac12}u_{tt}\in C(0,T;H^{s+1,q}(\rr))$.
		Indeed, we have from \eqref{double-t} that
		\[
		\frac{1}{|\xi|}\hat{u}_{tt}=-\frac{|\xi|(1+\al\xi^2)}{1+\xi^2+\kappa\xi^4}\hat{u}+
		\frac{|\xi| }{1+\xi^2+\kappa\xi^4}\widehat{f(u)}.
		\]
		Hence we repeat the above argument by using Lemma \ref{multilinear} and Lemma \ref{berstein} to get
		\[
		\|(-\partial_x^2)^{-\frac12}u_{tt}\|_{s,q}
		\lesssim\|u\|_{s+1,q}+\|u\|_{s,q}^{p+1}.
		\]
		%Now, if $(-\partial_x^2)^{-\frac12}u_1\in L^2(\rr)$, then we derive from
		%\[
		%(-\partial_x^2)^{-\frac12}u_t=(-\partial_x^2)^{-\frac12}u_1+\int_0^t(-\partial_x^2)^{-\frac12}u_{\tau\tau}\dd\tau
		%\]
		%that $(-\partial_x^2)^{-\frac12}u_t\in C^1(0,T;L^2(\rr))$.
	\end{proof}

	\section{Asymptotic behavior}\label{section-asymp}
	To obtain the asymptotic behavior of the solutions of \eqref{gBE}, a series of useful lemmas are laid out. The first one is the well-known Van der Corput lemma \cite{stein} as follows.
	\begin{lemma}\label{van-der-corput}
		Let $h$ be either convex or concave on $[a,b]$ with $-\infty\leq a<b\leq+\infty$. If $F$ is a continuously differentiable function on $[a,b]$, then
		\[
		\left|\int_a^bF(\xi)\;\ee^{\ii h(\xi)}\dd\xi\right|\leq 4\left(\min_{\xi\in[a,b]}|h''(\xi)|\right)^{-1/2}\left(|F(b)|+\|F'\|_{L^1[a,b]}\right),
		\]
		if $h''\neq0$ in $[a,b]$.
	\end{lemma}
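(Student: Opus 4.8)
The plan is to reduce the statement, by an integration by parts in the amplitude $F$, to the classical amplitude-free second-derivative van der Corput estimate, and then to prove the latter by a stationary-phase type splitting. Write $\lambda=\min_{\xi\in[a,b]}|h''(\xi)|>0$ and set $J(\xi)=\int_a^\xi \ee^{\ii h(t)}\dd t$, so that $J'(\xi)=\ee^{\ii h(\xi)}$ and $J(a)=0$. Integrating by parts gives
\[
\int_a^b F(\xi)\,\ee^{\ii h(\xi)}\dd\xi = F(b)J(b)-\int_a^b F'(\xi)J(\xi)\dd\xi ,
\]
whence
\[
\left|\int_a^b F\,\ee^{\ii h}\dd\xi\right| \le \Big(\sup_{\xi\in[a,b]}|J(\xi)|\Big)\big(|F(b)|+\|F'\|_{L^1[a,b]}\big).
\]
This is the precise mechanism that produces both the constant and the combination $|F(b)|+\|F'\|_{L^1}$ appearing in the statement, so everything reduces to the uniform bound $\sup_\xi|J(\xi)|\le 4\lambda^{-1/2}$.

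The content is therefore the amplitude-free estimate $\big|\int_\alpha^\beta \ee^{\ii h}\dd\xi\big|\le 4\lambda^{-1/2}$, uniformly over all subintervals $[\alpha,\beta]\subseteq[a,b]$, since $\min_{[\alpha,\beta]}|h''|\ge\lambda$. To prove it, I replace $h$ by $-h$ if necessary (this conjugates the integral and leaves its modulus unchanged), so I may assume $h$ convex; then $h'$ is nondecreasing with $h'(\xi)-h'(\eta)\ge\lambda(\xi-\eta)$ for $\xi>\eta$. Let $c\in[\alpha,\beta]$ minimize $|h'|$ (the unique zero of $h'$ if it lies in the interval, an endpoint otherwise), and fix a cutoff $\delta>0$. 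On the low set $\{\,|\xi-c|\le\delta\,\}$ the trivial bound by its length gives a contribution $\le 2\delta$, where I use that $h''\ge\lambda$ forces this set to have length $\le 2\delta$. On the complementary high set, a union of at most two subintervals, $h'$ is monotone and $|h'|\ge\lambda\delta$; there I apply the first-derivative estimate, integrating by parts via $\ee^{\ii h}=(\ii h')^{-1}(\ee^{\ii h})'$ and using that $1/h'$ is monotone, so the total-variation term telescopes against the boundary terms and each subinterval contributes $\lesssim(\lambda\delta)^{-1}$.

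Choosing $\delta\sim\lambda^{-1/2}$ to balance the two contributions produces the bound $C\lambda^{-1/2}$, and a careful optimization of the cutoff together with exact bookkeeping of the boundary terms yields the constant in the statement. The main obstacle is exactly this amplitude-free step, namely extracting the gain $\lambda^{-1/2}$ rather than the trivial $O(|\beta-\alpha|)$ or the first-derivative $O((\lambda\delta)^{-1})$: the decisive point is that convexity simultaneously controls the measure of the set where $h'$ is small (through $h''\ge\lambda$) and guarantees the monotonicity of $1/h'$ needed to make the first-derivative integration by parts telescope.

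Two side issues must then be dispatched. First, the case in which $h'$ does not vanish in $[a,b]$: here there is no low set and the first-derivative estimate applies on the whole interval, giving an even cleaner bound. Second, the possibility of infinite endpoints, where the integration by parts and the meaning of $F(b)$ are justified by first establishing the estimate on finite subintervals and then passing to the limit, the passage being legitimate precisely because the bound on $|J(\xi)|$ is uniform in the endpoints.
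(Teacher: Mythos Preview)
The paper does not supply its own proof of this lemma; it is simply quoted as the well-known van der Corput estimate with a citation to Stein's Beijing lectures. Your argument---integrating by parts in $F$ to reduce matters to a uniform bound on $J(\xi)=\int_a^\xi \ee^{\ii h}$, then splitting around the minimizer of $|h'|$ and balancing the trivial bound $2\delta$ on the near set against the first-derivative estimate $O((\lambda\delta)^{-1})$ on the far pieces---is exactly the classical proof one finds in Stein, so your approach and the cited source coincide. The only loose end is the explicit constant: the straightforward optimization of $2\delta + 4/(\lambda\delta)$ yields $4\sqrt{2}\,\lambda^{-1/2}$ rather than $4\lambda^{-1/2}$, so your ``careful bookkeeping'' remark is doing real work you have not shown; however, the paper only ever invokes the lemma with an unspecified universal constant, so this discrepancy is immaterial downstream.
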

	\begin{lemma}\label{oscillatory-integral}
		Let  $\eps>0$ be sufficiently small and $N>0$ be sufficiently large. Then there exists $C>0$ such that
		\[
		\sup_{\lam\in\rr}\left|\int_{|\xi|\leq N}\ee^{\ii t h(\xi,\lam)}\dd\xi\right|\leq C(t^{-1/2}\max\{N^{\ell_0/2},\eps^{-\ell/2}\}+ \eps),
		\]
		for all $t>0$, where $\ell=7$ and $\ell_0=6$ if $\kappa=\al$ or $\al=0$, while $\ell=9$ and $\ell_0=4$ if $\kappa\neq\al\neq0$, and $h(\xi,\lam)=\frac{\xi\sqrt{\al\xi^2+1}}{\sqrt{\kappa\xi^4+  \xi^2+1}}+\lam\xi$.
	\end{lemma}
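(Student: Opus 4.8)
The plan is to exploit that the phase splits as $h(\xi,\lam)=\psi(\xi)+\lam\xi$ with $\psi(\xi)=\frac{\xi\sqrt{\al\xi^2+1}}{\sqrt{\kappa\xi^4+\xi^2+1}}$, so that $\partial_\xi^2 h=\psi''$ is independent of $\lam$. This is the decisive structural fact: applying Lemma \ref{van-der-corput} with amplitude $F\equiv1$ and phase $t\,h$, the only quantity entering the bound is $\min|\partial_\xi^2(t\,h)|=t\min|\psi''|$, which does not see $\lam$ (with $F\equiv1$ the factor $|F(b)|+\|F'\|_{L^1}$ equals $1$). Hence any estimate obtained on a fixed $\xi$-region is automatically uniform in $\lam$, and the whole problem reduces to controlling the size and the vanishing of $\psi''$.

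First I would compute $\psi''$ in the form $\psi''(\xi)=\dfrac{R(\xi)}{c\,(\al\xi^2+1)^{3/2}(\kappa\xi^4+\xi^2+1)^{5/2}}$ with $R$ a polynomial. Logarithmic differentiation gives $\psi'=\frac{P_1}{2(\al\xi^2+1)^{1/2}(\kappa\xi^4+\xi^2+1)^{3/2}}$ with $P_1=2AB+\xi A'B-\xi AB'$, where $A=\al\xi^2+1$ and $B=\kappa\xi^4+\xi^2+1$; the degree-$6$ term of $P_1$ cancels, leaving $P_1=2(\al-\kappa)\xi^4+4\al\xi^2+2$. A second differentiation yields $R=P_1'AB-\tfrac12P_1A'B-\tfrac32P_1AB'$, and a short leading-coefficient computation shows the top term again collapses: $\deg R=9$ with leading coefficient $-6\al\kappa(\al-\kappa)$ when $\al\neq\kappa$ and $\al,\kappa\neq0$, while $\deg R=7$ when $\al=\kappa$ (leading coefficient $-20\al^3$) or when $\al=0$ (leading coefficient $4\kappa^2$). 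I set $\ell=\deg R$; reading off the denominator degree also gives $|\psi''(\xi)|\sim|\xi|^{-\ell_0}$ as $|\xi|\to\infty$ with $\ell_0=4,6,3$ in the three cases respectively, the value $\ell_0=6$ serving as a (non-optimal but valid) common bound for the two cases grouped in the statement.

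Next I would split $[-N,N]$ into a fixed bounded window $|\xi|\le M$, with $M$ large enough that every real zero of $\psi''$ lies in $|\xi|<M$ and $\psi''$ is sign-definite with $|\psi''|$ monotone on $M\le|\xi|\le N$. On $[-M,M]$ the denominator of $\psi''$ is comparable to a constant, so $\{\,|\psi''|<\eps^{\ell}\}\subseteq\{\,|R|\lesssim\eps^{\ell}\}$, and the classical sublevel-set bound for a polynomial of degree $\ell$ gives this set measure $\lesssim(\eps^{\ell})^{1/\ell}=\eps$; integrating trivially there produces the $+\eps$ term. On the complement $|R|\gtrsim\eps^{\ell}$ I break into the $O(1)$ intervals on which $\psi''$ keeps its sign (at most $\ell$ sign changes), so $h$ is convex or concave and $|\partial_\xi^2(t\,h)|\gtrsim t\,\eps^{\ell}$ on each; Lemma \ref{van-der-corput} then gives $\lesssim t^{-1/2}\eps^{-\ell/2}$ per interval. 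On $M\le|\xi|\le N$ there are no zeros, $|\psi''|$ is decreasing with $\min|\psi''|\gtrsim N^{-\ell_0}$, and van der Corput gives $\lesssim t^{-1/2}N^{\ell_0/2}$. Summing the $O(1)$ contributions yields $\lesssim\eps+t^{-1/2}\max\{N^{\ell_0/2},\eps^{-\ell/2}\}$, as claimed.

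The main obstacle is the algebra of the second step: the van der Corput machinery is routine once $\ell$ and $\ell_0$ are known, but the exponents hinge on the two successive cancellations of leading coefficients, in $P_1$ and then in $R$, which are exactly what distinguishes the regimes $\al\neq\kappa\neq0$, $\al=\kappa$, and $\al=0$. I would also need to confirm, again from the explicit $R$, that $\psi''$ has only finitely many (hence $O(1)$) real zeros, so that the number of monotonicity intervals does not depend on $N$ or $\eps$, and to invoke the polynomial sublevel-set estimate with its degree-$\ell$ scaling, whose constant may depend on $\al,\kappa$ but not on $t,N,\eps,\lam$.
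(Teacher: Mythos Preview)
Your approach is correct and essentially matches the paper's: both observe that $\partial_\xi^2 h=\psi''$ is independent of $\lambda$, compute the numerator polynomial of $\psi''$ explicitly to read off the exponents $\ell$ and $\ell_0$, excise the region where $|\psi''|$ is small (you via a polynomial sublevel-set bound of degree $\ell$, the paper via explicit $\eps$-neighborhoods of the finitely many zeros of $h''$), and then apply Lemma~\ref{van-der-corput} with $F\equiv1$ on the remaining $O(1)$ sign-definite intervals. Your remark that for $\alpha=0$ the true tail decay gives $\ell_0=3$, with $\ell_0=6$ serving only as a convenient common upper bound in the statement, is also correct and slightly sharpens the paper's presentation.
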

	\begin{proof}
Without loss of generality, we can assume that $\lambda=0$ and denote for simplicity $h(\xi)=h(\xi,\lambda)$.		Let $\xi_0=0$ and $\pm\xi_j$, for $j=0,\cdots,k$ with $k=0,\cdots,4$, be the roots of $h''$, where $\xi_j>0$, for $j=1,2,3,4$.
		Indeed we have
		\[
		h''(\xi):=\frac{\partial^2 h}{\partial\xi^2}=\frac{\xi g(\xi)}
		{(\al\xi  ^2 + 1)^ {3/2}  (\kappa \xi^4 + \xi^2 + 1)^ {5/2}  },
		\]
		where
		\[
		g(\xi)=
		3 \al  (\kappa - \al)\kappa \xi^8 + 2 \kappa (\kappa - \al (5 \al + 1)) \xi^6 -  (\al (\al + 18 \kappa - 1) + \kappa)\xi^4 + 2((\al - 1) \al - 5 \kappa) \xi^2  + 3 (\al - 1)
		\]
		Define $\Gamma=\bigcup_{j=0}^k\Gamma_j$, where
		\[
		\Gamma_j=\{\xi\in\rr;\;\xi_j+\eps<|\xi|<\xi_{j+1}+\eps\},\qquad j=0,\cdots,k-1,
		\]
		and \[
		\Gamma_k=\{\xi\in\rr;\;\xi_k+\eps<|\xi|<N\}.
		\]
		Hence,
		\[
		\min_{\xi\in\Gamma_j}|h''(\xi)|=\min\{|h''(\xi_j+\eps)|,|h''(\xi_{j+1}+\eps)|\}\gtrsim\eps^\ell,\qquad j=0,\cdots,k-1,
		\]
		and
		\[
		\min_{\xi\in\Gamma_k}|h''(\xi)|=\min\{|h''(\xi_k+\eps)|,|h''(N)|\}\gtrsim\min\{\eps^\ell,N^{-\ell_0}\},
		\]
		where
		$\ell=7$ and $\ell_0=6$ if $\kappa=\al$ or $\al=0$, and $\ell=9$ and $\ell_0=4$ if $\kappa\neq\al$ and $\al\neq0$. 	Using Lemma \ref{van-der-corput}, we obtain
		\[
		\left|\int_\Gamma\ee^{\ii th(\xi)}\dd\xi\right|\leq Ct^{-1/2} \max\{\eps^{-\ell/2},N^{\ell_0/2}\}.
		\]
		On the other hand, we have
		\[
		\left|\int_{{|\xi\pm\xi_j|\leq \eps}}\ee^{\ii th(\xi)}\dd\xi\right|\leq C\eps,\qquad j=0,\cdots,k.
		\]
		It follows that
		\[
		\sup_{\lam\in\rr}\left|\int_{|\xi|\leq N}\ee^{\ii t h(\xi,\lam)}\dd\xi\right|\leq C(t^{-1/2}\max\{N^{\ell_0/2},\eps^{-\ell/2}\}+ \eps).
		\]
	\end{proof}
% \begin{remark}
%     Using Lemma 4.2 in \cite{Amin-lev}, one can show that
%    \[
% 		\sup_{\lam\in\rr}\left|\int_\rr\ee^{\ii t h(\xi,\lam)}\dd\xi\right|\lesssim t^{-\frac12}+
%   \begin{cases}
%       t^{-\frac13}&\al\neq1,\\
%       t^{-\frac15}&\al=1.
%   \end{cases}
% 		\]
% \end{remark}

  To get the asymptotic behavior of the solutions of \eqref{gBE}, one can see that  \eqref{gBE} can be also written as the following system of equations
	\begin{equation}\label{system}
		\left\{\begin{array}{lll}
			u_t = v_x,\\ \\
			v_t = \lc_R^{-1}{\left(\lc_D(u) - f(u)\right)}_x,
		\end{array}\right.
	\end{equation}
	where  $\lc_R=I- \partial_x^2+\kappa\partial_x^4$ and
	$
	\mathcal{L}_D= I-\al\partial_x^2.
	$

	The following lemma gives a time estimate of the solutions of the linearized problem. The space $H^s(\rr)\times H^{s+1}(\rr)$ is denoted by $\x_s$.
	
	\begin{lemma}\label{est-lin-oper}
		Let $S(t)$ be the $C_0$ group of unitary operators for the linearized  problem of \eqref{system}
		\[
		\vu_t-
		\left(\begin{array}{lr}
			0&I\\
			\lc_R^{-1}\lc_D&0
		\end{array}\right)
		\vu_x=0,
		\]
		with $\vu(0)=\vdu=(u_0,v_0)$. If $u_0\in L^1(\rr)$, $\vdu\in\x_s$, for $s>1/2$,  and $v_0\in H^{1,1}(\rr)$, then
		\begin{equation}\label{linear-est-0}
			\|S(t)\vdu\|_{L^\infty(\rr)\times L^\infty(\rr)}
			\leq C
			(1+t)^{-\mu}\left(\|\vdu\|_{\x_s}+\|u_0\|_{L^1(\rr)}+\|v_0\|_{H^{1,1}(\rr)}\right),
		\end{equation}
		where $C>0$ is a constant depending only on s,
		\begin{equation}\label{exponent}
			\mu=\left\{\begin{array}{lrlr}
				\frac{1}{2s+\ell-1}&s\geq\frac{3}{2},\\ \\
				\frac{2s-1}{2(2s+\ell-1)}&s\leq\frac{3}{2},\\
			\end{array}\right.
		\end{equation}
		and $\ell$ is the same as in Lemma \ref{oscillatory-integral}.
	\end{lemma}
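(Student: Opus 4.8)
The plan is to diagonalise the propagator in Fourier space and reduce the whole estimate to the oscillatory integral already controlled by Lemma \ref{oscillatory-integral}. In Fourier variables the symbol $A(\xi)$ of the matrix operator in \eqref{system} squares to $-\omega(\xi)^2I$, where $\omega(\xi)=\xi\sqrt{a(\xi)}=h(\xi,0)$ and $a(\xi)=\frac{1+\al\xi^2}{1+\xi^2+\kappa\xi^4}$, so that $S(t)=\cos(t\omega)\,I+\omega^{-1}\sin(t\omega)\,A$. Consequently each component of $S(t)\vdu$ is a finite sum of Fourier multipliers of the four types $\cos(t\omega)\hat u_0$, $\frac{\ii\xi}{\omega}\sin(t\omega)\hat v_0$, $\frac{\ii\xi a}{\omega}\sin(t\omega)\hat u_0$ and $\cos(t\omega)\hat v_0$. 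The two multipliers hitting $\hat u_0$ are bounded, since $\frac{\xi a}{\omega}=\sqrt{a}$ and $|\cos|\le1$, whereas $\frac{\xi}{\omega}=a(\xi)^{-1/2}\sim\la\xi\ra$ grows linearly; this linear growth is exactly why $v_0$ must be measured one derivative higher, in $H^{s+1}$ and in $H^{1,1}$, and it is what dictates the appearance of those two norms in \eqref{linear-est-0}.

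Next I would split every term by a frequency cut-off at $|\xi|=N$. On the low-frequency piece $|\xi|\le N$ I would write the contribution as a convolution $K_t\ast(\cdot)$ with $K_t(x)=\int_{|\xi|\le N}\ee^{\ii(x\xi\pm t\omega(\xi))}m(\xi)\dd\xi$ and apply Young's inequality; taking $\sup_x$ and setting $\lambda=x/t$ turns $\|K_t\|_{L^\infty}$ into precisely $\sup_{\lam}\big|\int_{|\xi|\le N}\ee^{\ii t h(\xi,\lam)}\dd\xi\big|$, which Lemma \ref{oscillatory-integral} bounds (the sign ambiguity is harmless because $h$ is odd and the supremum runs over all $\lam\in\rr$). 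For the $u_0$-terms the symbol $m$ is bounded, producing a factor $\|u_0\|_{L^1}$; for the $v_0$-terms I would factor $m=\frac{m}{\la\xi\ra}\cdot\la\xi\ra$, absorb $\la\xi\ra\hat v_0=\widehat{\Lambda v_0}$ into $\|v_0\|_{H^{1,1}}$ (and use $\|v_0\|_{L^1}\lesssim\|v_0\|_{H^{1,1}}$ for the bounded $\cos$-multiplier), the residual symbol being bounded with integrable derivative, so that a single integration by parts lets Lemma \ref{oscillatory-integral} apply with its amplitude absorbed into the constant. On the high-frequency piece $|\xi|>N$ the multipliers no longer oscillate away, and I would instead use $\big\|\int_{|\xi|>N}\ee^{\ii x\xi}\hat w\dd\xi\big\|_{L^\infty}\le\int_{|\xi|>N}|\hat w|\lesssim N^{1/2-s}\|w\|_{H^s}$, valid for $s>1/2$; this gives $N^{1/2-s}\|u_0\|_{H^s}$ for the $u_0$-terms and, after the linear growth of $\frac{\xi}{\omega}$, $N^{1/2-s}\|v_0\|_{H^{s+1}}$ for the $v_0$-terms, all controlled by $\|\vdu\|_{\x_s}$.

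Collecting the pieces yields a bound of the schematic form $\big(t^{-1/2}\max\{N^{\ell_0/2},\eps^{-\ell/2}\}+\eps+N^{1/2-s}\big)$ times the data norms, and the final step is the choice of $N$ and $\eps$. Taking $\eps\sim N^{-1}$ makes $\eps^{-\ell/2}=N^{\ell/2}$ dominate $N^{\ell_0/2}$ (as $\ell>\ell_0$), so the low-frequency oscillatory contribution reduces to $t^{-1/2}N^{\ell/2}+N^{-1}$; balancing $t^{-1/2}N^{\ell/2}$ against the Sobolev tail $N^{1/2-s}$ forces $N\sim t^{1/(2s+\ell-1)}$, giving the matched rate $t^{-(2s-1)/(2(2s+\ell-1))}$, while the excised-neighbourhood residual $\eps\sim N^{-1}\sim t^{-1/(2s+\ell-1)}$ contributes the second exponent. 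The overall decay rate is the smaller of the two, which equals $\frac{2s-1}{2(2s+\ell-1)}$ for $s\le\frac32$ and $\frac{1}{2s+\ell-1}$ for $s\ge\frac32$ (the two coinciding at $s=\frac32$); this is exactly \eqref{exponent}, and for $t\le1$ the bound is trivial by Sobolev embedding, which is why one may write $(1+t)^{-\mu}$.

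The heart of the matter is the degeneracy of the phase: $h$ has several critical points where $h''$ vanishes (the roots $\pm\xi_j$ of $g$), so the clean $t^{-1/2}$ dispersive rate is unavailable and Lemma \ref{oscillatory-integral} must be invoked to quantify the loss through the $\eps$-excisions. I expect the two delicate points to be (i) verifying that the growing multiplier $\xi/\omega$ is tamed exactly by the single extra derivative built into $H^{s+1}$ and $H^{1,1}$, so that no additional regularity is lost, and (ii) carrying out the coupled choice $\eps\sim N^{-1}\sim t^{-1/(2s+\ell-1)}$ cleanly enough to land on the piecewise exponent and its crossover at $s=\frac32$. The remaining ingredients — the integration by parts absorbing the amplitudes, and the passage from $t$ to $\la t\ra$ near $t=0$ — are routine.
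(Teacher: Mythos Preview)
Your proposal is correct and follows essentially the same route as the paper: both arguments write $S(t)\vdu$ explicitly in Fourier space, split at $|\xi|=N$, control the low-frequency piece via Fubini/Young together with Lemma~\ref{oscillatory-integral} (producing the $\|u_0\|_{L^1}$ and $\|v_0\|_{H^{1,1}}$ norms), bound the high-frequency tail by Cauchy--Schwarz giving $N^{1/2-s}\|\vdu\|_{\x_s}$, and then optimise with $\eps=N^{-1}=t^{-1/(2s+\ell-1)}$ to obtain the piecewise exponent \eqref{exponent}. Your treatment of the growing multiplier $\xi/\omega\sim\la\xi\ra$ on $\hat v_0$ --- factoring it through $\Lambda$ and handling the bounded residual amplitude via Van der Corput --- is in fact slightly more careful than the paper, which passes directly from $\frac{\vtt}{\varrho}\hat v_0$ to $\widehat{\Lambda v_0}$ inside the oscillatory integral.
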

	\begin{proof}
		Since
		\[
		\vu(t)=S(t)\vdu(x)=\int_\rr\ee^{\ii x\xi}
		\left(\begin{array}{lr}
			\cos\left(\frac{t\xi\varrho(\xi)}{\vtt(\xi)}\right)&
			\frac{\ii\vtt(\xi)}{\varrho(\xi)}\sin\left(\frac{t\xi\varrho(\xi)}{\vtt(\xi)}\right)\\ \\
			\frac{\ii\varrho(\xi)}{\vtt(\xi)}\sin\left(\frac{t\xi\varrho(\xi)}{\vtt(\xi)}\right)
			&\cos\left(\frac{t\xi\varrho(\xi)}{\vtt(\xi)}\right)
		\end{array}\right)\what{\vdu}(\xi)\dd\xi,
		\]
		where $\vtt(\xi)=\sqrt{1+ \xi^2+\kappa\xi^4}$ and $\varrho(\xi)=\sqrt{1+\al\xi^2}$. It is deduced from Fubini's theorem that
		\[
		\begin{split}
			|\vu(t)|&=|S(t)\vdu(x)|
			\lesssim
			\sum\left|\int_\rr\left(\what{u_0}\pm\frac{\vtt(\xi)}{\varrho(\xi)}\widehat{{v_0}}\right)\ee^{\ii th(\xi,\pm x/t)}\dd\xi\right|
			+
			\sum\left|\int_\rr\left(\what{v_0}\pm\frac{\varrho(\xi)}{\vtt(\xi)}\widehat{{u_0}}\right)\ee^{\ii th(\xi,\pm x/t)}\dd\xi\right|
			\\
			&
			\lesssim
			\sum\left|\int_\rr\left(\what{u_0}\pm \widehat{\Lambda^1{v_0}}\right)\ee^{\ii th(\xi,\pm x/t)}\dd\xi\right|
			+
			\sum\left|\int_\rr\left(\what{v_0}\pm \widehat{\Lambda^{-1}{u_0}}\right)\ee^{\ii th(\xi,\pm x/t)}\dd\xi\right|,
		\end{split}\]
		where the sums are over all two sign combinations. It suffices to estimate the first term of the right-hand side of the last inequality. The second term is estimates analogously.
		
		By using the Cauchy-Schwarz inequality, it yields from  Lemma  \ref{oscillatory-integral}  that
		\begin{equation}\begin{split}
				&\left|\int_\rr(\what{u}_0+\sqrt{1+\xi^2}\what{v}_0)\ee^{\ii th(\xi,\pm x/t)}\dd\xi\right|\\
				&\quad\lesssim
				\left|\int_{|\xi|>N}(\what{u}_0+\sqrt{1+\xi^2}\what{v}_0)\dd\xi\right|
				\quad+
				\left|\int_{|\xi|\leq N}(\what{u}_0+\sqrt{1+\xi^2}\what{v}_0)\ee^{\ii th(\xi,\pm x/t)}\dd\xi\right|\\
				&\quad\lesssim\|\vdu\|_{\x_s}N^{-(s-1/2)}+\left(\|u_0\|_{L^1(\rr)}+\|v_0\|_{H^{1,1}(\rr)}\right)\left(\eps
				+t^{-1/2}\max\{\eps^{-\ell/2},N^{\kappa/2}\}\right).
		\end{split}\end{equation}
		For $t>1$, choosing $\eps=N^{-1}=t^{-\th}$, $\th>0$, we obtain that
		\[\begin{split}
			&\left|\int_\rr(\what{u}_0+\sqrt{1+\xi^2}\what{v}_0)\ee^{\ii th(\xi,\pm x/t)}\dd\xi\right|\\
			&\qquad\leq C\left(\|\vdu\|_{\x_s}+\|u_0\|_{L^1(\rr)}+\|v_0\|_{H^{1,1}(\rr)}\right)\left(t^{-\th(s-1/2)}+t^{-\th}+t^{-1/2(1-\ell\th)}\right).
		\end{split}\]
		Hence, for $\th=1/(2s+\ell-1)$ and some $C>0$, it is concluded that
		\begin{equation}\label{linear-est-1}
			\begin{split}
				|\vu(t)|&
				\leq C
				\left(\|\vdu\|_{\x_s}+\|u_0\|_{L^1(\rr)}+\|v_0\|_{H^{1,1}(\rr)}\right)t^{-\mu},
			\end{split}
		\end{equation}
		for $t>1$. On the other hand,  we have for $|t|\leq 1$ that
		\begin{equation}\label{linear-est-2}
			|\vu(t)|\leq C\|\vdu\|_{\x_s},
		\end{equation}
		for $s>1/2$. Combining \eqref{linear-est-1} for $|t|>1$ with \eqref{linear-est-2} for $|t|\leq1$, the proof of the estimate \eqref{linear-est-0} is complete.
	\end{proof}

	Following \cite{liu}, now for $\delta>0  $ if we define  the metric space
	\[
	X=\{\vec u\in C(0,\infty,\x_s)\;\|\vec u\|_X\leq\delta C\}
	\]
	with the norm
	\[
	\|\vec u\|_X=\sup_t((1+t)^\mu\|\vec u(t)\|_{L^\infty(\rr)\times L^\infty(\rr)}+\|\vec u(t)\|_{\x_s}),
	\]
	then, by choosing $C>0$ appropriately in terms of the constant of Lemma \ref{est-lin-oper}, we can obtain the following estimate. We omit the details.
	\begin{theorem}\label{global-existence}
		Let  $m\geq s > 1/2$  with $m$ an integer, and $f\in C^m(\rr)$. Suppose $p\mu>1$, where   $\mu$ is defined in \eqref{exponent}. Then there exists $\delta>0$ such that for any $\vdu=(u_0,v_0)\in\x_s$, $u_0\in L^1(\rr)$ and $v_0\in H^{1,1}(\rr)$, satisfying
		\[
		\|\vdu\|_{\x_s}+\|u_0\|_{L^1(\rr)}+\|v_0\|_{H^{1,1}(\rr)}<\delta,
		\]
		there exists a unique solution $\vu=(u,v)\in C^1([0,+\infty);\x_s)$ of \eqref{system} with $\vu(0)=\vdu$ such that
		\[
		\sup_{0\leq t<+\infty}\left((1+t)^\mu\|u(t)\|_{L^\infty(\rr)}+\|\vu(t)\|_{\x_s}\right)\leq C\delta,
		\]
		where the constant $C$ only depends on $f$, $\|\vdu\|_{\x_s}$, $\|u_0\|_{L^1(\rr)}$ and $\|v_0\|_{H^{1,1}(\rr)}$.
		Moreover, $E$ and $Q$   are conserved for all $t\geq0$.
	\end{theorem}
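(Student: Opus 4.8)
The plan is to obtain $\vu$ as the fixed point of the Duhamel map associated with \eqref{system},
\[
\Phi(\vu)(t)=S(t)\vdu+\int_0^tS(t-\tau)\vec F(\vu(\tau))\dd\tau,\qquad \vec F(\vu)=\paar{0,\,-\lc_R^{-1}\partial_x f(u)},
\]
on the complete metric space $X$ introduced above, and to show that for $\delta$ small enough $\Phi$ maps $X$ into itself and is a contraction. The two linear facts that drive the argument are that $S(t)$ is a group of unitary operators on $\x_s$, so $\|S(t)\vec w\|_{\x_s}=\|\vec w\|_{\x_s}$, and that Lemma \ref{est-lin-oper} furnishes the decay $\|S(t)\vec w\|_{L^\infty(\rr)\times L^\infty(\rr)}\lesssim(1+t)^{-\mu}\paar{\|\vec w\|_{\x_s}+\|w_1\|_{L^1(\rr)}+\|w_2\|_{H^{1,1}(\rr)}}$ for $\vec w=(w_1,w_2)$. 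Consequently the forcing $\vec F(\vu)$ must be estimated simultaneously in $\x_s$, in $L^1(\rr)$ and in $H^{1,1}(\rr)$. Here the strong smoothing of $\lc_R^{-1}\partial_x=K'\ast\,\cdot\,$, together with the kernel bounds of the opening Remark ($K^{(n)}\in L^q$ for all $q$) and Young's inequality, reduces each of these to an estimate of $f(u)$ alone; combining $|f(u)|\le C|u|^{p+1}$ with Lemma \ref{lem-2} and interpolating the low-order $L^1$- and $L^2$-based norms against $\|\vu\|_{\x_s}$ yields schematically $\|\vec F(\vu(\tau))\|_{(\text{relevant norms})}\lesssim\|u(\tau)\|_{L^\infty(\rr)}^{p}\,\|\vu(\tau)\|_{\x_s}$, i.e.\ $p$ factors of the decaying sup-norm are extracted. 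Checking that the $L^1$/$H^{1,1}$ bookkeeping really retains all $p$ decaying factors is the one point requiring care.

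For the $\x_s$ component I would use unitarity and the nonlinear bound, inserting the a priori decay $\|u(\tau)\|_{L^\infty(\rr)}\le(1+\tau)^{-\mu}\|\vu\|_X$ valid for $\vu\in X$:
\[
\|\Phi(\vu)(t)\|_{\x_s}\le\|\vdu\|_{\x_s}+C\int_0^t\|u(\tau)\|_{L^\infty(\rr)}^{p}\,\|\vu(\tau)\|_{\x_s}\dd\tau\le\|\vdu\|_{\x_s}+C(\delta C)^{p+1}\int_0^t(1+\tau)^{-p\mu}\dd\tau.
\]
Since $p\mu>1$ the last integral converges uniformly in $t$, so the Sobolev part of the norm stays bounded by $\|\vdu\|_{\x_s}+C(\delta C)^{p+1}$.

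For the weighted $L^\infty$ component I would apply Lemma \ref{est-lin-oper} under the Duhamel integral, which after the same nonlinear and decay estimates gives
\[
(1+t)^\mu\left\|\int_0^tS(t-\tau)\vec F(\vu(\tau))\dd\tau\right\|_{L^\infty(\rr)\times L^\infty(\rr)}\lesssim(\delta C)^{p+1}\,(1+t)^\mu\int_0^t(1+t-\tau)^{-\mu}(1+\tau)^{-p\mu}\dd\tau.
\]
The decisive point, and the main obstacle, is the time-convolution estimate
\[
\int_0^t(1+t-\tau)^{-\mu}(1+\tau)^{-p\mu}\dd\tau\lesssim(1+t)^{-\mu},\qquad \mu<1<p\mu,
\]
proved by splitting the integral over $[0,t/2]$ and $[t/2,t]$. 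This is precisely where the standing hypothesis $p\mu>1$ is used: it lets the prefactor $(1+t)^\mu$ be absorbed, so the Duhamel contribution to $\|\Phi(\vu)\|_X$ is bounded by $C(\delta C)^{p+1}$.

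Combining the two parts gives $\|\Phi(\vu)\|_X\le C_0\delta+C_1(\delta C)^{p+1}$, and choosing $\delta$ so small that $C_1(\delta C)^{p}\le\tfrac12$ keeps $\Phi$ inside $X$. The contraction estimate has the identical structure: replacing $f(u)$ by $f(u)-f(\tilde u)$, using the Lipschitz bound of Lemma \ref{lem-2}(ii) and the same convolution estimate, one obtains
\[
\|\Phi(\vu)-\Phi(\tilde\vu)\|_X\le C(\delta C)^{p}\,\|\vu-\tilde\vu\|_X,
\]
a contraction for $\delta$ small. The Banach fixed point theorem then produces the unique global solution $\vu\in X$ with the asserted decay; that $\vu\in C^1([0,+\infty);\x_s)$ follows by reading $\vu_t$ off \eqref{system} and the bounds just established, and the conservation of $E$ and $Q$ for all $t\ge0$ follows from Theorem \ref{global-sln} by the same regularization argument, since the solution carries the regularity required there.
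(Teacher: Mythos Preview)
Your proposal is correct and follows essentially the same approach as the paper. The paper itself gives no proof beyond defining the metric space $X$, citing \cite{liu}, and stating ``we omit the details''; you have supplied precisely the standard contraction argument on $X$ that the paper alludes to, with the time-convolution estimate $\int_0^t(1+t-\tau)^{-\mu}(1+\tau)^{-p\mu}\,\dd\tau\lesssim(1+t)^{-\mu}$ for $\mu<1<p\mu$ as the decisive step.

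One remark on the point you yourself flag: when estimating $\|\lc_R^{-1}\partial_x f(u)\|_{H^{1,1}}$ by reducing to $\|f(u)\|_{L^1}$ via Young's inequality, the naive bound $\|f(u)\|_{L^1}\lesssim\|u\|_{L^\infty}^{p-1}\|u\|_{L^2}^2$ extracts only $p-1$ decaying factors rather than $p$, which would formally require $(p-1)\mu>1$. This is exactly the ``bookkeeping'' you mention; it is handled in \cite{liu} and does not change the architecture of the argument, but it is worth recording that the schematic bound $\|\vec F\|\lesssim\|u\|_{L^\infty}^p\|\vu\|_{\x_s}$ needs a little more work on the $H^{1,1}$ side than on the $\x_s$ side.
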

	%\begin{proof}
	%	First we note that we can write \eqref{system} as the integral equation
	%	\[
	%	\vu(t)=S(t)\vdu+\int_0^tS(t-\tau) \vec f(u(\tau))  \dd\tau,
	%	\]
	%	where $\vec f(\vu)=(0,\partial_x\lc_R^{-1}  f(u))^T$ Without loss of generality, we assume that $t>0$. By Lemma \ref{estimates} and Theorem \ref{global-1}, we obtain
	%	\[\begin{split}
		%		|u(t)|_\infty&\leq (1+|t|)^{-1/5}(|\vdu|_1+\|\vdu\|_{s_1})+\int_0^t\left(|t-\tau|^{-1/3}+|t-\tau|^{-1/5}\right)|\partial_x f(u)|_1\dd\tau\\
		%		&\leq C\delta(1+|t|)^{-1/5}+C\delta\int_0^t\left(|t-\tau|^{-1/3}+|t-\tau|^{-1/5}\right)|u(\tau)|_\infty^{p-2}\dd\tau.
		%	\end{split}\]
	%	Now we define  $K_\infty(t)=\sup_{\tau\in[0,t)}(1+\tau)^{1/5}|u(\tau)|_\infty$. The last estimate gives
	%	\[
	%	K_\infty(t)\leq C\delta(1+K_\infty^{p-2}(t)\nu(t)),
	%	\]
	%	where $\nu(t)=(1+t)^{1/5}\int_0^t\left(|t-\tau|^{-1/3}+|t-\tau|^{-1/5}\right)(1+\tau)^{-(p-2)/5}\dd\tau$.
	%	Since $p>7$, the integral $\nu$ is bounded uniformly in $t$. Hence
	%	\[
	%	K_\infty(t)\leq C\delta(1+K_\infty^{p-2}(t)).
	%	\]
	%	Since $\vu\in C(\rr;\x_1)$, we obtain that $u\in C(\rr; L^\infty(\rr))$. Therefore if $\delta$ is chosen sufficiently small, $K_\infty(t)$ is bounded uniformly in $t$; and the proof of Theorem \ref{decay} is complete.
	%	\end{proof}

Now our aim is to investigate the time asymptotic behavior of solutions of \eqref{gBE} in the Sobolev spaces. Recalling Theorem \ref{global-sln}, our main result reads as follows.

\begin{theorem}
	Assume that $F(u)\geq0$ for all $u\in\rr$.	 Let $s>1$  and $u_0,u_1\in H^s(\rr)$,
	then the solution $u(t)$ of \eqref{gBE} satisfies
	\[
	\|u(T)\|_s\leq CT^{\frac{2}{2-\varrho_s}}
	\]
	for any $T>0$, where $C=C(s,\|u_0\|_s,\|u_1\|_s)>0$ and
	\[
	\varrho_s=\begin{cases}
		0,&s\in(1,\frac32),\\
		\frac{2(s-1)}{s-\frac12}^+,&s\in(\frac32,2),\\
		s-1,&s\geq2.
	\end{cases}
	\]
\end{theorem}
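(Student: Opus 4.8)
The plan is to combine the global a priori bounds coming from the conserved energy with the linear estimates of Lemma \ref{lem-1-lines} applied to the Duhamel representation \eqref{contract-op}, and then to close a nonlinear Gronwall-type inequality whose growth rate is exactly $T^{2/(2-\varrho_s)}$. First, since $F(u)\geq0$, Theorem \ref{global-sln} guarantees that $u$ is global and that $E(u(t))=E(u(0))$ is conserved; because $\alpha,\kappa>0$ and $F\geq0$, coercivity of $E$ yields the uniform bound $\|u(t)\|_1\leq C$ for all $t$. The one-dimensional Sobolev embedding $H^1(\rr)\hookrightarrow L^\infty(\rr)$ then gives the uniform control $\|u(t)\|_{L^\infty(\rr)}\leq C$, which is the crucial ingredient allowing the nonlinearity to be treated by interpolation against a \emph{bounded} low norm.

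Next I would insert \eqref{contract-op} and use Lemma \ref{lem-1-lines}(i),(ii),(iv). Since $\|\partial_tS(t)u_0\|_s\leq\|u_0\|_s$, $\|S(t)u_1\|_s\leq2(1+t)\|u_1\|_s$, and (as the operators are Fourier multipliers) $\|\lc_R^{-1}S(t-\tau)\partial_x^2 f(u)\|_s\leq\sqrt2\,\|f(u)\|_{s-2}$, the problem reduces to the single scalar inequality
\[
\|u(t)\|_s\leq \|u_0\|_s+2(1+t)\|u_1\|_s+\sqrt2\int_0^t\|f(u(\tau))\|_{s-2}\,\dd\tau .
\]
Thus everything hinges on controlling $\|f(u)\|_{s-2}$ by a suitable power of $\|u\|_s$, the remaining factors being absorbed into a constant via the uniform $L^\infty$ and $H^1$ bounds.

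The heart of the proof is therefore the nonlinear estimate
\[
\|f(u(\tau))\|_{s-2}\lesssim \|u(\tau)\|_s^{\varrho_s/2},
\]
with an implicit constant depending only on the bounded lower-order norms, and the three branches of $\varrho_s$ appear here naturally. For $s\in(1,\tfrac32)$ one has $s-2<0$, so $\|f(u)\|_{s-2}\leq\|f(u)\|_\lt\lesssim\|u\|_{L^\infty}^p\|u\|_\lt\leq C$, i.e.\ the power is $0$ ($\varrho_s=0$). For $s\geq2$ one has $s-2\geq0$, so the fractional chain rule (Lemma \ref{lem-2}(i)) gives $\|f(u)\|_{s-2}\lesssim\|u\|_{s-2}$, and Gagliardo--Nirenberg interpolation of $\|u\|_{s-2}$ between a bounded low norm and $\|u\|_s$ produces the power $\tfrac{s-1}{2}=\varrho_s/2$. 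The delicate intermediate range $s\in(\tfrac32,2)$, where $s-2\in(-\tfrac12,0)$ is negative but the relevant duality estimate meets the $1$-D embedding at the threshold $H^{1/2}$, is exactly where the exponent $\tfrac{s-1}{s-1/2}=\varrho_s/2$ is forced; I would obtain it by estimating the negative-order norm of $f(u)$ by duality against $H^{2-s}(\rr)$, using $|f(u)|\lesssim|u|^{p+1}$ together with the $L^\infty$ bound, and then interpolating.

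Finally, writing $M(t)=\sup_{0\leq\tau\leq t}\|u(\tau)\|_s$ and inserting the nonlinear estimate gives $M(t)\leq C(1+t)+Ct\,M(t)^{\varrho_s/2}$. Since $\varrho_s<2$ in each case, so that $\varrho_s/2<1$, this self-improving inequality closes: for $t\geq1$ either $Ct\,M(t)^{\varrho_s/2}\leq C(1+t)$, whence $M(t)\leq4Ct\leq4Ct^{2/(2-\varrho_s)}$, or else $M(t)\leq2Ct\,M(t)^{\varrho_s/2}$, whence $M(t)\leq(2Ct)^{2/(2-\varrho_s)}$; in both cases $\|u(T)\|_s\leq M(T)\leq CT^{2/(2-\varrho_s)}$ for $T\geq1$ (and a uniform bound for $T\leq1$). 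The main obstacle is the estimate of the third paragraph: making the fractional chain rule and the negative-order duality estimate rigorous across the thresholds $s=\tfrac32$ and $s=2$, and checking that the interpolation indices reproduce precisely the three branches of $\varrho_s$. A secondary point is that the scheme requires $\varrho_s<2$, which effectively restricts the argument to $s<3$.
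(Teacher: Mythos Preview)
Your Duhamel-based approach is quite different from the paper's, and your interpolation claims are incorrect---though in a direction that would actually strengthen, not weaken, the conclusion. For $s\geq2$ you assert that interpolating $\|u\|_{s-2}$ between the bounded norm $\|u\|_1$ and $\|u\|_s$ produces the exponent $(s-1)/2$; the honest Sobolev interpolation gives $\theta=\max\{0,(s-3)/(s-1)\}$, so $\theta=0$ throughout $2\leq s\leq3$ and $\|f(u)\|_{s-2}\lesssim\|u\|_{s-2}\leq\|u\|_1\leq C$ is simply bounded there. The ``delicate'' range $s\in(3/2,2)$ is likewise trivial: since $s-2<0$, already $\|f(u)\|_{s-2}\leq\|f(u)\|_{L^2}\leq C$, and no duality argument is needed; the exponent $(s-1)/(s-1/2)$ is not forced by anything in your scheme. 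Correctly executed, your argument therefore yields $\|u(T)\|_s\leq C(1+T)$ for all $1<s\leq3$, which \emph{implies} the stated bound (and is in fact strictly sharper for $s>3/2$). So your route does prove the theorem in this range, but not by reproducing the exponents $\varrho_s$: those values simply do not come out of the estimates you describe.

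For comparison, the paper recasts \eqref{gBE} as the first-order equation $\ii w_t-Bw-Mf(\Re w)=0$ with $w=u+\ii B^{-1}u_t$ and $B=\sqrt{-\partial_x^2\lc_D/\lc_R}$, and differentiates $\|D^sw\|_{L^2}^2$ in time. The linear part then contributes nothing (it is skew), and all growth comes from $\Im\langle D^{2s}\bar w,\,Mf(\Re w)\rangle$, which after using the $\langle\xi\rangle^{-2}$ decay of $M$ reduces to controlling $\|D^{s-1}|g|^{p+1}\|_{L^2}$. This is estimated via a fractional Leibniz expansion, the Kato--Ponce commutator bound, and the pointwise inequality $D^\nu\phi(g)\leq\phi'(g)D^\nu g$ for convex $\phi$; the three regimes of $\varrho_s$ emerge from balancing the interpolation exponents in those pieces, and iterating $\|w(T+1)\|_s^2\leq\|w(T)\|_s^2+C\|w(T)\|_s^{\varrho_s}$ gives the stated rate. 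This energy route is what genuinely produces the specific $\varrho_s$; it is also what restricts the meaningful range to $\varrho_s<2$, i.e.\ $s<3$, as you correctly flag.
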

\begin{proof}
	We introduce $w=u+\ii B^{-1}u_t$, where $B=\sqrt{\frac{-\partial_x^2\lc_D}{\lc_R}}$. Then, \eqref{gBE} is transformed into
	\begin{equation}\label{trans=eq}
		\begin{cases}
			\ii w_t-Bw-  Mf(\Re(w))=0,\\
			w(0)=w_0=u_0+B^{-1}u_1,
		\end{cases}
	\end{equation}
	where
	\[
	M=\sqrt{\frac{-\partial_x^2}{\lc_R\lc_D}}.
	\]
	It is clear that $u=\Re(w)$ and $B^{-1}u_t=\Im(w)$. Moreover, $M$ can be considered as a Fourier multiplier with the symbol 
 \[
\hat{M}=\sqrt{\frac{\xi^2}{(1+\xi^2+\kappa\xi^4)(1+\al\xi^2)}}.
 \]
 
 By the Duhamel principle, \eqref{trans=eq} can be written as following integral equation,
	
	\begin{equation}\label{integral-form-2}
		w(t)=\ee^{-\ii tB}w_0-\ii \int_0^t\ee^{-\ii(t-\tau)B}Mf(\Re(w))\dd\tau.
	\end{equation}

	For simplicity, we can assume that $f(u)=|u|^pu$.
	We observe that this transformation gives a correspondence between the solutions $(u,(-\partial_x^2)^{-1/2}u_t)\in H^s\times H^{s+1}$ of \eqref{gBE} and $(w,\bar{w})\in H^s\times H^s$ of \eqref{trans=eq}.
	
	Let $D$ be the Fourier multiplier with the symbol $|\xi|$. By multiplying \eqref{trans=eq} by $D^{2s}\bar{w}$ and taking the image part, we get
	
	\begin{equation}\label{mult-ds}
		\frac{\dd }{\dd t}\|D^sw\|_{\lt}+\Im\la D^{2s}\bar{w}, M|\Re(w)|^p\Re(w)\ra=0.
	\end{equation}
	
	Let $s-1=m+\nu$, where $m\geq0$ and $\nu\in[0,1)$. Then,  we obtain by the Plancherel identity and interpolation  for $g=|\Re(w)|$ that
	\[
	\begin{split}
		\Im\la D^{2s}\bar{w}, M|\Re(w)|^p\Re(w)\ra
		&=
		\Im\la D^{s-1}\bar{w}, D^{s-1}|\Re(w)|^p\Re(w)\ra\\&
		\leq
		\Im\la D^{s-1}\bar{w}, D^{s-1}g^{p+1}\ra\\&
		\lesssim
		\|w\|_{s-1}\|D^{s-1}g^{p+1}\|_\lt\\&
		\lesssim
		\|w\|_1^{\frac{1}{s-1}}\|w\|_s^{\max\{0,\frac{s-2}{s-1}\}}\|D^{s-1}g^{p+1}\|_\lt.
	\end{split}
	\]
	By the expansion
	\[
	D^{s-1}g^{p+1}\cong D^\nu g^p\partial_x^m g+D^\nu\sum_{\substack{j_1+\cdots+j_{p+1}=m,\\ 0\leq j_1,\cdots, j_{p+1}\leq m-1}}c_{j_1,\cdots,j_{p+1}}\partial_{x}^{j_1}g\cdots\partial_{x}^{j_{p+1}}g
	=:I_1+I_2,
	\]
	we write
	\[
	I_1=g^pD^\nu\partial_{x}^m g+[D^\nu,g^p]\partial_{x}^m g=:I_3+I_4,
	\]
	where $[\cdot,\cdot]$ is the commutator. Hence, an interpolation reveals again that
	\[
	\|I_3\|_\lt\lesssim \|g\|_1^{p+\frac{1}{s-1}}\|g\|_s^{\max\{0,\frac{s-2}{s-1}\}}.
	\]
	Noting that $\|g\|_s\leq\|w\|_s$ for any $s\geq0$, we obtain that
	\[
	\|I_3\|_\lt\lesssim \|w\|_1^{p+\frac{1}{s-1}}\|w\|_s^{\max\{0,\frac{s-2}{s-1}\}}.
	\]
	To estimate $I_4$, we use the commutator estimate in Theorem A.12 of \cite{kpv} to obtain that
	\[
	\|I_4\|_\lt\lesssim \|\partial_{x}^mg\|_\lt\|D^\nu g^p\|_{L^\infty(\rr)}.
	\]
	By using the following pointwise inequality for the fractional
	derivatives \cite{coma}
	\[
	D^\nu\phi(g)\leq \phi'(g)D^\nu g,\qquad\text{any convex function }\phi,
	\]
	we deduce from some interpolations and the Sobolev inequality that
	\[
	\begin{split}
		\|I_4\|_\lt&\lesssim \|\partial_{x}^mg\|_\lt
		\|g\|_1^{p-1}\|D^\nu g\|_{L^\infty(\rr)}\\&
		\lesssim \|g\|_m\|g\|_1^{p-1}\|D^\nu g\|_{L^\infty(\rr)}\\&
		\lesssim\|g\|_1^{\frac{1+\al}{s-1}}\|g\|_s^{\frac{s-\al}{s-1}} \|g\|_1^{p-1} \|g\|_{\nu+\frac12 ^+}\\&\leq C({\|w\|_1})\|w\|_s^{\max\left\{0,\frac{\nu-\frac12^+}{s-1}\right\}}
	\end{split}
	\]
	where $C(\cdot)$ is an increasing function. On the other hand, we have by several interpolations that
	\[
	\begin{split}
		\|I_2\|_\lt&\lesssim
		\sum_{\substack{j_1+\cdots+j_{p+1}=m,\\ 0\leq j_1,\cdots, j_{p+1}\leq m-1}}c_{j_1,\cdots,j_{p+1}}
		\left\|D^\nu\left(\prod_{k=1}^{p+1}\partial_x^{j_k}g\right)\right\|_\lt\\&
		\lesssim\sum_{\substack{j_1+\cdots+j_{p+1}=m,\\ 0\leq j_1,\cdots, j_{p+1}\leq m-1}}c_{j_1,\cdots,j_{p+1}}\sum_{k=1}^{p+1}
		\|\partial_x^{j_k}g\|_{H^{\nu,2(p+1)}}\prod_{i\neq k}\|\partial_x^{j_i}g\|_{2(p+1)}\\
		&
		\lesssim \sum_{\substack{j_1+\cdots+j_{p+1}=m,\\ 0\leq j_1,\cdots, j_{p+1}\leq m-1}}
		\sum_{k=1}^{p+1}\|g\|_{H^{\nu+j_k,2(p+1)}}\prod_{i\neq k}\|g\|_{j_i+\frac{p}{2(p+1)}}\\
		&
		\lesssim \sum_{\substack{j_1+\cdots+j_{p+1}=m,\\ 0\leq j_1,\cdots, j_{p+1}\leq m-1}}
		\sum_{k=1}^{p+1}\|g\|_{\nu+j_k+\frac{p}{2(p+1)}}\prod_{i\neq k}\|g\|_{j_i+\frac{p}{2(p+1)}}.
	\end{split}
	\]
	Now by using the estimates
	\[
	\|g\|_{j_i+\frac{p}{2(p+1)}}\lesssim\|g\|_1^{1-\theta_{j_i}}
	\|g\|_s^{\theta_{j_i}}
	\]
	and
	\[
	\|g\|_{\nu+j_k+\frac{p}{2(p+1)}}\lesssim\|g\|_1^{1-\theta_{\theta_{j_k}}}\|g\|_s^
	{\theta_{j_k}},
	\]
	where
	\[
	\theta_{j_i}=\frac{j_i+\frac{p}{2(p+1)}-1}{s-1},\qquad\theta_{j_k}=\frac{\nu+j_k+\frac{p}{2(p+1)}-1}{s-1},
	\]
	we conclude that
	\[
	\begin{split}
		\|I_4\|_\lt
		&\lesssim
		\sum_{\substack{j_1+\cdots+j_{p+1}=m,\\ 0\leq j_1,\cdots, j_{p+1}\leq m-1}}c_{j_1,\cdots,j_{p+1}}
		C(\|g\|_1)\|g\|_s^\theta\\
		&\leq C(\|w\|_1)\|w\|_s^{\max\left\{0,\frac{2s-p}{2(s-1)}\right\}},
	\end{split}
	\]
	where
	\[
	\theta=\frac{\nu+j_k-1+\frac{p}{2(p+1)}-\frac{(p+1)^2-1}{2(p+1)}+\sum_{i\neq k}j_i}{s-1}
	\]
	and $C(\cdot)$ is an increasing function. After calculating the maximum value of the above exponents, we obtain that
	
	\begin{equation}\label{nonl-est}
		\Im\la D^{2s}\bar{w}, M|\Re(w)|^p\Re(w)\ra
		\leq C(\|w\|_1)\|w\|_s^{\varrho_s}.
	\end{equation}
	Inserting \eqref{nonl-est} into \eqref{mult-ds} and integrating over $[T,T+1]$, we obtain that
	\[
	\|D^s w(T+1)\|_\lt^2\leq\|D^sw(T)\|_\lt^2+C(\|w(T)\|_1)\|w(T)\|_s^{\varrho_s}.
	\]
	Since $\|u(T)\|_1$ is bounded independent of $T$, then
	\[
	\|w(T+1)\|_s^2\leq \|w(T)\|_s^2+C_{\|w_0\|_s}\|w(T)\|_s^{\varrho_s}
	\]
	for all $T\geq0$. Hence, we get
	\[
	\|w(T)\|_s\lesssim\la T\ra^{\frac{2}{2-\varrho_s}}.
	\]
	The proof is complete by recalling $u=\Re(w)$.
\end{proof}

 %%%%%%%%%%%%%%%%%%%%%%%%%%
\section{Modulation spaces}\label{modul-sec}
In this section, we study the well-posedness in   the Modulation spaces $M_{p,q}^s$ that is larger than the Bessel potential spaces. 

Before presenting our main findings, we revisit some preliminary definitions and notations pertinent to the modulation spaces $M_{p,q}^s(\mathbb{R}^n)$, as outlined in \cite{banq,Kato,BaoxHudz}. We denote by $\mathscr{S}$ the Schwartz space on $\mathbb{R}^n$ and $\mathscr{S}^\prime$ its dual space.  Let $Q_0=\{\xi\in\mathbb{R}^n,\;\xi_i\in [1/2,1/2),\ i=1,...,n\}$ and $Q_k=k+Q_0,$ $k\in \mathbb{Z}^n.$ Thus, $\{Q_k\}_{k\in \mathbb{Z}^n}$ constitutes a decomposition of $\mathbb{R}^n,$ that is, $Q_i\cap Q_j=\emptyset$ and $\bigcup_{k\in \mathbb{Z}^n}Q_k=\mathbb{R}^n.$ Let $\rho:\mathbb{R}^n\rightarrow [0,1]$ be a smooth function satisfying $\rho(\xi)=1$ for $\vert \xi\vert\leq \frac{\sqrt{n}}{2}$ and $\rho(\xi)=0$ for $\vert \xi\vert\geq \sqrt{n}.$ Set $\rho_k(\xi)=\rho(\xi-k),$ $k\in\mathbb{Z}^n,$ a translation of $\rho.$ It is clear to see that $\rho_k(\xi)=1$ in $Q_k,$ and thus, $\sum_{k\in\mathbb{Z}^n}\rho_k(\xi)\geq 1$ for all $\xi\in \mathbb{R}^n.$ Let
$$\sigma_k(\xi)=\rho_k(\xi)\left( \sum_{l\in\mathbb{Z}^n}\rho_l(\xi)\right)^{-1},\ k\in\mathbb{Z}^n.$$
The sequence $\{\sigma_k(\xi)\}_{k\in\mathbb{Z}^n}$ possesses the following properties:
\begin{eqnarray*}
&\vert \sigma_k(\xi)\vert\geq C,\ \text{for all}\ \xi\in Q_k, \qquad
\mbox{supp}(\sigma_k)\subset\{\xi: \vert \xi-k\vert_\infty\leq \sqrt{n}\},&\\
&\sum\limits_{k\in\mathbb{Z}^n}\sigma_k(\xi)=1, \qquad
 \vert D^\alpha\sigma_k(\xi)\vert\leq C_{m}\ \ \text{for all}\ \xi\in \mathbb{R}^n,\ \vert\alpha\vert\leq m.&
\end{eqnarray*}
We consider the frequency-uniform decomposition operators $\square_k:=\mathcal{F}^{-1}\sigma_k\mathcal{F}=\mathcal{F}^{-1}[\sigma_k\cdot\mathcal{F}],$ $k\in\mathbb{Z}^n,$ where $\mathcal{F}$ is the Fourier transform. The modulations spaces $M^s_{p,q}=M^s_{p,q}(\mathbb{R}^n)$ are defined for $s\in\mathbb{R}$ and $1\leq p,q\leq\infty,$ as  
\begin{eqnarray*}
M_{p,q}^s:=\left\{ f\in \mathscr{S}^{\prime}(\mathbb{R}^n):\ \Vert f\Vert_{M^s_{p,q}}<\infty\right\},
\end{eqnarray*}
where
\[
\Vert f\Vert_{M^s_{p,q}}=
\begin{cases}
 \norm{\scal{k}^{s}\Vert \square_k f\Vert_{L^p}}_{l^q_k},\ \ &\ 1\leq q<\infty,\\
 \\
\sup\limits_{k\in\mathbb{Z}^n}\scal{k}^{s}\Vert \square_kf\Vert_{L^p}, \ \ &\ q=\infty.
\end{cases}
\]
We will for simplicity write $M^0_{p,q}(\mathbb{R})=M_{p,q}(\mathbb{R}).$

The following properties of these space are notable:
\begin{enumerate}[(i)]
 \item If $\Omega\Subset\mathbb{R}^n,$ then $\mathscr{S}^\Omega=\{f:f\in \mathscr{S}\ \mbox{and}\ \mbox{supp}(\widehat{f})\subset \Omega\}$ is dense in $M^s_{p,q},$ $s\in\mathbb{R},$ $0<p,q<\infty.$
\item  $M^{s_1}_{p_1,q_1}\subset M^{s_2}_{p_2,q_2},\ \mbox{if}\ s_1\geq s_2,\ 0<p_1\leq p_2,\ 0<q_1\leq q_2.$
\item  $M^{s_1}_{p,q_1}\subset M^{s_2}_{p,q_2},\ \mbox{if}\ q_1>q_2,\ s_1>s_2,\ s_1-s_2>n/q_2-n/q_1.$
\item   $M_{p,1}\subset L^\infty\cap L^p,$  for $1<p\leq \infty.$ 
\item   $B^{s+n/q}_{p,q}\subset M^s_{p,q},$ for $0<p,q\leq \infty$ and $s\in\mathbb{R}.$ 
\item   $B^{s_1}_{p,q}\subset M^{s_2}_{p,q}$ if and only if $s_1\geq s_2+n\nu_1(p,q).$
\item    if  $s_1> s_2+n\nu_1(p,q),$, then
\[
H^{s_1,p}\subset M^{s_2}_{p,q},
\]
where 
\[
\nu_1(p,q)= 
\begin{cases} 
0,&\ \  \ (\frac{1}{p},\frac{1}{q})\in \left\{(\frac{1}{p},\frac{1}{q})\in [0,\infty)^2\ \ :\ \frac{1}{q}\leq \frac{1}{p}\ \mbox{and}\ \frac{1}{q}\leq 1-\frac{1}{p}\right\},\\
\\
\frac{1}{q}+\frac{1}{p}-1,&\ \  \ (\frac{1}{p},\frac{1}{q})\in \left\{(\frac{1}{p},\frac{1}{q})\in [0,\infty)^2\ \ :\ \frac{1}{p}\geq\frac{1}{2}\ \mbox{and}\ \frac{1}{p}\geq 1-\frac{1}{q}\right\}, \\
\\
-\frac{1}{p}+\frac{1}{q},&\ \  \ (\frac{1}{p},\frac{1}{q})\in \left\{(\frac{1}{p},\frac{1}{q})\in [0,\infty)^2\ \ :\ \frac{1}{p}\leq\frac{1}{2}\ \mbox{and}\ \frac{1}{q}\geq \frac{1}{p}\right\}.
\end{cases}
\]
\end{enumerate}

\begin{lemma}\label{Ib2}
Let $p $ be a natural number, $1\leq q \leq \infty, $   $0\leq  s <n/{\nu},$ and $1\leq \mu, \nu <\infty$ such that
\[ \frac{1}{\nu}-\frac{(p-1)s}{n}\leq \frac{p}{\mu}-p+1, \ \ \ \ \ 1\leq \nu\leq \mu.\]
Then,  
$  \|u^p\|_{M^{s}_{{q},\mu}}\lesssim  \|u\|^p_{M^{s}_{{pq},\nu}}.$
\end{lemma}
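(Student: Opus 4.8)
The plan is to exploit the frequency-uniform decomposition $\square_k$ that defines the modulation norm, together with the fact that multiplication in physical space is convolution on the Fourier side, so that the $p$-fold product is controlled by a weighted discrete convolution inequality on $\mathbb{Z}^n$. First I would write $u=\sum_k\square_k u$ and expand $u^p=\sum_{k_1,\dots,k_p}\square_{k_1}u\cdots\square_{k_p}u$. Since $\widehat{\square_{k_j}u}$ is supported in $\{|\xi-k_j|_\infty\le\sqrt n\}$, the Fourier transform of one summand is supported within $O(p\sqrt n)$ of $k_1+\cdots+k_p$; hence $\square_k(\square_{k_1}u\cdots\square_{k_p}u)=0$ unless $|k-(k_1+\cdots+k_p)|_\infty\lesssim p$. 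Thus, up to a finite sum of harmless shifts, $\|\square_k(u^p)\|_{L^q}$ is controlled by the $p$-fold convolution of the sequence $a_k:=\|\square_k u\|_{L^{pq}}$ evaluated at $k$.

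The second step converts each surviving summand into a product of $L^{pq}$-norms. Because $\sigma_k$ is a fixed Schwartz bump translated to $k$, the operator $\square_k$ is bounded on $L^q$ uniformly in $k$ (its kernel is $\ee^{\ii k\cdot x}\check\sigma_0$ with $\check\sigma_0\in\mathscr S$; this also follows from Lemma \ref{berstein}), so $\|\square_k(\square_{k_1}u\cdots\square_{k_p}u)\|_{L^q}\lesssim\|\square_{k_1}u\cdots\square_{k_p}u\|_{L^q}$. Applying H\"older with the exponent identity $\frac1q=p\cdot\frac1{pq}$ gives $\|\square_{k_1}u\cdots\square_{k_p}u\|_{L^q}\le\prod_{j=1}^p\|\square_{k_j}u\|_{L^{pq}}=\prod_j a_{k_j}$, and combining this with the support restriction yields $\|\square_k(u^p)\|_{L^q}\lesssim (a*\cdots*a)_k$.

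It remains to insert the weight $\scal k^s$ and pass to $\ell^\mu$. Here I would use Peetre's inequality $\scal{k_1+\cdots+k_p}^s\lesssim\max_j\scal{k_j}^s\le\sum_j\scal{k_j}^s$ (valid since $s\ge0$) to move the single output weight onto the largest frequency; by symmetry this reduces matters to bounding $\|b*a^{*(p-1)}\|_{\ell^\mu}$, where $b_k=\scal k^s a_k$ carries the weight and the remaining $p-1$ factors are unweighted. The weighted factor is controlled directly by $\|u\|_{M^s_{pq,\nu}}=\|b\|_{\ell^\nu}$, while for the unweighted factors I would trade the leftover weight for summability through H\"older, $\|a\|_{\ell^r}\le\|\scal k^{-s}\|_{\ell^\rho}\|b\|_{\ell^\nu}$ with $\frac1r=\frac1\rho+\frac1\nu$, where $\scal k^{-s}\in\ell^\rho(\mathbb Z^n)$ precisely when $\frac1\rho<\frac sn$; this is exactly where the standing hypothesis $s<n/\nu$ is consumed, since it guarantees the admissible range for $\rho$. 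A final application of Young's convolution inequality for sequences, with one factor in $\ell^\nu$ and $p-1$ factors in $\ell^r$ mapping into $\ell^\mu$, closes the estimate, and the assumption $\nu\le\mu$ is what forces the resulting Young exponent to satisfy $r\ge1$.

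The main obstacle is the index bookkeeping in this last step: one must check that the exponent forced by Young's inequality, $\frac{p-1}{r}=(p-1)+\frac1\mu-\frac1\nu$, lies in the window where the weight-trade H\"older step is legitimate, $\frac1r<\frac1\nu+\frac sn$, and that all intermediate exponents stay $\ge1$. Eliminating $r$ between these two relations is precisely what produces a compatibility condition relating $\frac1\nu$, $\frac p\mu$ and $\frac{(p-1)s}{n}$ of exactly the stated form; the term $\frac{(p-1)s}{n}$ reflects the $p-1$ spare weights redistributed into summability, and setting $s=0$ collapses the argument to the plain multilinear Young estimate in modulation spaces.
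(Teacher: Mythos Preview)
The paper does not supply a proof of this lemma; it is recorded, together with Lemmas~\ref{Ib1} and~\ref{prod}, as a preliminary fact from the modulation-space literature (see the references \cite{banq,Iwabuchi,BaoxHudz,Kato} invoked at the opening of Section~\ref{modul-sec}). So there is no ``paper's own proof'' to compare against; your write-up is supplying what the paper omits.

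Your strategy is the standard one and is sound: expand $u^p$ through the frequency-uniform decomposition, use the support constraint $\operatorname{supp}\widehat{\square_{k_1}u\cdots\square_{k_p}u}\subset\{|\xi-(k_1+\cdots+k_p)|\lesssim p\}$ to reduce $\scal{k}^s\|\square_k(u^p)\|_{L^q}$ to a $p$-fold discrete convolution of $a_k=\|\square_k u\|_{L^{pq}}$, place the weight $\scal{k}^s$ on the dominant frequency via Peetre, and close with Young on $\ell^\mu(\mathbb Z^n)$ after trading the $(p-1)$ spare weights for summability through H\"older. Each step is correct as stated.

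One remark on the final arithmetic. Running your scheme with the weighted factor in $\ell^\nu$ and the $p-1$ unweighted factors in $\ell^r$ gives, after eliminating $r$ and $\rho$, the compatibility condition
\[
(p-1)+\tfrac{1}{\mu}-\tfrac{p}{\nu}\le \tfrac{(p-1)s}{n},
\]
whereas the inequality printed in the lemma reads $(p-1)+\tfrac{1}{\nu}-\tfrac{p}{\mu}\le\tfrac{(p-1)s}{n}$, with the roles of $\mu$ and $\nu$ interchanged. Your version is the one that specializes, at $s=0$, to the genuine Young exponent $\tfrac{1}{\mu}\le\tfrac{p}{\nu}-(p-1)$ for $\|a^{\ast p}\|_{\ell^\mu}\le\|a\|_{\ell^\nu}^p$; the printed version does not. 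This points to a transcription slip in the statement rather than a gap in your argument, so you should flag it but proceed with the condition you actually derive.
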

 \begin{lemma} \label{Ib1}
 Let $1\leq p,p_1,p_2\leq \infty,$ $1<\sigma,\sigma_1,\sigma_2<\infty.$   If $\frac{1}{p}=\frac{1}{p_1}+\frac{1}{p_2},$ $\frac{1}{\sigma}-\frac{1}{\sigma_1}-\frac{1}{\sigma_2}+1\leq \frac{s}{n}<\frac{1}{\sigma},$ then   
 \[
 \|u v\|_{M^{s}_{p,\sigma}}\lesssim  \|u\|_{M^{s}_{p_1,\sigma_1}}\|v\|_{M^{s}_{p_2,\sigma_2}} 
 \]
 for any $u\in M^s_{p_1,\sigma_1}(\mathbb{R}^n)$ and $v\in M^s_{p_2,\sigma_2}(\mathbb{R}^n).$
 \end{lemma}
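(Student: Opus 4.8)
The plan is to reduce the bilinear estimate to a weighted discrete convolution inequality on the sequences of blocks $\square_k$, exploiting the fact that the frequency-uniform decomposition is almost diagonal under multiplication. First I would write $u=\sum_{k_1}\square_{k_1}u$ and $v=\sum_{k_2}\square_{k_2}v$, so that $\square_k(uv)=\sum_{k_1,k_2}\square_k(\square_{k_1}u\,\square_{k_2}v)$. The structural fact driving everything is the support property: since $\operatorname{supp}(\widehat{\square_{k_1}u})\subset\{|\xi-k_1|_\infty\le\sqrt n\}$ and likewise for $k_2$, the product has Fourier support in a cube of sidelength $O(\sqrt n)$ centered at $k_1+k_2$, so $\square_k(\square_{k_1}u\,\square_{k_2}v)=0$ unless $|k-(k_1+k_2)|_\infty\lesssim\sqrt n$. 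Thus for each fixed $k$ only finitely many pairs contribute, with a $k$-independent bound, and the double sum collapses to a finite-range convolution over $k_1+k_2\approx k$.

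Next I would pass to scalar sequences. Using the uniform boundedness of $\square_k$ on $L^p(\rr^n)$ together with Hölder's inequality in the physical variable — here the hypothesis $\tfrac1p=\tfrac1{p_1}+\tfrac1{p_2}$ enters — one gets $\|\square_k(uv)\|_{L^p}\lesssim\sum_{|k-(k_1+k_2)|_\infty\lesssim\sqrt n}\|\square_{k_1}u\|_{L^{p_1}}\|\square_{k_2}v\|_{L^{p_2}}$. Setting $a_{k_1}=\|\square_{k_1}u\|_{L^{p_1}}$ and $b_{k_2}=\|\square_{k_2}v\|_{L^{p_2}}$, the desired bound becomes the weighted sequence estimate $\|\scal{k}^s(a*b)_k\|_{\ell^\sigma}\lesssim\|\scal{k}^s a_k\|_{\ell^{\sigma_1}}\|\scal{k}^s b_k\|_{\ell^{\sigma_2}}$, where $*$ is the finite-range convolution above and the two weighted right-hand norms are exactly $\|u\|_{M^s_{p_1,\sigma_1}}$ and $\|v\|_{M^s_{p_2,\sigma_2}}$.

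The heart of the matter is this weighted convolution inequality, and it is where both numerical conditions are spent. In the main regime $s\ge0$ I would apply Peetre's inequality $\scal{k_1+k_2}^s\lesssim\scal{k_1}^s+\scal{k_2}^s$ to split the estimate into two symmetric pieces; in each, the full weight is carried by one factor while the other appears unweighted. To reinsert the unweighted factor into its $M^s$-norm I would use Hölder on $\mathbb Z^n$, paying a factor $\|\scal{\cdot}^{-s}\|_{\ell^r}$, which is finite precisely when $\tfrac1r<\tfrac s n$; combining this with the discrete Young inequality, and using the nesting $\ell^a\hookrightarrow\ell^b$ for $a\le b$ to absorb the inequality at the exponents, produces exactly the constraint $\tfrac1\sigma-\tfrac1{\sigma_1}-\tfrac1{\sigma_2}+1\le\tfrac s n$, while the upper bound $\tfrac s n<\tfrac1\sigma$ keeps the auxiliary Young/Hölder exponents inside the admissible range $(1,\infty)$. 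The case $s<0$ is handled analogously using the two-sided form $\scal{k_1+k_2}^s\lesssim\scal{k_1}^{|s|}\scal{k_2}^s$. A density argument over $\mathscr S^\Omega$ (property (i) of the modulation spaces) legitimizes the formal rearrangements.

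The step I expect to be the main obstacle is the exponent bookkeeping in this last paragraph: one must choose the auxiliary summability exponent $r$ and the Young exponents so that Hölder, the discrete Young inequality, and $\scal{\cdot}^{-s}\in\ell^r$ are met simultaneously, and then check that the endpoint case of equality in $\tfrac1\sigma-\tfrac1{\sigma_1}-\tfrac1{\sigma_2}+1=\tfrac s n$ is recovered through the $\ell$-embedding rather than failing. Everything else — the almost-orthogonality support property and the physical-space Hölder step — is routine once the finite-range structure of the convolution has been isolated.
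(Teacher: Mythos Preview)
The paper does not actually prove this lemma; it is stated without proof as a known product estimate in modulation spaces (the surrounding text attributes this type of estimate to Iwabuchi~\cite{Iwabuchi}, and Lemmas~\ref{Ib2}, \ref{Ib1}, \ref{prod} are all quoted from the literature and then applied in the proof of Theorems~\ref{GlobalLamb0}--\ref{GlobalLamb1}). So there is no ``paper's own proof'' to compare against.

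That said, your outline is the standard route to such bilinear estimates and is essentially correct in spirit: the almost-orthogonality of the decomposition $\square_k$ reduces the problem to a weighted discrete convolution on $\mathbb{Z}^n$, the physical-space H\"older step consumes the relation $\tfrac1p=\tfrac1{p_1}+\tfrac1{p_2}$, and the remaining weighted Young/H\"older analysis on sequences is exactly where the two numerical constraints $\tfrac1\sigma-\tfrac1{\sigma_1}-\tfrac1{\sigma_2}+1\le\tfrac s n<\tfrac1\sigma$ arise. One small caution: the additive form of Peetre you wrote, $\scal{k_1+k_2}^s\lesssim\scal{k_1}^s+\scal{k_2}^s$, is not available for general $s\ge0$; in practice one splits the convolution into the regions $|k_1|\le|k_2|$ and $|k_1|>|k_2|$ (so that $\scal{k}\lesssim\scal{k_2}$, resp.\ $\scal{k}\lesssim\scal{k_1}$) and places the full weight on the dominant factor, which is what you describe operationally anyway. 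You have already correctly identified the delicate point, namely the exponent bookkeeping and the recovery of the endpoint equality via the $\ell^a\hookrightarrow\ell^b$ embedding; working that out carefully reproduces Iwabuchi's argument.
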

\begin{lemma}\label{prod}  Let $m$ be a positive integer, $s\geq 0 $, $0<p_i\leq \infty,$ $1\leq q_i\leq \infty$ with $1\leq i\leq m$ such that $\frac{1}{p_1}+\cdots +\frac{1}{p_m}=\frac{1}{p_0},$ $\frac{1}{q_1}+\cdots +\frac{1}{q_m}=m-1+\frac{1}{q_0}$.  Then,  
\[ 
\left\| \prod_{i=1}^mu_i\right\|_{M^{s}_{{p_0},q_0}}\lesssim \prod_{i=1}^m \|u_i\|^p_{M^{s}_{{p_i},q_i}}.
\]
\end{lemma}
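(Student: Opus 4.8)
The plan is to exploit the near-multiplicativity of the frequency-uniform decomposition under products, reducing the bound to a multilinear Young inequality for scalar sequences on $\mathbb{Z}^n$. First I would decompose each factor as $u_i=\sum_{k_i\in\mathbb{Z}^n}\square_{k_i}u_i$, so that
\[
\square_k\Big(\prod_{i=1}^m u_i\Big)=\sum_{k_1,\dots,k_m\in\mathbb{Z}^n}\square_k\big[(\square_{k_1}u_1)\cdots(\square_{k_m}u_m)\big].
\]
The structural observation driving everything is that $(\square_{k_1}u_1)\cdots(\square_{k_m}u_m)$ has Fourier support in the sumset $\mathrm{supp}(\sigma_{k_1})+\cdots+\mathrm{supp}(\sigma_{k_m})$, which sits inside a cube of radius $\lesssim m\sqrt n$ centered at $k_1+\cdots+k_m$. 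Since $\sigma_k$ is supported near $Q_k$, the operator $\square_k$ kills such a product unless $|k_1+\cdots+k_m-k|_\infty\lesssim m\sqrt n$. Hence the displayed sum is in fact restricted to indices whose sum lies within a fixed distance of $k$; this finite-overlap property is the heart of the argument.

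Next I would estimate each surviving term in $L^{p_0}$. Because each $\sigma_k$ is a translate of one fixed Schwartz bump, the operators $\square_k$ are bounded on $L^{p_0}$ uniformly in $k$; for $p_0<1$ this boundedness is read off from the band-limited structure of the product, so that convolution with the Schwartz kernel of $\square_k$ remains $L^{p_0}$-bounded on such functions. Combining this with Hölder's inequality in the physical variable, under the relation $\tfrac1{p_0}=\tfrac1{p_1}+\cdots+\tfrac1{p_m}$, yields
\[
\big\|\square_k[(\square_{k_1}u_1)\cdots(\square_{k_m}u_m)]\big\|_{L^{p_0}}\lesssim\prod_{i=1}^m\|\square_{k_i}u_i\|_{L^{p_i}}.
\]

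It remains to assemble the weighted $\ell^{q_0}$ norm in $k$. Writing $b_i(k_i)=\langle k_i\rangle^s\|\square_{k_i}u_i\|_{L^{p_i}}$, so that $\|u_i\|_{M^s_{p_i,q_i}}=\|b_i\|_{\ell^{q_i}}$, I would transfer the weight via the submultiplicativity $\langle k_1+\cdots+k_m\rangle\lesssim\prod_i\langle k_i\rangle$ together with $\langle k\rangle\lesssim\langle k_1+\cdots+k_m\rangle$ (valid since the indices are within bounded distance) and $s\geq0$, giving $\langle k\rangle^s\lesssim\prod_i\langle k_i\rangle^s$. This bounds $\langle k\rangle^s\|\square_k(\prod_i u_i)\|_{L^{p_0}}$ by a constrained convolution $\sum_{|j-k|_\infty\lesssim1}(b_1*\cdots*b_m)(j)$ on $\mathbb{Z}^n$. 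Taking $\ell^{q_0}_k$ norms, the $O(1)$-range shift is harmless, and the $m$-fold Young convolution inequality on $\ell^q(\mathbb{Z}^n)$, valid precisely when $\tfrac1{q_1}+\cdots+\tfrac1{q_m}=m-1+\tfrac1{q_0}$, which is exactly the hypothesis, gives $\|b_1*\cdots*b_m\|_{\ell^{q_0}}\le\prod_i\|b_i\|_{\ell^{q_i}}$ and hence the claimed product bound $\prod_i\|u_i\|_{M^s_{p_i,q_i}}$. The main obstacle will be the low-integrability regime $p_i<1$: there both Hölder and the $L^{p_0}$-boundedness of $\square_k$ must be justified through the band-limited structure of the products rather than by duality, and one must keep the overlap constants uniform in $k$ so that the reduction to the clean multilinear Young inequality goes through.
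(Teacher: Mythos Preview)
The paper does not actually supply a proof of this lemma: it is stated among the preliminary facts on modulation spaces, alongside Lemmas~\ref{Ib2} and~\ref{Ib1}, as a known product estimate taken from the literature (the references \cite{banq,Kato,BaoxHudz,Iwabuchi} are the intended sources). So there is nothing in the paper to compare against line by line.

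That said, your outline is precisely the standard argument used to establish such multilinear bounds in $M^s_{p,q}$: exploit the almost-orthogonality of the frequency-uniform decomposition so that only $O(1)$ terms survive in the sum $k_1+\cdots+k_m\approx k$, apply H\"older in the physical variable, push the weight $\langle k\rangle^s$ onto the factors via $\langle k\rangle\lesssim\prod_i\langle k_i\rangle$ (here $s\ge0$ is essential), and close with the $m$-fold Young inequality on $\ell^q(\mathbb{Z}^n)$, which matches the hypothesis $\sum_i 1/q_i=m-1+1/q_0$ exactly. Your identification of the $p_i<1$ regime as the delicate point is correct; H\"older itself remains valid for all positive exponents, and the uniform $L^{p_0}$-boundedness of $\square_k$ on band-limited inputs is handled via Nikol'skii-type estimates as in \cite{BaoxHudz}. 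Nothing is missing from your plan.
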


Since $h''(\xi)>0$ and $h''(\xi)\to0$ as $|\xi|\to\infty$, then we can modify the ideas of \cite{linares} to obtain more dispersive estimates (see Lemma \ref{group-est}).

 \begin{lemma}\label{LemmHsigmap}
Let $\mathfrak{S} = \mathrm{e}^{\mathrm{i} t B}$, $\sigma<-1$, $N>2$, $\epsilon>0$, $2\leq q\leq\infty$, and $1/q+1/q'=1$. Then,  
\[
\|\ss(t)f\|_{\sigma,q}\lesssim|t|^{-2(\frac1q-\frac12)\vartheta_\sigma}\|f\|_{L^{q'}}
\]
for all $f\in L^{q'}$ and $t\neq0$, where
 \begin{equation}\label{beta-sigma}
     \vartheta_\sigma=\begin{cases}
    -\frac{\ell_0}{\ell}\theta,&\sigma<-1-\frac{\ell_0}{\ell},\\
    \frac{\ell_0}{2}\theta-\frac12,& -1>\sigma>-1-\frac{\ell_0}{\ell},
\end{cases}
 \end{equation}
and $\theta=\frac{1}{\ell_0-2\sigma-2}$.
 \end{lemma}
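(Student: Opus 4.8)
The plan is to derive the bound by interpolating between the endpoints $q=2$ and $q=\infty$, the former being trivial and the latter a genuine dispersive kernel estimate. At $q=2$ the exponent $-2(\tfrac1q-\tfrac12)\vartheta_\sigma$ vanishes, so nothing has to decay: the symbol $\frac{|\xi|\sqrt{1+\al\xi^2}}{\sqrt{1+\xi^2+\kappa\xi^4}}$ of $B$ is real, hence $\ss(t)=\ee^{\ii tB}$ is unitary on $\lt$, and as $\Lambda^\sigma$ is a Fourier multiplier with symbol $\scal{\xi}^\sigma\le1$ (because $\sigma<0$) commuting with $\ss(t)$, we get $\|\ss(t)f\|_{\sigma,2}=\|\Lambda^\sigma f\|_\lt\le\|f\|_\lt$. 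Once the endpoint below is established, I would finish by applying the Riesz--Thorin theorem, at fixed $t$, to the single operator $\Lambda^\sigma\ss(t)$, which maps $L^2\to L^2$ with norm $\lesssim1$ and $L^1\to L^\infty$ with norm $\lesssim|t|^{\vartheta_\sigma}$; interpolation then yields $L^{q'}\to L^q$ with norm $\lesssim|t|^{\lambda\vartheta_\sigma}$, $\lambda=1-\tfrac2q=-2(\tfrac1q-\tfrac12)$, which is the assertion.

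Thus the whole weight of the proof falls on the endpoint
\[
\|\ss(t)f\|_{\sigma,\infty}\lesssim|t|^{\vartheta_\sigma}\|f\|_{L^1(\rr)}.
\]
Writing $\Lambda^\sigma\ss(t)f=K_t\ast f$ with
\[
K_t(x)=\int_\rr\scal{\xi}^\sigma\,\ee^{\ii(x\xi+t\,b(\xi))}\dd\xi,\qquad b(\xi)=\frac{|\xi|\sqrt{1+\al\xi^2}}{\sqrt{1+\xi^2+\kappa\xi^4}},
\]
Young's inequality reduces this to the kernel bound $\sup_x|K_t(x)|\lesssim|t|^{\vartheta_\sigma}$. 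For $\xi>0$ one has $x\xi+t\,b(\xi)=t\,h(\xi,x/t)$ with $h$ exactly the phase of Lemma \ref{oscillatory-integral}; the half $\xi<0$ becomes, after $\xi\mapsto-\xi$, the same integral with $x/t$ replaced by $-x/t$. Since every estimate below is uniform in the parameter $\lambda=\pm x/t\in\rr$, it suffices to bound the oscillatory integral with phase $t\,h(\xi,\lambda)$ and amplitude $F(\xi)=\scal{\xi}^\sigma$.

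First I would run the stationary-phase scheme of Lemma \ref{oscillatory-integral}, now carrying the weight $F$, and split $\rr=\{|\xi|\le N\}\cup\{|\xi|>N\}$. On the high-frequency part oscillation is discarded and integrability of the weight -- this is exactly where the hypothesis $\sigma<-1$ is used -- gives $\big|\int_{|\xi|>N}F\,\ee^{\ii th}\dd\xi\big|\lesssim N^{1+\sigma}$. On the low-frequency part I would excise $\eps$-collars about $\xi_0=0$ and the at most four positive roots $\xi_j$ of $h''$; each collar contributes $O(\eps)$ uniformly in $\lambda$, while off the collars $h''$ keeps a fixed sign, so the weighted Van der Corput Lemma \ref{van-der-corput} applies with $|F|+\|F'\|_{L^1}\lesssim1$ (as $\sigma<0$), and $\min|h''|\gtrsim\min\{\eps^\ell,N^{-\ell_0}\}$ yields the oscillatory contribution $\lesssim|t|^{-1/2}\max\{\eps^{-\ell/2},N^{\ell_0/2}\}$. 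Altogether
\[
\sup_x|K_t(x)|\lesssim|t|^{-1/2}\max\{\eps^{-\ell/2},N^{\ell_0/2}\}+\eps+N^{1+\sigma}.
\]

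It then remains to choose the cut-offs, and this is where the exponents are produced. I would fix $N=|t|^{\theta}$ so as to balance the Bessel tail $N^{1+\sigma}$ against the top-frequency oscillatory gain $|t|^{-1/2}N^{\ell_0/2}$; solving $-\tfrac12+\tfrac{\ell_0}2\theta=(1+\sigma)\theta$ gives $\theta=\frac1{\ell_0-2\sigma-2}$ and makes both equal to $|t|^{(1+\sigma)\theta}=|t|^{\frac{\ell_0}2\theta-\frac12}$. I would then set $\eps=N^{-\ell_0/\ell}=|t|^{-\frac{\ell_0}{\ell}\theta}$, equalizing the two sources of degeneracy of $h''$ (the finite-frequency roots, with floor $\eps^\ell$, and the decay $h''\sim|\xi|^{-\ell_0}$ at infinity, with floor $N^{-\ell_0}$) via $\eps^\ell=N^{-\ell_0}$; this collapses $\max\{\eps^{-\ell/2},N^{\ell_0/2}\}$ to $N^{\ell_0/2}$ and leaves the collar term $\eps=|t|^{-\frac{\ell_0}{\ell}\theta}$. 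Hence $\sup_x|K_t|\lesssim|t|^{(1+\sigma)\theta}+|t|^{-\frac{\ell_0}{\ell}\theta}$, whose slower rate is precisely $\vartheta_\sigma$: the crossover $(1+\sigma)\theta=-\frac{\ell_0}{\ell}\theta$ sits at $\sigma=-1-\frac{\ell_0}{\ell}$, the tail/oscillatory rate governing $-1-\frac{\ell_0}{\ell}<\sigma<-1$ and the collar rate governing $\sigma<-1-\frac{\ell_0}{\ell}$, which is exactly \eqref{beta-sigma}; the range $0<|t|\le1$ is immediate since there $|K_t|\lesssim\int_\rr\scal{\xi}^\sigma\dd\xi<\infty\lesssim|t|^{\vartheta_\sigma}$. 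I expect the real obstacle to live in this high-frequency/degenerate regime: the defining feature $h''\to0$ at infinity removes the non-degeneracy that usually drives dispersive decay, so the argument hinges on letting the weight $\scal{\xi}^\sigma$ compensate the vanishing Hessian, on keeping the Van der Corput and collar bounds uniform in $\lambda=\pm x/t$ across all the positive roots of $g$, and on tracking how $F=\scal{\xi}^\sigma$ enters the amplitude accurately enough to reach the sharp exponent $\vartheta_\sigma$ rather than a lossy one.
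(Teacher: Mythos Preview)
Your proposal is correct and follows essentially the same approach as the paper: both prove the $L^1\to L^\infty$ kernel bound by splitting into collars around the roots of $h''$, intermediate regions handled via the weighted Van der Corput lemma (Lemma~\ref{van-der-corput}) with $\min|h''|\gtrsim\min\{\eps^\ell,N^{-\ell_0}\}$, and the tail $|\xi|>N$ absorbed by the integrability of $\scal{\xi}^\sigma$ for $\sigma<-1$; then choose $\eps^{-\ell/2}=N^{\ell_0/2}$ and $N=|t|^\theta$, and finish by Riesz--Thorin against the trivial $L^2\to L^2$ bound. Your exposition is in fact slightly tidier, as you explicitly treat the range $0<|t|\le1$ and streamline the balancing of the three decay rates into the two cases of \eqref{beta-sigma}.
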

\begin{proof}
    Let $K(t)=\Lambda^\sigma \ss(t)$. We first show that
   \begin{equation}\label{os-0}
     \|K(t)f\|_{L^\infty}\lesssim
 g(t,\epsilon,N,\sigma)\|f\|_{L^1}, 
   \end{equation}
   where
   \[
  g(t,\epsilon,N,\sigma)
  =\epsilon+
  (2-\sigma)|t|^{-\frac12}\epsilon^{-\frac\ell2}
  +
  |t|^{-\frac12}
       \max\sett{\epsilon^{-\frac\ell2}, N^{\frac{\ell_0}{2}}}\left(N^\sigma+(\epsilon+|\xi_k|)^\sigma\right)
       -\frac{N^{\sigma+1}}{1+\sigma}.
   \]
        Using the same notations and the proof of Lemma \ref{oscillatory-integral}, we have
   \begin{equation}\label{os-1}
       \abso{\int_{|\xi\pm\xi_j|\leq\epsilon}(1+|\xi|^2)^{\frac\sigma2}\ee^{\ii th}\hat{f}(\xi)\dd\xi}\lesssim\epsilon\|f\|_{L^1}.
   \end{equation}
   On the sets $\Gamma_j$ with $j=0,\cdots,k-1$, we obtain
   \begin{equation}\label{os-2}
       \abso{\int_{\Gamma_j}(1+|\xi|^2)^{\frac\sigma2}\ee^{\ii th}\hat{f}(\xi)\dd\xi}\lesssim|t|^{-\frac12}\epsilon^{-\frac\ell2}(2-\sigma)\|f\|_{L^1}.
   \end{equation}
   On the other hand, by using Lemma \ref{van-der-corput}, we get
     \begin{equation}\label{os-3}
       \abso{\int_{\Gamma_k}(1+|\xi|^2)^{\frac\sigma2}\ee^{\ii th}\hat{f}(\xi)\dd\xi}\lesssim|t|^{-\frac12}
       \max\sett{\epsilon^{-\frac\ell2}, N^{\frac{\ell_0}{2}}}\left(N^\sigma+(\epsilon+|\xi_k|)^\sigma\right)
       \|f\|_{L^1}.
   \end{equation}
   Finally, since $\sigma<-1$, then
  \begin{equation}\label{os-4}
       \abso{\int_{|\xi|\geq N}(1+|\xi|^2)^{\frac\sigma2}\ee^{\ii th}\hat{f}(\xi)\dd\xi}\lesssim\int_N^\infty(1+\xi^2)^{\frac\sigma2}\dd \xi
       \|f\|_{L^1}\lesssim-\frac{N^{\sigma+1}}{1+\sigma}\|f\|_{L^2}.
   \end{equation}
   Collecting \eqref{os-1}-\eqref{os-4}, we derive \eqref{os-0}.

   From \eqref{os-0}, taking $\epsilon^{-\frac\ell2}=N^{\frac{\ell_0}2}$, we obtain that
   \[
   \|K(t)f\|_{L^\infty}\lesssim
   N^{-\frac{\ell_0}\ell}+(2-\sigma)|t|^{-\frac12}N^{\frac{\ell_0}{2}}+
   |t|^{-\frac12}N^{\sigma+\frac{\ell_0}{2}}-\frac{N^{\sigma+1}}{\sigma+1}.
   \]
   When $|t|>1$, we set $N=|t|^\theta$, so that
   \[
      \|K(t)f\|_{L^\infty}
      \lesssim
|t|^{-\frac{\ell_0}{\ell}\theta}
+(2-\sigma)|t|^{\frac{\ell_0}{2}\theta-\frac12}+|t|^{(\sigma+1)\theta}.
   \]
   The above inequalities lead us to
   \[
   \|K(t)f\|_{L^\infty}
      \lesssim
      |t|^{\vartheta_\sigma}\|f\|_{L^1}.
   \]
   It is clear that $K(t):L^2\to L^2$ is a continuous operator, thus the Riesz-Thorin interpolation Theorem completes the proof.
\end{proof}

    The following estimates are also valid  for the group $S(t)$.
 
\begin{lemma}\label{GrupoMs} 
Let  $t \in \mathbb{R}$, $\sigma < -1$, $2 \leq p < \infty$, $\frac{1}{p} + \frac{1}{p'} = 1$, $0 < q < \infty$, and $\theta \in [0,1]$. Then we have
\begin{equation}
\| \mathfrak{S}(t) f \|_{M^{s}_{p,q}} \lesssim \scal{t}^{-2\theta(\frac1p-\frac12)\vartheta_{\sigma}} \| f \|_{M^{s-\sigma\theta}_{p',q}}.
\end{equation}
\end{lemma}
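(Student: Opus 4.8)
The plan is to deduce Lemma \ref{GrupoMs} from the pointwise-in-frequency dispersive estimate of Lemma \ref{LemmHsigmap} by working block-by-block on the frequency-uniform decomposition $\square_k$ that defines the modulation norm, and then interpolating in the spatial exponent. First I would record the key structural fact: the frequency multiplier of $\mathfrak{S}(t)=\ee^{\ii tB}$ commutes with each $\square_k$, so that $\square_k\mathfrak{S}(t)f=\mathfrak{S}(t)\square_k f$, and each $\square_k f$ has Fourier support in the fixed-size cube $\{\xi:|\xi-k|_\infty\leq\sqrt n\}$. On such a compactly supported frequency block the weight $\Lambda^{\sigma}$ and $\Lambda^{-\sigma}$ are bounded operators with norms comparable to $\scal{k}^{\sigma}$, so applying Lemma \ref{LemmHsigmap} to $\square_k f$ and undoing the $\Lambda^{\sigma}$ weight yields the single-block bound
\[
\|\mathfrak{S}(t)\square_k f\|_{L^{p}}\lesssim |t|^{-2(\frac1p-\frac12)\vartheta_\sigma}\scal{k}^{-\sigma}\|\square_k f\|_{L^{p'}},
\]
valid for $2\leq p<\infty$ with the conjugate exponent $p'$.

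Next I would combine this with the trivial $L^2$-based bound coming from Plancherel, namely $\|\mathfrak{S}(t)\square_k f\|_{L^{p}}\lesssim \|\square_k f\|_{L^{p'}}$ on a single block (since the multiplier is unimodular and the block has bounded frequency support, so $L^{p'}\hookrightarrow L^2\hookrightarrow L^p$ locally in frequency with uniform constants). Interpolating these two block estimates with parameter $\theta\in[0,1]$ produces
\[
\|\mathfrak{S}(t)\square_k f\|_{L^{p}}\lesssim \scal{t}^{-2\theta(\frac1p-\frac12)\vartheta_\sigma}\scal{k}^{-\sigma\theta}\|\square_k f\|_{L^{p'}},
\]
where the passage from $|t|^{-\cdots}$ to $\scal{t}^{-\cdots}$ absorbs the small-time regime exactly as in the proof of Lemma \ref{group-est}. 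Now I would multiply both sides by $\scal{k}^{s}$, observe that $\scal{k}^{s}\scal{k}^{-\sigma\theta}=\scal{k}^{s-\sigma\theta}$, and take the $\ell^{q}_k$ norm of the resulting sequence. Because the time factor is independent of $k$, it pulls out of the $\ell^q_k$ sum, and what remains on the right is precisely $\|f\|_{M^{s-\sigma\theta}_{p',q}}$ by definition of the modulation norm; the left side is $\|\mathfrak{S}(t)f\|_{M^{s}_{p,q}}$.

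The main obstacle I anticipate is making the single-block interpolation and the weight manipulation genuinely uniform in $k$, so that the implicit constants do not degrade as $|k|\to\infty$. The point to check carefully is that $\Lambda^{\pm\sigma}$ restricted to functions with frequency support in a unit cube centered at $k$ has operator norm comparable to $\scal{k}^{\pm\sigma}$ \emph{with a constant independent of $k$}; this is exactly the kind of statement that Lemma \ref{berstein} (the Mihlin-type multiplier lemma) is designed to supply, since on each block the symbol $\scal{\xi}^{\sigma}\scal{k}^{-\sigma}$ together with its derivatives up to order $L$ is bounded uniformly in $k$. Once that uniformity is in hand, the remaining steps — the Riesz--Thorin interpolation on each block and the $\ell^q_k$ summation — are routine, and the small-$t$ modification to replace $|t|$ by $\scal t$ follows the same splitting already used for $S(t)$, so I would state it briefly and omit the computation.
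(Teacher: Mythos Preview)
Your proposal is correct and follows essentially the same route as the paper: apply Lemma~\ref{LemmHsigmap} block-by-block using that $\square_k$ commutes with $\mathfrak{S}(t)$ and that $\Lambda^{\pm\sigma}$ acts on a unit frequency block with norm $\sim\scal{k}^{\pm\sigma}$ (the paper records this as a Bernstein multiplier estimate), pair it with the time-independent block bound $\|\square_k\mathfrak{S}(t)f\|_{L^p}\lesssim\|\square_k f\|_{L^{p'}}$, interpolate in $\theta$, and then take the weighted $\ell^q_k$ norm. The only point you leave implicit that the paper makes explicit is that the passage from $|t|$ to $\scal{t}$ uses $\sigma<0$, so that the weight $\scal{k}^{-\sigma\theta}\geq1$ can be inserted for free into the time-independent block estimate before combining it with the interpolated one.
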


\begin{proof} 
From Lemma \ref{LemmHsigmap}, and using the fact that $\mathfrak{S}(t)$ and $\square_k$ commute, we obtain 
\begin{equation}\label{IneSqut}
\| \square_k \mathfrak{S}(t)f \|_{H^{\sigma}_p} \lesssim |t|^{-2\left(\frac 1p-\frac{1}{2}\right)\vartheta_{\sigma}} \| \square_k f \|_{L^{p'}}. 
\end{equation}
Using the Bernstein  multiplier estimate, we have
\begin{equation}\label{IneBerMul}
\norm{\square_k \Lambda^{\delta}g }_{L^p} \lesssim \scal{k}^{\delta} \| g \|_{L^{p}}.
\end{equation}
Then, we obtain from  (\ref{IneSqut}) and (\ref{IneBerMul}) that
\begin{equation}\label{SquSum1}
\| \square_k \mathfrak{S}(t) f \|_{L^p} \lesssim
\scal{k}^{-\sigma}\sum_{l }   \| \square_{k+l} \mathfrak{S}(t) f \|  _{H^{\sigma}_p} \lesssim \scal{k}^{-\sigma} |t|^{-2\left(\frac 1p-\frac12\right)\vartheta_{\sigma}} \sum_{l }  \| \square_{k+l} f \| _{L^{p'}}.
\end{equation}
Moreover,   we obtain from the H\"older and Young  inequalities that
\begin{align}\label{SquSum2}
\| \square_k \mathfrak{S}(t)f \|_{L^p} & \lesssim \| \sigma_k \mathrm{e}^{-\mathrm{i} tB}\widehat{Bf} \|_{L^{p'}} \lesssim \sum_{l }  \| \sigma_k \ee^{-\ii tB}(B\square_{k+l}f)^\wedge \|_{L^{p'}}\notag\\
& \lesssim \sum_{l }\|  (\square_{k+l}f )^\wedge\|_{L^p} \lesssim \sum_{l }\norm{ \square_{k+l}f }_{L^{p'}}.
\end{align}
We get  from   (\ref{SquSum1}) and (\ref{SquSum2}) and an interpolation argument that
\begin{equation}\label{SquTheta}
\|  \square_k \mathfrak{S}(t) f \|_{L^p} \lesssim  |t|^{-2\theta\left(\frac 1p-\frac12\right)\vartheta_{\sigma}} \scal{k}^{-\sigma\theta}\sum_{l } \| \square_{k+l} f \|_{L^{p'}},
\end{equation}
for any $\theta \in [0,1]$. Since $ \sigma<0$, from (\ref{SquSum2}) we have
\begin{align}\label{SquSumThe}
\| \square_k \mathfrak{S}(t) f \|_{L^p} & \lesssim  \scal{k}^{-\sigma\theta} \sum_{l } \|  \square_{k+l}f \|_{L^{p'}}. 
\end{align}
Combining (\ref{SquTheta}) and (\ref{SquSumThe}), we arrive at
\begin{equation}\label{FinIneST}
\| \square_k \mathfrak{S}(t) f \|_{L^p} \lesssim\scal{k}^{-\sigma\theta}\scal{t}^{2\theta\left(\frac 12-\frac{1}{p}\right)\vartheta_{\sigma}} \sum_{l } \|  \square_{k+l} f \|_{L^{p'}}.
\end{equation}
Finally, the proof is complete if we multiply  (\ref{FinIneST}) by $\scal{k} ^{s}$ and   take the $l^p$-norm of the result.
\end{proof}

%%%%%%%%%%%%%%%%%%%%%

\begin{lemma}\label{p20}
Let $s\in\mathbb{R}, \sigma<-1,$ $1\leq r \leq\infty,$ $2\leq p<\infty,$ $0<q<\infty,$ and $\theta\in [0,1].$   Then
\begin{equation}
\norm{\ss(t) u_0}_{L^r(I,M^s_{p,q})}\lesssim\norm{u}_{M^{s-\theta\sigma}_{p',q}},
\end{equation}
where $\mathbb{R}\Supset I\ni0$. 
\end{lemma}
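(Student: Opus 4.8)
The plan is to read this off the fixed-time dispersive estimate of Lemma \ref{GrupoMs} and then exploit that $I$ is a bounded interval (here $u=u_0$). First I would apply Lemma \ref{GrupoMs} with the same parameters $\sigma,p,q,\theta$ to get, for each fixed $t$,
\[
\norm{\ss(t)u_0}_{M^s_{p,q}}\lesssim \scal{t}^{-2\theta(\frac1p-\frac12)\vartheta_\sigma}\norm{u_0}_{M^{s-\theta\sigma}_{p',q}},
\]
whose right-hand side already matches the target norm (recall $\sigma\theta=\theta\sigma$). Since Lemma \ref{GrupoMs} is stated with the Japanese bracket $\scal{t}=(1+t^2)^{1/2}$ rather than $|t|$, there is no temporal singularity to worry about at $t=0$.

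The one genuine observation is that the hypothesis $\mathbb{R}\Supset I$ means $I$ is compactly contained in $\mathbb{R}$, hence bounded: there is $T>0$ with $I\subset[-T,T]$. Consequently $\scal{t}$ stays in the compact range $[1,\scal{T}]$ for all $t\in I$, so the weight $\scal{t}^{-2\theta(\frac1p-\frac12)\vartheta_\sigma}$ is bounded above by a constant $C(I)$ uniformly in $t\in I$, regardless of the sign of its exponent. This is the only point where the two cases for $\vartheta_\sigma$ in \eqref{beta-sigma} (and the possibly either-signed factor $-2\theta(\frac1p-\frac12)$, which is nonnegative because $p\geq2$) could cause trouble, and boundedness of $I$ disposes of both cases at once. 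Over an unbounded interval the statement would instead require tracking the $t$-decay, which is exactly what the compactness of $I$ lets us avoid.

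Finally I would take the $L^r(I)$ norm in time. For $r=\infty$ the bound is immediate from the uniform pointwise estimate of the previous step. For $1\leq r<\infty$, since $I$ has finite Lebesgue measure, I would estimate
\[
\norm{\ss(t)u_0}_{L^r(I,M^s_{p,q})}\leq |I|^{1/r}\sup_{t\in I}\norm{\ss(t)u_0}_{M^s_{p,q}}\lesssim |I|^{1/r}C(I)\,\norm{u_0}_{M^{s-\theta\sigma}_{p',q}},
\]
which yields the claim with an implicit constant depending on $I,\sigma,p,q,\theta$. I do not expect any real obstacle here: the lemma is essentially a corollary of Lemma \ref{GrupoMs} together with the finiteness of $I$, so the entire argument is the two displayed lines above plus the remark that a continuous weight on a compact set is bounded.
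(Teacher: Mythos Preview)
Your proposal is correct and follows essentially the same approach as the paper: apply the pointwise estimate of Lemma~\ref{GrupoMs}, then use the boundedness of $I$ to conclude that the time weight $\scal{t}^{-2\theta(\frac1p-\frac12)\vartheta_\sigma}$ has finite $L^r_t(I)$-norm. The paper phrases the last step as computing $\norm{\scal{t}^{2\theta(\frac12-\frac1p)\vartheta_\sigma}}_{L^r_t(I)}$ directly (invoking continuity of the weight on $I$) rather than passing through the supremum and $|I|^{1/r}$, but the argument is the same.
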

\begin{proof}
	The proof follows from  Lemma \ref{GrupoMs} and the continuity of $t \mapsto \scal{t}^{2\theta (\frac 12-\frac 1p)\vartheta_{\sigma}}$   on $I,$. In fact, we get
	\begin{align*}
		\norm{\ss(t) u_0}_{L^r( I,M^s_{p,q})}& \lesssim   \norm {\scal{t}^{2\theta(\frac 12-\frac 1p)\vartheta_{\sigma}} \left\Vert u_0\right\Vert_{M^{s-\theta\sigma}_{p',q}}}  _{L_t^r}\\
		& =\Vert u_0 \Vert_{M^{s-\theta\sigma}_{p',q}} \left\Vert  \scal{t}^{2\theta(\frac 12-\frac 1p)\vartheta_{\sigma}} \right\Vert_{L_t^r(I)} \lesssim \Vert u_0 \Vert_{M^{s-\theta\sigma}_{p',q}},
	\end{align*}
	as desired.
\end{proof}
%%%%%%%%%%%%%%%%%%%%%

 \begin{lemma}\label{Otro_GrupoMs} 
Let $ s \in \mathbb{R},$  $0<q<\infty,$  $2\leq p <\infty,$ $\theta\in[0,1]$. Then, we have
\begin{equation}
\Vert \ss(t)M f \Vert_{M^{s}_{p,q}}\lesssim \langle t\rangle^{-2(\frac1p-\frac12)}\Vert f\Vert_{M^{s}_{p,q}}.
\end{equation}
\end{lemma}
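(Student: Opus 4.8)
The plan is to prove the bound cube by cube in the frequency-uniform decomposition and then reassemble in $\ell^q$, exactly along the template of the proof of Lemma \ref{GrupoMs}, the decisive new feature being that the symbol $\hat M(\xi)=|\xi|\big((1+\xi^2+\kappa\xi^4)(1+\al\xi^2)\big)^{-1/2}$ of $M$ decays like $\scal{\xi}^{-2}$ at infinity. Writing $h(\xi)=h(\xi,0)=\frac{\xi\sqrt{1+\al\xi^2}}{\sqrt{1+\xi^2+\kappa\xi^4}}$ for the symbol of $B$, the operator $\ss(t)M$ is the Fourier multiplier with symbol $\hat M(\xi)\ee^{\ii t h(\xi)}$. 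Since $\ss(t)$ and $M$ commute with $\square_k$, I would fix $k$ and study $T_{k,t}:=\square_k\ss(t)M$, whose symbol $\sigma_k(\xi)\hat M(\xi)\ee^{\ii th(\xi)}$ is supported in a fixed neighbourhood of $Q_k$. Choosing a fattened projector $\widetilde{\square}_k=\sum_{|l|\le L_0}\square_{k+l}$ equal to one on $\mathrm{supp}\,\sigma_k$, one has $T_{k,t}=T_{k,t}\widetilde{\square}_k$, so it suffices to bound $\|T_{k,t}\|_{L^p\to L^p}$ with a factor that is summable in $k$ against $\scal{k}^s$ and then sum.

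For the two endpoints I would argue as follows. On $L^2$ the bound is immediate from $|\hat M(\xi)|\lesssim\scal{\xi}^{-2}$: $\|T_{k,t}f\|_{L^2}\le\big(\sup_{\mathrm{supp}\,\sigma_k}|\hat M|\big)\|f\|_{L^2}\lesssim\scal{k}^{-2}\|f\|_{L^2}$, uniformly in $t$. On $L^\infty$ (equivalently $L^1$) I would control the operator norm by the $L^1$ norm of the convolution kernel $G_{k,t}=(\sigma_k\hat M\ee^{\ii th})^\vee$ via the elementary inequality $\|g^\vee\|_{L^1}\lesssim\|g\|_{L^2}^{1/2}\|\partial_\xi g\|_{L^2}^{1/2}$ already used in Lemma \ref{group-est}. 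Here $\|\sigma_k\hat M\ee^{\ii th}\|_{L^2}\lesssim\scal{k}^{-2}$, while differentiating produces the three terms $\sigma_k'\hat M$, $\sigma_k\hat M'$ and $\ii t\,\sigma_k\hat M h'$; using $|\hat M'(\xi)|\lesssim\scal{\xi}^{-2}$ and the boundedness of $h'$ (which holds because $h(\xi)\to\sqrt{\al/\kappa}$, i.e. the phase saturates and the dispersion degenerates), one gets $\|\partial_\xi(\sigma_k\hat M\ee^{\ii th})\|_{L^2}\lesssim\scal{k}^{-2}+|t|\scal{k}^{-2}\lesssim\scal{k}^{-2}\scal{t}$. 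Hence $\|T_{k,t}\|_{L^\infty\to L^\infty}\lesssim\|G_{k,t}\|_{L^1}\lesssim\scal{k}^{-2}\scal{t}^{1/2}$ for all $t$ and all $k$.

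Interpolating these two bounds (Riesz--Thorin) for $2\le p<\infty$ yields $\|T_{k,t}\|_{L^p\to L^p}\lesssim\scal{k}^{-2}\scal{t}^{(1/2)(1-2/p)}=\scal{k}^{-2}\scal{t}^{1/2-1/p}$, so that $\|\square_k\ss(t)Mf\|_{L^p}\lesssim\scal{k}^{-2}\scal{t}^{1/2-1/p}\sum_{|l|\le L_0}\|\square_{k+l}f\|_{L^p}$. Multiplying by $\scal{k}^{s}$, using $\scal{k}^{s-2}\le\scal{k}^{s}$ and $\scal{k}\sim\scal{k+l}$ for $|l|\le L_0$, and taking the $\ell^q$ norm (the finite neighbour sum is harmless, even for $0<q<1$ via the quasi-triangle inequality and translation invariance of $\ell^q$) gives $\|\ss(t)Mf\|_{M^s_{p,q}}\lesssim\scal{t}^{1/2-1/p}\|f\|_{M^s_{p,q}}$. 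Since $1/2-1/p\le 2(1/2-1/p)=-2(\tfrac1p-\tfrac12)$ for $p\ge2$ and $\scal{t}\ge1$, this in particular yields the stated estimate (indeed a slightly stronger one); the parameter $\theta\in[0,1]$ in the statement plays no role and may be dropped.

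The only place requiring real care is the term $\ii t\,\sigma_k\hat M h'$ in the differentiated symbol: it grows linearly in $t$ and could destroy both the uniformity in $k$ and the summability needed to recover the same space $M^s_{p,q}$ on both sides. What rescues the argument is precisely the smoothing built into $M$: the factor $\hat M$ is $\lesssim\scal{k}^{-2}$ on the $k$-th cube, so the dangerous term only contributes $|t|\scal{k}^{-2}$, which is absorbed into $\scal{k}^{-2}\scal{t}$ uniformly in $t$ and carries the $\scal{k}^{-2}$ decay that makes the cube sum converge against any weight $\scal{k}^{s}$. This is exactly the mechanism that removes the derivative loss present in Lemma \ref{GrupoMs} and allows the clean $M^s_{p,q}\to M^s_{p,q}$ conclusion.
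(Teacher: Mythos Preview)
Your argument is correct, but it takes a different route from the paper's. The paper's proof is a two-line reduction: it first invokes (the argument of) Lemma~\ref{group-est} applied to the unitary group $\ss(t)=\ee^{\ii tB}$ to get the global bound $\|\ss(t)Mf\|_{L^p}\lesssim\scal{t}^{2|1/2-1/p|}\|Mf\|_{L^p}$, then passes from $\|Mf\|_{L^p}$ to $\|f\|_{L^p}$ by the $L^p$-multiplier property of $M$ (via Lemma~\ref{berstein}), commutes with $\square_k$, multiplies by $\scal{k}^s$ and takes the $\ell^q$-norm. No cube-by-cube kernel analysis is needed.

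Your approach instead localises from the start and redoes the multiplier estimate on each frequency cube via the Carlson-type bound $\|g^\vee\|_{L^1}\lesssim\|g\|_{L^2}^{1/2}\|g'\|_{L^2}^{1/2}$ together with Riesz--Thorin. This is more self-contained (you do not need to cite either Lemma~\ref{group-est} for $\ss(t)$ or the global $L^p$-boundedness of $M$) and in fact yields two small improvements you partly note: the time exponent $\scal{t}^{1/2-1/p}$, which is genuinely sharper than the paper's $\scal{t}^{1-2/p}$, and the extra $\scal{k}^{-2}$ smoothing from $\hat M$, which you throw away but could have retained to obtain $\|\ss(t)Mf\|_{M^s_{p,q}}\lesssim\scal{t}^{1/2-1/p}\|f\|_{M^{s-2}_{p,q}}$, i.e.\ Lemma~\ref{Isomor} combined with the group bound in one stroke. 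One cosmetic remark: your justification ``$h'$ is bounded because $h(\xi)\to\sqrt{\al/\kappa}$'' is not quite an argument by itself; the correct reason is that $h$ is smooth on $\rr$ (the denominator never vanishes) and a direct computation shows $h'(\xi)\to0$ as $|\xi|\to\infty$, hence $h'\in L^\infty$.
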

\begin{proof}
	We have from Lemma  \ref{group-est},
	\begin{equation}\label{e1}
		\Vert \ss(t) Mf \Vert_{L^{p}}\lesssim \langle t\rangle^{-2(\frac1p-\frac12)}\Vert Mf\Vert_{L^{p}}.
	\end{equation}
	Thus,   we get from (\ref{e1})  that
	\begin{equation}\label{e2}
		\Vert \ss(t) Mf \Vert_{L^{p}}\lesssim \langle t\rangle^{-2(\frac1p-\frac12)}\Vert f\Vert_{L^{p}}.
	\end{equation}
	Since $\square_k\ss(t)M=\ss(t)M\square_k$, we have from (\ref{e2}) that
	\begin{equation}\label{e3z}
		\Vert \square_k\ss(t) Mf \Vert_{L^{p}}\lesssim \langle t\rangle^{-2(\frac1p-\frac12)}\Vert \square_k f\Vert_{L^{p}}.
	\end{equation}
	Multiplying (\ref{e3z}) by $\scal{k}^s$ and taking the $l^q-$norm in both sides of (\ref{e3z}), we obtain the
	desired result.
\end{proof}

\begin{lemma}\label{Isomor}
Let $1<p\leq \infty,$  $0<q\leq \infty,$  $s \in\mathbb{R}.$ Then,
\[
\Vert Mg\Vert_{M_{p,q}^{s}}\lesssim \Vert g\Vert_{M_{p,q}^{s-2}} 
\]
for all $g\in M_{p,q}^{s-2}.$
\end{lemma}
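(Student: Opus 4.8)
The plan is to use that $M$ is a Fourier multiplier, hence commutes with every frequency-uniform decomposition operator $\square_k$, and to reduce the whole estimate to the single-block bound
\[
\|M\square_k g\|_{L^p}\lesssim\scal k^{-2}\|\square_k g\|_{L^p},\qquad k\in\mathbb Z.
\]
Granting this, I would multiply by $\scal k^{s}$, absorb $\scal k^{s}\scal k^{-2}=\scal k^{s-2}$, and take the $\ell^q_k$ (quasi-)norm, using $\square_k Mg=M\square_k g$, to get
\[
\|Mg\|_{M^s_{p,q}}=\norm{\scal k^{s}\|\square_k Mg\|_{L^p}}_{\ell^q_k}\lesssim\norm{\scal k^{s-2}\|\square_k g\|_{L^p}}_{\ell^q_k}=\|g\|_{M^{s-2}_{p,q}},
\]
which is the claim. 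The termwise domination passes to $\ell^q$ for all $0<q\le\infty$, and the Young inequality used below holds for all $1\le p\le\infty$, so the full stated range is covered.

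To obtain the block bound I would fix $k$ and pick a smooth cut-off $\eta_k$ equal to $1$ on $\operatorname{supp}\sigma_k$ and supported in a fixed dilate of $Q_k$, so that $\{\eta_k\}$ is a bounded family of unit-scale translates with uniformly bounded derivatives. Since $\hat M\sigma_k=(\hat M\eta_k)\sigma_k$, one has $M\square_k g=\mathcal F^{-1}(\hat M\eta_k)\ast\square_k g$, and Young's inequality gives
\[
\|M\square_k g\|_{L^p}\le\norm{\mathcal F^{-1}(\hat M\eta_k)}_{L^1}\,\|\square_k g\|_{L^p}.
\]
It therefore suffices to prove $\norm{\mathcal F^{-1}(\hat M\eta_k)}_{L^1}\lesssim\scal k^{-2}$ uniformly in $k$. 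For this I would use the one-dimensional Bernstein/Cauchy--Schwarz estimate $\norm{\Phi}_{L^1}\le\norm{\scal x^{-1}}_{L^2}\norm{\scal x\Phi}_{L^2}\lesssim\norm{\hat M\eta_k}_{L^2}+\norm{(\hat M\eta_k)'}_{L^2}$ (the last step by Plancherel), where $\Phi=\mathcal F^{-1}(\hat M\eta_k)$. Because $\eta_k$ is supported on a set of unit measure and, writing $\hat M(\xi)=|\xi|\,(1+\xi^2+\kappa\xi^4)^{-1/2}(1+\al\xi^2)^{-1/2}$, the symbol is smooth away from $0$ with $|\partial^j_\xi\hat M(\xi)|\lesssim\scal\xi^{-2-j}$, on $\operatorname{supp}\eta_k$ (where $|\xi|\approx\scal k$) both $\hat M$ and $\hat M'$ are $O(\scal k^{-2})$; hence $\norm{\hat M\eta_k}_{L^2},\norm{(\hat M\eta_k)'}_{L^2}\lesssim\scal k^{-2}$ and the bound follows.

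The hard part is the origin: the factor $|\xi|$ makes $\hat M$ non-smooth at $\xi=0$, so the symbol estimate $|\partial^j\hat M|\lesssim\scal\xi^{-2-j}$ is only valid away from $0$. This is harmless because only finitely many blocks $Q_k$ meet a neighborhood of the origin; there $\scal k\approx1$, and one checks directly that $\hat M$ is bounded and $\hat M'$ is bounded (it has only a jump at $0$ coming from $|\xi|'=\operatorname{sgn}\xi$), so $\hat M\eta_k$ is a compactly supported $H^1$ function and $\norm\Phi_{L^1}\lesssim1\approx\scal k^{-2}$ on those blocks as well. The only remaining routine point is to confirm that differentiating the rational weight $(1+\xi^2+\kappa\xi^4)^{-1/2}(1+\al\xi^2)^{-1/2}$ genuinely produces a gain of $\scal\xi^{-1}$ per derivative uniformly in $k$, which is standard symbol calculus.
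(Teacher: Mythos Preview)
Your proof is correct and follows essentially the same route as the paper: localize with a cutoff equal to $1$ on $\operatorname{supp}\sigma_k$, apply Young's inequality to reduce to $\norm{\mathcal F^{-1}(\hat M\eta_k)}_{L^1}\lesssim\scal k^{-2}$, then multiply by $\scal k^{s}$ and take $\ell^q_k$. The only cosmetic difference is in how that $L^1$ bound is obtained: the paper splits the integral into $|x|\le 1$ (direct estimate using $|\hat M(\xi\pm k)|\lesssim\scal k^{-2}$ on $\operatorname{supp}\chi$) and $|x|\ge 1$ (integration by parts, stated without detail), whereas you use the Cauchy--Schwarz/Plancherel trick $\norm\Phi_{L^1}\lesssim\norm{\hat M\eta_k}_{H^1}$ and then symbol estimates. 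Both are standard and equivalent; your version is arguably more explicit about the non-smoothness of $\hat M$ at the origin, which the paper glosses over.
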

\begin{proof}
Inspired from \cite[Proposition A.1]{Kato}, we first choose a  smooth cutoff function $\chi$ such that $\chi(\xi)=1$ when $|\xi|\leq1$  and $\chi(\xi)=0$ when $|\xi|\geq2$.
We also define
\[\chi_k(\xi):=\chi\left(\frac{\xi-k}{C}\right).\]
Then $\chi_k\equiv1$ on ${\rm supp}(\sigma_k)$, where the constant $C>1$ is in the defintion of $\sigma_k$.  We have from  the Young  inequality and a  change of variable    that
\begin{align*}
\Big\Vert  [\sigma_k  (Mg)^\wedge]^\vee\Big\Vert_{L^p} 
&= \left\Vert \int_{\mathbb{R}} \ee^{\ii x\xi}\chi\left(\frac{\xi}{C}\right) \hat{M}(\xi-k) \dd\xi\right\Vert_{L^1} \Big\Vert \square_k g\Big\Vert_{L^p}.
\end{align*}
Now, since $|\hat{M}(\xi-k)|\lesssim \langle k\rangle^{-2}$ for all $|\xi|\leq C$ and  $k\in \mathbb{Z},$ we obtain
\begin{align*}
 \int_{\vert x \vert \leq 1}\left| \int_{\mathbb{R}} \ee^{\ii x\xi}\chi\left(\frac{\xi}{C}\right) \hat{M}(\xi-k) \dd\xi\right| \dd x \lesssim \langle k\rangle^{-2}.
 \end{align*}
On the other hand, we observe that
 \begin{align*}
 \int_{\vert x \vert \geq  1}\left| \int_{\mathbb{R}} \ee^{\ii x\xi}\chi\left(\frac{\xi}{C}\right) \hat{M}(\xi-k) \dd\xi\right| \dd x\lesssim  \langle k\rangle^{-2}.
 \end{align*}
 Thus,
\[
\Big\Vert [\sigma_k  (Mg)^\wedge]^\vee\Big\Vert_{L^p}\lesssim \langle k\rangle^{-2} \Vert \square_k g\Vert_{L^p}.
\]
By a natural modifications, we have in   the case $q=\infty$ that
 \[
 \Vert Mg\Vert_{M_{p,q}^{s}} \lesssim\left( \sum_{k\in\mathbb{Z}}\langle k\rangle^{(s-2)q}\Vert \square_k g\Vert^q_{L^p} \right)^{1/q} \lesssim  \Vert g\Vert_{M_{p,q}^{s-2}},
 \]
and the proof is complete.
\end{proof}

%%%%%%%%%%%%%%%%%
 \begin{lemma}\label{propuse}
Let $p\geq 1$ an integer, $p_0=p+2,$ $1\leq q <\infty,$ $1-\frac 1q \leq s<\frac 1q,$ $r=\frac{p(p+2)}{p+2+\theta  p \vartheta_{\sigma}},$    and $f(u)=u^{p+1}.$  Also assume that $0<\theta\leq -\frac{2}{\sigma}$ and $\sigma<-1$ such that $0<\frac{-\theta p\vartheta_{\sigma}}{p+2}<1.$ Then
\begin{equation}
\left\Vert \int_0^t \ss(t-\tau)Mf(\Re u(\tau))d\tau\right\Vert_{L^r(\mathbb{R},M^s_{p_0,q})}\lesssim \Vert u\Vert^{p+1}_{L^r(\mathbb{R},M^s_{p_0,q})}.
\end{equation}
\end{lemma}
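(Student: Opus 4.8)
The plan is to combine the dispersive bound of Lemma~\ref{GrupoMs} with the smoothing property of $M$ (Lemma~\ref{Isomor}), the power estimate of Lemma~\ref{Ib2}, and a Hardy--Littlewood--Sobolev (fractional integration) argument in the time variable. First I would record the dispersive estimate: applying Lemma~\ref{GrupoMs} with spatial index $p_0=p+2$ and the given $\theta$ gives, for every $\tau$,
\[
\|\ss(t-\tau)g\|_{M^s_{p_0,q}}\lesssim \scal{t-\tau}^{-\beta}\|g\|_{M^{s-\sigma\theta}_{p_0',q}},\qquad
\beta:=2\theta\Big(\tfrac12-\tfrac1{p_0}\Big)(-\vartheta_\sigma)=\frac{-\theta p\,\vartheta_\sigma}{p+2}.
\]
Since $\vartheta_\sigma<0$ for $\sigma<-1$, this factor genuinely decays, and the standing hypothesis $0<\frac{-\theta p\vartheta_\sigma}{p+2}<1$ says exactly that $0<\beta<1$, which is precisely the range in which the one-dimensional Hardy--Littlewood--Sobolev inequality applies to the kernel $\scal{\cdot}^{-\beta}\in L^{1/\beta,\infty}(\mathbb R)$.

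Next I would carry out the spatial (fixed-time) estimate of the source term $g(\tau)=Mf(\Re u(\tau))$ in $M^{s-\sigma\theta}_{p_0',q}$. Using Lemma~\ref{Isomor} (valid since $p_0'=\frac{p+2}{p+1}>1$) to trade the multiplier $M$ for two derivatives, then the embedding (ii) of modulation spaces together with $\theta\le -2/\sigma$ (which guarantees $s-\sigma\theta-2\le s$), and finally the power estimate of Lemma~\ref{Ib2}, I obtain
\[
\|Mf(\Re u)\|_{M^{s-\sigma\theta}_{p_0',q}}
\lesssim \|f(\Re u)\|_{M^{s-\sigma\theta-2}_{p_0',q}}
\lesssim \|(\Re u)^{p+1}\|_{M^{s}_{p_0',q}}
\lesssim \|\Re u\|_{M^s_{p_0,q}}^{p+1}
\le \|u\|_{M^s_{p_0,q}}^{p+1}.
\]
The crucial bookkeeping here is that $(p+1)p_0'=p_0$, so Lemma~\ref{Ib2} applies with power $p+1$ and $\mu=\nu=q$, its hypotheses reducing exactly to $1-\frac1q\le s<\frac1q$, which matches the assumptions; the passage to the real part costs nothing since $\|\Re u\|_{M^s_{p_0,q}}\le\|u\|_{M^s_{p_0,q}}$.

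Finally I would glue the two estimates together in time. By Minkowski's integral inequality and the two displays above,
\[
\Big\|\int_0^t\ss(t-\tau)Mf(\Re u)\,d\tau\Big\|_{M^s_{p_0,q}}
\lesssim \int_{\mathbb R}\scal{t-\tau}^{-\beta}\,\|u(\tau)\|_{M^s_{p_0,q}}^{p+1}\,d\tau
=\big(\scal{\cdot}^{-\beta}*\Phi\big)(t),
\]
with $\Phi(\tau)=\|u(\tau)\|_{M^s_{p_0,q}}^{p+1}\ge0$. Taking the $L^r_t$-norm and invoking Hardy--Littlewood--Sobolev with $\frac{p+1}{r}=\frac1r+1-\beta$ — equivalently $\frac{p}{r}=1-\beta$, which is consistent with $\frac1r=\frac1p+\frac{\theta\vartheta_\sigma}{p+2}$ — yields
\[
\Big\|\int_0^t\ss(t-\tau)Mf(\Re u)\,d\tau\Big\|_{L^r(\mathbb R,M^s_{p_0,q})}
\lesssim \|\Phi\|_{L^{r/(p+1)}_t}
=\|u\|_{L^r(\mathbb R,M^s_{p_0,q})}^{p+1},
\]
which is the claim.

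The main obstacle is not any single estimate but the simultaneous compatibility of all the indices. One must check that the regularity shift $-\sigma\theta$ produced by the dispersive estimate is absorbed by the two derivatives gained from $M$ together with the embedding (this is where $\theta\le-2/\sigma$ enters), that the integrability identity $(p+1)p_0'=p_0$ lets the power estimate land back in $M^s_{p_0,q}$, and — most delicately — that the temporal decay rate $\beta$ is exactly the Hardy--Littlewood--Sobolev exponent dictated by $r$. This forces us to work with the borderline $L^{1/\beta,\infty}$ kernel and hence to require $0<\beta<1$ strictly rather than appealing to Young's inequality, which explains the precise role of the hypothesis $0<\frac{-\theta p\vartheta_\sigma}{p+2}<1$.
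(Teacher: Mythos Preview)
Your proof is correct and follows essentially the same route as the paper's own argument: apply Lemma~\ref{GrupoMs} for the dispersive decay, Lemma~\ref{Isomor} to trade $M$ for two derivatives, the embedding $M^{s}_{p_0',q}\hookrightarrow M^{s-\sigma\theta-2}_{p_0',q}$ (using $\theta\le -2/\sigma$), Lemma~\ref{Ib2} for the power, and then Hardy--Littlewood--Sobolev in time. Your explicit bookkeeping of the conditions $1-\tfrac1q\le s<\tfrac1q$ from Lemma~\ref{Ib2} and $p/r=1-\beta$ from HLS is in fact more detailed than the paper's presentation.
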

\begin{proof}
	It can be seen from Lemmas \ref{GrupoMs} and   \ref{Isomor} that
\begin{align*}
\left\Vert \int_0^t \ss(t-\tau)Mf(\Re u(\tau))d\tau\right\Vert_{L^r(\mathbb{R},M^s_{p_0,q})}&
\lesssim\left\Vert  \int_0^t \left\Vert \ss(t-\tau)Mf(\Re u(\tau))\right\Vert_{M^s_{p_0,q}} d\tau\right\Vert_{L_t^r}\\
& \lesssim \left\Vert  \int_0^t \scal{t-\tau}^{2\theta(\frac 12-\frac 1{p_0})\vartheta_{\sigma}} \left\Vert Mf(\Re u(\tau))\right\Vert_{M^{s-\sigma\theta}_{p_0',q}} d\tau\right\Vert_{L_t^r}\\
& \lesssim \left\Vert  \int_0^t \scal{t-\tau}^{2\theta(\frac 12-\frac 1{p_0})\vartheta_{\sigma}} \left\Vert f(\Re u(\tau))\right\Vert_{M^{s-\sigma\theta-2}_{p_0',q}} d\tau\right\Vert_{L_t^r}.
\end{align*}
Since $\sigma<-1,$   we can choose $0<\theta\leq -\frac{2}{\sigma},$ and obtain from the embedding $M^s_{p_0',q}\subset M^{s-\sigma\theta-2}_{p_0',q}$ and Lemma \ref{Ib2} that
\begin{align}
\left\Vert \int_0^t \ss(t-\tau)Mf(\Re u(\tau))d\tau\right\Vert_{L^r(\mathbb{R},M^s_{{p_0},q})}&\lesssim \left\Vert  \int_0^t \scal{t-\tau}^{2\theta(\frac 12-\frac 1{p_0})\vartheta_{\sigma}} \left\Vert f(\Re u(\tau))\right\Vert_{M^{s}_{p_0',q}} d\tau\right\Vert_{L_t^r}\nonumber\\
&\lesssim \left\Vert  \int_0^t \scal{t-\tau}^{2\theta(\frac 12-\frac 1{p_0})\vartheta_{\sigma}} \Vert u(\tau)\Vert^{p+1}_{M^{s}_{{p_0},q}} d\tau\right\Vert_{L_t^r}.\label{z12}
\end{align}
Since
\[ 
0<1+2\theta\left( \frac 12-\frac 1{p_0}\right)\vartheta_{\sigma}<1 \ \ \ \  \text{and}\ \ \ \  \frac 1 r=\frac{p+1}{r}-\left(1+2\theta\left( \frac 12-\frac 1{p_0}\right)\vartheta_{\sigma}\right),
\]
an application of the Hardy-Littlewood-Sobolev  inequality in (\ref{z12}) completes the proof.
\end{proof} 
 \begin{lemma}\label{alter}
Let $p\geq 1$ an integer, $p_0=p+2,$ $s\geq 0,$ $r=\frac{p(p+2)}{p+2+\theta p \vartheta_{\sigma}},$  and $f(u)=u^{p+1}.$  Also assume that $0<\theta\leq -\frac{2}{\sigma}$ and $\sigma<-1$ such that $0<\frac{-\theta p\vartheta_{\sigma}}{p+2}<1.$ Then
\begin{equation}
\left\Vert \int_0^t \ss(t-\tau)Mf(\Re u(\tau))d\tau\right\Vert_{L^r(\mathbb{R},M^s_{{p_0},1})}\lesssim \Vert u\Vert^{p+1}_{L^r(\mathbb{R},M^s_{{p_0},1})}.
\end{equation}
\end{lemma}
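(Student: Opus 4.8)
The plan is to follow the proof of Lemma \ref{propuse} almost verbatim, the only substantive change being that the multilinear estimate Lemma \ref{Ib2} (which needs the constraint $1-\frac1q\le s<\frac1q$ and hence only $q>1$) is replaced by the product estimate Lemma \ref{prod}, which is valid in the algebra-type regime $q=1$ for every $s\ge0$. Concretely, I would first apply the group bound of Lemma \ref{GrupoMs} together with the mapping property of $M$ from Lemma \ref{Isomor}, inside the time integral, to get
\[
\left\Vert \int_0^t \ss(t-\tau)Mf(\Re u(\tau))\,d\tau\right\Vert_{L^r(\mathbb{R},M^s_{p_0,1})}
\lesssim
\left\Vert \int_0^t \scal{t-\tau}^{2\theta(\frac12-\frac1{p_0})\vartheta_\sigma}\left\Vert f(\Re u(\tau))\right\Vert_{M^{s-\sigma\theta-2}_{p_0',1}}\,d\tau\right\Vert_{L_t^r}.
\]
Since $\sigma<-1$ and the hypothesis $0<\theta\le-\frac2\sigma$ force $-\sigma\theta-2\le0$, the embedding property (ii) gives $M^s_{p_0',1}\subset M^{s-\sigma\theta-2}_{p_0',1}$, so the inner norm may be replaced by $\Vert f(\Re u(\tau))\Vert_{M^s_{p_0',1}}$.

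Next I would estimate the nonlinearity by Lemma \ref{prod}. With $f(u)=u^{p+1}$, taking $m=p+1$, each factor in $M^s_{p+2,1}$, and the target space $M^s_{p_0',1}$, the admissibility conditions match exactly: $\sum_{i=1}^{p+1}\frac1{p+2}=\frac{p+1}{p+2}=\frac1{p_0'}$ and $\sum_{i=1}^{p+1}1=(p+1)-1+1$. Hence
\[
\Vert f(\Re u(\tau))\Vert_{M^s_{p_0',1}}=\left\Vert(\Re u(\tau))^{p+1}\right\Vert_{M^s_{p_0',1}}\lesssim\Vert\Re u(\tau)\Vert^{p+1}_{M^s_{p+2,1}}\le\Vert u(\tau)\Vert^{p+1}_{M^s_{p_0,1}},
\]
where the last inequality uses $\Vert\Re u\Vert_{M^s_{p,q}}\le\Vert u\Vert_{M^s_{p,q}}$, itself a consequence of $\Re u=\tfrac12(u+\bar u)$ and the invariance of the modulation-space norm under complex conjugation.

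Finally, inserting this bound and invoking the Hardy--Littlewood--Sobolev inequality in the time variable, exactly as at the end of Lemma \ref{propuse}, with the exponents obeying $0<1+2\theta(\frac12-\frac1{p_0})\vartheta_\sigma<1$ and $\frac1r=\frac{p+1}{r}-\bigl(1+2\theta(\frac12-\frac1{p_0})\vartheta_\sigma\bigr)$, completes the estimate. The only genuinely new point, and the step I would check most carefully, is the exact matching of indices in Lemma \ref{prod}: the $q=1$ algebra property forces each factor to sit precisely in $M^s_{p_0,1}$ with $p_0=p+2$, which is exactly why the statement is phrased with $q=1$ and needs no upper bound on $s$. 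Once this bookkeeping is confirmed, the time-integration and HLS step are identical to the previous lemma and require no further work.
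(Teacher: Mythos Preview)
Your proposal is correct and follows exactly the approach indicated in the paper: repeat the proof of Lemma~\ref{propuse} verbatim but invoke the product estimate Lemma~\ref{prod} in place of Lemma~\ref{Ib2}. Your careful check of the index compatibility in Lemma~\ref{prod} (with $m=p+1$, each $p_i=p+2$, each $q_i=1$, target $M^s_{p_0',1}$) is precisely the bookkeeping needed, and the remaining steps (embedding via $-\sigma\theta-2\le0$, then Hardy--Littlewood--Sobolev in time) are identical to the previous lemma.
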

\begin{proof}
	The proof of Lemma \ref{alter} is similar  to one of Lemma \ref{propuse} by   
	applying Lemma \ref{prod} instead of Lemma \ref{Ib2}
\end{proof}

%%%%%%%%%%%%%%%
\begin{theorem}\label{GlobalLamb0}
Let $p\geq 1$ be an integer, $p_0=p+2,$ $s\geq 0,$ and $0<\theta\leq -\frac{2}{\sigma}$, and $r=\frac{p(p+2)}{p+2+p\theta\vartheta_{\sigma}}$, where $\sigma<-1$. Assume $0<\frac{-p\theta\vartheta_{\sigma}}{p+2}<1$. Then, there exists $\epsilon>0$ such that if $\Vert \mathfrak{S}(t)u_0\Vert_{L^r(\mathbb{R};M^s_{p_0,1})}<\epsilon$, then equation (\ref{trans=eq}) is globally well-posed in $L^{r}(\mathbb{R}; M^s_{p_0,1})$.
\end{theorem}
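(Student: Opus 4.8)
The plan is to run a contraction-mapping argument directly in the global-in-time space $Y := L^r(\mathbb{R};M^s_{p_0,1})$, exploiting the fact that every estimate we need is already available over the whole time line. Using the Duhamel formulation \eqref{integral-form-2}, I would define
\[
\Phi(w)(t)=\ee^{-\ii tB}w_0-\ii\int_0^t\ee^{-\ii(t-\tau)B}Mf(\Re(w(\tau)))\dd\tau,\qquad w_0=u_0+B^{-1}u_1,
\]
and seek a fixed point of $\Phi$ in a closed ball $B_R=\{w\in Y:\|w\|_Y\leq R\}$, with $R$ to be chosen of size comparable to $\epsilon$. Here the dispersive estimates of Lemmas \ref{GrupoMs}, \ref{Otro_GrupoMs} and \ref{alter} depend only on $|t|$, hence apply verbatim to $\ee^{-\ii tB}$ as well as to $\ss(t)=\ee^{\ii tB}$.

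First I would check the self-mapping property. By the triangle inequality,
\[
\|\Phi(w)\|_Y\leq\|\ss(t)w_0\|_Y+\left\|\int_0^t\ss(t-\tau)Mf(\Re w)\dd\tau\right\|_Y.
\]
The first term is bounded by $\epsilon$ by hypothesis, while Lemma \ref{alter} gives exactly $\left\|\int_0^t\ss(t-\tau)Mf(\Re w)\dd\tau\right\|_Y\lesssim\|w\|_Y^{p+1}\leq CR^{p+1}$; the structural constraints $r=\tfrac{p(p+2)}{p+2+p\theta\vartheta_\sigma}$ and $0<-p\theta\vartheta_\sigma/(p+2)<1$ are precisely what make the underlying Hardy--Littlewood--Sobolev step close. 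Thus $\|\Phi(w)\|_Y\leq\epsilon+CR^{p+1}$, and choosing $R=2\epsilon$ together with $\epsilon$ small enough that $C(2\epsilon)^{p+1}\leq\epsilon$ renders $B_R$ invariant.

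Next I would show that $\Phi$ is a contraction. For $w_1,w_2\in B_R$,
\[
\Phi(w_1)-\Phi(w_2)=-\ii\int_0^t\ss(t-\tau)M\big(f(\Re w_1)-f(\Re w_2)\big)\dd\tau.
\]
Since $f(u)=u^{p+1}$, the algebraic factorization
\[
f(\Re w_1)-f(\Re w_2)=\Re(w_1-w_2)\sum_{j=0}^{p}(\Re w_1)^j(\Re w_2)^{p-j}
\]
reduces the difference to a finite sum of $(p+1)$-linear monomials, each carrying one factor $\Re(w_1-w_2)$. Running the proof of Lemma \ref{alter} but replacing the single-function estimate by the genuinely multilinear Lemma \ref{prod} yields
\[
\|\Phi(w_1)-\Phi(w_2)\|_Y\lesssim\big(\|w_1\|_Y^p+\|w_2\|_Y^p\big)\|w_1-w_2\|_Y\leq CR^p\|w_1-w_2\|_Y,
\]
so shrinking $\epsilon$ (hence $R$) to force $CR^p\leq\tfrac12$ makes $\Phi$ a contraction on $B_R$.

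The Banach fixed point theorem then produces a unique $w\in B_R\subset L^r(\mathbb{R};M^s_{p_0,1})$ with $\Phi(w)=w$, which is the sought global solution; because the ball and all estimates are posed over all of $\mathbb{R}$, globality and the quantitative bound on $\|w\|_Y$ come for free. I expect the only genuinely delicate point to be the difference estimate: one must verify that the multilinear analogue of Lemma \ref{alter} holds with the \emph{same} time-exponent $r$ and the same modulation indices, i.e. that feeding the factored difference into Lemma \ref{prod} does not disturb the Hardy--Littlewood--Sobolev bookkeeping. Since each monomial $(\Re w_1)^j(\Re w_2)^{p-j}\Re(w_1-w_2)$ has the same homogeneity as $f(\Re w)=(\Re w)^{p+1}$, this is routine and requires no smallness conditions beyond those already assumed.
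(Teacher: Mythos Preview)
Your proposal is correct and follows essentially the same route as the paper: a Banach fixed-point argument on the ball of radius $2\epsilon$ in $L^r(\mathbb{R};M^s_{p_0,1})$, invoking Lemma~\ref{alter} for the self-map bound and the multilinear product estimate of Lemma~\ref{prod} together with Hardy--Littlewood--Sobolev for the difference estimate. The paper's proof is written out for Theorem~\ref{GlobalLamb1} and then notes that the $q=1$ case (Theorem~\ref{GlobalLamb0}) proceeds identically with Lemma~\ref{alter} replacing Lemma~\ref{propuse}, exactly as you do.
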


The condition $q=1$ in Theorem \ref{GlobalLamb0} arises from utilizing the product estimate in modulation spaces. By employing a different product estimate provided in \cite{Iwabuchi}, we can extend the global existence result to values in $M^s_{p,q}$ for $1\leq q<2$, by   a regularity assumption on the index $s$ such that $1-\frac 1q\leq s<\frac 1q$. This extension is elaborated in the following theorem.

\begin{theorem}\label{GlobalLamb1}
Consider $p\geq 1$ an integer, $p_0=p+2$, $1\leq q <\infty$, and $1-\frac 1q\leq s<\frac 1q$. Also, let $0<\theta\leq -\frac{1}{\sigma}$ and $r=\frac{p(p+2)}{p+2+p\theta\vartheta_{\sigma}}$, where $\sigma<-1$, such that $0<\frac{-p\theta\vartheta_{\sigma}}{p+2}<1$. Then, there exists $\epsilon>0$ such that if $\Vert \mathfrak{S}(t)u_0\Vert_{L^r(\mathbb{R};M^s_{p_0,q})}<\epsilon$, equation (\ref{trans=eq}) in globally well-posed in $L^{r}(\mathbb{R}; M^s_{p_0,q})$.
\end{theorem}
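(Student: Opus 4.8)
The plan is to run a Banach fixed-point argument for the Duhamel formulation \eqref{integral-form-2} directly in the space $L^r(\rr;M^s_{p_0,q})$, in complete parallel with the proof of Theorem \ref{GlobalLamb0}; the one decisive change is that the nonlinear estimate is now furnished by Lemma \ref{propuse} (valid for every $1\le q<\infty$ under the regularity constraint $1-\frac1q\le s<\frac1q$) in place of Lemma \ref{alter}. Observe first that the standing hypothesis $0<\theta\le-\frac1\sigma$ is stronger than the restriction $0<\theta\le-\frac2\sigma$ demanded in Lemma \ref{propuse}, and that the remaining relations among $p_0$, $r$ and $\vartheta_\sigma$ are exactly those of that lemma; hence Lemma \ref{propuse} is available throughout. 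Note also that since $B$ has a real symbol, the propagator $\ee^{-\ii tB}$ appearing in \eqref{integral-form-2} enjoys the same dispersive decay as $\ss(t)=\ee^{\ii tB}$, so the conclusion of Lemma \ref{propuse} holds verbatim with $\ss(t-\tau)$ replaced by $\ee^{-\ii(t-\tau)B}$.

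Define
\[
\Phi(w)(t)=\ee^{-\ii tB}w_0-\ii\int_0^t\ee^{-\ii(t-\tau)B}Mf(\Re w(\tau))\dd\tau ,
\]
and, for a radius $\delta>0$ to be fixed, the closed ball
\[
X_\delta=\left\{w\in L^r(\rr;M^s_{p_0,q}):\ \|w\|_{L^r(\rr;M^s_{p_0,q})}\le\delta\right\},
\]
which is complete for the metric induced by the norm. The free term satisfies $\|\ee^{-\ii tB}w_0\|_{L^r(\rr;M^s_{p_0,q})}=\|\ss(t)w_0\|_{L^r(\rr;M^s_{p_0,q})}<\epsilon$ by the smallness assumption, the first equality following from the invariance of the $L^r_t(\rr)$-norm under $t\mapsto-t$. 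The nonlinear term is bounded directly by Lemma \ref{propuse}, giving $\|\int_0^t\ee^{-\ii(t-\tau)B}Mf(\Re w)\dd\tau\|_{L^r(\rr;M^s_{p_0,q})}\le C\|w\|_{L^r(\rr;M^s_{p_0,q})}^{p+1}$. Hence $\|\Phi(w)\|_{L^r(\rr;M^s_{p_0,q})}\le\epsilon+C\delta^{p+1}$, and choosing $\delta=2\epsilon$ together with $\epsilon$ so small that $C(2\epsilon)^{p}\le\frac12$ (possible because $p\ge1$) forces $\Phi(X_\delta)\subset X_\delta$.

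For the contraction I would exploit the algebraic identity $f(a)-f(b)=a^{p+1}-b^{p+1}=(a-b)\sum_{j=0}^{p}a^{j}b^{p-j}$ with $a=\Re w_1$ and $b=\Re w_2$, so that $f(\Re w_1)-f(\Re w_2)$ is a finite sum of $(p+1)$-linear monomials, each carrying one factor $\Re(w_1-w_2)$ and $p$ factors taken from $\{\Re w_1,\Re w_2\}$. Since the product estimate behind Lemma \ref{propuse} (namely Lemma \ref{Ib2}) is multilinear and symmetric, the proof of Lemma \ref{propuse} applies unchanged to each such monomial; combining the dispersive bound of Lemma \ref{GrupoMs}, the mapping property of $M$ from Lemma \ref{Isomor}, the product estimate, and the Hardy--Littlewood--Sobolev inequality in time, one obtains
\[
\|\Phi(w_1)-\Phi(w_2)\|_{L^r(\rr;M^s_{p_0,q})}\le C\big(\|w_1\|_{L^r(\rr;M^s_{p_0,q})}^{p}+\|w_2\|_{L^r(\rr;M^s_{p_0,q})}^{p}\big)\|w_1-w_2\|_{L^r(\rr;M^s_{p_0,q})} .
\]
On $X_\delta$ this is at most $2C\delta^p\|w_1-w_2\|_{L^r(\rr;M^s_{p_0,q})}$; shrinking $\epsilon$ (hence $\delta$) so that $2C\delta^p<1$ makes $\Phi$ a contraction. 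The Banach fixed-point theorem then yields a unique $w\in X_\delta$ solving \eqref{integral-form-2} on all of $\rr$, and the same Lipschitz estimate gives continuous dependence on the data (real-analytic dependence when $f$ is polynomial), which is what global well-posedness in $L^r(\rr;M^s_{p_0,q})$ means. Since the fixed point lives in $L^r(\rr;M^s_{p_0,q})$ over the full line, the solution is automatically global in time.

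The genuine content is concentrated in Lemma \ref{propuse}, not in the fixed-point scheme: passing from $q=1$ (Theorem \ref{GlobalLamb0}) to general $q$ with $1-\frac1q\le s<\frac1q$ rests on the modulation-space product estimate of \cite{Iwabuchi} encoded in Lemma \ref{Ib2}, which is what permits the derivative loss $-\sigma\theta-2$ to be absorbed through the embedding $M^s_{p_0',q}\subset M^{s-\sigma\theta-2}_{p_0',q}$ while the time integrability is still closed by Hardy--Littlewood--Sobolev. Accordingly, I expect the only delicate point in the proof proper to be the verification that the \emph{difference} nonlinearity $f(\Re w_1)-f(\Re w_2)$ obeys the same multilinear bound as $f(\Re u)$; this reduces, via the factorization above, to re-applying the symmetric multilinear estimate monomial by monomial, so once Lemma \ref{propuse} is in hand the contraction closes with no new ideas.
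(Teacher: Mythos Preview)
Your proposal is correct and follows essentially the same approach as the paper: a Banach fixed-point argument on the ball $B_{2\epsilon}\subset L^r(\rr;M^s_{p_0,q})$, with the self-map estimate coming from Lemma~\ref{propuse} and the contraction obtained by factoring $f(\Re w_1)-f(\Re w_2)$ and re-running the proof of Lemma~\ref{propuse} monomial by monomial (the paper phrases this last step as invoking Lemmas~\ref{Ib1} and~\ref{Ib2} together, which amounts to the same thing). Your observation that the hypothesis $0<\theta\le -\frac{1}{\sigma}$ implies the constraint $0<\theta\le -\frac{2}{\sigma}$ needed in Lemma~\ref{propuse} is exactly the compatibility check required, and the remainder of the argument matches the paper's in structure and detail.
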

\begin{proof}[Proof of Theorems \ref{GlobalLamb1} and \ref{GlobalLamb1}]
    
We will first delve into the proof of Theorem \ref{GlobalLamb1}, which relies on a fixed-point argument involving above obtained estimates. To do so, define $B_{2\epsilon}=\{u:\Vert u\Vert_{L^{r}(\mathbb{R}; M^s_{p_0,q})}\leq 2\epsilon\}$, where $\epsilon>0$. Define the map $\Gamma$ on the metric space $B_{2\epsilon}$ as the right-hand side of \eqref{integral-form-2}. To ensure $\Gamma: B_{2\epsilon}\rightarrow B_{2\epsilon}$ is a contraction, we select $\epsilon>0$ such that $\Gamma: B_{2\epsilon}\rightarrow B_{2\epsilon}$, as elaborated in Lemma \ref{propuse}. From Lemma \ref{propuse} and the smallness of $\Vert \ss(t)u_0\Vert_{L^r(\mathbb{R};M^s_{p_0,q})}$ we have for $u\in B_{2\epsilon}$ that 
\begin{equation}\label{e18}
\Vert \Gamma u \Vert_{L^{r}(\rr;M^s_{p_0,q})}\leq \Vert \ss(t)u_{0}(x)\Vert_{L^{r}(\mathbb{R};M^s_{p_0,q})}+C\Vert u\Vert^{p+1}_{L^{r}(\mathbb{R}; M^s_{p_0,q})}  \leq \epsilon(1+C2^{p+1}\epsilon^p) .
\end{equation}
Choosing $\epsilon>0$ such that $C2^{p+1}\epsilon^p<1$, ensures that the mapping $\Gamma$ is a contraction. By utilizing Lemmas \ref{GrupoMs} and  \ref{Isomor} (as demonstrated in the proof of Lemma \ref{propuse}), and considering Lemmas \ref{Ib1} and \ref{Ib2}, we deduce
\begin{align*}
 \Vert \Gamma u-\Gamma v\Vert_{L^{r}(\mathbb{R};M^s_{p_0,q})} 
 \lesssim \left\Vert  \int_0^t \scal{t-\tau}^{2\theta(\frac 12-\frac 1{p_0})\vartheta_{\sigma}} \Vert u-v\Vert_{M^s_{p_0,q}}\sum_{k=1}^{p+1}\Vert u\Vert^{p+1-k}_{M^s_{p_0,q}}\Vert v\Vert^{k-1}_{M^s_{p_0,q}}d\tau\right\Vert_{L_t^r}.
\end{align*}
Considering the assumption $0<\frac{-p\theta\vartheta_{\sigma}}{p+2}<1$ and $r=\frac{p(p_0+2)}{p+2+p\theta\vartheta_{\sigma}}$, it is revealed from the H\"older and Hardy-Littlewood-Sobolev  inequalities that
\begin{equation}\label{e19}
    \begin{split}
\Vert \Gamma u-\Gamma v\Vert_{L^{r}(\mathbb{R};M^s_{p_0,q})}&\lesssim \Vert u-v\Vert_{L^{r}(\mathbb{R};M^s_{p_0,q})}\paar{\Vert u\Vert^p_{L^{r}(\mathbb{R};M^s_{p_0,q})}+\Vert v\Vert^p_{L^{r}(\mathbb{R};M^s_{p_0,q})}} \\
&\leq 2^{p+1}\epsilon^p C\Vert u-v\Vert_{L^{r}(\mathbb{R};M^s_{p_0,q})}.
\end{split}
\end{equation}
Combining inequalities (\ref{e18}) and (\ref{e19}), we establish that the mapping $\Gamma: B_{2\epsilon}\rightarrow B_{2\epsilon}$ acts as a contraction, ensuring the existence of a unique fixed point. The demonstration of Theorem \ref{GlobalLamb0} mirrors the approach adopted for Theorem \ref{GlobalLamb1}, albeit incorporating Lemma \ref{alter} in lieu of Lemma \ref{propuse}.
\end{proof}

%%%%%%%%%%%%%%%%%%%%%%

\section{Solitary wave solutions}\label{section-solitary}
In this section, we study the  non-existence of solitary waves of   \eqref{gBE} for some parameters.  Throughout this section, we assume that $f(u)=\beta u^{p+1}$ with $\beta\in\rr$.

To find the  localized solitary wave solutions of the  equation \eqref{gBE}, we use the ansatz \mbox{$u(x,t)=Q_c(\xi), ~~\xi=x-ct$}
with $\displaystyle{\lim_{|\xi| \rightarrow \infty} Q_c(\xi)=0}$ which yields to  the following ordinary differential equation
\begin{equation}
	c^2 Q_c'' =Q_c''-(\alpha-c^2) Q_c''''-\kappa c^2 Q_c''''''+\beta(Q_c^{p+1})''.
	\label{solitary}
\end{equation}
Here $'$ denotes the derivative with respect to $\xi$.
Integrating the equation \eqref{solitary} twice, we obtain
\begin{equation} \label{solitary2}
	\kappa c^2 Q_c''''+(\alpha-c^2) Q_c''+(c^2-1)Q_c=\beta Q_c^{p+1}.
\end{equation}

\noindent
In the case $\beta<0$ and $c=0$, it is well-known that the equation \eqref{solitary2} has the unique solitary wave
\[
Q_0(x)=\left( \frac{p+2}{-2\beta}\right)^\frac1p
\sech^\frac2p\left(\frac{p}{2}\sqrt{\frac1\al}x\right).
\]

\begin{theorem}\label{theorem-nonex}
	
	Equation \eqref{solitary2} with $\alpha>0$ and $\kappa>0$ does not admit any nontrivial solution \mbox{$Q_c \in  H^{4}(\mathbb{R})$}  if one of the following conditions holds.
	\begin{description}
		\item[i.] $c^2 \geq \max\{\alpha,1\}$, $\beta <0$ and $p$ is even.
		%\item[ii.] $ c^2 \leq 1,~ \beta>0 $  and $p$ is even.
		\item[ii.] $ c^2 \leq\min \{ \alpha, 1 \},~ \beta>0 $  and $p$ is even.
		\item[iii.]$\alpha \leq c^2 <1$    for all $p>0$.
	\end{description}
\end{theorem}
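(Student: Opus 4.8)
The plan is to extract two integral (Pohozaev-type) identities from the twice-integrated profile equation \eqref{solitary2} and then read off a sign contradiction in each of the three regimes. Write $Q=Q_c$ and set
\[
A=\int_\rr (Q'')^2\dd\xi,\qquad B=\int_\rr (Q')^2\dd\xi,\qquad C=\int_\rr Q^2\dd\xi,\qquad D=\int_\rr Q^{p+2}\dd\xi .
\]
Since $Q\in H^4(\rr)\hookrightarrow C^3(\rr)$ with $Q,Q',Q'',Q'''\to0$ at infinity, and $Q^{p+2}\le\|Q\|_{L^\infty}^{p}\,Q^2\in L^1$, all four quantities are finite, and for $Q\not\equiv0$ one has $C>0$ (and in fact $A>0$, since $Q''\equiv0$ together with the decay forces $Q\equiv0$). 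First I would multiply \eqref{solitary2} by $Q$ and integrate, the boundary terms vanishing by the decay above, to obtain
\[
\kappa c^2 A-(\alpha-c^2)B+(c^2-1)C=\beta D.\qquad(\mathrm{I})
\]
Then I would multiply \eqref{solitary2} by $\xi Q'$ and integrate; equivalently, since \eqref{solitary2} is the Euler--Lagrange equation of $S(Q)=\int\big(\tfrac{\kappa c^2}{2}(Q'')^2-\tfrac{\alpha-c^2}{2}(Q')^2+\tfrac{c^2-1}{2}Q^2-\tfrac{\beta}{p+2}Q^{p+2}\big)\dd\xi$, I would differentiate $S(Q_\lambda)$ at $\lambda=1$ along the scaling $Q_\lambda(\xi)=Q(\lambda\xi)$. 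Using $A\mapsto\lambda^3A$, $B\mapsto\lambda B$, $C\mapsto\lambda^{-1}C$, $D\mapsto\lambda^{-1}D$, this yields
\[
3\kappa c^2 A-(\alpha-c^2)B-(c^2-1)C=-\tfrac{2\beta}{p+2}D.\qquad(\mathrm{II})
\]

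Next I would form two linear combinations tailored to the unknowns of indefinite sign. Eliminating $D$ between $(\mathrm{I})$ and $(\mathrm{II})$ gives
\[
(3p+8)\,\kappa c^2 A-(p+4)(\alpha-c^2)B-p(c^2-1)C=0,\qquad(\mathrm{III})
\]
while eliminating $A$ gives
\[
2(\alpha-c^2)B-4(c^2-1)C=-\tfrac{3p+8}{p+2}\,\beta D.\qquad(\mathrm{IV})
\]
Both $(\mathrm{III})$ and $(\mathrm{IV})$ are pure sign relations once the coefficients are sorted by the hypotheses.

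For the case analysis I would argue as follows. Under condition \textbf{iii} ($\alpha\le c^2<1$, hence $c^2\ge\alpha>0$, $\alpha-c^2\le0$, $c^2-1<0$), identity $(\mathrm{III})$ has every left-hand term nonnegative, with the last one $-p(c^2-1)C=p(1-c^2)C>0$; the left side is therefore strictly positive, contradicting the equality with $0$. This uses no hypothesis on $\beta$ or on the parity of $p$, matching the statement ``for all $p>0$.'' Under conditions \textbf{i} and \textbf{ii} I would use $(\mathrm{IV})$ together with the observation that, for $p$ even, $p+2$ is even, so $Q^{p+2}\ge0$ and $D>0$. In case \textbf{i} ($c^2\ge\max\{\alpha,1\}$, $\beta<0$) the inequalities $\alpha-c^2\le0$ and $c^2-1\ge0$ force the left side of $(\mathrm{IV})$ to be $\le0$, whereas $-\tfrac{3p+8}{p+2}\beta D>0$; contradiction. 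In case \textbf{ii} ($c^2\le\min\{\alpha,1\}$, $\beta>0$) all the signs reverse, the left side is $\ge0$ and the right side is $<0$; contradiction.

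The algebra above is routine, so the genuine obstacle is the rigorous justification of the Pohozaev identity $(\mathrm{II})$, namely the vanishing of the weighted boundary contributions $\xi(Q')^2$, $\xi(Q'')^2$, $\xi Q'Q'''$ and the like, which do not follow from the $H^4$ Sobolev embedding alone. The clean way around this is to first upgrade the decay of $Q_c$: rewriting \eqref{solitary2} as a fixed point for the operator $\lc_R^{-1}$ and invoking the exponentially decaying fundamental kernel $K$ of \eqref{E:kernel_formulas}, one shows that $Q_c$ and all its derivatives decay exponentially. With that decay in hand every weighted boundary term vanishes, the scaling computation leading to $(\mathrm{II})$ is fully rigorous, and the sign argument closes all three cases.
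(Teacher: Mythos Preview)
Your proof is essentially the paper's: both derive the energy identity by multiplying \eqref{solitary2} by $Q$ and the Pohozaev identity by multiplying by $\xi Q'$, then form the combinations (III) and (IV) and read off sign contradictions in the three regimes; the only cosmetic difference is that for case~(i) the paper uses (I) directly rather than (IV). One small slip: the kernel $K$ in \eqref{E:kernel_formulas} inverts $\lc_R=I-\partial_x^2+\kappa\partial_x^4$, not the linear part $\kappa c^2\partial_\xi^4+(\alpha-c^2)\partial_\xi^2+(c^2-1)$ of \eqref{solitary2}, so you need the analogous Green's function for the latter operator---the exponential-decay bootstrap still goes through (the paper just cites it).
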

\textbf{Proof:} Let $Q_c$ be any nontrivial solution of the equation \eqref{solitary2}. Multiplying the equation \eqref{solitary2} by $Q_c$, integrating on $\mathbb{R}$ and  performing integration by parts,  we get

\begin{equation}
	\kappa c^2\int_{\mathbb{{R}}} (Q_c^{\prime\prime})^2 \dd x + (c^2-\alpha) \int_{\mathbb{R}} (Q_c^\prime)^2 \dd x  +  (c^2-1) \int_{\mathbb{{R}}} {Q_c^2 \dd x }=  \beta \int_{\mathbb{{R}}} Q_c^{p+2} \dd x. \label{energy1}
\end{equation}

\noindent
The term on the left side of this equation will be non-negative,  a contradiction, if the condition $(i)$ is satisfied.

\noindent
On the other hand, multiplying  the equation \eqref{solitary2} by $x Q_c'$ and integrating over $\mathbb{R}$ yields the Pohozaev type identity
\begin{equation}
	\frac{3\kappa c^2}{2}\int_{\mathbb{{R}}} (Q_c^{\prime\prime})^2 \dd x + \frac{c^2-\alpha}{2} \int_{\mathbb{R}} (Q_c^\prime)^2 \dd x -  \frac{c^2-1}{2} \int_{\mathbb{{R}}} {Q_c^2 \dd x }= -\frac{\beta}{p+2} \int_{\mathbb{{R}}} Q_c^{p+2} \dd x. \label{energy2}
\end{equation}

%\noindent
%Multiplying the \eqref{energy2} by $(-2)$ and adding it to the equation \eqref{energy1},  we get
%\begin{equation}
%-2\kappa c^2\int_{\mathbb{{R}}} (Q_c^{\prime\prime})^2 \dd x + 2(c^2-1) \int_{\mathbb{R}} Q_c^2 \dd x  =  \frac{p+4}{p+2}~\beta\int_{\mathbb{{R}}} Q_c^{p+2} \dd x.
%\end{equation}
%The term on the left side of this equation will be negative, a contradiction, when condition $(ii)$ is satisfied.

\noindent
Eliminating  $(Q_c^{\prime\prime})^2$ terms in the equations \eqref{energy1} and \eqref{energy2},  we get
\begin{equation}
	2(c^2-\alpha) \int_{\mathbb{R}} (Q_c^{\prime})^2 \dd x + 4(c^2-1) \int_{\mathbb{R}} Q_c^2 \dd x  =  \frac{p+4}{p+2}~\beta\int_{\mathbb{{R}}} Q_c^{p+2} \dd x.
\end{equation}
The term on the left side of this equation will be negative, a contradiction, when condition $(ii)$ is satisfied.

\noindent
Eliminating  $Q_c^{p+2}$ terms in the equations \eqref{energy1} and \eqref{energy2} leads to
\begin{equation}\label{poho-3}
	\kappa c^2\left(\frac{3p}{2}+4 \right)\int_{\mathbb{{R}}} (Q_c^{\prime\prime})^2 \dd x + (c^2-\alpha)\frac{p+4}{2} \int_{\mathbb{R}} (Q_c^\prime)^2 \dd x  = (c^2-1)\frac{p}{2}\int_{\mathbb{{R}}} Q_c^2 \dd x.
\end{equation}
The condition $\alpha \leq c^2 <1$ implies that the left-hand side is non-negative and the right-hand side is negative.

\begin{cor}
	It	is also possible to extend the previous result, independent of $p$.
	Indeed, since $Q_c\in H^2$, a bootstrap argument
	shows the solution $Q_c\in H^\infty$  and $Q_c\to0$ together with its derivatives as $|x|\to\infty$ (see \cite{Amin-lev}). Thus,
	to study the asymptotic behavior of the solution, it suffices to study the solutions of the linearized equation by ignoring the nonlinear term, that is
	\[
	\kappa c^2 g''''+(\alpha-c^2) g''+(c^2-1)g=0.
	\]
	The characteristic equation is
	\begin{equation}
		\kappa c^2 \lambda^4+(\alpha-c^2) \lambda^2+ c^2-1  =0, \label{characteristic}
	\end{equation}
	and the roots are
	\begin{equation}
		\lambda^2=\frac{\alpha-c^2\pm\sqrt{\Delta}}{-2\kappa c^2}, \nonumber
	\end{equation}
	where
	\begin{equation}\label{disc}
		\Delta= (\alpha-c^2)^2-4\kappa c^2(c^2-1)
	\end{equation}
	In the case $\alpha>c^2>1$ and
	\begin{equation}\label{c-codition-0}
		\kappa\leq \frac{	(\alpha-c^2)^2}{ 4c^2(c^2-1) }
	\end{equation}
	the above roots are pure imaginary. It turns out that the solution of the linearized equation, and whence the nonlinear equation does not vanish at infinity. This is a contradiction.
\end{cor}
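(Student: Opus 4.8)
The plan is to rule out a nontrivial decaying solution in the regime $\alpha>c^2>1$ by examining its tail as $|x|\to\infty$, where the nonlinearity becomes negligible and \eqref{solitary2} is effectively governed by its constant-coefficient linear part. Concretely, I would proceed in four steps: (a) upgrade the regularity of $Q_c$ and extract decay of $Q_c$ together with its derivatives; (b) recast \eqref{solitary2} as a first-order system and identify its linearization at the origin; (c) show that under \eqref{c-codition-0} all characteristic roots are purely imaginary, so every solution of the linear equation is a bounded, non-decaying oscillation; and (d) conclude that a decaying $Q_c$ must be trivial, contradicting nontriviality.

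First I would carry out the bootstrap. Since $H^4(\rr)\hookrightarrow C^3(\rr)$ with $Q_c,Q_c',Q_c'',Q_c'''\to0$ at infinity, and the nonlinearity $\beta Q_c^{p+1}$ inherits the $H^4$-regularity of $Q_c$ (because $Q_c\in L^\infty(\rr)$ and $H^4(\rr)$ is a Banach algebra in one dimension), solving \eqref{solitary2} for $Q_c''''$ gives $Q_c\in H^8(\rr)$; iterating yields $Q_c\in H^\infty(\rr)\subset C^\infty(\rr)$ with all derivatives vanishing at infinity, as in \cite{Amin-lev}.

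Next I would write \eqref{solitary2} as $Y'=AY+N(Y)$ with $Y=(Q_c,Q_c',Q_c'',Q_c''')^{T}$, where $A$ has characteristic polynomial \eqref{characteristic} and $|N(Y)|\lesssim|Y|^{p+1}$ near the origin. Setting $\mu=\lambda^2$, the two roots of \eqref{characteristic} satisfy $\mu_1\mu_2=(c^2-1)/(\kappa c^2)>0$ and $\mu_1+\mu_2=(c^2-\alpha)/(\kappa c^2)<0$ when $\alpha>c^2>1$, so both are negative as soon as they are real; and reality is precisely the condition $\Delta\geq0$ in \eqref{disc}, that is \eqref{c-codition-0}. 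Hence $\mu_1,\mu_2<0$ and all four eigenvalues $\lambda=\pm\ii\sqrt{|\mu_j|}$ are purely imaginary, so a fundamental system for the linear equation consists of the bounded oscillations $\cos(\omega_j x),\sin(\omega_j x)$, none of which tends to zero at infinity.

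Finally I would argue that a solution of the full system tending to the origin must be trivial. The natural route is asymptotic integration: the decay of $Q_c$ renders the perturbation $N(Y)$ integrable along the orbit, so by a Levinson-type theorem every solution of $Y'=AY+N(Y)$ tending to $0$ is asymptotic to a solution of $Y'=AY$, and since the latter cannot vanish at infinity unless it is identically zero, $Q_c\to0$ forces $Q_c\equiv0$. The main obstacle is exactly this last reduction: because the spectrum of $A$ lies entirely on the imaginary axis, the origin is non-hyperbolic, so one cannot simply invoke a stable-manifold argument, and one must verify both the integrability of $N(Y)$ along the decaying tail (immediate for $p\geq2$ from $Q_c\in L^2(\rr)\cap L^\infty(\rr)$, but requiring a short additional decay estimate when $p=1$) and the applicability of an asymptotic-integration theorem in this purely oscillatory setting. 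Once these technical points are secured, the purely imaginary spectrum forces oscillatory, non-localized tails and thereby excludes any nontrivial solution $Q_c\in H^4(\rr)$.
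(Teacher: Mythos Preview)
Your proposal is correct and follows essentially the same route as the paper: bootstrap to $H^\infty$ with decay, pass to the linearized fourth-order equation, compute the characteristic roots via the quadratic in $\mu=\lambda^2$, check via Vieta that $\mu_1\mu_2>0$ and $\mu_1+\mu_2<0$ together with $\Delta\ge0$ force $\mu_1,\mu_2<0$ and hence purely imaginary $\lambda$, and then conclude no decaying solution can exist. The paper's argument is entirely contained in the corollary statement itself, and your steps (a)--(c) match it line for line.

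Where you go further is in step (d): the paper simply asserts ``whence the nonlinear equation does not vanish at infinity'' without justification, while you correctly flag that the equilibrium is non-hyperbolic (spectrum on the imaginary axis), so neither a stable-manifold theorem nor a naive linearization principle applies directly, and you propose Levinson-type asymptotic integration as the remedy. This is a genuine gap in the paper's presentation that you have identified and addressed; your caution about verifying integrability of the perturbation (particularly the $p=1$ case) is well placed. In short, your argument is the paper's argument made rigorous at its weakest point.
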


\begin{remark}
	By using the proof of Theorem \ref{theorem-nonex}, one can extend the non-existence results. Actually, by using the inequality
	\[
	\|g'\|_{L^2}^2\leq
	\|g\|_{L^2} \|g''\|_{L^2}
	\]
	and the Cauchy-Schwarz inequality $ab\leq \epsilon a^2+b^2/(4\epsilon)$, we obtain from \eqref{poho-3} that
	\[
	(	\kappa c^2(3p+8)-\epsilon|c^2-\alpha|(p+4))  \|Q_c''\|_{L^2} ^2
	+\left(p (1-c^2 )-\frac{|c^2-\alpha|(p+4)}{4\epsilon}\right) \|Q_c\|_{L^2}^2 \leq0.
	\]
	If $c^2\neq\alpha$ and $c^2<1$, by choosing $\epsilon=\frac{\kappa c^2(3p+8)}{|c^2-\alpha|(p+4)}$ we have
	\[
	\left(p (1-c^2 )-\frac{(c^2-\alpha)^2(p+4)^2}{4\kappa c^2(3p+8)}\right) \|Q_c\|_{L^2}^2 \leq0.
	\]
	Thus, $Q_c\equiv0$ if
	\[
	\frac{\kappa(1-c^2 )c^2}{(c^2-\alpha)^2}>\frac{(p+4)^2}{4 p(3p+8)}.
	\]
\end{remark}

In the case $\beta>0$,   we can consider the variational problem

\begin{equation}\label{minim}
	\inf\{I_c(u), \;u\in H^2(\rr),\;\|u\|_{L^{p+2}(\rr)}=1\},
\end{equation}
where
\[
I_c(u)=\frac12\int_\rr\left(\kappa c^2u_{xx}^2+(c^2-\al)u_x^2+(c^2-1)u^2\right)\dd x,
\]
and obtain the existence of solitary waves of \eqref{solitary2} by using the arguments in \cite{Amin-lev} and  applying the concentration-compactness principle. More details can be found in \cite[Theorem 2.1]{Amin-lev}.
\begin{theorem}
	Let $p>1$, $\kappa,\beta>0$, $c^2>1$ and the condition
	\begin{equation}\label{discriminant}
		c^4(1-4\kappa)+\alpha^2+2c^2(2\kappa-\alpha)<0
	\end{equation}
	holds. If
	$\{u_n\}$ is a minimizing sequence of \eqref{minim}, then there is a subsequence $u_{n_k}\subset H^2(\rr)$, scalars $y_k\in\rr$ and $u\in H^2(\rr)$ such that $u_{n_k}(\cdot+y_k)\to u$ in $H^2(\rr)$ and $u$ is a minimizer of \eqref{minim}.
\end{theorem}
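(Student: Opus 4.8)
The plan is to prove the statement by the concentration--compactness method of Lions, after first recasting the quadratic form $I_c$ as an equivalent $H^2$-norm under hypothesis \eqref{discriminant}. By Plancherel, $I_c(u)=\frac12\int_\rr P(\xi)|\hat u(\xi)|^2\dd\xi$ with symbol $P(\xi)=\kappa c^2\xi^4+(c^2-\al)\xi^2+(c^2-1)$. Viewing $P$ as a quadratic in $\eta=\xi^2\geq0$, its discriminant equals $(c^2-\al)^2-4\kappa c^2(c^2-1)$, which is exactly the left-hand side of \eqref{discriminant}; since this is negative and the leading coefficient $\kappa c^2>0$, the symbol satisfies $P(\xi)>0$ for every $\xi$, and together with $P(0)=c^2-1>0$ and the growth $P(\xi)\sim\kappa c^2\xi^4$ one gets $P(\xi)\simeq\la\xi\ra^4$. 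Hence $I_c(u)\simeq\|u\|_2^2$, so $I_c^{1/2}$ is a Hilbert norm equivalent to the $H^2$-norm. Using the one-dimensional embedding $H^2(\rr)\hookrightarrow L^{p+2}(\rr)$, the infimum $m:=\inf\{I_c(u):\|u\|_{L^{p+2}(\rr)}=1\}$ in \eqref{minim} is finite and strictly positive, and any minimizing sequence $\{u_n\}$ is bounded in $H^2(\rr)$.

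Next I would set up the scaling that yields strict subadditivity. For $\lambda>0$ put $I_\lambda=\inf\{I_c(u):\|u\|_{L^{p+2}}^{p+2}=\lambda\}$; the homogeneities $I_c(tu)=t^2I_c(u)$ and $\|tu\|_{L^{p+2}}^{p+2}=t^{p+2}\|u\|_{L^{p+2}}^{p+2}$ give $I_\lambda=\lambda^{\theta}I_1$ with $\theta=\tfrac{2}{p+2}\in(0,1)$ and $I_1=m>0$. Since $t\mapsto t^\theta$ is strictly concave and vanishes at $0$, one has $\mu^\theta+(\lambda-\mu)^\theta>\lambda^\theta$ for $0<\mu<\lambda$, i.e. the strict subadditivity
\[
I_\lambda<I_\mu+I_{\lambda-\mu},\qquad 0<\mu<\lambda,
\]
which is the mechanism that rules out splitting.

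Then I would run the trichotomy on the mass densities $\rho_n=|u_n|^{p+2}$, normalized so that $\int_\rr\rho_n\dd x=1$. Vanishing is excluded because, for a sequence bounded in $H^2(\rr)\subset H^1(\rr)$, Lions' vanishing lemma would force $u_n\to0$ in $L^{p+2}(\rr)$, contradicting $\|u_n\|_{L^{p+2}}=1$. Dichotomy is excluded by the strict subadditivity above: if the mass split into parts $\mu$ and $1-\mu$ concentrated at diverging distance, I would introduce smooth cutoffs separating the two lumps, write $u_n=u_n^{(1)}+u_n^{(2)}$ with $\|u_n^{(1)}\|_{L^{p+2}}^{p+2}\to\mu$ and $\|u_n^{(2)}\|_{L^{p+2}}^{p+2}\to1-\mu$, and establish the almost-additivity $I_c(u_n)\geq I_c(u_n^{(1)})+I_c(u_n^{(2)})-o(1)$; letting $n\to\infty$ would give $m\geq I_\mu+I_{1-\mu}>I_1=m$, a contradiction. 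Consequently the compactness alternative holds: there are translations $y_k\in\rr$ such that $u_{n_k}(\cdot+y_k)$ is tight in $L^{p+2}$.

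Finally I would pass to the limit. The tight, $H^2$-bounded sequence $\tilde u_k:=u_{n_k}(\cdot+y_k)$ has a weak limit $u\in H^2(\rr)$ along a further subsequence; tightness together with the compactness of the embedding $H^2(-R,R)\hookrightarrow L^{p+2}(-R,R)$ on bounded intervals upgrades this to $\tilde u_k\to u$ in $L^{p+2}(\rr)$, so $\|u\|_{L^{p+2}}=1$ and $u$ is admissible. Weak lower semicontinuity of the convex form $I_c$ gives $I_c(u)\leq\liminf I_c(\tilde u_k)=m$, whence $I_c(u)=m$ and $u$ is a minimizer. Since $I_c^{1/2}$ is an equivalent Hilbert norm and $I_c(\tilde u_k)\to I_c(u)$, weak convergence together with convergence of norms yields strong convergence $\tilde u_k\to u$ in $H^2(\rr)$, which is the asserted conclusion. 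I expect the main obstacle to be precisely the exclusion of dichotomy through the localization estimate $I_c(u_n)\geq I_c(u_n^{(1)})+I_c(u_n^{(2)})-o(1)$: because $I_c$ contains the second-order term $\kappa c^2\int(u'')^2\dd x$, the cutoffs generate cross terms involving first and second derivatives of the cutoff functions, and one must verify that all of them are $o(1)$ as the separation of the two regions tends to infinity; the remaining steps are standard once the norm equivalence of the first paragraph is in hand.
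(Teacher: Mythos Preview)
Your proposal is correct and follows essentially the same route that the paper indicates: the paper does not give a detailed proof but states that the result is obtained ``by using the arguments in \cite{Amin-lev} and applying the concentration-compactness principle,'' referring for details to \cite[Theorem~2.1]{Amin-lev}. Your computation showing that condition~\eqref{discriminant} is precisely the negativity of the discriminant of $P(\eta)=\kappa c^2\eta^2+(c^2-\alpha)\eta+(c^2-1)$, so that $I_c$ defines an equivalent $H^2$-norm, together with the standard Lions trichotomy and the scaling identity $I_\lambda=\lambda^{2/(p+2)}I_1$, reproduces exactly that argument.
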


\subsection{The Petviashvili  iteration method}

The Petviashvili iteration method was first introduced by V.I. Petviashvili for the Kadomtsev-Petviashvili equation in \cite{petviashvili} to construct a solitary wave solution. We use the Petviashvili  iteration method \cite{pelinovsky, petviashvili, yang} to construct the solitary wave solution of \eqref{gBE} cannot be determined analytically.
If we use the Fourier transform,
\begin{equation}
	u(x)=\frac{1}{2\pi} \int_{-\infty}^{\infty} \widehat{u} (k)e^{ik x} \dd k, \hspace*{30pt} \widehat{u}(k)=\int_{-\infty}^{\infty} u(x) e^{-ik x} \dd x,
\end{equation}
the equation \eqref{solitary2} is rewritten
\begin{equation}
	\left[\kappa c^2k^4+(c^2-\alpha)k^2+c^2-1 \right]\widehat{Q_c}(k)= \beta~ \widehat{Q_c^{p+1}}(k). \label{iteras2}
\end{equation}
From now on, we write  $Q$ shortly instead of $Q_c$. A simple iterative algorithm for $\widehat{Q}(k)$ of the  equation \eqref{iteras2} can be proposed in the form
\begin{equation}
	\widehat{Q}_{n+1}(k)= \frac{\beta~ \widehat{Q_n^{p+1}}(k)}{\kappa c^2k^4+(c^2-\alpha)k^2+c^2-1}. \label{algor}
\end{equation}
where $\widehat{Q}_n (k)$ is the Fourier transform of $Q_n (x)$ which is the   $n^{th}$ iteration of the numerical solution.
Although there exists a fixed point $\widehat{Q}_n (k)$ of the equation \eqref{iteras2}, the algorithm \eqref{algor} diverges. To ensure the convergence, we
add a stabilizing factor $M_n$ given in \cite{petviashvili}. The new algorithm for \eqref{gBE} is given by
\begin{equation}
	\widehat{Q}_{n+1}(k)= M_n^\gamma \frac{\beta~ \widehat{Q_n^{p+1}}(k)}{\kappa c^2k^4+(c^2-\alpha)k^2+c^2-1}, \label{newalgor}
\end{equation}
where the stabilizing factor is
\begin{equation}
	M_n=\frac{\int_{-\infty}^{\infty} ~ \left[\kappa c^2k^4+(c^2-\alpha)k^2+c^2-1 \right] [\widehat{Q}_n(k)]^2 \dd k }{\int_{-\infty}^{\infty} {\beta}~ \widehat{Q^{p+1}_n}(k)\widehat{Q}_n(k) \dd k  } \label{stab}
\end{equation}
and $\gamma$ is a free parameter. We  construct the solitary wave solutions for \eqref{gBE} if the  following condition
\begin{equation}
	\kappa c^2k^4+(c^2-\alpha)k^2+c^2-1 \neq 0  \label{condc}
\end{equation}
is satisfied for all $k\in \mathbb{R}$. The iterative process is  controlled by the  error,
\begin{equation}
	Error(n)=\|Q_n-Q_{n-1}\|,~~~~n=0,1,\cdots \nonumber
\end{equation}
between two consecutive iterations defined with the  number of iterations,  the stabilization factor error
\begin{equation}
	|1-M_n|, ~~~~n=0,1,\cdots \nonumber
\end{equation}
and the residual error
\begin{equation}
	{RES(n)}= \|{\cal R} Q_n\|_\infty, ~~~~n=0,1,\cdots \nonumber
\end{equation}
where
\begin{equation}
	{\cal R}Q= \kappa c^2 Q_c^{\prime\prime\prime\prime}+(\alpha-c^2) Q_c^{\prime\prime}+(c^2-1)Q_c-\beta Q_c^{p+1}
\end{equation}

\subsubsection{Accuracy test}
In the case of $\alpha=0$, equation \eqref{gBE} reduces the  HBq equation  \eqref{hbq1}.  The solitary wave solution of the HBq equation
with $f(u)=u^p$
is  given by
\begin{eqnarray} \label{soliterhbq}
	&&\hspace{80pt}u(x,t)=A\left\{ {\mbox{sech}}^4\left( B(x-ct-x_0)\right) \right\}^{\frac{1}{p-1}},\hspace{-8pt}
	\\  && \hspace{-40pt} A=\left[\frac{\eta_1^2 c^2(p+1)~(p+3)~(3p+1)}{2\eta_2~(p^2+2p+5)^2}\right]^{\frac{1}{p-1}},
	\hspace{10pt} B=\left[\frac{\eta_1(p-1)^2}{4\eta_2(p^2+2p+5)}\right]^{\frac{1}{2}},
	\\ &&\hspace{50pt} c^2=\left\{{1-\left[\frac{4\eta_1^2~(p+1)~^2}{\eta_2~(p^2+2p+5)^2}\right]}\right\}^{-1}  , \label{soliterhbq3}
\end{eqnarray}
where $A$ is the amplitude and $B$ is the inverse width of the solitary wave in \cite{oruc}.
Here $c$ represents the velocity of the solitary wave centered at $x_0$ with $c^2>1$.

\begin{figure}[!htbp]
	\begin{minipage}[t]{0.45\linewidth}
		\centering
		\hspace*{-20pt}
		\includegraphics[height=5.5cm,width=7.5cm]{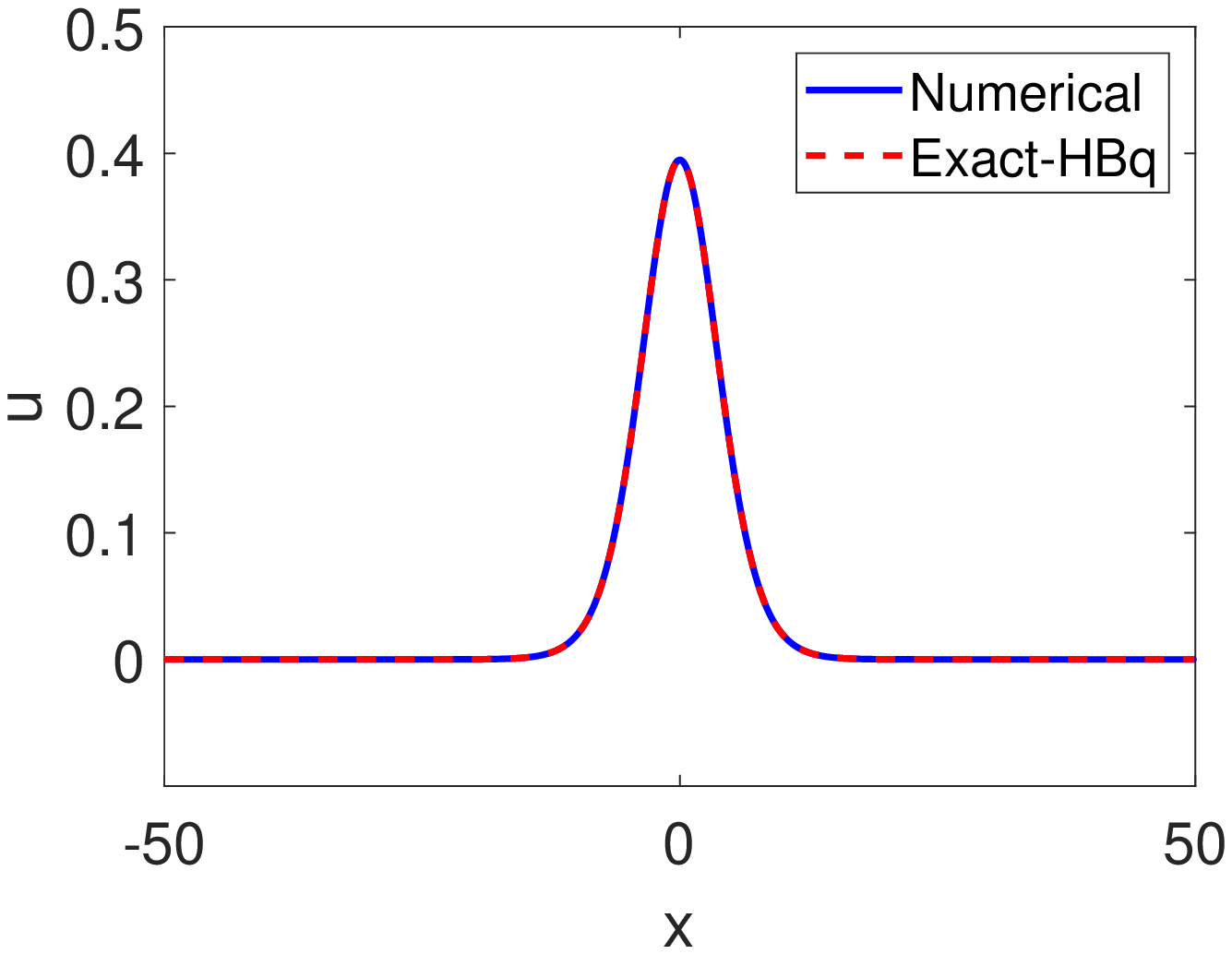}
	\end{minipage}%
	\hspace{3mm}
	\begin{minipage}[t]{0.45\linewidth}
		\centering
		\includegraphics[height=5.5cm,width=7.5cm]{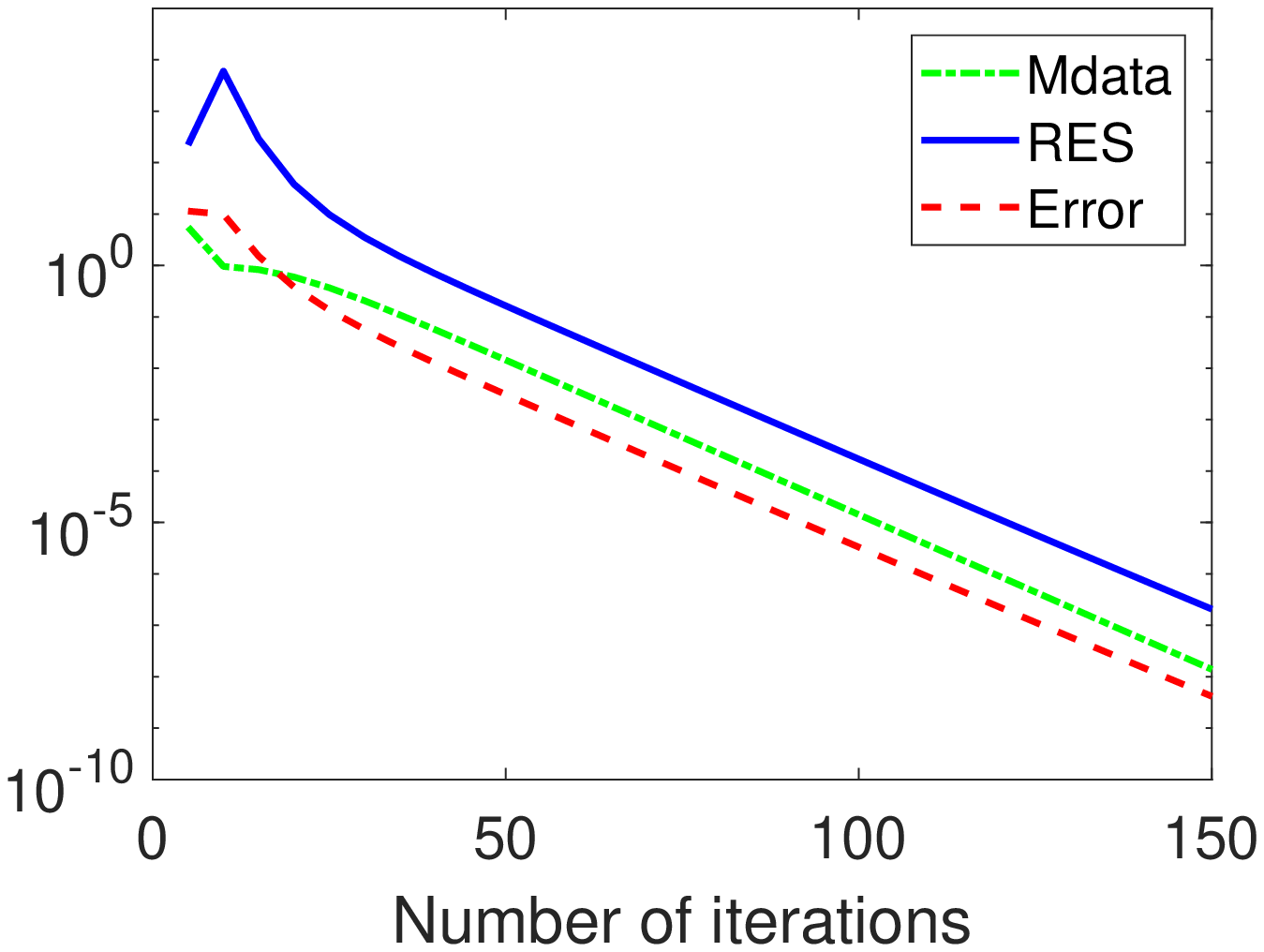}
	\end{minipage}
	
	\caption{The solitary wave solution constructed by Petviashvili method for \eqref{gBE} and   the exact solitary wave solution of the HBq equation for the speed $c = 1.1272$ and   the variation of the $Error(n)$, $|1-M_n|$ and $RES$ with the number of iterations in semi-log scale.}\label{figure1}
\end{figure}
\noindent
In order to test our scheme, we compare the solitary wave profile generated by Petviashvili's method for \eqref{gBE} ($\alpha=0, \kappa=1, p=1, \beta=1$)  with the exact solitary wave solution of the HBq equation given in \eqref{soliterhbq}-\eqref{soliterhbq3} ($\eta_1=\eta_2=1, p=2)$. The space interval is $-50 \le x \le 50$ and we choose the number of spatial grid points $N=1024$.
Choosing $\eta_1=\eta_2=1, p=2$ in the equation \eqref{soliterhbq3}, we compute the wave speed as $c= 1.1272$. Then, taking the same wave speed, we
generate the solitary wave profile by the Petviashvili's method for \eqref{gBE} ($\alpha=0, \kappa=1, p=1, \beta=1$).
In the left panel of  Figure \ref{figure1}, we present the solitary wave solution constructed by Petviashvili's method for \eqref{gBE} ($\alpha=0, \kappa=1, p=1, \beta=1$)  with the exact solitary wave solution of the HBq equation.
In the right panel of Figure \ref{figure1}, we show the variation of three different errors
with the number of iterations in semi-log scale. As seen from the figure, the solitary wave profile generated by Petviashvili's method coincides with the exact solitary wave solution of the HBq equation which is a special case of the gBq equation. We also compute the $L_{\infty}$-error norm as  $5.69\times 10^{-12}$.

\subsubsection{Numerical generation of solitary waves for the gBq equation}
To the best of the authors' knowledge, there is no exact solitary wave solution of the gBq equation where the parameter $\alpha$ is different from zero. It is natural to ask how the higher-order effects of dispersion and nonlinearity affect the solitary wave solutions.
We now construct the solitary wave solution for some values of $\alpha, \kappa$, and $\beta$.  The space interval is $-50 \le x \le 50$ choosing the number of spatial grid points $N=1024$. We consider the quadratic nonlinearity $(p=1, f(u)=\beta u^{p+1})$.

\begin{figure}[!htbp] \label{figure2}
	\begin{minipage}[t]{0.45\linewidth}
		\centering
		\hspace*{-20pt}
		\includegraphics[height=5.5cm,width=7.5cm]{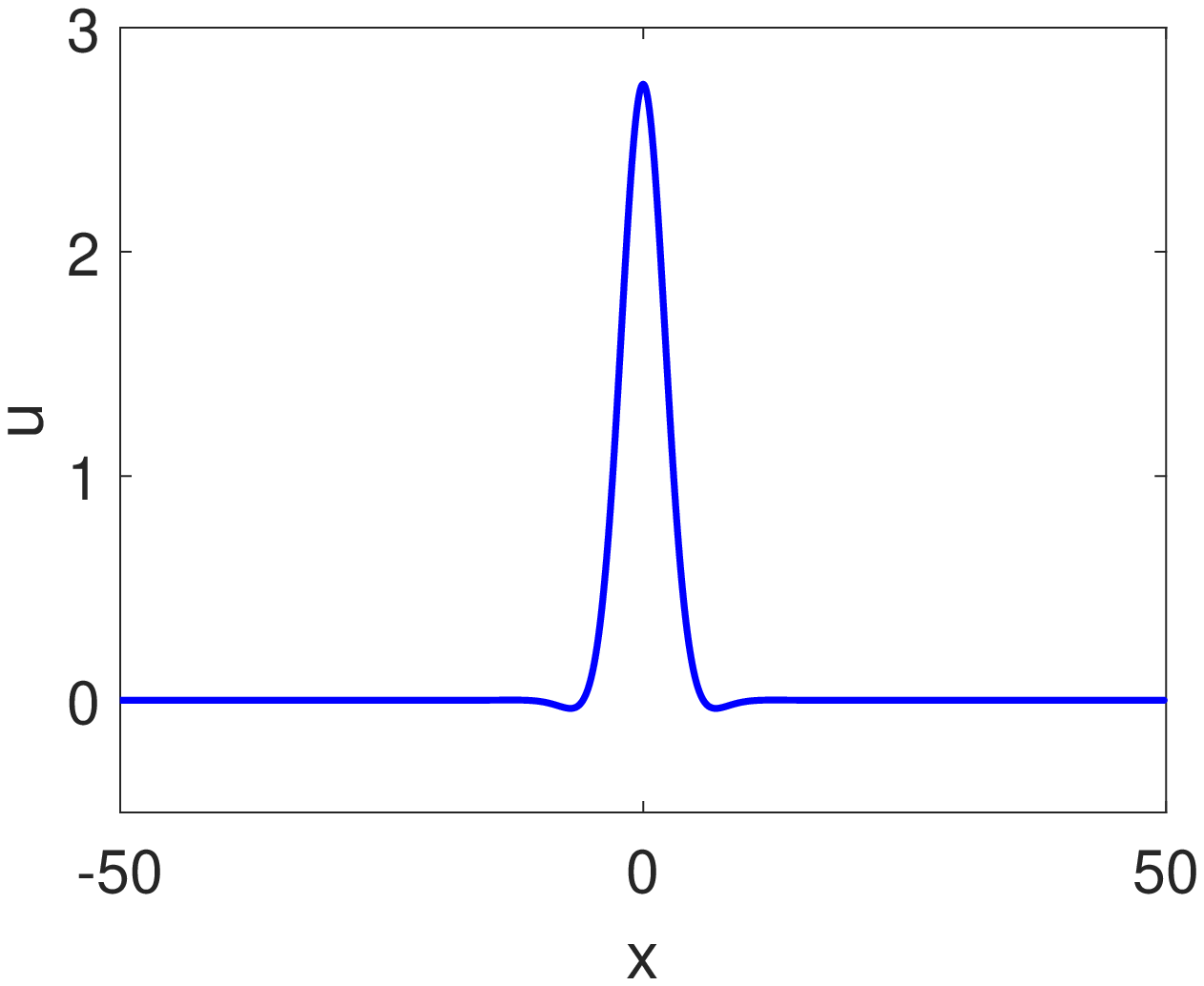}
	\end{minipage}%
	\hspace{3mm}
	\begin{minipage}[t]{0.45\linewidth}
		\centering
		\includegraphics[height=5.5cm,width=7.5cm]{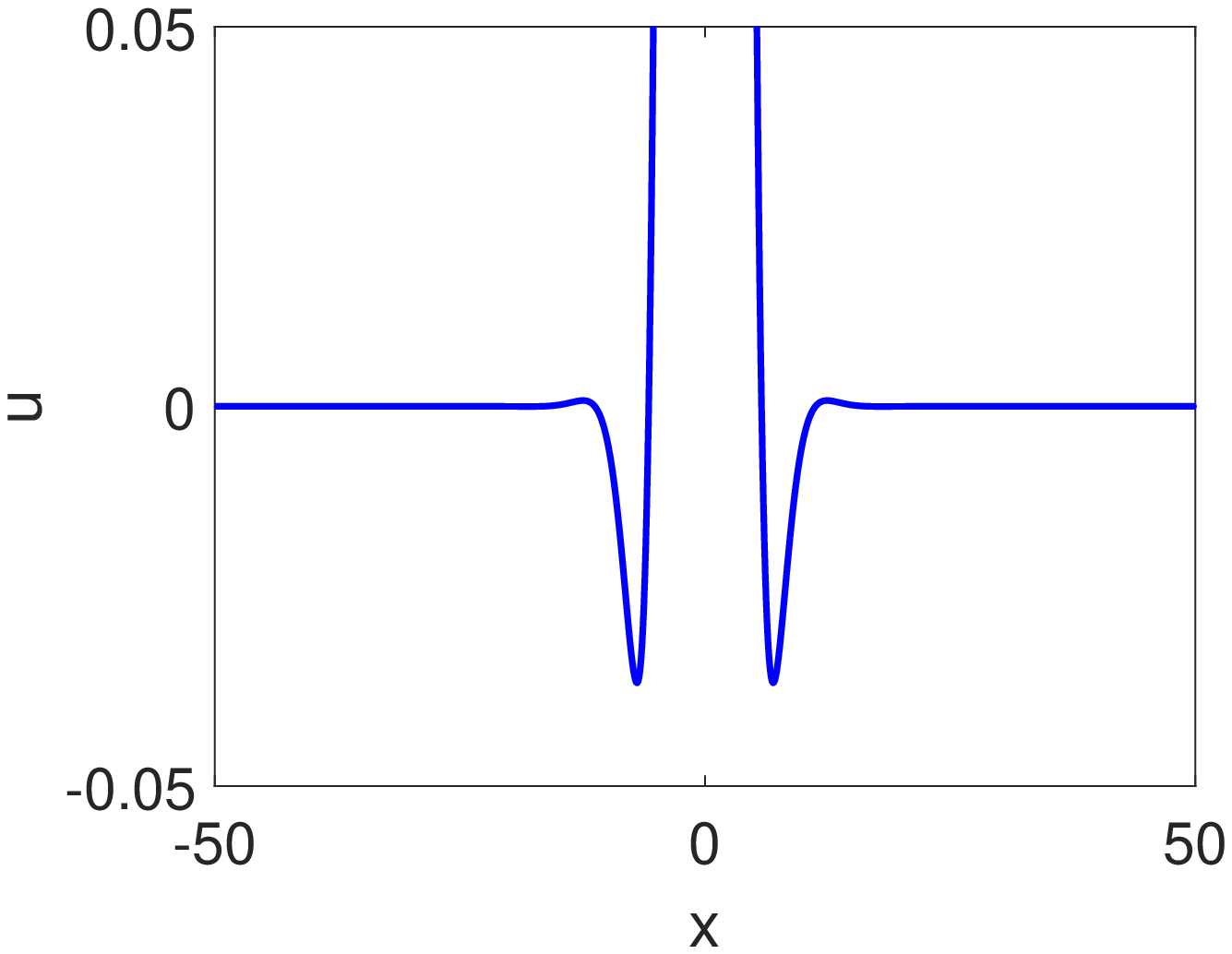}
	\end{minipage}
	\begin{minipage}[t]{0.45\linewidth}
		\centering
		\hspace*{120pt}
		\includegraphics[height=5.5cm,width=7.5cm]{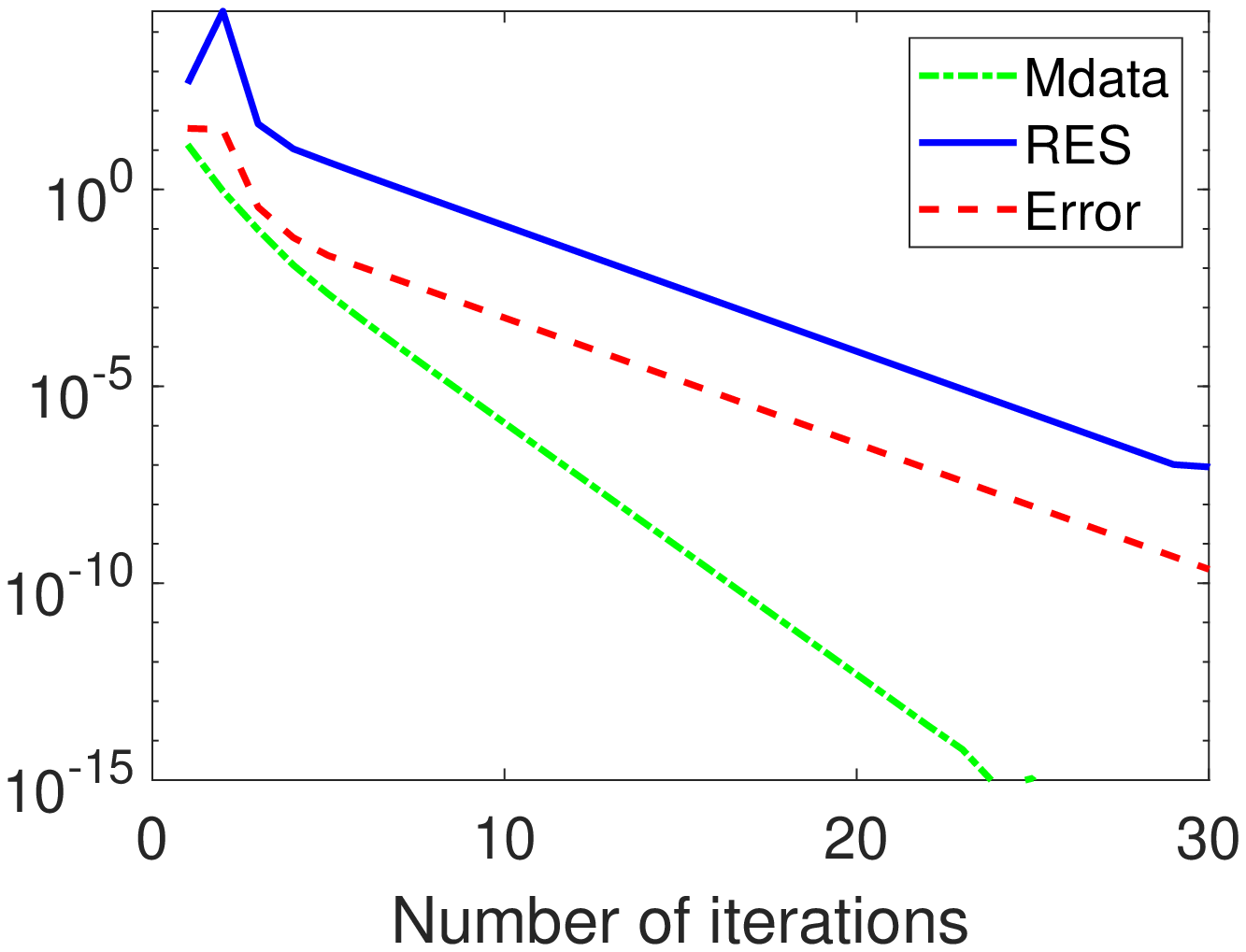}
	\end{minipage}
	
	\caption{The solitary wave solution generated by the  Petviashvili's method for \eqref{gBE}  with the parameters $\alpha=2$, $c=\sqrt{3}, \kappa=\beta=1$ (top left),  the close-up look at the profile (top right) and   the variation of the $Error(n)$, $|1-M_n|$ and $RES$ with the number of iterations in semi-log scale (bottom).} 
\end{figure}

\begin{figure}[!htbp] 
	\begin{minipage}[t]{0.45\linewidth}
		\centering
		\hspace*{-20pt}
		\includegraphics[height=5.5cm,width=7.5cm]{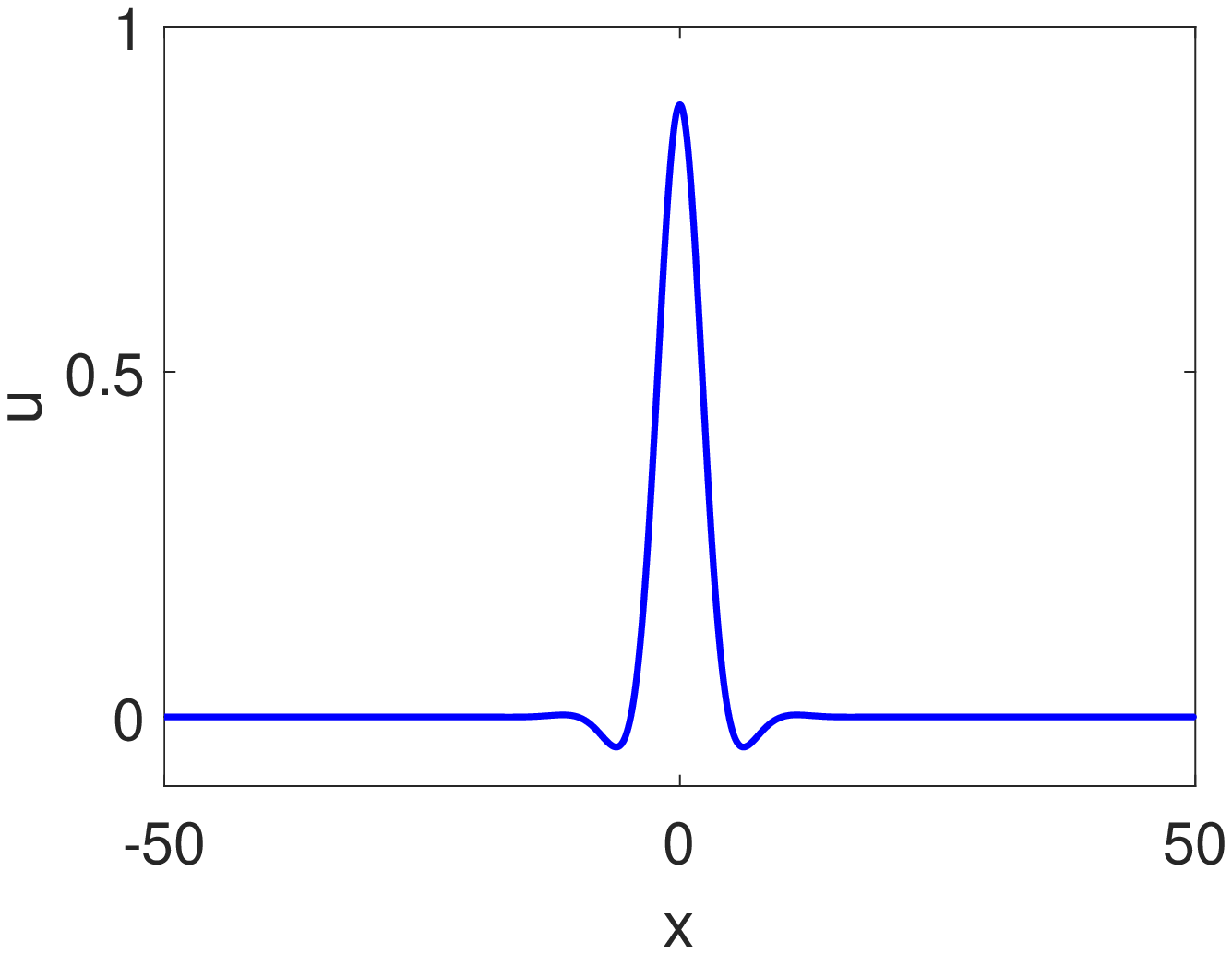}
	\end{minipage}%
	\hspace{3mm}
	\begin{minipage}[t]{0.45\linewidth}
		\centering
		\includegraphics[height=5.5cm,width=7.5cm]{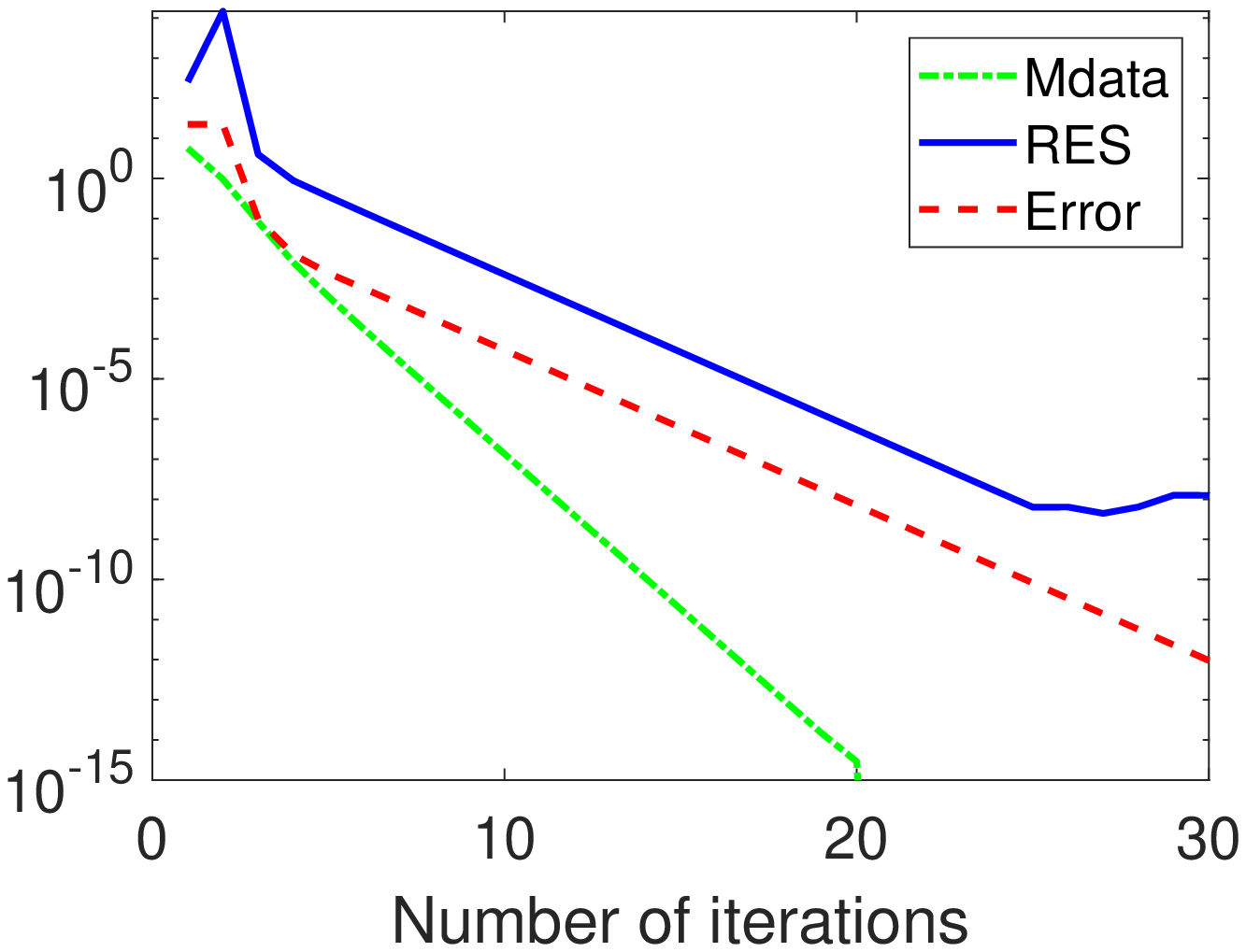}
	\end{minipage}
	
	\caption{The solitary wave solution generated by the  Petviashvili's method for \eqref{gBE}  with the parameters $\alpha=2$, $c=1.3, \kappa=\beta=1$ (left panel) and   the variation of the $Error(n)$, $|1-M_n|$ and $RES$ with the number of iterations in semi-log scale (right panel).}\label{figure3}
\end{figure}
The solitary wave profile generated by  Petviashvili's method for the parameters $\alpha=2$, $c=\sqrt{3}, \kappa=\beta=1$, where $c^2> \alpha>1$  is illustrated in the top left panel of Figure \ref{figure2}. The top right panel of Figure \ref{figure2} gives a closer look. In the bottom panel, we present the variation of    three different errors
with the  number of iterations in the semi-log scale.
The solitary wave profile generated by  Petviashvili's method for the parameters $\alpha=2$, $c=1.3, \kappa=\beta=1$, where $1< c^2< \alpha$ is illustrated in the  left panel of Figure \ref{figure3}. In the right panel of Figure \ref{figure3}, we present  the variation of    three different  errors
with the  number of iterations in semi-log scale.
As it is seen from Figures \ref{figure2} and \ref{figure3}, the solitary wave solution  has an oscillatory structure. For the parameters $ (i)~\alpha=2, c=\sqrt{3}, ~\kappa=\beta=1 $, $(ii)~\alpha=2, c=1.3, ~ \kappa=\beta=1$, the discriminant in \eqref{disc} is calculated approximately as
$-22.99, -4.56$, respectively. In the presence of the imaginary parts of the roots, the solitary wave solution has oscillatory asymptotics.  Therefore, the numerical results are compatible with the expected theoretical result.
\begin{figure}[!htbp] \label{figure4}
	\begin{minipage}[t]{0.45\linewidth}
		\centering
		\hspace*{-20pt}
		\includegraphics[height=5.5cm,width=7.5cm]{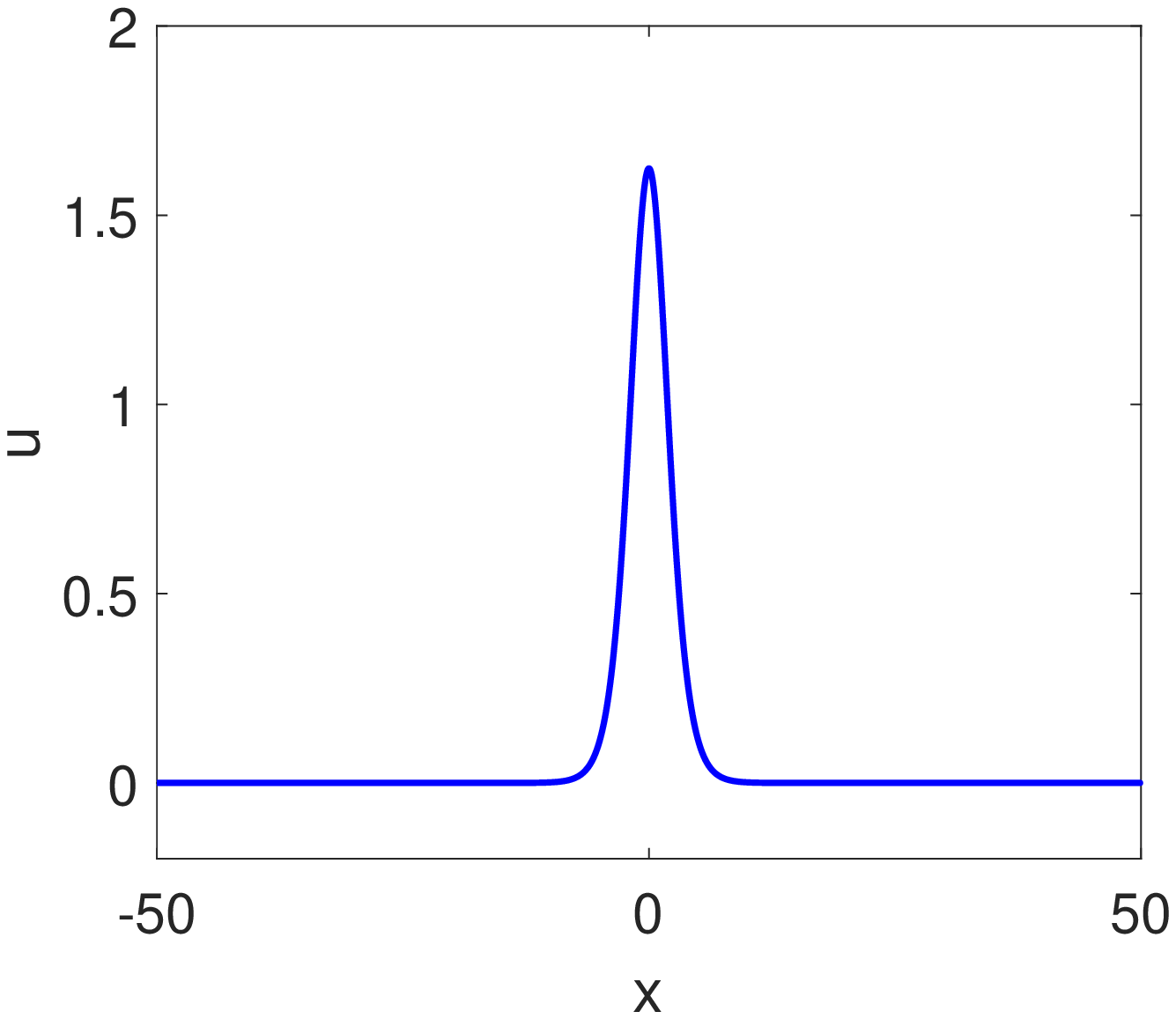}
	\end{minipage}%
	\hspace{3mm}
	\begin{minipage}[t]{0.45\linewidth}
		\centering
		\includegraphics[height=5.5cm,width=7.5cm]{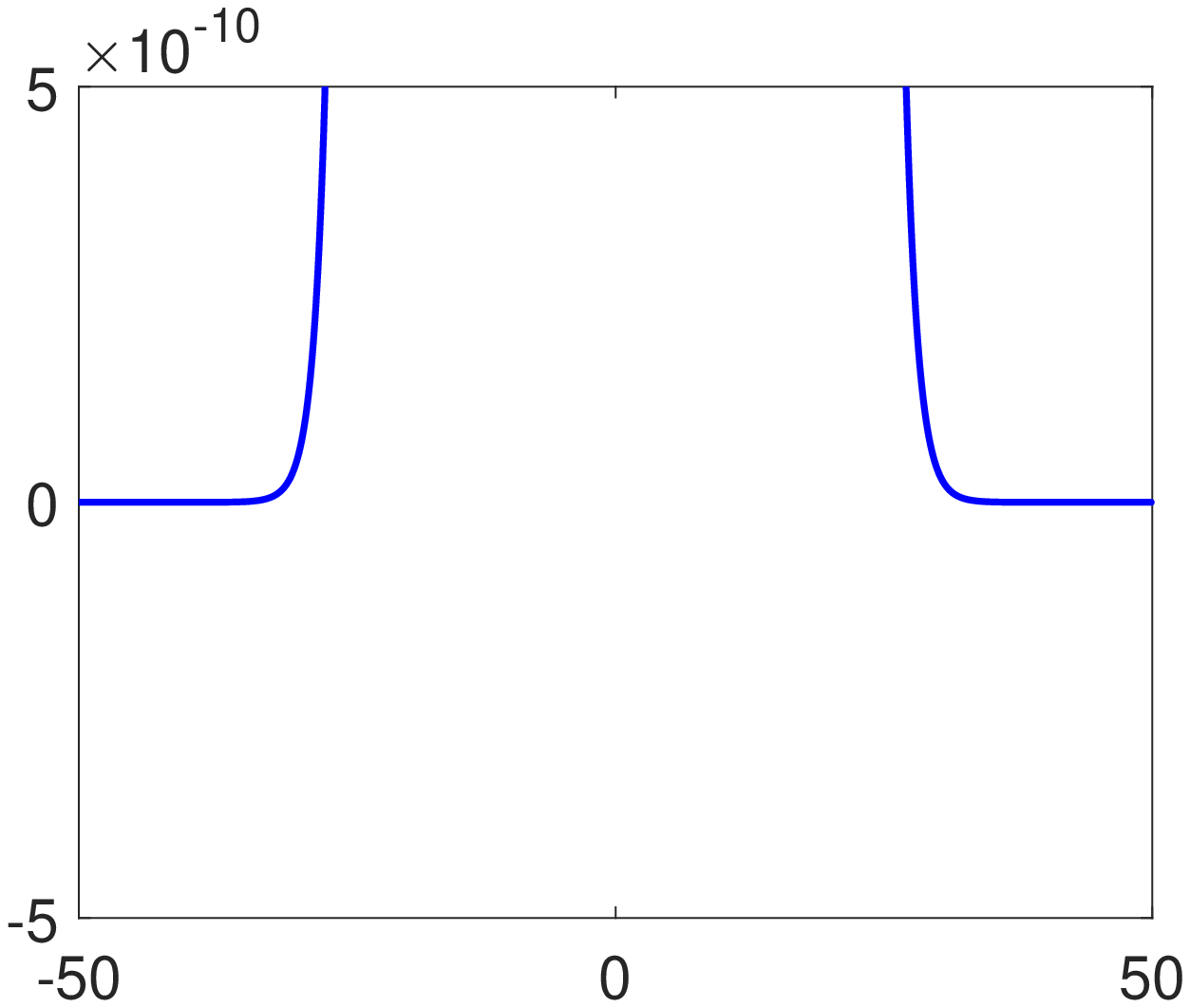}
	\end{minipage}
	\vspace{-5mm}
	\caption{The solitary wave solution generated by the  Petviashvili's method for \eqref{gBE}  with the parameters $\alpha=0.5, c=\sqrt{2.1}, \kappa=0.1, \beta=1$ (left panel),  the close-up look at the profile (right panel).} 
\end{figure}

\noindent
The solitary wave profile  for the parameters $\alpha=0.5, c=\sqrt{2.1}, \kappa=0.1, \beta=1$ is presented in the left panel of Figure \ref{figure4}.  Contrary to the above experiments, we observe that \eqref{gBE} has the monotone sech-type solitary wave profile. The  right panel of Figure \ref{figure4} gives a closer look. It takes  nonnegative values even if near zero. We observe that the equation \eqref{characteristic} has four real roots for the chosen parameters. Therefore, the oscillatory asymptotic profile is disappeared in this case.

\setcounter{equation}{0}
\section{Fourier pseudo-spectral  numerical method for  the gBq equation} \label{section-numericalmethod}
In order to investigate the time evolution of the solutions for \eqref{gBE},  we propose the  numerical method  combination of  a Fourier pseudo-spectral method for the space component
and a fourth-order Runge Kutta scheme (RK4) for time.
If the spatial period $[-L,L]$ is, for convenience,
normalized to $[0,2\pi]$ using the transformation
\mbox{$X=\pi(x+L)/L$}, the equation \eqref{gBE} becomes
\begin{eqnarray} \label{gBq_scaled}
	u_{tt}&=&\left(\frac{\pi }{L}\right)^2 u_{XX}- \left(\frac{\pi }{L}\right)^4 \alpha~ u_{XXXX}+\left(\frac{\pi }{L}\right)^2 u_{XXtt}- \left(\frac{\pi }{L}\right)^4 \kappa~ u_{XXXXtt}
	+(\frac{\pi }{L})^2\beta~(u^{p+1})_{XX}  \nonumber \\
	&&
	%u_t+ \frac{2\pi}{b-a} ~ u_X+ (\frac{2\pi}{b-a})^4~  u_{XXXXt} + \frac{2\pi}{b-a} (\frac{u^{p+1}}{p+1})_X=0.
\end{eqnarray}

\noindent
The interval $[0,2\pi]$ is divided into $N$ equal subintervals with grid spacing
$\Delta X=2\pi/N$, where the integer $N$ is even. The spatial grid points are given by
$X_{j}=2\pi j/N$,  $j=0,1,2,...,N$. The approximate solutions to
$u(X_{j},t)$ is denoted by $U_{j}(t)$. The discrete Fourier transform of the sequence
$\{U_{j} \}$
\begin{equation}\label{dft}
	\widetilde{U}_{k}={\cal F}_{k}[U_{j}]=
	\frac{1}{N}\sum_{j=0}^{N-1}U_{j}\exp(-ikX_{j}),
	~~~~-\frac{N}{2} \le k \le \frac{N}{2}-1~
\end{equation}
gives the corresponding Fourier coefficients. Similarly, $\{U_{j} \}$ can
be recovered from the Fourier coefficients by the inversion formula
for the discrete Fourier transform (\ref{dft})
\begin{equation}\label{invdft}
	U_{j}={\cal F}^{-1}_{j}[\widetilde{U}_{k}]=
	\sum_{k=-\frac{N}{2}}^{\frac{N}{2}-1}\widetilde{U}_{k}\exp(ikX_{j}),
	~~~~j=0,1,2,...,N-1.
\end{equation}
Here $\cal F$ denotes the discrete Fourier transform and
${\cal F}^{-1}$ its inverse.  These transforms are performed using the well-known software package FFT algorithm in Matlab.

\noindent
Applying the discrete Fourier transform to the equation \eqref{gBq_scaled}, the system of ordinary differential equations is given explicitly
\begin{eqnarray}
	&& (\widetilde{U}_k)_t=\widetilde{V}_k  \label{gBq_fourier1}  \\
	&& (\widetilde{V}_k)_t= -\frac{\Big\{ \big [(\pi k/L)^2+\alpha (\pi k/L)^4 \big ]  \widetilde{U}_k  +\beta (\pi k/L)^2
		(\widetilde{U^{p+1}})_k \Big\}}{1+(\pi k/L)^2+\kappa(\pi k/L)^4}
	\label{gBq_fourier2}
\end{eqnarray}
We use the fourth-order Runge-Kutta method to solve the   system of ordinary differential equations
\eqref{gBq_fourier1}-\eqref{gBq_fourier2} in time.
Finally, we find the approximate solution by using the inverse Fourier transform \eqref{invdft}.

\subsection{Time evolution of the single solitary wave}
The aim of  this subsection is to investigate the time evolution of the single solitary wave solution of \eqref{gBE}. First, we show that our proposed method is capable of high accuracy and
to confirm the convergence of the  scheme in space and time. The $L_\infty$-error norm is defined as
\begin{equation}
	L_{\infty}\mbox{-error}=\max_i |~u_i-U_i~|,
\end{equation}
where $u_i$ denotes the exact solution at $u(X_i,t)$.

\begin{figure}[!htbp]
	\centering
	\hspace*{-20pt}
	\includegraphics[height=5.5cm,width=7.5cm]{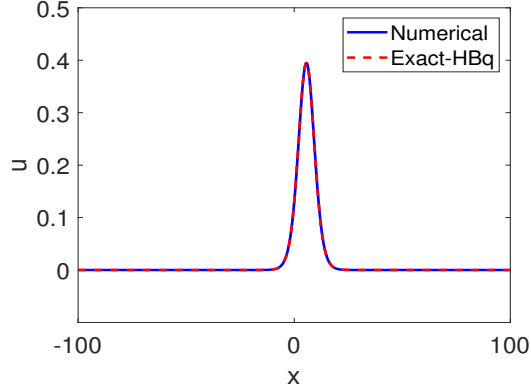}
	\caption{The solitary wave solution obtained by the Fourier pseudo-spectral scheme  for \eqref{gBE}  with $\alpha=0$ and the exact solitary wave solution for the HBq equation}\label{figure5}
\end{figure}

\noindent
In the first numerical experiment,  we take the initial data
\begin{eqnarray}\label{hbq-initial}
	&& u(x,0)=\frac{15}{38}\, \sech^4(\frac{x}{2\sqrt{13}}) \\
	&& v(x,0)= \frac{15 \sqrt{13}}{19\sqrt{133}} \sech^4(\frac{x}{2\sqrt{13}})
	\tanh(\frac{x}{2\sqrt{13}})
\end{eqnarray}
corresponding to the solitary wave solution  \eqref{soliterhbq} for $\eta_1=\eta_2=1, p=2$ initially
located at $x_0=0$. The experiment was run from $t=0$ to $t=5$ in the space interval $-100 \leq x\leq 100$
taking the number of  spatial grid points $N=2048$ and  the number of temporal grid  points $M=1000$.
To test the accuracy of the scheme \eqref{gBq_fourier1}-\eqref{gBq_fourier2}, the numerical solution obtained for
\eqref{gBE} with $\alpha=0$
is compared with the
exact solitary wave solution for the HBq equation \eqref{soliterhbq} at $t=5$ in Figure \ref{figure5}. As it is seen from Figure \ref{figure5}, the  numerical solution and the exact solution coincide very well. The  $L_\infty$-error norm at $t=5$ is $8.33 \times 10^{-16}$. It shows that our proposed method is capable of high accuracy.

\begin{figure}[!htbp]
	\begin{minipage}[t]{0.45\linewidth}
		\centering
		\hspace*{-20pt}
		\includegraphics[height=5.5cm,width=7.5cm]{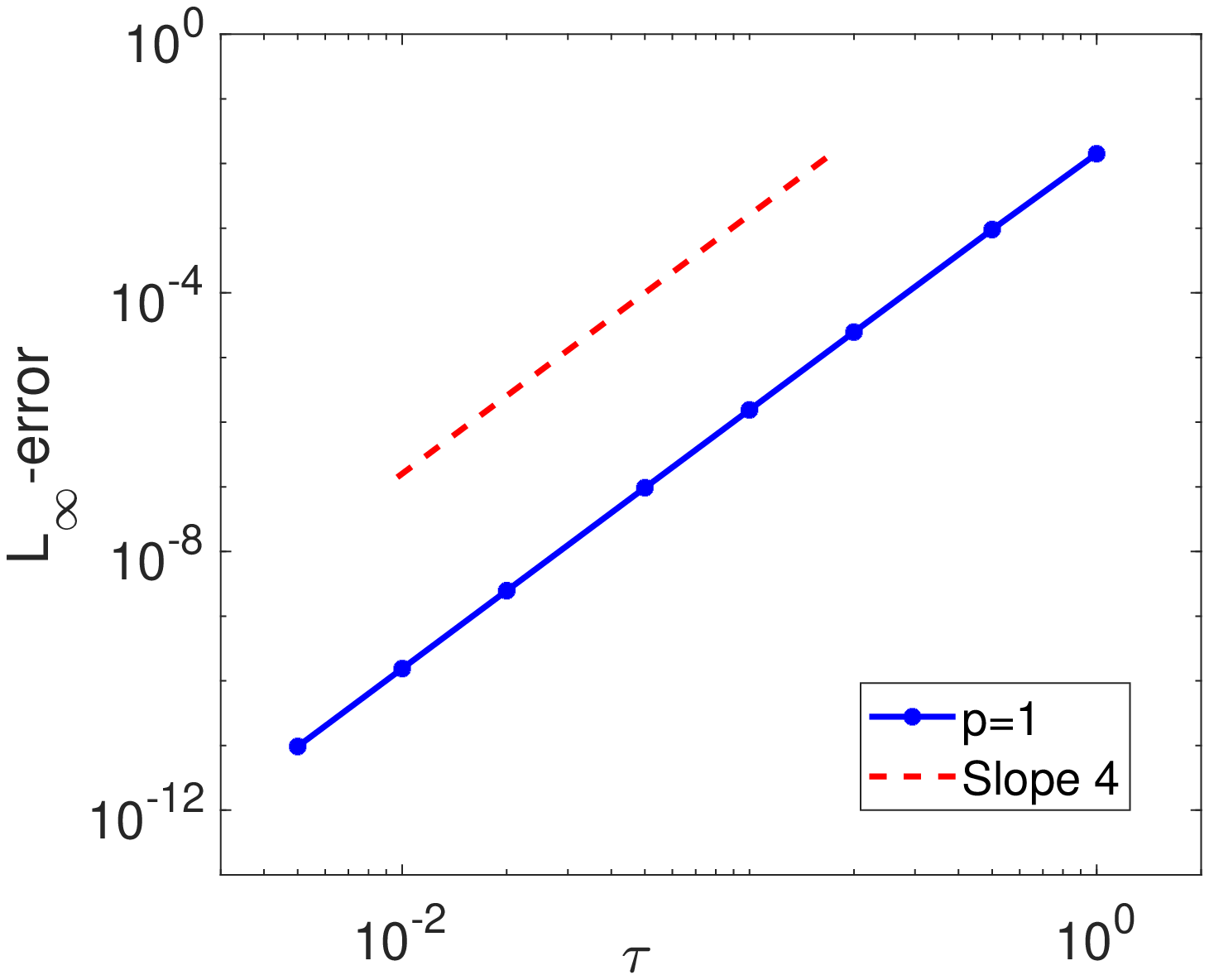}
	\end{minipage}%
	\hspace{3mm}
	\begin{minipage}[t]{0.45\linewidth}
		\centering
		\includegraphics[height=5.5cm,width=7.5cm]{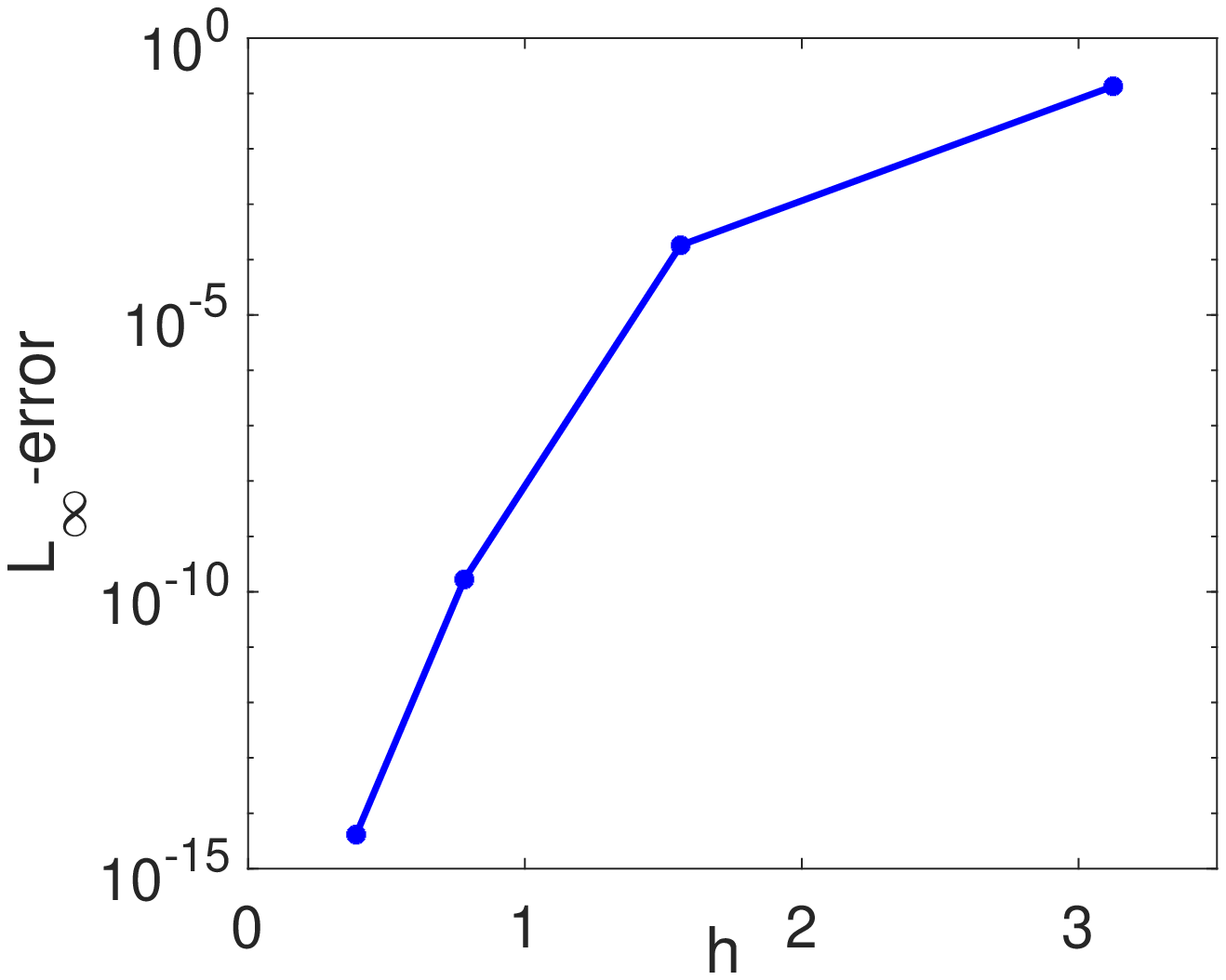}
	\end{minipage}
	
	\caption{The convergence rates in time calculated from
		the $L_{\infty}$-errors  (left panel) and  in space calculated from
		the $L_{\infty}$-errors (right panel).}   \label{ordertime}
\end{figure}

\begin{figure}[!htbp]
	\begin{minipage}[t]{0.45\linewidth}
		\centering
		\hspace*{-20pt}
     \centering
		\includegraphics[height=5.3cm,width=7.5cm]{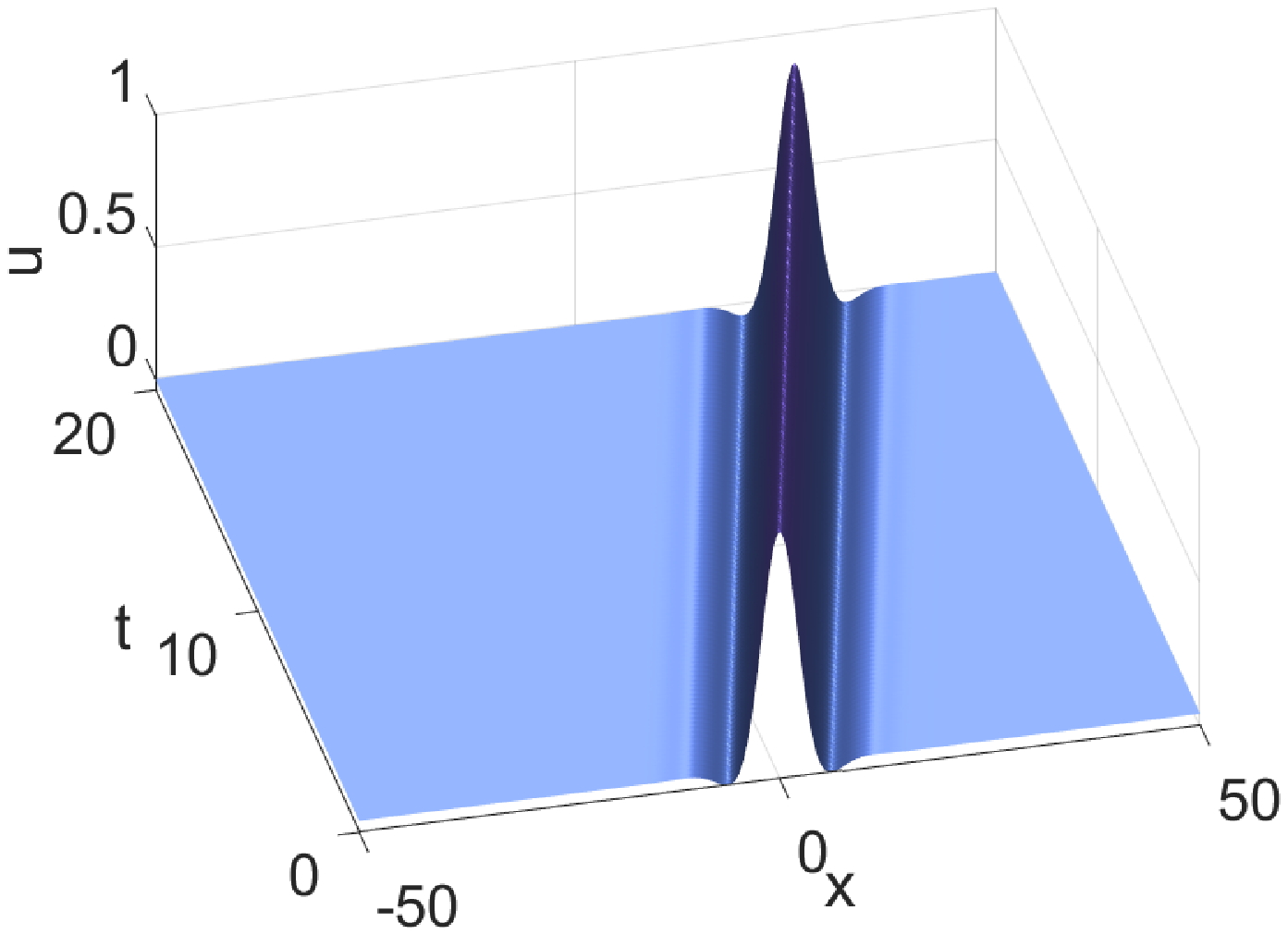}
	\end{minipage}%
	\hspace{3mm}
	\begin{minipage}[t]{0.45\linewidth}
		\centering
		\includegraphics[height=5.3cm,width=7.5cm]{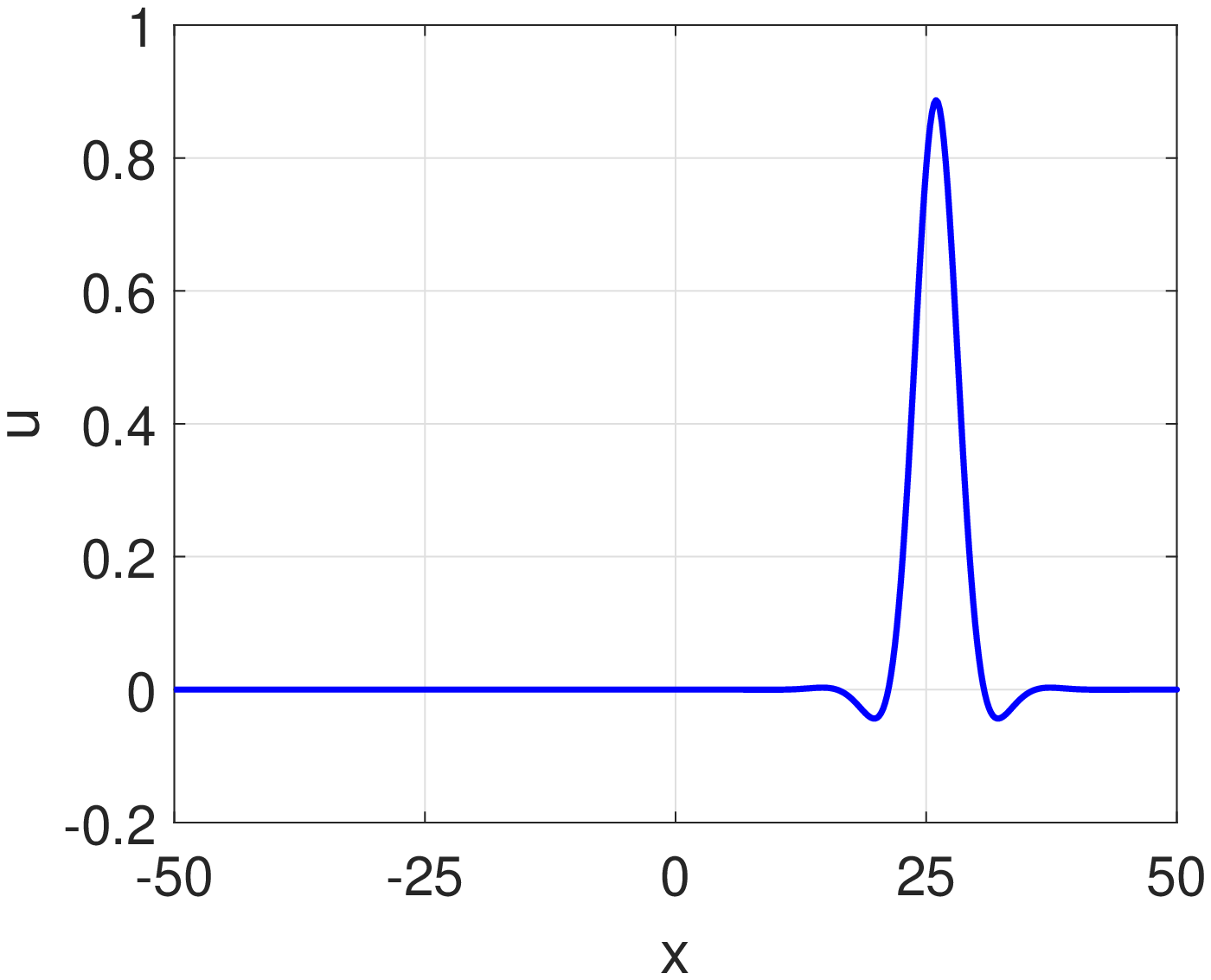}
	\end{minipage}
	\begin{minipage}[t]{0.45\linewidth}
		\centering
		\hspace*{-20pt}
		\includegraphics[height=5.3cm,width=7.5cm]{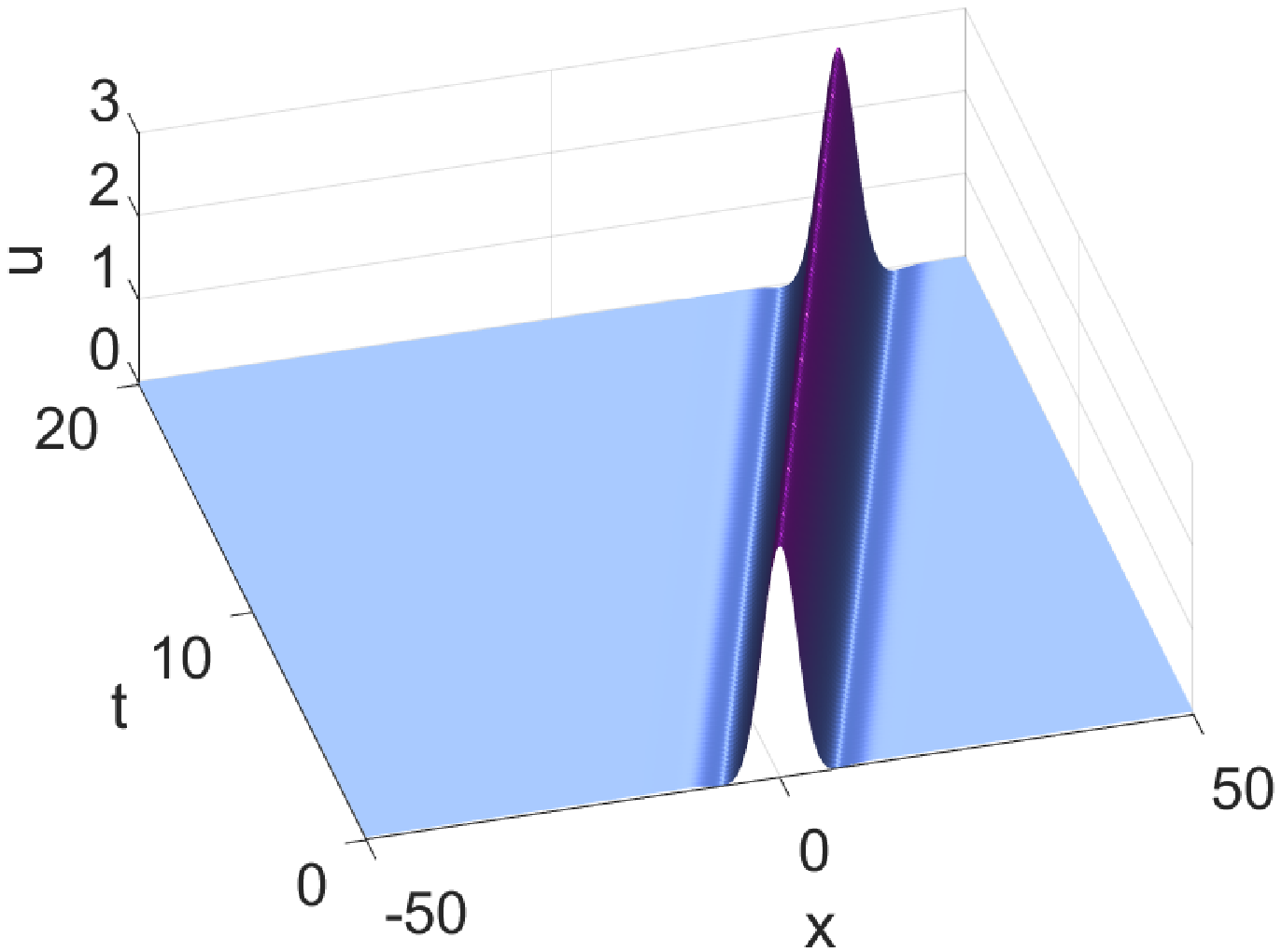}
	\end{minipage}%
	\hspace{14mm}
	\begin{minipage}[t]{0.45\linewidth}
		\centering
		\includegraphics[height=5.3cm,width=7.5cm]{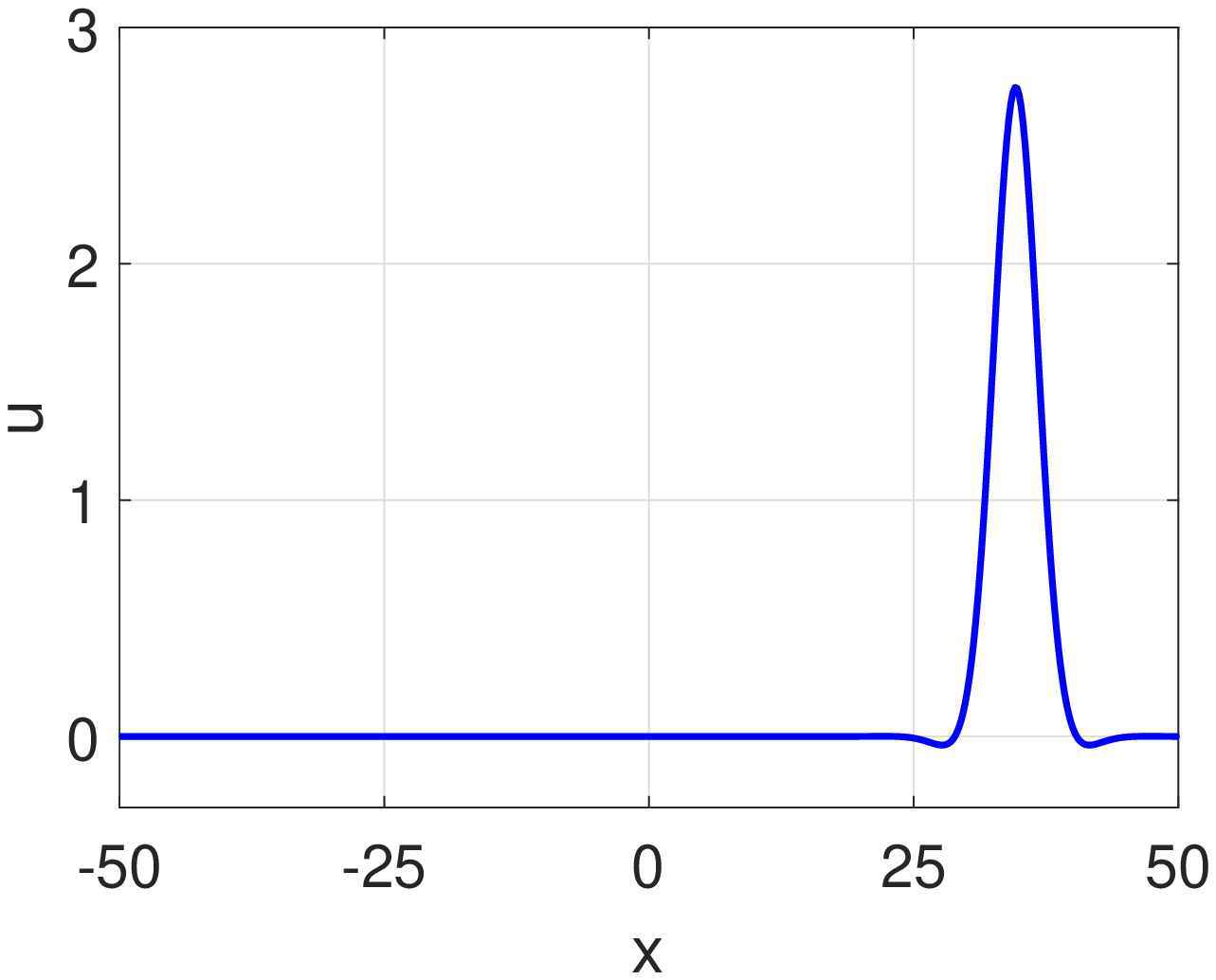}
	\end{minipage}
	\caption{Evolution of the solitary wave for the parameters $ \alpha=2, c=1.3$,  $\kappa=\beta=1 $ (top left), the profile at $t=20$ (top right)  and the evolution of the solitary wave for the parameters $\alpha=2, c=\sqrt{3}, \kappa=\beta=1$ (bottom left), the profile at $t=20$ (bottom right) .}\label{figure7}
\end{figure}

\begin{figure}[!htbp]
	\centering
	\hspace*{-20pt}
	\includegraphics[height=5.5cm,width=7.5cm]{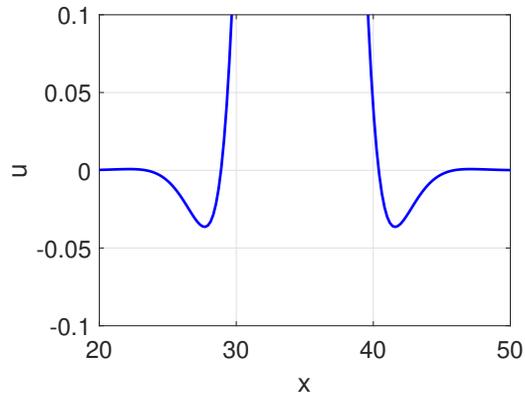}
	\caption{Magnification of the profile at $t=20$ for the parameters $\alpha=2, c=\sqrt{3}, \kappa=\beta=1$ }\label{figure75}
\end{figure}

\noindent
Our goal is to understand numerically how the higher-order effects of dispersion and nonlinearity affect the solitary wave solutions of \eqref{gBE} as time increases.
Therefore, we need initial conditions for $u(x,0)$ and $u_t(x,0)$. The generated solitary wave profile $Q_c$ using Petviashvili's iteration method for the parameters  $\alpha=2, c=1.3, \kappa=\beta=1$ is taken as $u(x,0)$. Since the localized solitary wave solution has the  form $u(x,t)=Q_c(x-ct)$, we get $u_t(x,t)=-c Q_c^{\prime}(x-ct)$. We set
$u_t(x,0)=-cQ_c^{\prime}(x)$. It is computed  by \mbox{$-c{\cal F}_{j}^{-1}[ik{\cal F}_{k}[Q_c(j)]]$} numerically.  Since the exact solitary wave solution is unknown,  the ``exact'' solitary wave solution $u^{\rm ex}$ is obtained numerically  with a very fine spatial step size  $N=1024$  and a very small time step $M=5000$   by using Fourier pseudo-spectral method.

\noindent
In order to test the temporal discretization errors for  the gBq equation  where the parameter $\alpha$ is different from zero, we fix the number of  spatial grid points $N=1024$ and solve \eqref{gBE} for different time step $\tau$. The convergence rates  calculated from the $L_{\infty}$-errors
at the terminating time $t=5$ are illustrated in the left panel  of Figure \ref{ordertime}. The computed convergence rates agree well
with the fact that Fourier pseudo-spectral method exhibits fourth-order convergence in time. In order to  test the spatial discretization errors,  we fix the time step $(\tau=0.001)$  such that the temporal error can be neglected, and solve \eqref{gBE} for different spatial step size $h$. The right panel of
Figure \ref{ordertime} shows  the variation of  $L_{\infty}$-errors with spatial step size.
The error decays very rapidly when the spatial step size decreases.
These results show that the numerical solution obtained using the
Fourier pseudo-spectral scheme converges rapidly to the accurate solution in space.

Figure \ref{figure7} shows evolution of the nonmonotone solitary wave for the parameters $ (i)~\alpha=2, c=\sqrt{3}$, $\kappa=\beta=1 $ and  $(ii)~\alpha=2, c=1.3, \kappa=\beta=1$ . The solitary wave profiles at the final time $t=20$ are also presented.
The experiment was run from $t=0$ to $t=20$ in the space interval $-100 \leq x\leq 100$
taking the number of  spatial grid points $N=1024$ and  the number of temporal grid  points $M=20000$. 
As it is  seen from the figures, the solitary wave emerges without any change in their shapes. The magnification of the profile at $t=20$  for the parameters $\alpha=2, c=\sqrt{3}, \kappa=\beta=1$ is depicted in Figure \ref{figure75}.  The solution still has the sign-changing part at the final time. We do not observe any dispersive tail as time increases. 

\subsection{Global and Blow-up Solutions}
In this subsection, we first test  the ability of the proposed method to investigate the global existence and blow-up solutions of the gBq equation. The analytical results  needed for the global existence or blow-up of the solutions for \eqref{gBE} have been discussed  in Section \ref{section-evolution} but there remains a gap between the global existence and blow-up intervals. Our aim is to give shed of light on the gap interval  neither a global existence nor a blow-up result is established.

For the numerical experiments in this subsection, the problem is solved on the interval $-100 \leq x \leq 100$  taking the spatial grid points as $N=2^{13}$. In the rest of the study, we consider \eqref{gBE} with cubic nonlinearity \mbox{$f(u)=-u^3$} setting the parameters $\alpha=\kappa=1$ and $\beta=-1$.  The steady-state solution of the equation
\begin{equation*}
	u-  u^{\prime\prime}-u^3=0
\end{equation*}
is given by $g(x)=\sqrt{2}\sech(x)$.  If we choose the
initial data
\begin{equation}
	u(x,0)=-\sqrt{2}A\sech(x)\tanh(x), \hspace*{30pt} v(x,0)=0,
	\label{inamp}
\end{equation}
where $A>0$, one gets
\begin{eqnarray*}
	d&=&J(g) =\frac12\|g\|_{H^1}^2-\frac14\|g\|_{L^4}^4
	=\frac43,\\
	E(u(x,0))&=&\frac12\left(\|u(\cdot,0)\|_{L^2}^2+\|v(\cdot,0)\|_{L^2}^2+ \|u_x(\cdot,0)\|_{L^2}^2
	+\|v_{x}(\cdot,0)\|_{L^2}^2 +\|(-\partial_x^2)^{-1/2}v(.,0)\|_{L^2}^2
	\right)
	\\
	&&
	\hspace*{15pt} -\frac14\|u(\cdot,0)\|_{L^4}^4 \\
	&=&
	\frac{4A^2}{35}(14-A^2),\\
	I(u(x,0))&=&\|u(.,0)\|_{H^1}^2-\|u(\cdot,0)\|_{L^4}^4=\frac{16A^2}{35}(7-A^2).
\end{eqnarray*}
Thus we obtain that
\begin{eqnarray*}
	E(u(x,0))&<&d\Leftrightarrow A\in(0,0.943)\cup(3.621,\infty)\\
	I(u(x,0)&< &0\Leftrightarrow A>\sqrt{7}\approx 2.646.
\end{eqnarray*}

\begin{figure}[!htbp]
	\begin{minipage}[t]{0.45\linewidth}
		\centering
		\hspace*{-20pt}
		\includegraphics[height=5.5cm,width=7.5cm]{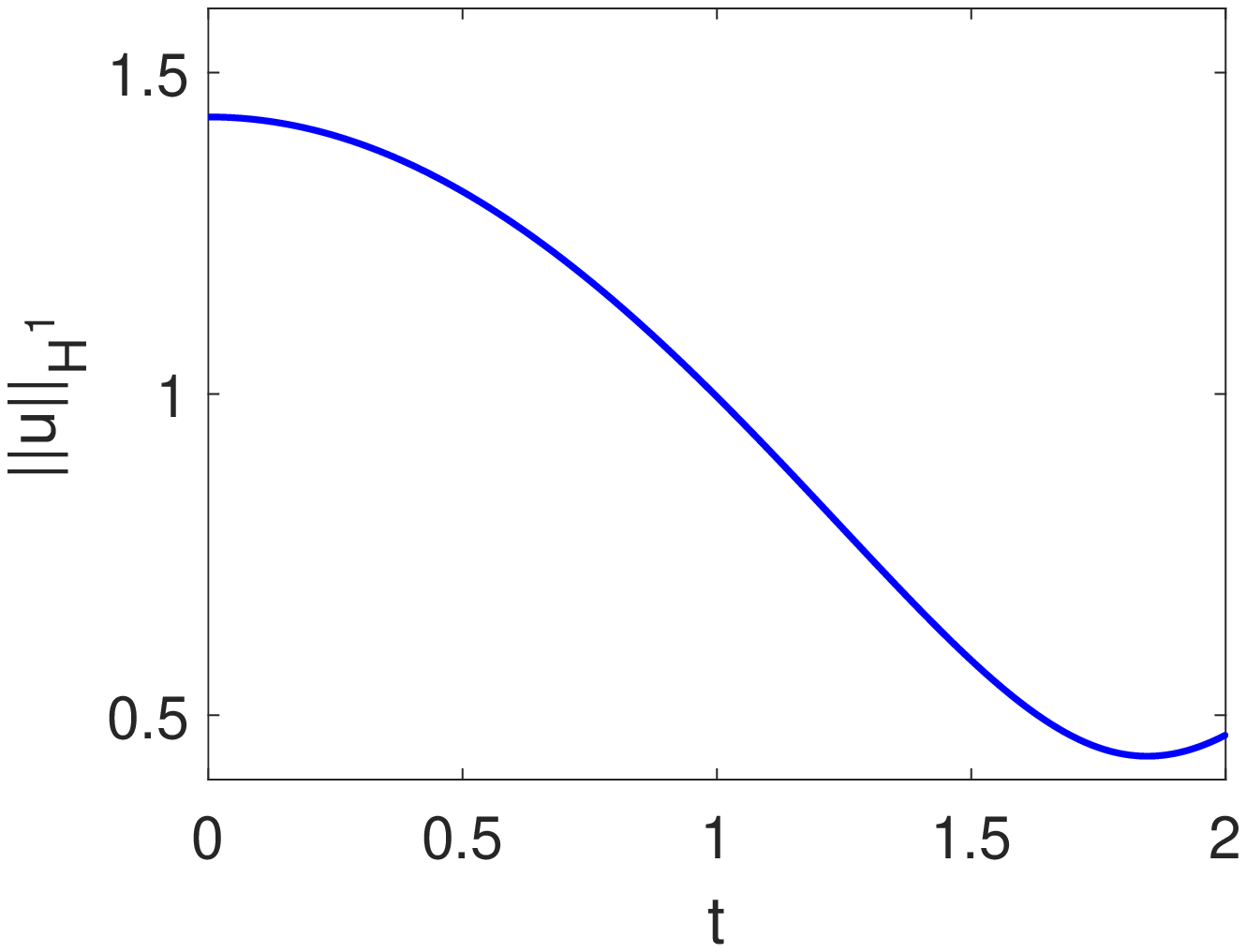}
	\end{minipage}%
	\hspace{3mm}
	\begin{minipage}[t]{0.45\linewidth}
		\centering
		\includegraphics[height=5.5cm,width=7.5cm]{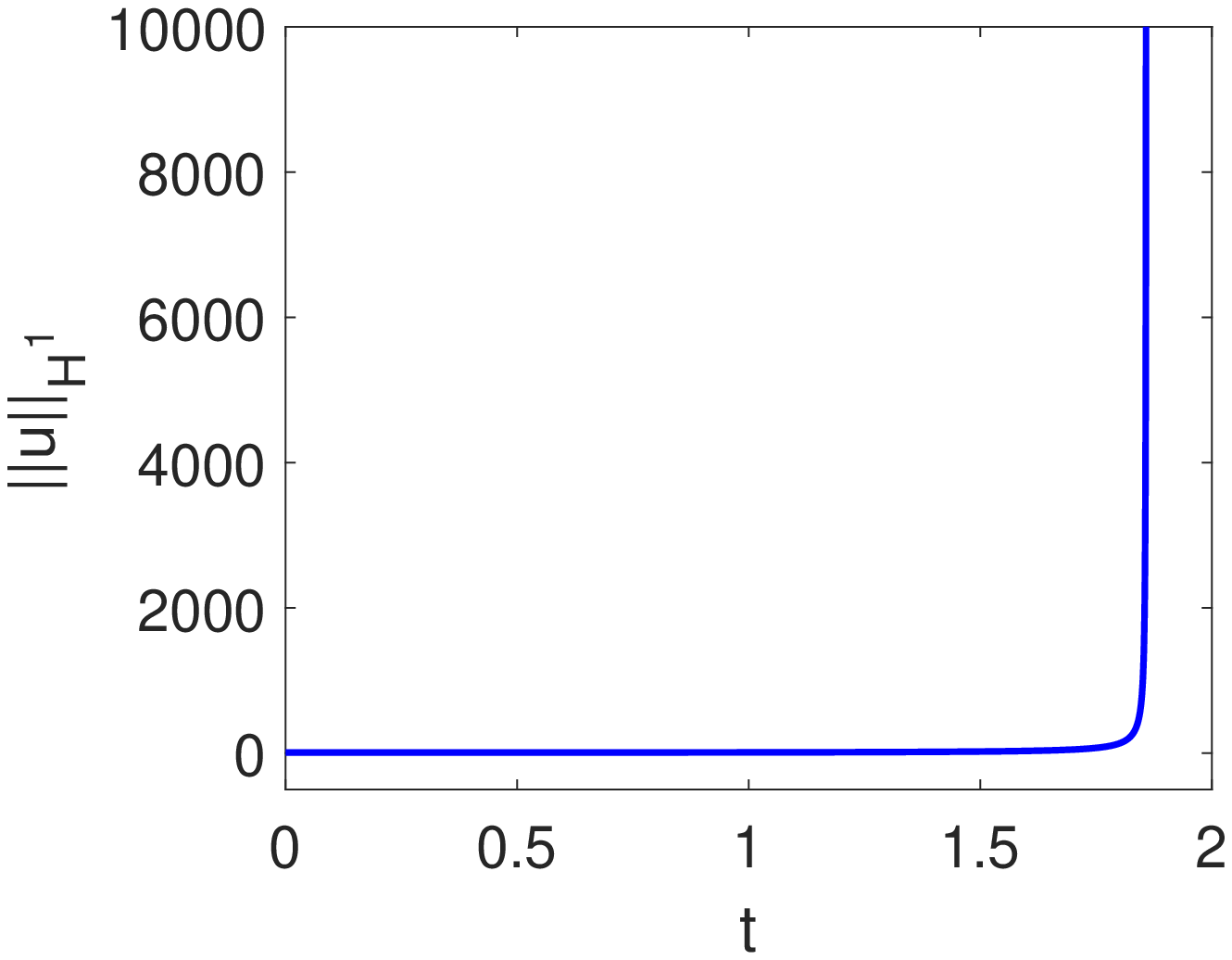}
	\end{minipage}
	
	\caption{The variation of the $H^1$-norm of the approximate solution   for $A=0.8$ (left panel) and $A=3.7$ (right panel).}   \label{figure9}
\end{figure}

\noindent
We present  the variation of the $H^1$-norm of the approximate solution  for the initial data \eqref{inamp} with $A=0.8$ and $A=3.7$ with time in Figure \ref{figure9}.  In the left panel of  Figure \ref{figure9},  the $H^1$-norm of the approximate solution
decreases  as time increases for  $A=0.8$. The numerical result indicates the global existence of the solution. This numerical result is also  compatible with the analytical result given in Theorem \ref{global-1}. In the right panel of  Figure \ref{figure9}, the $H^1$-norm of the approximate solution
increases  as time increases for  $A=3.7$. This is a strong indication that the solution  blows up in a finite time. This numerical result is in complete agreement with the analytical result given in Theorem \ref{blowup-1}.

\begin{figure}[!htbp]
	\begin{minipage}[t]{0.45\linewidth}
		\centering
		\hspace*{-20pt}
		\includegraphics[height=5.5cm,width=7.5cm]{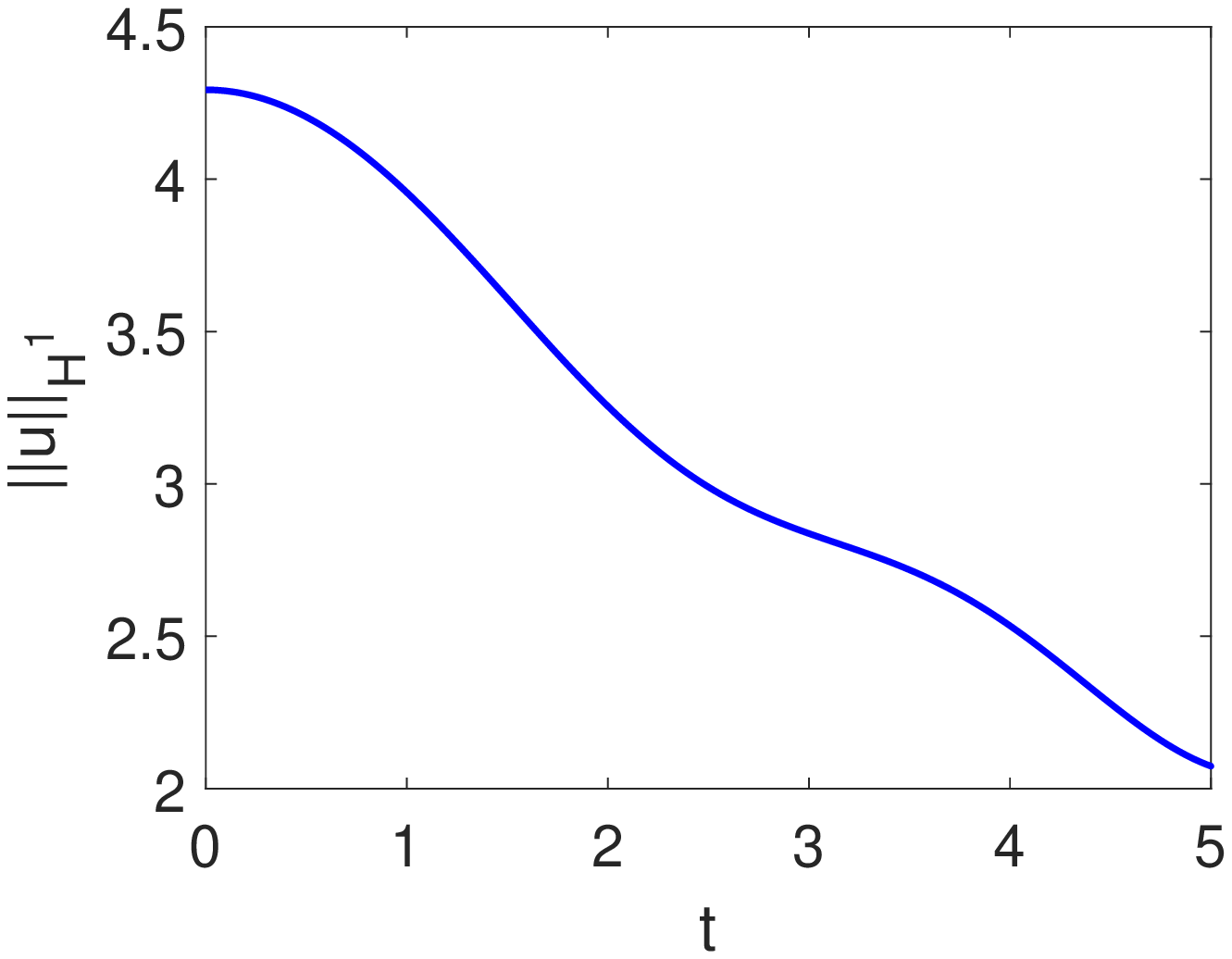}
	\end{minipage}%
	\hspace{3mm}
	\begin{minipage}[t]{0.45\linewidth}
		\centering
		\includegraphics[height=5.5cm,width=7.5cm]{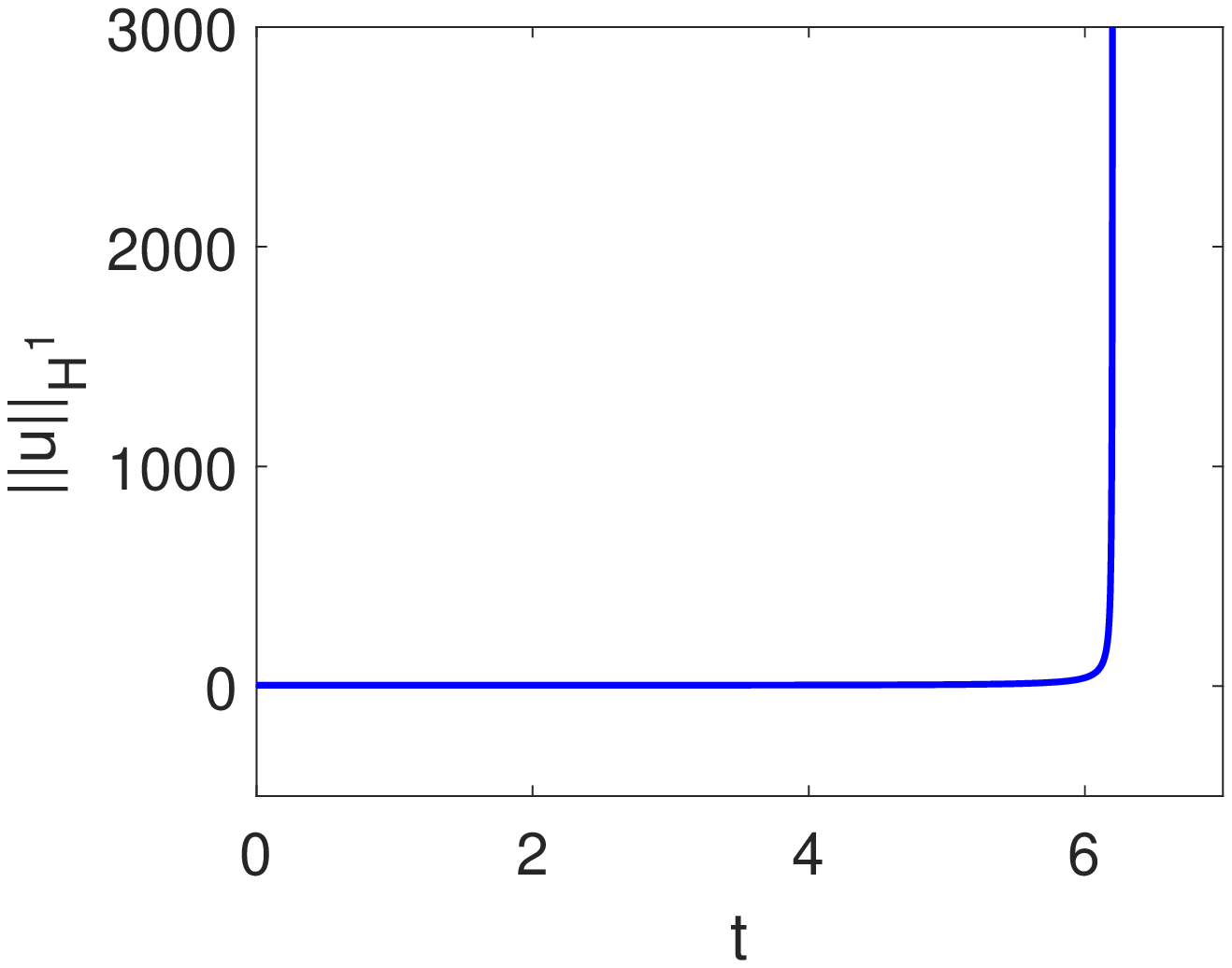}
	\end{minipage}
	
	\caption{The variation of the $H^1$-norm of the approximate solution with time  for the initial data \eqref{inamp} with $A=2.4$ (left panel) and $A=2.5$ (right panel).}   \label{figure10}
\end{figure}
\begin{figure}[!htbp]
	\begin{minipage}[t]{0.45\linewidth}
		\centering
		\hspace*{-20pt}
		\includegraphics[height=5.5cm,width=7.5cm]{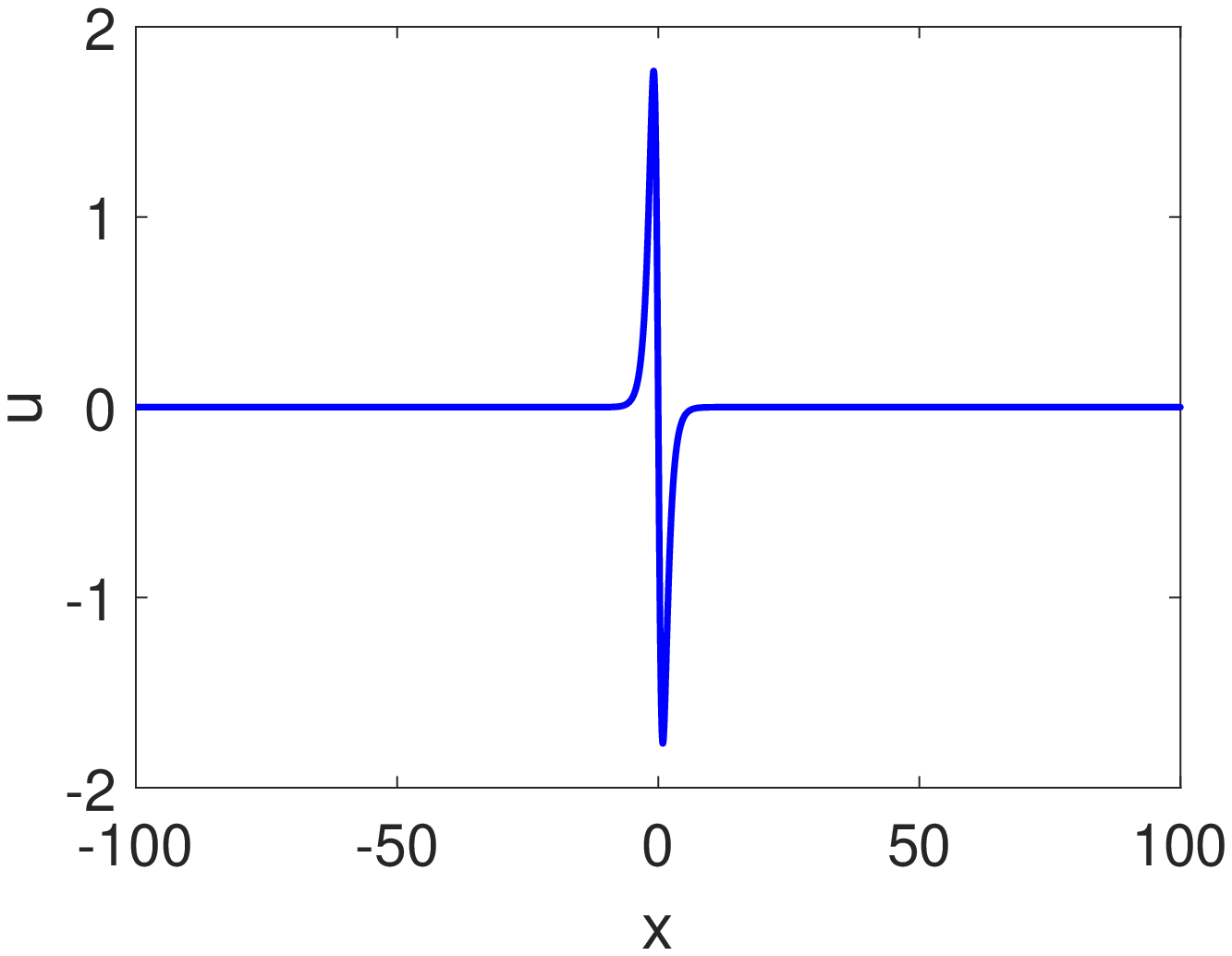}
	\end{minipage}%
	\hspace{3mm}
	\begin{minipage}[t]{0.45\linewidth}
		\centering
		\includegraphics[height=5.5cm,width=7.5cm]{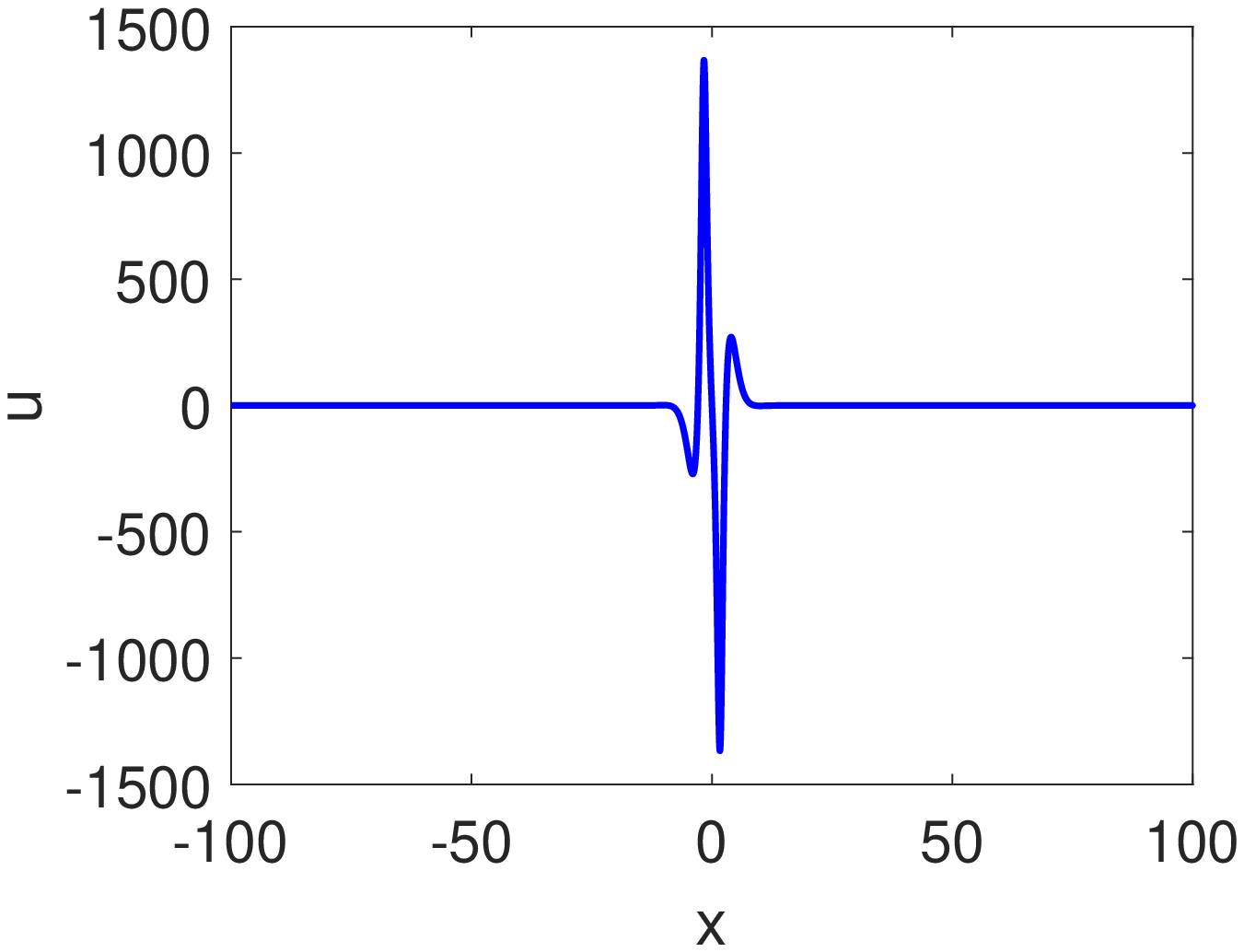}
	\end{minipage}
	\caption{The initial data \eqref{inamp} with $A=2.5$   (left panel) and the solution  profile at $t=6.2$ (right panel).}   \label{figure105}
\end{figure}

There is no analytical result for the gap interval $A\in[0.943,3.621]$. We performed lots of numerical experiments for different values of $A$. The numerical experiments indicate that there is a threshold value  $A^*\in (2.4,2.5)$ such that
the solution exists globally   for  $A \in [0.943, A^*)$ and it blows up in finite time for  $A \in [A^*,3.621]$.
The variation of  $H^1$-norm of the approximate solution with time is illustrated  for the initial data \eqref{inamp} with  $A=2.4$ and $A=2.5$ in Figure \ref{figure10}.
The $H^1$-norm of the approximate solution
decreases  as time increases for  $A=2.4$ which indicates the global existence of the solution.
For $A=2.5$,   the $H^1$-norm of the approximate solution
increases  as time increases. Figure \ref{figure105} shows the the initial data \eqref{inamp} with $A=2.5$ and the solution profile at $t=6.2$.    
It can be  seen that the solution  blows up in finite time.

\par In the second numerical experiment, we take the
initial data
\begin{equation}
	u(x,0)=v(x,0)=-\sqrt{2}A\sech(x)\tanh(x)
	\label{inamp2}
\end{equation}
where $A>0$. It yields
\begin{equation*}
	E(u(x,0))=A^2 \left(\frac{78}{15}-\frac{4}{35} A^2  \right),
\end{equation*}
and
\begin{eqnarray*}
	E(u(x,0))&<& d\Leftrightarrow A\in(0,0.508)\cup(6.726,\infty),  \\
	I(u(x,0)&< &0\Leftrightarrow A>\sqrt{7}\approx 2.646.
\end{eqnarray*}

\begin{figure}[!htbp]
	\begin{minipage}[t]{0.45\linewidth}
		\centering
		\hspace*{-20pt}
		\includegraphics[height=5.5cm,width=7.5cm]{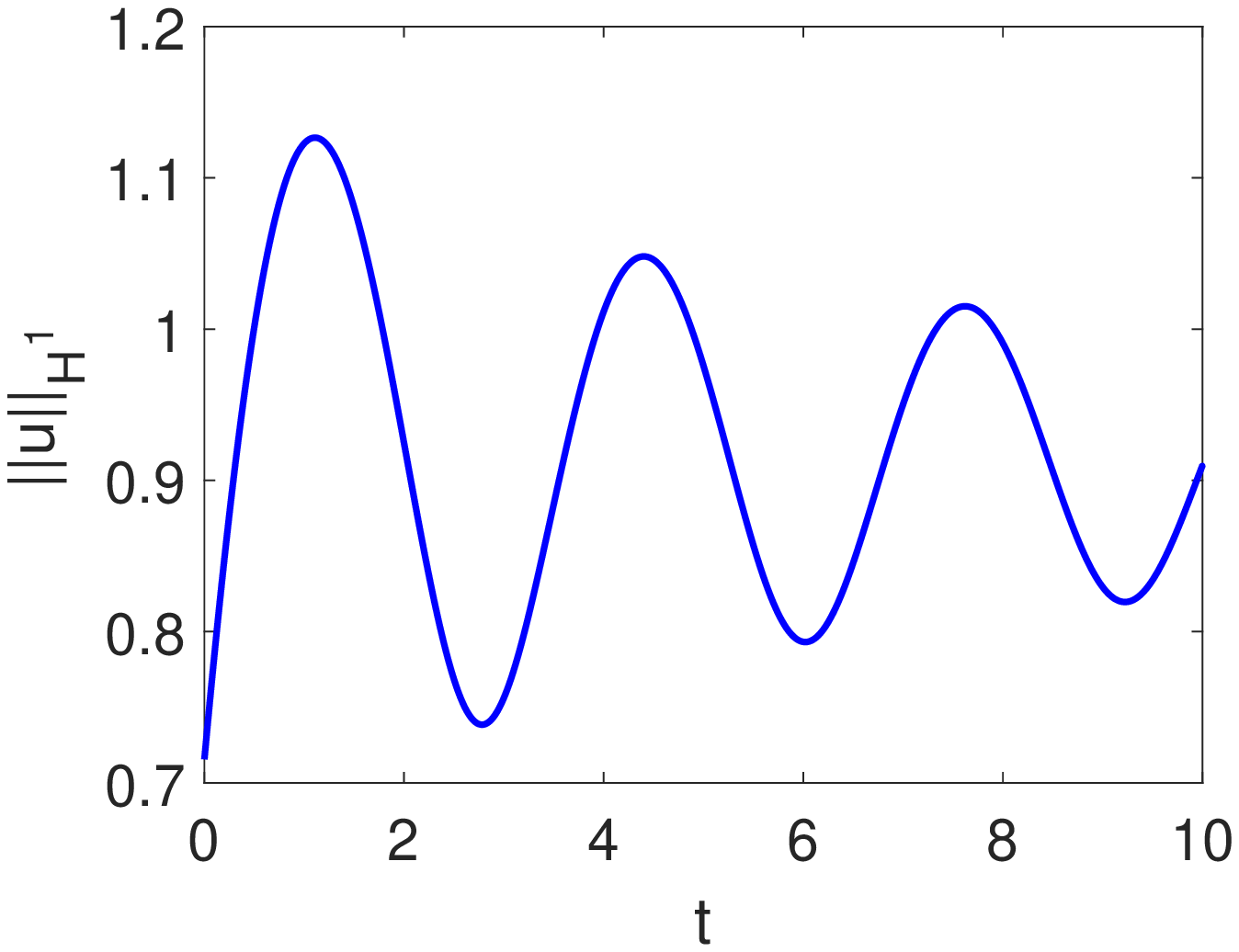}
	\end{minipage}%
	\hspace{3mm}
	\begin{minipage}[t]{0.45\linewidth}
		\centering
		\includegraphics[height=5.5cm,width=7.5cm]{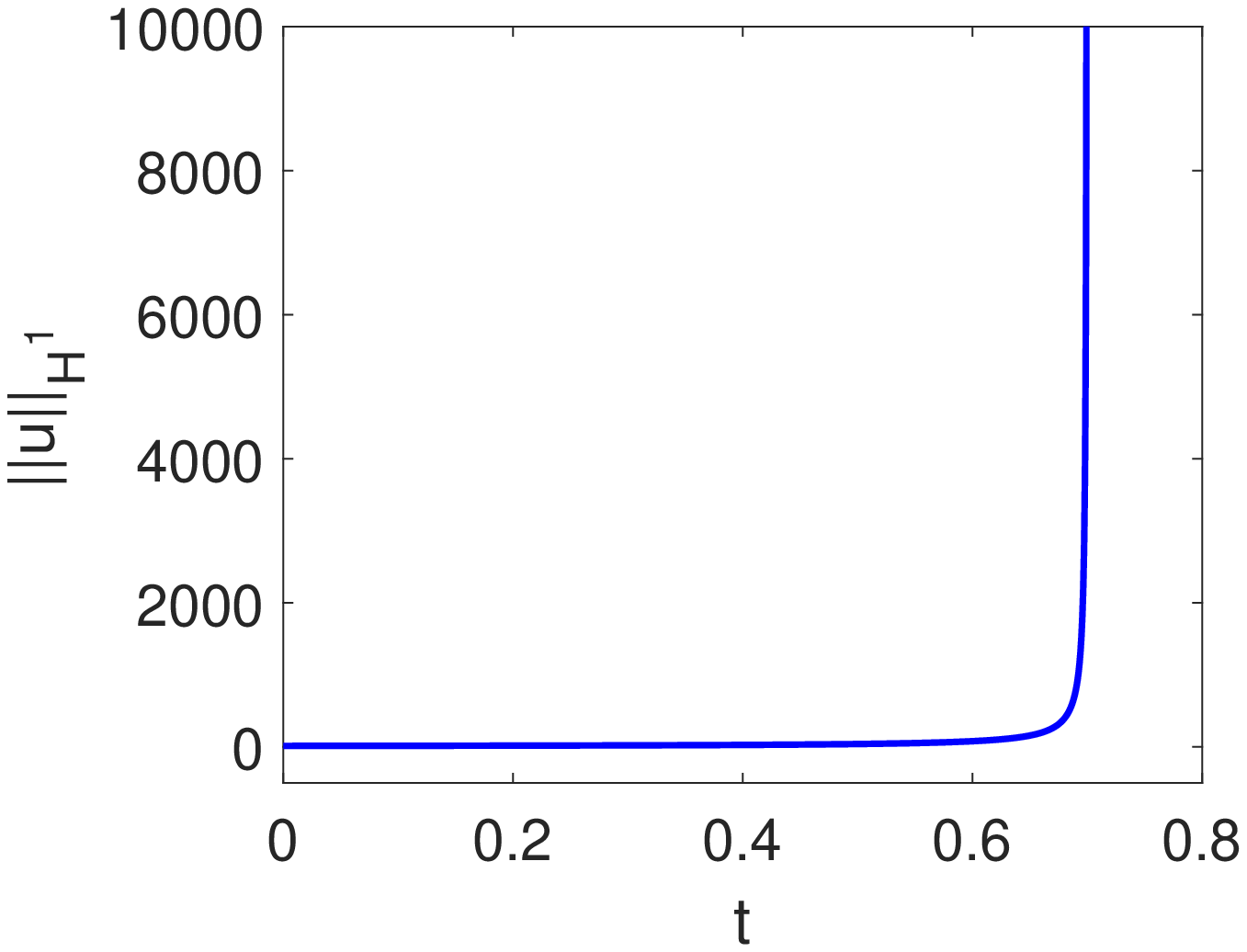}
	\end{minipage}
	
	\caption{The variation of the $H^1$-norm of the approximate solution   for $A=0.4$ (left panel) and $A=7$ (right panel).}   \label{figure11}
\end{figure}
\noindent
In the left panel of  Figure \ref{figure11},  the $H^1$-norm of the approximate solution
remains bounded for  $A=0.4$. It oscillates  as time increases.   The numerical result indicates the global existence of the solution. This numerical result is also  compatible with the analytical result given in Theorem \ref{global-1}
since  $E(u(x,0) <d$  and $I(u(x,0))>0$.
The $H^1$-norm of the approximate solution  increases  as time increases for  $A=7$. This is a strong indication that the solution  blows up in a finite time. This numerical result is in complete agreement with the analytical result given in Theorem \ref{blowup-1}. 
For the gap interval $A\in[0.508,6.726]$,  the numerical experiments indicate that there is a threshold value  $A^*\in (1, 1.1)$ such that
the solution exists globally   for the parameter $A \in [0.508, A^*)$ and it blows up in finite time for the parameter   $A \in [A^*,6.726]$.
The variation of the $H^1$-norm of the approximate solution is illustrated for $A=1$ and $A=1.1$ in Figure \ref{figure12}.  The initial data \eqref{inamp2} with $A=1.1$ and the evolution of the initial data
are depicted in Figure \ref{figure125}. This peak appears to blow-up.
\begin{figure}[!htbp]
	\begin{minipage}[t]{0.45\linewidth}
		\centering
		\hspace*{-20pt}
		\includegraphics[height=5.5cm,width=7.5cm]{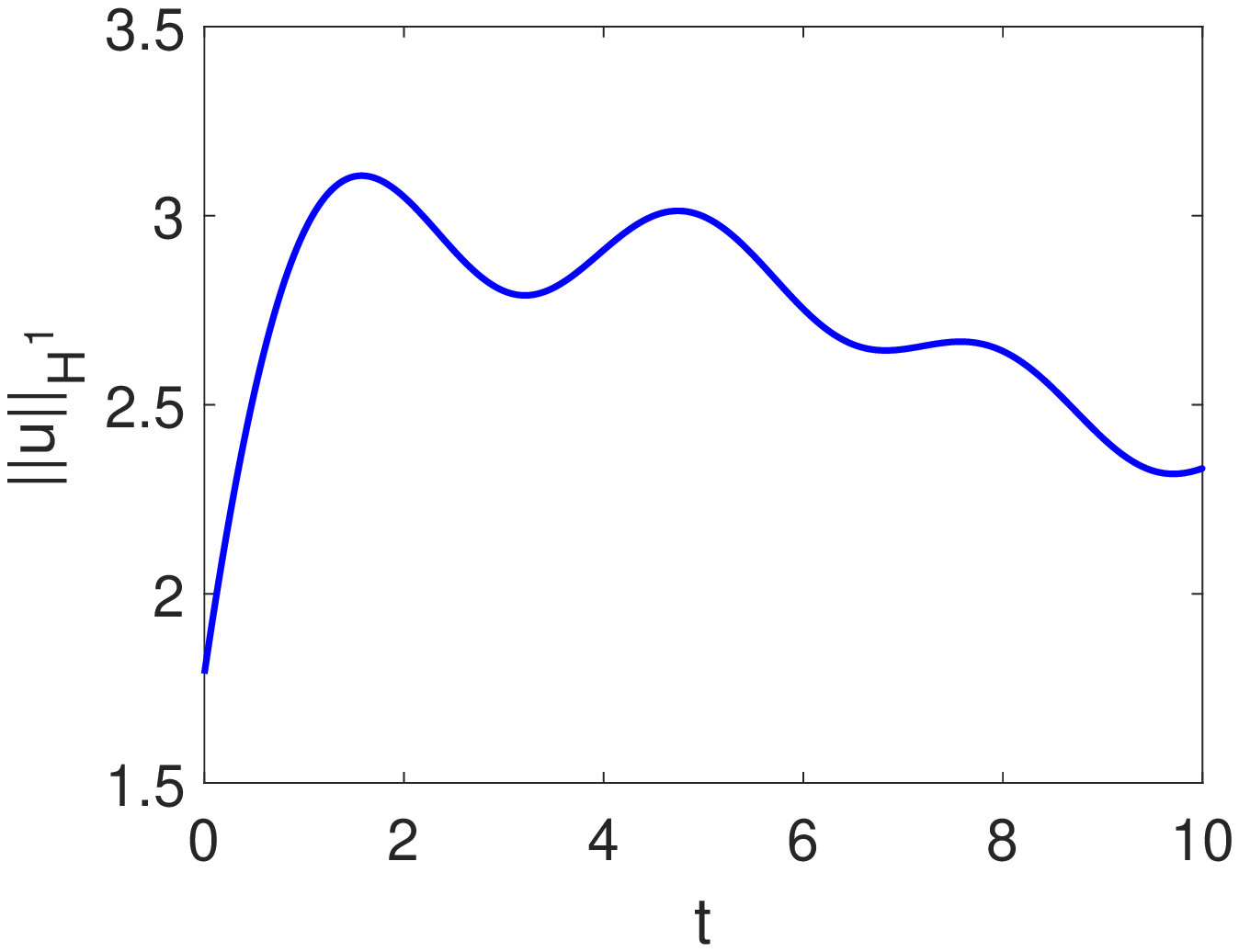}
	\end{minipage}%
	\hspace{3mm}
	\begin{minipage}[t]{0.45\linewidth}
		\centering
		\includegraphics[height=5.5cm,width=7.5cm]{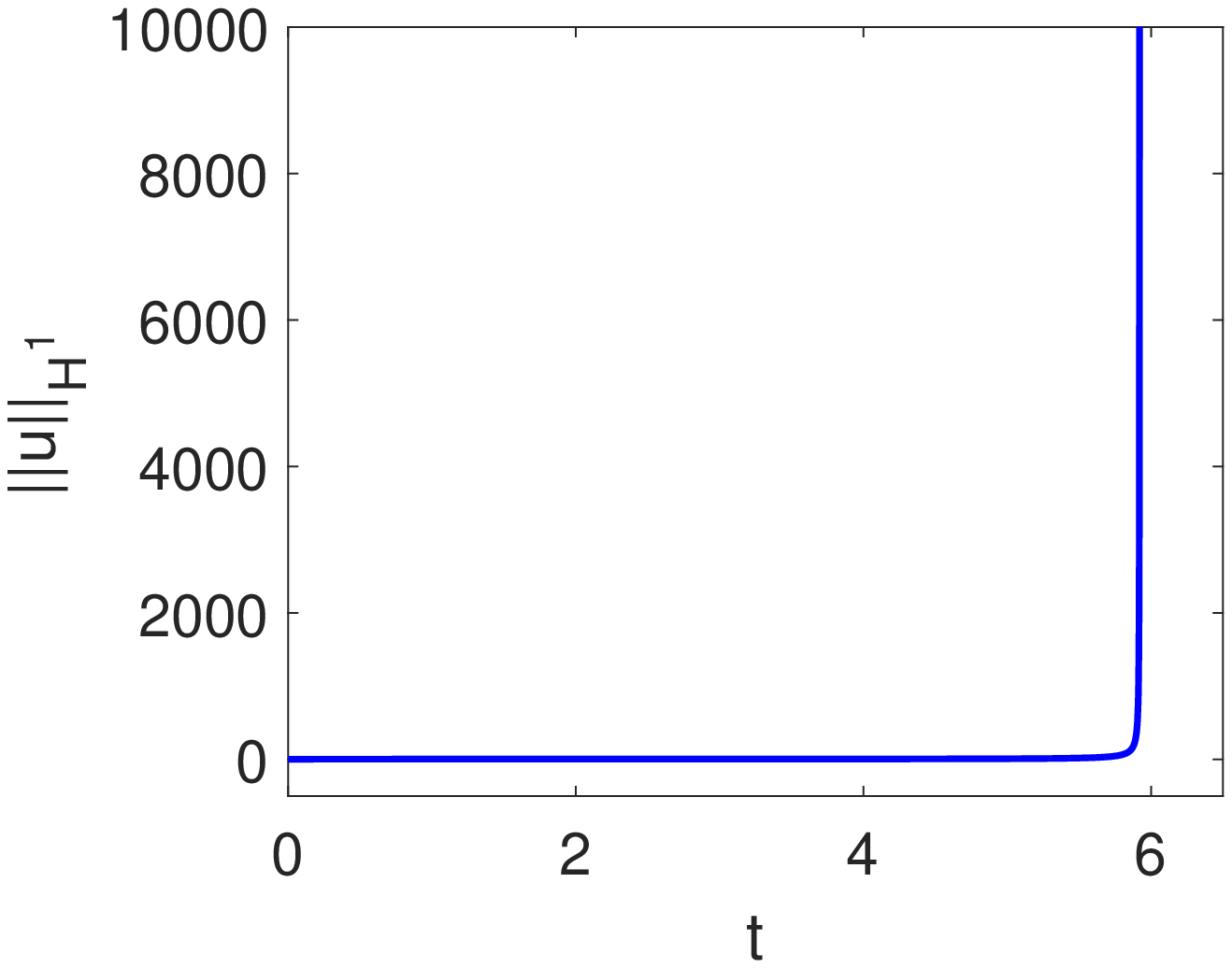}
	\end{minipage}
	
	\caption{The variation of the $H^1$-norm of the approximate solution with time  for the initial data \eqref{inamp2} with  for $A=1$ (left panel) and $A=1.1$ (right panel).}   \label{figure12}
\end{figure}

\begin{figure}[!htbp]
	\begin{minipage}[t]{0.45\linewidth}
		\centering
		\hspace*{-20pt}
		\includegraphics[height=5.5cm,width=7.5cm]{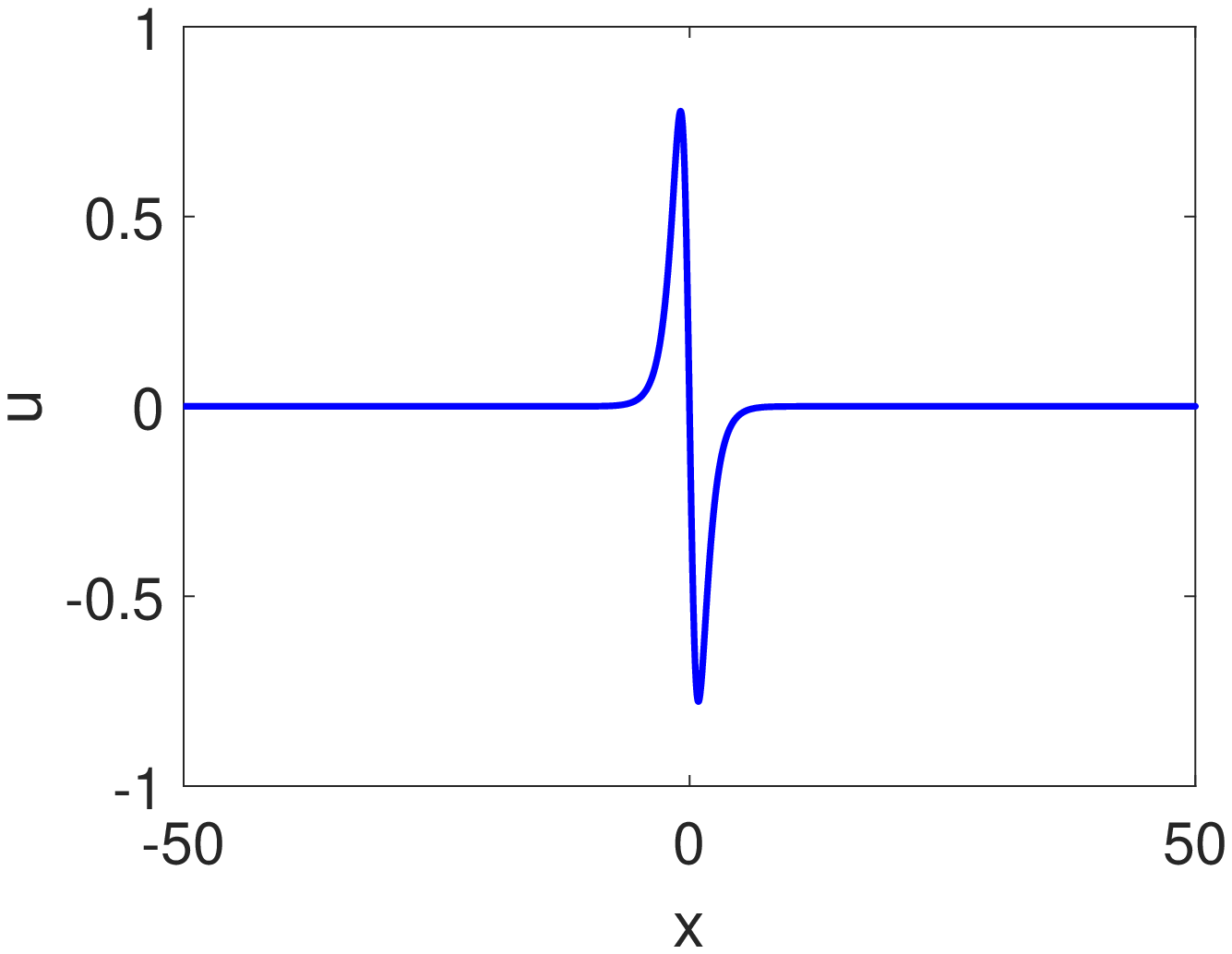}
	\end{minipage}%
	\hspace{3mm}
	\begin{minipage}[t]{0.45\linewidth}
		\centering
		\includegraphics[height=5.5cm,width=7.5cm]{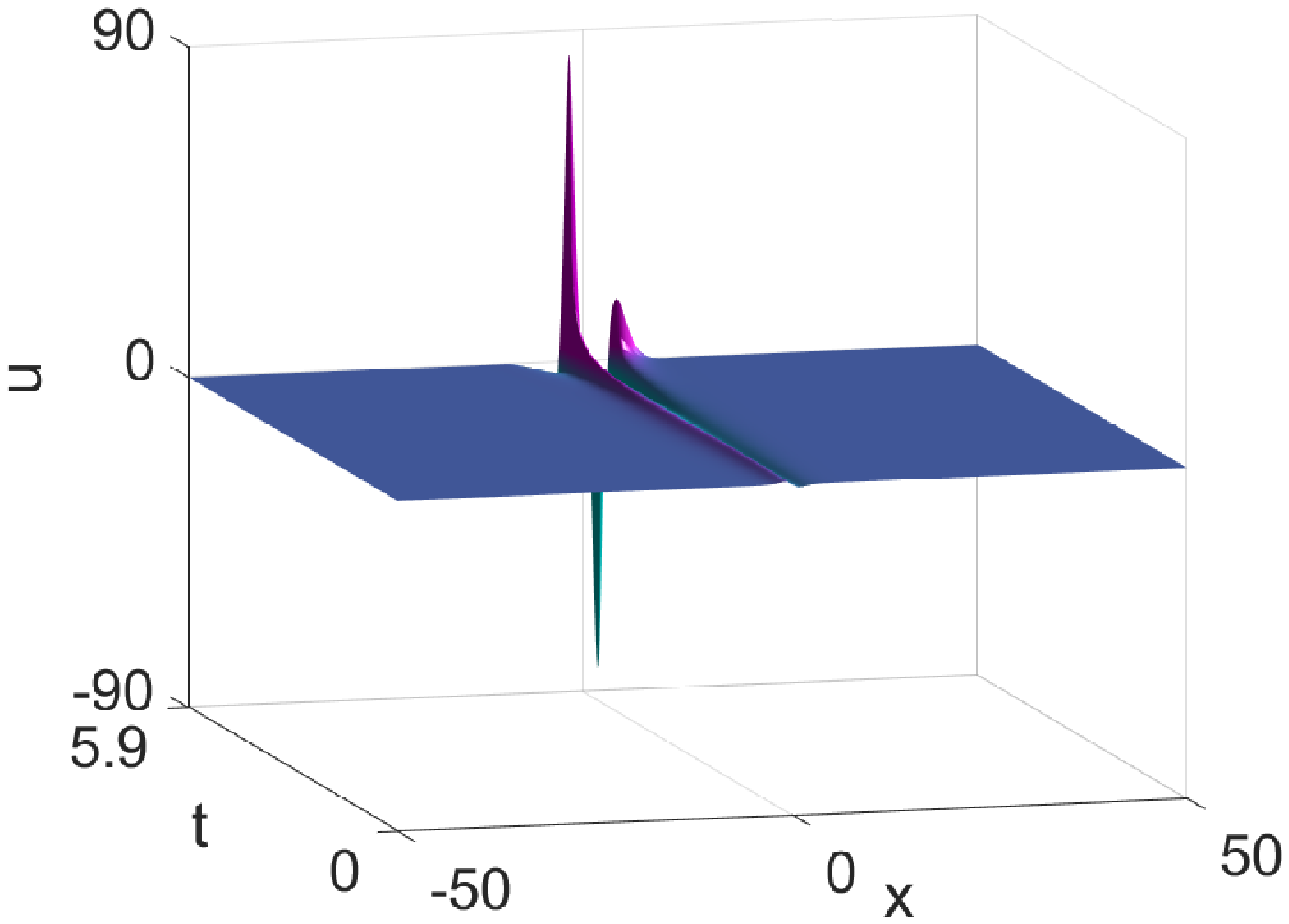}
	\end{minipage}
	\caption{The initial data \eqref{inamp2} with $A=1.1$ (left panel) and the evolution of the initial data (right panel).}   \label{figure125}
\end{figure}

\begin{figure}[!htbp]
	\begin{minipage}[t]{0.45\linewidth}
		\centering
		\hspace*{-20pt}
		\includegraphics[height=5.5cm,width=7.5cm]{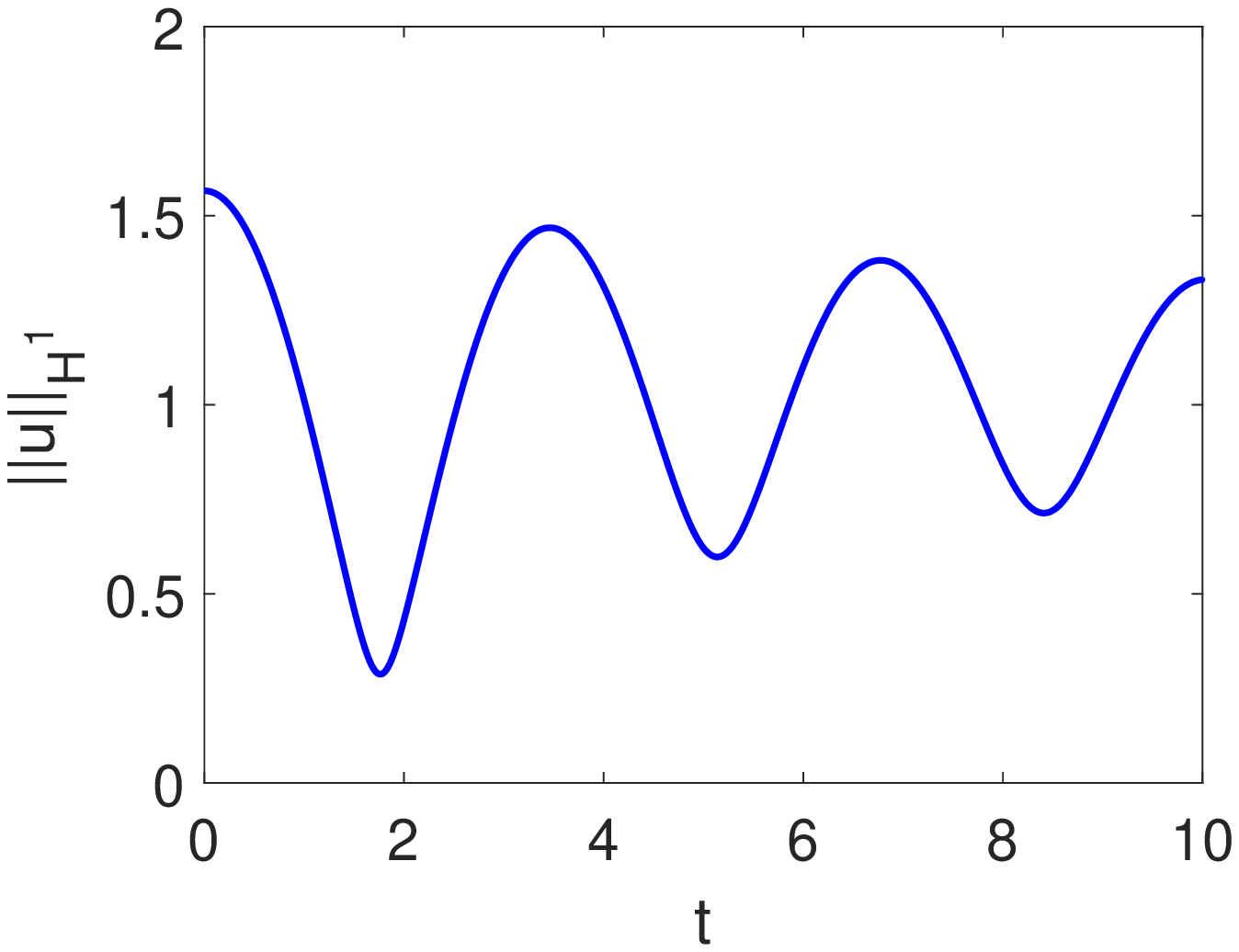}
	\end{minipage}%
	\hspace{3mm}
	\begin{minipage}[t]{0.45\linewidth}
		\centering
		\includegraphics[height=5.5cm,width=7.5cm]{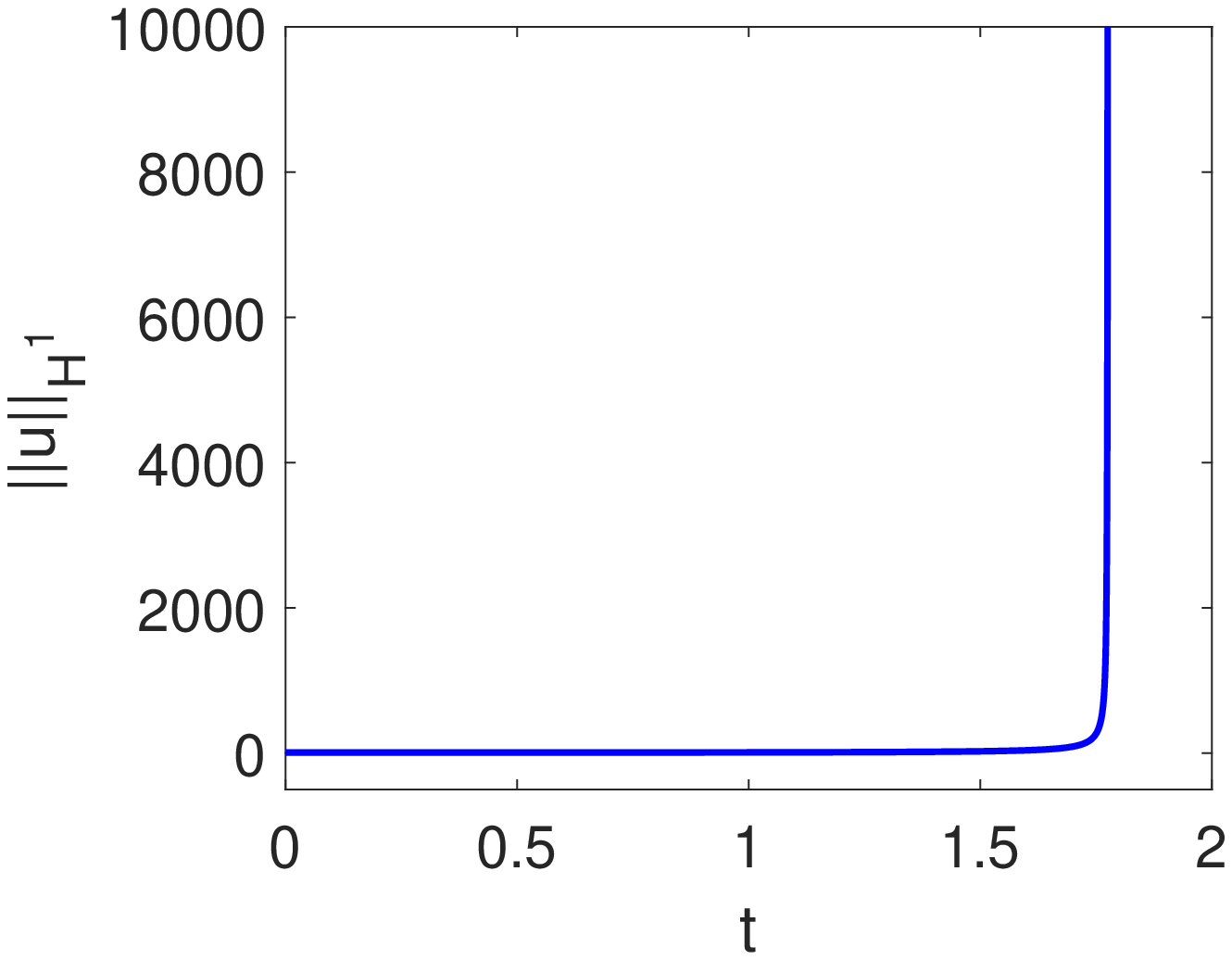}
	\end{minipage}
	\caption{The variation of the $H^1$-norm of the approximate solution with time  for the initial data \eqref{inamp3} with for $A=1.6$ (left panel) and $A=6.5$ (right panel).}   \label{figure13}
\end{figure}

\par In the last numerical experiment, we take the
initial data
\begin{equation}
	u(x,0)=A \Big[g(x)-\frac{1}{2}g(x-1)-\frac{1}{2}g(x+1) \Big],  \hspace*{30pt} v(x,0)=0,
	\label{inamp3}
\end{equation}
where $A>0$. It   yields
\begin{eqnarray*}
	E(u(x,0))&<& d\Leftrightarrow A\in(0,1.728)\cup(6.435,\infty),  \\
	I(u(x,0)&< &0\Leftrightarrow A>4.712.
\end{eqnarray*}

\noindent
The $H^1$-norm of the approximate solutions correspond to  $A=1.6$ and $A=6.5$ are depicted in Figure \ref{figure13}. The $H^1$-norm of the approximate solution
decreases with  oscillations  $A=1.6$. It remains bounded.   The numerical result indicates the global existence of the solution which is  compatible with the Theorem \ref{global-1}.
The $H^1$-norm of the approximate solution
increases  as time increases for  $A=6.5$. This is a strong indication of that the solution  blows up in finite time which is compatible with   Theorem \ref{blowup-1}.
\begin{figure}[!htbp]
	\begin{minipage}[t]{0.45\linewidth}
		\centering
		\hspace*{-20pt}
		\includegraphics[height=5.5cm,width=7.5cm]{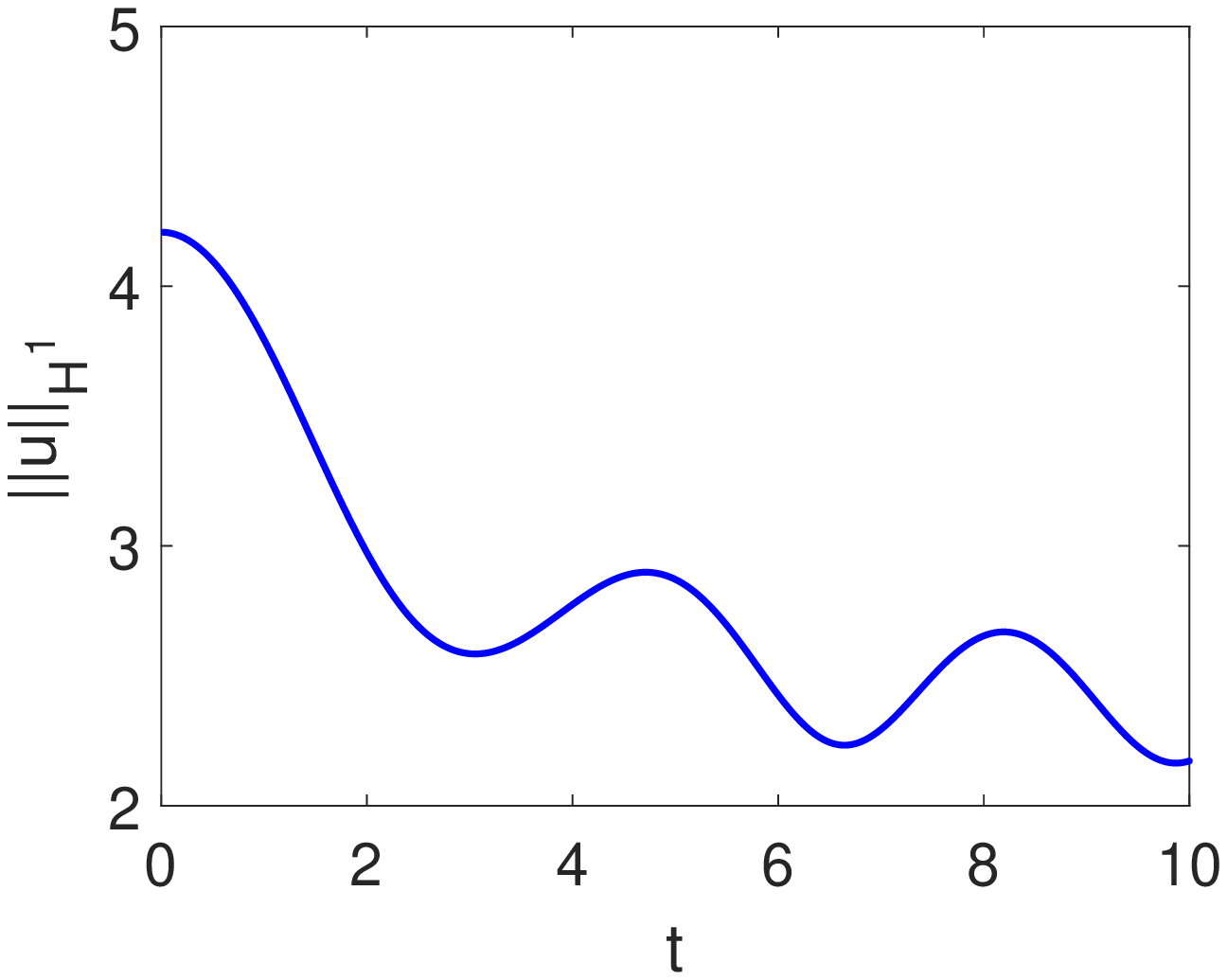}
	\end{minipage}%
	\hspace{3mm}
	\begin{minipage}[t]{0.45\linewidth}
		\centering
		\includegraphics[height=5.5cm,width=7.5cm]{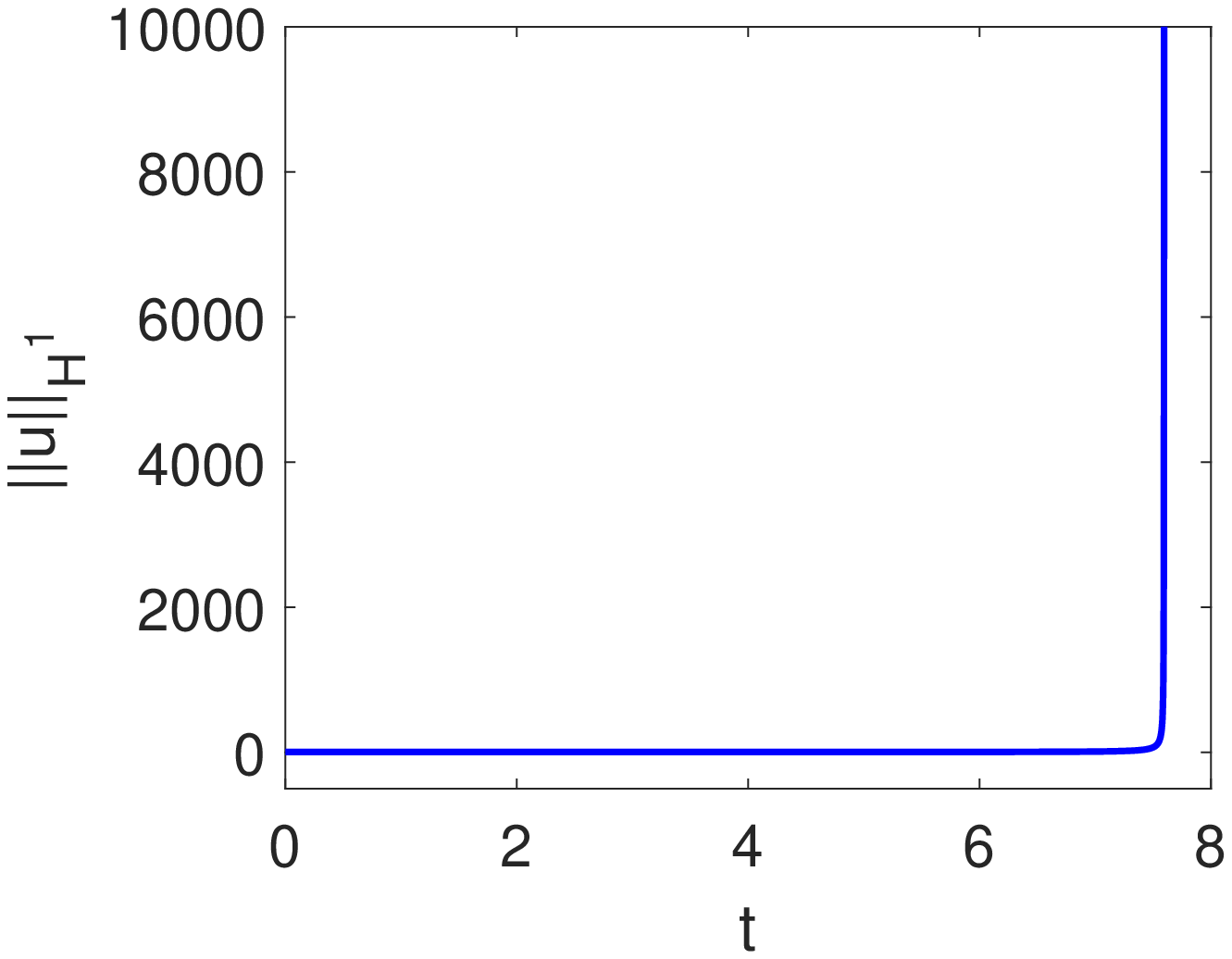}
	\end{minipage}
	
	\caption{The variation of the $H^1$-norm of the approximate solution with time  for the initial data \eqref{inamp3} with  for $A=4.3$ (left panel) and $A=4.4$ (right panel).}   \label{figure14}
\end{figure}

\begin{figure}[!htbp]
	\begin{minipage}[t]{0.45\linewidth}
		\centering
		\hspace*{-20pt}
		\includegraphics[height=5.5cm,width=7.5cm]{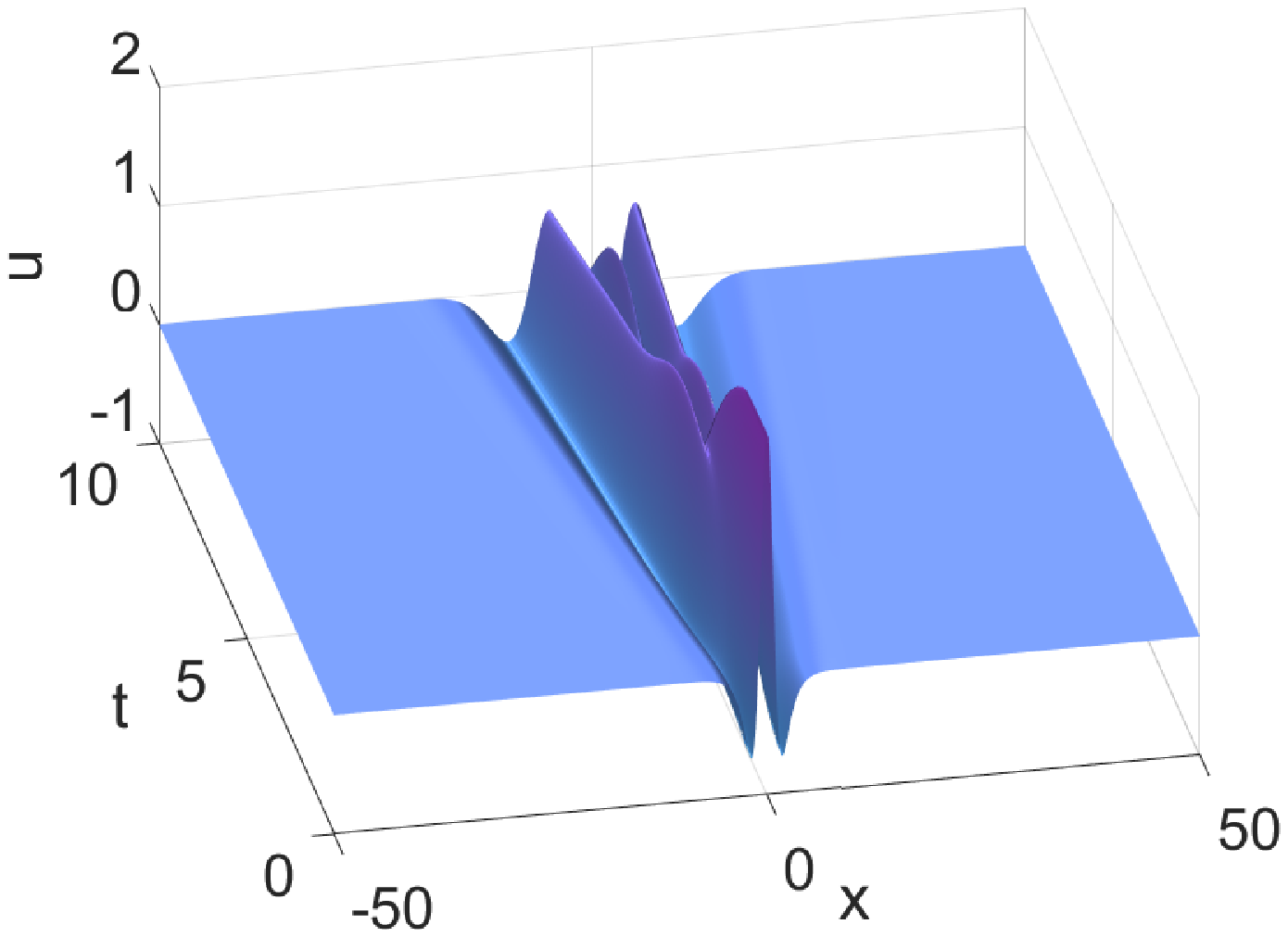}
	\end{minipage}%
	\hspace{3mm}
	\begin{minipage}[t]{0.45\linewidth}
		\centering
		\includegraphics[height=5.5cm,width=7.5cm]{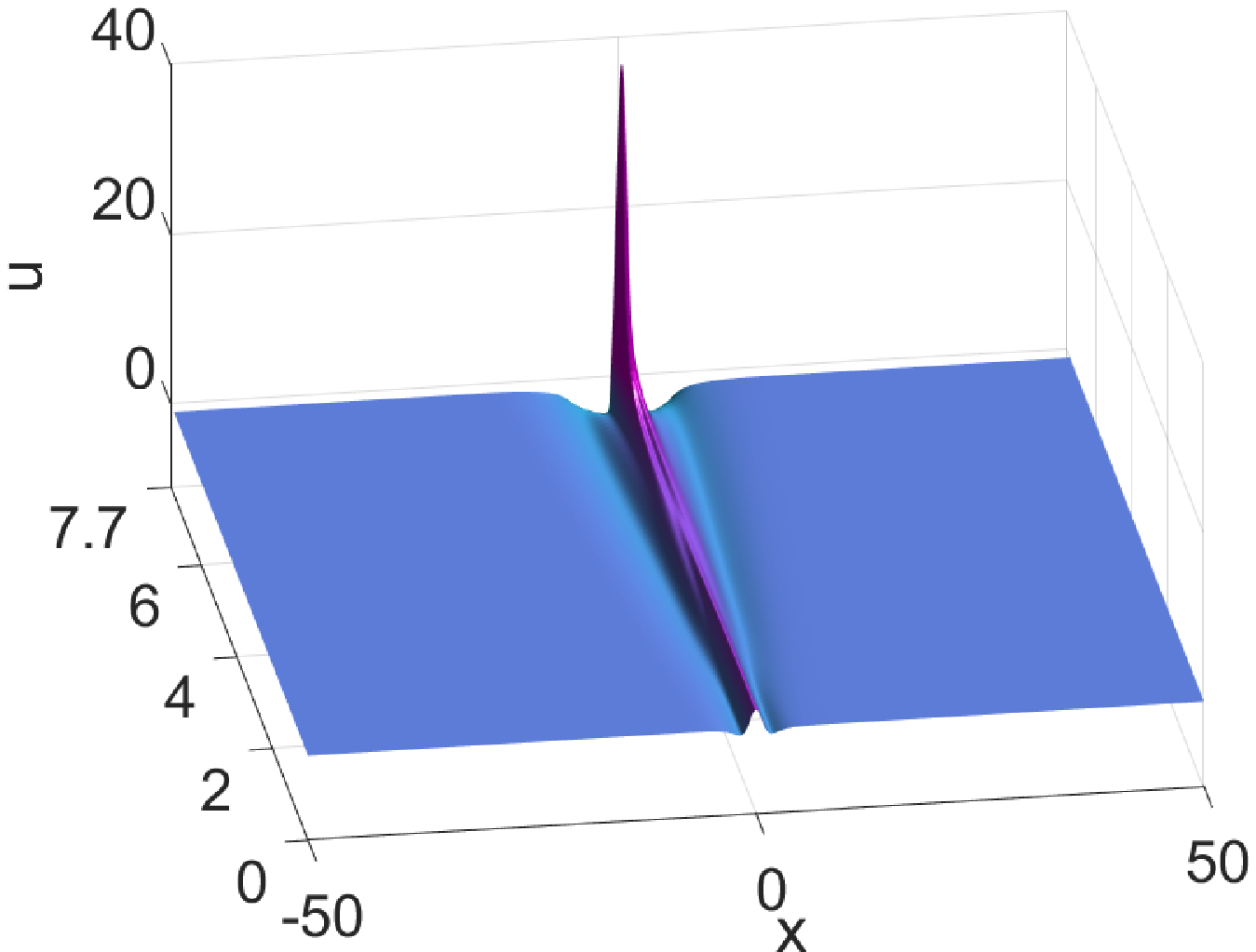}
	\end{minipage}
	
	\caption{Solution to the gBq equation for the initial data \eqref{inamp3} with $A=4.3$ (left panel) and $A=4.4$ (right panel).}   \label{figure145}
\end{figure}
\noindent

The variation of the $H^1$-norm of the approximate solution with time is illustrated  for $A=4.3$ and $A=4.4$ in Figure \ref{figure14}. In Figure \ref{figure145}, solutions to the gBq equation for the initial data \eqref{inamp3} with $A=4.3$  and $A=4.4$  are presented. 
It can be seen that the initial hump splits into several more strongly peaked humps for  the initial data \eqref{inamp3} with $A=4.3$,  whereas the initial hump evolves directly into a blow-up for  the initial data \eqref{inamp3} with $A=4.4$. For the gap interval $A\in[1.728,6.435]$,  the numerical experiments indicate that there is a threshold value  $A^*\in (4.3, 4.4)$ such that
the solution exists globally   for the parameter $A \in [1.728, A^*)$ and it blows up in finite time for the parameter  $A \in [A^*,6.435]$.

\section*{Conclusions}
This paper is focused on  the   Boussinesq equation with a general power-type nonlinearity. 
We first investigate from a theoretical point of view and derive the conditions under which the solutions of the initial value problem associated with this equation is global or blow up. Next, we extend our results to the Bessel potential spaces leading us to find the asymptotic behavior of the solutions.  Moreover,  the conditions  which guarantee the non-existence of solitary waves are obtained. We generate solitary wave solutions of the generalized Boussinesq equation by using the Petviashvili iteration method numerically. In order to investigate the time evolution of solutions to the generalized Boussinesq equation, the Fourier pseudo-spectral numerical method is proposed. After studying the time evolution of the single solitary wave,
we focus on the gap interval where neither a global existence nor a blow-up result has been established theoretically. Our numerical results successfully fill the gaps left by the theoretical ones.

\section*{Acknowledgements}
A. E. is supported by Nazarbayev University under Faculty Development Competitive Research Grants Program  for 2023-2025 (grant number 20122022FD4121).
G. M. M. is   supported by the Research Fund of the Istanbul Technical University (Project Number: 45000).

\section*{CONFLICT OF INTERESTS} It is stated by the authors that they have no competing interests.

\section*{DATA AVAILABILITY STATEMENT} 
Data sharing is not applicable to this article as no new data were created or analyzed in this study.

\bibliographystyle{elsarticle-num}
\biboptions{compress}

\end{document}